
\documentclass[10pt]{amsart}
\usepackage{amsmath}
\usepackage{amssymb, amsfonts, amsmath, amsthm, amscd}
\usepackage{mathrsfs}
\usepackage{color, graphics}
\usepackage[all]{xy}
\usepackage{hyperref}
\usepackage{amsxtra} 

\parindent 1.0cm

\topmargin -0.3 cm \oddsidemargin 0.2cm \evensidemargin 0.2cm
\textwidth 16cm \textheight 22cm

\newtheorem{theorem}{Theorem}[section]
\newtheorem{proposition}[theorem]{Proposition}
\newtheorem{lemma}[theorem]{Lemma}
\newtheorem{claim}[theorem]{Claim}
\newtheorem{corollary}[theorem]{Corollary}

\newtheorem{remark}[theorem]{Remark}

\newtheorem{definition}[theorem]{Definition}
\theoremstyle{remark}

\numberwithin{equation}{section}

\def\deg{\operatorname{deg}}%
\def\dim{\operatorname{dim}}%
\def\max{\operatorname{max}}%
\def\codim{\operatorname{codim}}%
\def\corank{\operatorname{corank}}%
\def\ker{\operatorname{ker}}%

\def\pic{\hbox{\rm Pic}}


%
%
%
\def\codim{\operatorname{codim}}%
\def\deg{\operatorname{deg}}%
\def\dim{\operatorname{dim}}%
\def\max{\operatorname{max}}%
\def\codim{\operatorname{codim}}%
\def\ker{\operatorname{ker}}%

\DeclareMathOperator{\cork}{{\rm cork}}
\newcommand{\Jj}{\mathcal{J}}
\newcommand{\Ii}{\mathcal{I}}

\newcommand \im   {\ensuremath{\mathrm{im}}}

\newcommand \ext {\ensuremath{\mathrm{Ext}}}
\newcommand \Sec {\ensuremath{\mathrm{Sec}}}

\newcommand \coker {\ensuremath{\mathrm{coker}}}

\newcommand \lra {\rightarrow}

\def\deg{\mbox{deg}}

\def\Oc{\mathcal O}

\def\N{\mathcal N}

\def\C{\mathbf C}

\def\Ff{\mathcal F}

\def\Pp{\mathbb P}

\begin{document}

\title[Complete classification of  irreducible components of the Brill-Noether locus]{Complete classification of  irreducible components of the Brill-Noether locus of rank-$2$ vector bundles of degree $d$ and speciality 2 on a general $\nu$-gonal curve}
\author{Youngook Choi}
\address{Department of Mathematics Education, Yeungnam University, 280 Daehak-Ro, Gyeongsan, Gyeongbuk 38541,
 Republic of Korea }
\email{ychoi824@yu.ac.kr}
\author{Flaminio Flamini}
\address{Universita' degli Studi di Roma Tor Vergata, 
Dipartimento di Matematica, Via della Ricerca Scientifica-00133 Roma, Italy}
\email{flamini@mat.uniroma2.it}
\author{Seonja Kim}
\address{Department of  Electronic Engineering,
Chungwoon University, Sukgol-ro, Nam-gu, Incheon, 22100, Republic of Korea}
\email{sjkim@chungwoon.ac.kr}
\thanks{The first author was supported by the National Research Foundation of Korea(NRF) grant funded by the Korea government(MSIT) (RS-2024-00352592). The third  author  was supported by the National Research Foundation of Korea(NRF) grant funded by the Korea government(MSIT) (2022R1A2C100597713). First and third authors also benefitted of support by research funds ``AAAGH-CUP E83C22001710005" (Responsible Prof. G. Pareschi) of the University of Rome Tor Vergata, during their visit at the Department on Mathematics in November 2023, when collaboration began. The second author acknowledges the support of the MIUR Excellence Department Project MatMod@TOV, MIUR CUP-E83C23000330006, 2023-2027,  awarded to the Department of Mathematics, University of Rome Tor Vergata, and the support from the Organizers of the conference ``Workshop in Classical Algebraic Geometry", Daejeon-South Korea, 12-15 December 2023 (Prof. D. S. Hwang, J.-M. Hwang, Y. Lee) as an invited speaker, occasion in which this collaboration has been further developed; moreover he is member of  INdAM-GNSAGA}

\subjclass[2020]{14H60, 14H10, 14D20, 14E08, 14J26, 14E05}

\keywords{general $\nu$-gonal curves, stable rank-two vector bundles, Brill-Noether loci, birational geometry}
\begin{abstract} In this paper, we address the {\em Brill–Noether theory} for rank-two, degree $d$, stable vector bundles of speciality $2$ on a general $\nu$-gonal curve $C$ of genus $g$, leveraging {\em universal extension spaces}, {\em modular maps}, and recent developments in rank-one {\em Brill-Noether theory over Hurwitz spaces} on $C$.

We  completely classify the irreducible components of such Brill-Noether loci in the whole range of interest for $d$, namely $2g-2 \leq d \leq 4g-4$. Using specialization techniques, we further uncover a {\em stratification} into locally closed subsets within some of these components, and we also provide additional insight into the {\em birational geometry} and the {\em local structure} of every such a component.

Our methods yield thorough descriptions of the irreducible components of any such Brill-Noether locus and, as a by-product of our more general results, also derive interesting consequences for Brill-Noether loci of stable, rank-two bundles with a fixed {\em general determinant}, rather than fixed degree.

\end{abstract}
\maketitle
\section*{Introduction} 

Brill-Noether theory of line bundles on smooth, complex, projective curves $C$ of genus $g$ is a fundamental topic in Algebraic Geometry, whose study goes back (at least) to Riemann \cite{R}, and which addresses the problem of determining which {\em invariants} are needed to understand the behavior of maps from a smooth, projective curve $C$ of genus $g$ to a suitable projective space. In other words, it focuses on the geometry of the {\em space of maps} $C \to \mathbb P^r$, with $r>0$ and of given positive degree $d$. 

The {\em discrete invariants} $(g,r,d)$ involved are equivalent to the datum of line bundles $L$ on $C$ of genus $g$, which are of degree $d$ and which are equipped with $(r + 1)$-dimensional base-point-free vector spaces of global sections. Thus investigation on the geometry of such maps is related to the study of (locally determinantal) subschemes of ${\rm Pic}^d(C)$, called {\em Brill-Noether loci} $W_d^r(C)$, set-theoretically defined as:
$$W_d^r(C) := \left\{ L \in {\rm Pic}^d(C) \; | \; h^0(C, L) \geq r+1\right\} \subseteq {\rm Pic}^d(C).$$

When $C$ is a {\em general} curve of genus-$g$, equivalently when $C$ is with {\em general moduli} (this means that $C$ corresponds to a general point of the {\em moduli space} $\mathcal M_g$ parametrizing isomorphism classes of such curves), Brill-Noether loci are well-understood by main theorems from the 1970-1980's (due to several authors as D. Eisenbud-J. Harris,\; W. Fulton-R. Lazarsfeld, D. Gieseker, P. Griffiths-J. Harris, G. Kempf, S. Kleiman-D. Laksov, K. Petri, just to mention a few) which give good descriptions of the geometry of such $W^r_d (C)$'s, cf. e.g. \cite{ACGH}. Some properties, e.g. non-emptyness and smoothness, are of local nature  whereas some others, e.g. connectedness or irreducibility, require a global understanding of such loci; the core of the aforementioned results establishes that properties of Brill-Noether loci are quite uniform when $C$ is  with {\em general moduli}, being basically determined by the aforementioned discrete invariants $(g, r,d)$ (cf. e.g. \cite{ACGH} for full details).

Nevertheless some curves $C$ can be already equipped with a map $C \stackrel{f}{\longrightarrow} \mathbb P^k$ of some given degree (which could impose to $C$ not being with general moduli, in which case $C$ will be called  a curve with {\em special moduli}). It is therefore natural to ask how the presence of such a map $f$ affects the properties of Brill-Noether loci $W^r_d(C)$ to be studied. The basic and most natural case occurs when $k=1$ and $\deg(f) = \nu$ with $\nu < \lfloor \frac{g+3}{2}\rfloor$ (the integer $\lfloor \frac{g+3}{2}\rfloor$ being the so called {\em general gonality} for genus-$g$ curves, i.e. the gonality of a curve of genus $g$ with general moduli), and when $C$ is a {\em general curve} among those genus-$g$ curves possessing such a map $f$ onto $\mathbb P^1$, of degree $\nu < \lfloor \frac{g+3}{2}\rfloor$ strictly less than the general gonality. 

The term ``general curve" in this context can be made more precise by considering the so called {\em Hurwitz scheme} $\mathcal H_{g,\nu}$, which is an irreducible scheme parametrizing branched covers $ C \stackrel{f}{\longrightarrow} \mathbb P^1$ of degree $\nu$ and genus $g$, which admits a natural {\em modular map} $$\mathcal H_{g,\nu} \stackrel{\Phi}{\longrightarrow}\mathcal M_g,$$given by forgetting the maps $f$. From the above overview, when $\nu \geq \lfloor \frac{g+3}{2}\rfloor$, i.e. the general gonality, then the map $\Phi$ is dominant onto $\mathcal M_g$; this is the reason why one restricts to $\nu < \lfloor \frac{g+3}{2}\rfloor$ and, in such a case, one refers to $C$ to be a {\em general $\nu$-gonal} curve of genus $g$  when $C$ corresponds to a general point of ${\rm Im}(\Phi)$.

In this set-up, one can similarly ask about properties of Brill-Noether loci  $W^r_d(C)$'s; in such a case, the uniform picture for Brill-Noether theory for general curves completely breaks down as it is clear even from very first examples. For $\nu = 2$ and $3$ there are classical results, the first case dealing with {\em hyperelliptic curves}, due to Clifford \cite{Cli} in 1878, whereas the second dealing with {\em trigonal curves}, which have been studied by Maroni \cite{Maro} in 1946.

For arbitrary special gonality $\nu$, some preliminary results have been given by e.g. \cite{BK,CKM,CoM,KK,K,Mart}, just to mention a few and, in more recent years, there has been a great deal of interest in this topic which produced many breakthrough results (cf.\;e.g.\,\cite{CJ,JP,JR,LLV,Lar,P}) showing that, differently from the case with general moduli, discrete invariants $(g, r,d)$ are not enough to study  Brill-Noether theory on a general $\nu$-gonal curve. In \S\,\ref{ss:BNlbgon} we will remind the most general results in this direction, following in particular terminology and results by H. Larson as in \cite{Lar}, which will be sometimes used later on. 

The present paper is focused on Brill-Noether theory of rank-$2$, stable vector bundles of degree $d$ on a general $\nu$-gonal curve of genus $g$.

Recall that, for any smooth, irreducible projective curve $C$ of genus $g$, one denotes by $U_C(d)$ the {\em moduli space} of (semi)stable, degree-$d$, rank-$2$ vector bundles on $C$ and by $U^s_C(d)$ the open, dense subset filled-up by stable bundles. Similarly as in the line-bundle case, one can define a (locally determinantal) subscheme $B^k_{2,d} \subseteq U_C(d)$ called as above {\em Brill-Noether locus}, which parametrizes {\em $S$-equivalence} classes of rank-$2$, degree $d$, semistable vector bundles $\mathcal F$  having $h^0(C, \mathcal F)\ge k$, for a given positive integer $k$. In rank one, traditionally the Brill-Noether locus  $B^{k+1}_{1,d}$  is denoted as in the above lines by $W^k_d$.

The schemes $B^k_{2,d}$, from now on simply denoted by $B^k_d := B^k_{2,d}$, being natural generalizations of the Brill--Noether loci for line bundles, have received considerable attention from various authors. Beyond their intrinsic interest, these loci play a central role in several applications to other areas: birational geometry of several moduli spaces (cf.\,e.g.\,\cite{Beau,Be,BeFe,HR,Muk2,Muk}), Hilbert schemes of curves and surface scrolls in projective spaces (cf.\,e.g.,\,\cite{CCFM,CFK2,CIK,CIK1,CIK2,CF2,FS}, for the use of stable and unstable special bundles, in particular for explicit constructions of {\em non-reduced} components of such Hilbert schemes), just to mention a few.

In contrast with the extensive results available for the rank-$1$ case, several fundamental questions about rank-$2$ Brill--Noether loci—such as non-emptiness, dimension, irreducibility, local structure, and more—remain open in general. Moreover, unlike the case for line bundles, Brill--Noether loci in rank $2$ do not always behave as expected, even when the curve $C$ is general in moduli (cf. e.g.\;\cite{BeFe} and Remark \ref{rem:BNloci}-(b) below). 

From standard facts concerning Serre duality and (semi)stability, it turns out that in the rank-$2$ case it makes sense to consider non-trivial Brill-Noether loci $B_d^k$ when $$2g-2 \leq d \leq 4g-4\;\;\; {\rm and}\;\;\; k = k_i := d-2g+2+i,$$for $i \geq 1$ an integer (cf. Proposition \ref{prop:sstabh1}, formula \eqref{eq:congd} and Definition \ref{def:BNloci} below). In this case, it is well-known that the {\em expected dimension} of $B_d^{k_i} \cap U^s_C(2,d)$ is $$\rho_d^{k_i}:=4g-3-ik_i,$$which is the so-called rank-$2$ {\em Brill-Noether number} (cf.\;\cite{Sun} and \eqref{eq:bn}, Remark \ref{rem:BNloci} below). Contrary to the line bundle case, the integer ${\rm max}\{-1, \; \rho_d^{k_i}\}$ is no longer the {\em expected dimension} for possible components of $B^{k_i}_d$ contained in the strictly-semistable locus $U_C(d) \setminus U^s_C(d)$ (i.e. components containing no stable points), which case can occur only if $d$ is even (cf. Remark \ref{rem:BNloci} below for more details).

With this reminded set-up, an irreducible component $\mathcal B$ of $B_d^{k_i} \cap U_C^s(d)$ is said to be {\em regular}, if $\mathcal B$ is generically smooth and of the expected dimension $\rho_d^{k_i}$, {\em superabundant}, otherwise (cf. Definition \ref{def:regsup} below). 

\medskip

When the speciality is $i=1$, the Brill-Noether locus  $B^{k_1}_d\cap U_C^s(d)$  has been deeply studied on any smooth, irreducible, projective curve $C$:  

\begin{theorem}\label{SunLau} (cf.\,e.g. \cite{L,Sun,Teixidor1}) Let $C$ be a non-singular, irreducible, projective curve of genus $g>0$. Then, for any integer $d$, one has that $B^{k_1}_d \cap U^s_C(d)$ is either empty or it is irreducible. 

When not empty, a general point $[\mathcal F] \in B^{k_1}_d \cap U^s_C(d)$ corresponds to a rank-$2$ stable vector bundle $\Ff$ of degree $d$ on $C$, whose space of sections $H^0(C, \mathcal F)$ has dimension $k_1= d - 2g +3$. When moreover $C$ is with general moduli, then $B^{k_1}_d \cap U^s_C(d)$ is also {\em regular}. 
\end{theorem}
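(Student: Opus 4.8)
The plan is to reduce, via Serre duality, to a Brill--Noether locus of \emph{effective} bundles of small degree, where an explicit extension parametrization is available. Set $\mathcal E := \mathcal F^{\vee}\otimes K_C$, so that $\deg \mathcal E = e := 4g-4-d$ (hence $0\le e\le 2g-2$ in our range), stability of $\mathcal F$ is equivalent to stability of $\mathcal E$, and $h^0(\mathcal E)=h^1(\mathcal F)$ by Serre duality. Since Riemann--Roch gives $h^0(\mathcal F)-h^1(\mathcal F)=d-2g+2$, the defining condition $h^0(\mathcal F)\ge k_1=d-2g+3$ is equivalent to $h^1(\mathcal F)\ge 1$, i.e.\ to $h^0(\mathcal E)\ge 1$. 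Thus $B^{k_1}_d\cap U^s_C(d)$ is isomorphic to $\mathcal W:=\{[\mathcal E]\in U^s_C(e)\mid h^0(\mathcal E)\ge 1\}$, and the assertion that a general point has $h^0(\mathcal F)=k_1$ translates into $h^0(\mathcal E)=1$ generically. I would first dispose of the boundary $e=0$ (that is $d=4g-4$): a nonzero section of a semistable $\mathcal E$ of slope $0$ produces a sub-line-bundle of non-negative degree, contradicting stability, so $\mathcal W=\emptyset$ there --- consistent with the dichotomy. Hence assume $e\ge 1$.

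For irreducibility I would parametrize $\mathcal W$ by extensions. The first step is to show that a \emph{general} $[\mathcal E]\in\mathcal W$ carries a nowhere-vanishing section: the bundles all of whose sections vanish somewhere are swept out by choosing an effective divisor $Z$ and an element of a smaller extension family, and a dimension count shows these form a locus of strictly smaller dimension. A nowhere-vanishing section yields a short exact sequence $0\to \mathcal O_C\to \mathcal E\to \mathcal O_C(\delta)\to 0$ with $\delta:=\det\mathcal E\in\Pic^e(C)$. I would then assemble the total family $\mathcal P$: over the Jacobian $\Pic^e(C)$ the extension classes live in $\ext^1(\mathcal O_C(\delta),\mathcal O_C)=H^1(\mathcal O_C(-\delta))$, of constant dimension $e+g-1$ on the open locus where $h^0(\mathcal O_C(-\delta))=0$, giving a projective bundle $\mathcal P=\P(\,\cdots\,)$ over $\Pic^e(C)$. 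As a projective bundle over the irreducible Jacobian, $\mathcal P$ is irreducible, and the classifying rational map $\mathcal P\dashrightarrow \mathcal W$ is dominant; hence $\mathcal W$, and therefore $B^{k_1}_d\cap U^s_C(d)$, is irreducible whenever nonempty.

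The dimension bookkeeping then pins down the generic behaviour: $\dim\mathcal P=g+(e+g-2)=2g+e-2=6g-6-d=\rho^{k_1}_d$. To see that $\mathcal P\dashrightarrow\mathcal W$ is generically injective (so that $\dim\mathcal W=\rho^{k_1}_d$), and simultaneously that the general point has $h^0(\mathcal E)=1$, I would verify that for general $\delta$ and general class $\epsilon\in H^1(\mathcal O_C(-\delta))$ the coboundary $H^0(\mathcal O_C(\delta))\to H^1(\mathcal O_C)$ given by cup product with $\epsilon$ is injective, which forces $h^0(\mathcal E)=1$ and hence a unique sub-sheaf $\mathcal O_C\subset\mathcal E$. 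Openness of stability together with a dimension count on the unstable extensions shows the general extension is stable, so the construction indeed dominates $\mathcal W$ and fills a component of the expected dimension.

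Finally, for the \emph{regular} statement under general moduli I would study the local structure of $\mathcal W$ at a general $[\mathcal E]$ through the rank-$2$ Petri (cup-product) map $H^0(\mathcal E)\otimes H^0(\mathcal E^{\vee}\otimes K_C)\to H^0(\mathcal E\otimes\mathcal E^{\vee}\otimes K_C)$, whose maximal rank --- here injectivity, since $h^0(\mathcal E)=1$ --- is equivalent to generic smoothness of $\mathcal W$ along the expected dimension. I expect this Petri-type injectivity to be the \textbf{main obstacle}: it fails in general but holds when $C$ has general moduli, and I would establish it by specializing $C$ to a curve on which the map can be computed directly (a limit-linear-series or Castelnuovo-type degeneration) and then invoking semicontinuity. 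Irreducibility and the generic identity $h^0(\mathcal F)=k_1$ are comparatively formal consequences of the extension parametrization, whereas the smoothness-and-expected-dimension assertion genuinely requires this general-moduli input.
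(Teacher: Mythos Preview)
The paper does not give its own proof of this statement: Theorem~\ref{SunLau} is quoted as a background result from the cited references \cite{L,Sun,Teixidor1}, with no argument supplied. So there is no in-paper proof to compare against; your proposal is effectively a reconstruction of the classical argument (closest in spirit to Laumon~\cite{L}).

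Your outline is correct and is essentially the standard route: Serre-dualize to the locus $\{[\mathcal E]\in U^s_C(e):h^0(\mathcal E)\ge 1\}$ with $e=4g-4-d$, parametrize the generic bundle by an extension $0\to\mathcal O_C\to\mathcal E\to\mathcal O_C(\delta)\to 0$, and conclude irreducibility from the irreducibility of the projective bundle of extension classes over $\Pic^e(C)$. Your dimension count $\dim\mathcal P=2g+e-2=6g-6-d=\rho_d^{k_1}$ is right, and the observation that $h^0(\mathcal E)=1$ generically both forces generic injectivity of the modular map and gives $h^0(\mathcal F)=k_1$ is the correct mechanism.

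Two places where the sketch is thin rather than wrong: (i) the assertion that bundles all of whose sections vanish somewhere form a strictly smaller locus, and that the general extension is stable, are both ``dimension count'' steps you announce but do not carry out --- in the literature these are done either by explicit stratification or (for stability) via the secant-variety criterion of Lange--Narasimhan (Proposition~\ref{LN} in this paper); (ii) for regularity under general moduli you correctly isolate the Petri map injectivity as the crux, but ``specialize $C$ and invoke semicontinuity'' is a plan, not a proof --- this is precisely the content of Teixidor's result \cite{Teixidor1,Teixidor2}, and a self-contained argument would have to reproduce a nontrivial degeneration. None of this is a genuine gap in approach; it is the expected level of detail missing from a proof sketch.
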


\medskip

For speciality $i=2$,  using degeneration arguments, N. Sundaram \cite{Sun} proved  that $B^{k_2}_d\cap U_C^s(d)$ is not empty for any  smooth, irreducible projective curve $C$ and for odd degree $2g-1 \le d\le 3g-4$. Later on M. Teixidor I Bigas generalized Sundaram's result  in \cite{Teixidor1, Teixidor2} as follows:

\begin{theorem}\label{TeixidorRes} ( {\em ``Serre-dual" version} of \cite[Theorem]{Teixidor1}) Let $C$ be a non-singular, irreducible, projective curve of genus $g \geq 3$. 
If $d$ is an integer such that $2g-3\le d\le 4g-7$,  then for any $C$ one has that $B^{k_2}_d \cap U^s_C(d)$ contains an irreducible component $\mathcal B$ of (expected)  dimension $\rho^{k_2}_d=8g-2d-11$ and a general point $[\mathcal F]$ on it corresponds to a rank-$2$ stable vector bundle $\Ff$ of degree $d$ on $C$, whose space of sections $H^0(C, \mathcal F)$ has dimension $k_2 = d - 2g +4$. 

If moreover $C$ is with {\em general moduli}, this is the only component of $B^{k_2}_d \cap U^s_C(d)$, which is {\em regular}. Moreover, $B^{k_2}_d \cap U^s_C(d)$ has extra components if and only if $W^1_n(C)$ is non-empty with $\dim (W^1_n(C)) \ge 2g + 2n - d - 5$,  for some integer $n$ such that $2n<4g-4-d$.
\end{theorem}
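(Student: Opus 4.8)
My plan is to pass to the Serre-dual picture and thereby reduce the study of $B^{k_2}_d$ to rank-two bundles carrying exactly two sections, whose geometry is governed by elementary modifications of the trivial bundle. Concretely, the assignment $\mathcal{F} \mapsto \mathcal{E} := \mathcal{F}^{\vee} \otimes K_C$ is an isomorphism $U^s_C(d) \xrightarrow{\sim} U^s_C(4g-4-d)$ carrying a class $[\mathcal{F}]$ with $h^1(\mathcal{F}) = 2$ to $[\mathcal{E}]$ with $h^0(\mathcal{E}) = h^1(\mathcal{F}) = 2$, where $e := \deg \mathcal{E} = 4g-4-d$ and the hypothesis $2g-3 \le d \le 4g-7$ becomes $3 \le e \le 2g-1$. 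Under this duality the expected dimension transforms into $\rho^{k_2}_d = 2e-3$, so it suffices to describe the locus of stable $\mathcal{E}$ of degree $e$ with $h^0(\mathcal{E}) \ge 2$.

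For existence and for the distinguished component $\mathcal{B}$, I would realize a generic such $\mathcal{E}$ as an elementary (Hecke) modification $0 \to \mathcal{O}_C^{\oplus 2} \to \mathcal{E} \to T \to 0$, with $T$ a torsion sheaf of length $e$ supported at $e$ general points, each stalk of length one. The two tautological sections of $\mathcal{O}_C^{\oplus 2}$ give $h^0(\mathcal{E}) \ge 2$; taking cohomology, $h^0(\mathcal{E}) = 2$ exactly when the connecting map $H^0(T) \to H^1(\mathcal{O}_C^{\oplus 2})$ is injective, which holds for general modification data since $e \le 2g-1 < 2g = \dim H^1(\mathcal{O}_C^{\oplus 2})$---this is precisely where the lower bound $d \ge 2g-3$ enters. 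The upper bound $d \le 4g-7$, i.e.\ $e \ge 3$, guarantees that the generic modification is stable and that the action below is generically free. The family of such modifications, taken up to isomorphism, has dimension $e$ (the points) plus $e$ (a $\P^1$ of directions at each point) minus $3$ (the action of $\operatorname{PGL}_2 = \operatorname{Aut}(\mathcal{O}_C^{\oplus 2})/\mathbb{C}^{*}$ on those directions), that is, exactly $2e-3 = \rho^{k_2}_d$. As this construction works on an arbitrary $C$, it produces the asserted component $\mathcal{B}$, whose general member has $h^0(\mathcal{E}) = 2$, hence $h^0(\mathcal{F}) = k_2$ after dualizing, for every curve.

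For the regularity statement when $C$ has general moduli, I would compute the Zariski tangent space to $B^{k_2}_d$ at a general $[\mathcal{E}] \in \mathcal{B}$: by the deformation theory of these loci, generic smoothness of the expected dimension is equivalent to injectivity of the rank-two Petri (cup-product) map $H^0(\mathcal{E}) \otimes H^0(K_C \otimes \mathcal{E}^{\vee}) \to H^0(K_C \otimes \operatorname{End}(\mathcal{E}))$. I would establish this injectivity for the generic modification bundle via a degeneration of $C$ to a suitable nodal curve together with a limit-bundle analysis \`a la Teixidor, to which the modification description is well adapted. To classify the components I would then stratify by the behaviour of the two sections: either they generate a rank-two subsheaf generically, which is the modification stratum whose closure is $\mathcal{B}$, or they both factor through a saturated sub-line-bundle $A \subseteq \mathcal{E}$ with $h^0(A) \ge 2$, i.e.\ $A \in W^1_n$ with $n = \deg A$ and stability forcing $2n < e = 4g-4-d$. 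In the latter case $\mathcal{E}$ lies in an extension $0 \to A \to \mathcal{E} \to B \to 0$ with $h^0(B) = 0$, and the corresponding family has dimension $\dim W^1_n(C) + g + \dim \P(\operatorname{Ext}^1(B,A))$. A Riemann--Roch computation of $\operatorname{ext}^1(B,A) = h^1(A \otimes B^{-1})$ (using $2n < e$) rewrites this as $\dim W^1_n(C) + 2g + e - 2n - 2$; comparing with $\rho^{k_2}_d = 2e-3$ shows such a family is a component of dimension at least the expected one exactly when $\dim W^1_n(C) \ge 2g + 2n - d - 5$, which is the stated criterion for extra components.

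Finally, the uniqueness assertion for general $C$ follows by feeding the general-moduli value $\dim W^1_n(C) = 2n - g - 2$ into the criterion above: on a general curve $W^1_n \neq \emptyset$ forces $2n \ge g+2$ and then the inequality $2n-g-2 \ge 2g+2n-d-5$ forces $d \ge 3g-3$, whereas the range condition $2n < 4g-4-d$ combined with $2n \ge g+2$ forces $d < 3g-6$; these are incompatible, so no $W^1_n$-stratum yields an extra component and $\mathcal{B}$ is the only one. The main obstacle I anticipate is the regularity step: injectivity of the rank-two Petri map at the generic point of $\mathcal{B}$ is a genuinely global statement that, unlike the dimension bookkeeping, cannot be read off the modification construction and requires a careful degeneration argument with control of the limit bundles---this is the technical heart of the theorem. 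A secondary difficulty is making the stratification by sub-line-bundles exhaustive, namely bounding the dimension of every locus where the two sections factor through a pencil and ruling out further component types, so that the list $\{\mathcal{B}\}$ together with the $W^1_n$-strata is genuinely complete.
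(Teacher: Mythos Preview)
The paper does not give its own proof of this theorem: it is stated as a known result, quoted (after Serre duality $\mathcal F=\omega_C\otimes\mathcal E^\vee$) from Teixidor's paper \cite{Teixidor1}, as the paragraph immediately following the statement makes explicit. So there is no in-paper proof to compare against directly.

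Your sketch is essentially Teixidor's original argument in \cite{Teixidor1}: pass to the Serre dual, realize a generic $\mathcal E$ with $h^0\ge 2$ as a Hecke modification of $\mathcal O_C^{\oplus 2}$ along $e$ general points, count parameters to get $2e-3=\rho^{k_2}_d$, use degeneration for the Petri injectivity on a general curve, and stratify the remaining bundles by a maximal sub-line-bundle $A\in W^1_n$ to obtain the stated criterion. Your dimension bookkeeping and derivation of the $W^1_n$ inequality are correct.

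One point you pass over: on an \emph{arbitrary} curve the statement asserts a component of dimension \emph{exactly} $\rho^{k_2}_d$, not merely a $\rho^{k_2}_d$-dimensional family inside $B^{k_2}_d$. The parameter count alone does not preclude your modification family from sitting inside a larger component; Teixidor handles this by analyzing which modifications fail to be stable and by a direct tangent-space argument on the arbitrary curve, not only via the general-moduli degeneration. Your sketch should separate the two issues: (a) on every $C$, the generic Hecke modification is a smooth point of $B^{k_2}_d$ of the right dimension (this uses a Petri computation valid on any $C$, not just on a general one), versus (b) on a general $C$, no $W^1_n$-stratum can produce an extra component. For context, when the present paper reproves and refines this for $\nu$-gonal curves it takes the dual viewpoint throughout, presenting $\mathcal F$ as an extension $0\to N\to\mathcal F\to \omega_C(-p)\to 0$ and working with degeneracy loci $\mathcal W_1\subset\operatorname{Ext}^1$, which is better suited to feeding in rank-one Brill--Noether data on $C$.
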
 

\noindent
To be more precise, the bundle $\mathcal F$ in the statement of Theorem \ref{TeixidorRes} above is $\mathcal F := \omega_C \otimes \mathcal E^{\vee}$, where $\mathcal E$ stands for the bundle as in the proof of Theorem \ref{TeixidorRes} in \cite{Teixidor1}, thus $h^j(C, \mathcal F) = h^{1-j}(C, \mathcal E)$, $0 \leq j \leq 1$. 

\bigskip

Inspired by the previous results, which provide more comprehensive descriptions 
basically when $C$ is a curve with {\em general moduli}, the aim of this paper is to deal with the recent aforementioned breakthrough results in Brill-Noether theory of line bundles on a general $\nu$-gonal curve $C$ as in \cite{CJ,JP,JR,LLV,Lar,P} (in particular we follow terminology and results by H. Larson as in \cite{Lar}), in order to study and classify all the irreducible components of $B_d^{k_2} \cap U^s_C(d)$ when $C$ is a general $\nu$-gonal curve of genus $g$, with $3 \leq \nu < \lfloor \frac{g+3}{2}\rfloor$.

 We extend some preliminary results in \cite{CFK,CFK2} by giving a thorough description of 
all the irreducible components of  $B_d^{k_2} \cap U^s_C(d)$, including information on the {\em birational structure} of these irreducible components, on their local behavior (namely regarding {\em smoothness} of their points), on their dimension (in some cases distinguishing between a {\em superabundant} and a {\em regular} component), and finally providing an explicit {\em presentation} of the rank-$2$ vector bundle $\Ff$ on $C$ corresponding to the general point $[\Ff]$ of each constructed component.

As the approach to prove our classification result in this paper is rather extensive and stratified, we have undertaken the study of the speciality $i=3$ case in a separate forthcoming paper (cf. \cite{CFK3}).

Our investigation hinges on determining suitable  \emph{``geometric descriptions''} of the (isomorphism classes of) bundles $\Ff$ corresponding to general points of possible irreducible components of $B^{k_2}_d\cap U^s_C(d)$ on $C$. By ``geometric descriptions'', we refer to a characterization of any such a $\Ff$ through properties of suitable families of curves on the associated ruled surface (or {\em surface scroll}) $F:= \Pp(\Ff)$; this, in turn, translates into expressing $\Ff$ as an extension of line bundles: $$(*)\;\;\; 0 \to N \to \Ff \to L \to 0,$$which we term a {\em presentation} of $\Ff$, where $L$ and $N$ are suitable line bundles on $C$. In order to construct irreducible components of $B_d^{k_2}\cap U^s_C(d)$,  a presentation $(*)$ of $\Ff$ featuring suitable {\em minimality properties} for the quotient line bundle $L$ is particularly relevant. This explains the reason why rank-$1$ Brill-Noether theory on a general $\nu$-gonal curve, as in \cite{CJ,JP,JR,LLV,Lar,P}, deeply enter into the game.

Representing rank-$2$ bundles $\Ff$ as extensions $(*)$ is a classical approach, tracing back at least to C. Segre \cite{Seg}. More recently, this method has been effectively used by A. Bertram and B. Feinberg in \cite[\S\,2,3]{BeFe}, by S. Mukai in \cite[\S\,8]{Muk}, and by \cite{CF} to study Brill-Noether loci in rank two, primarily when $C$ has {\em general moduli}. As explicitly noted in \cite{BeFe}, while this approach performs well in low genera, its implementation becomes more challenging in general.

Nevertheless, in this paper, we pursue this route without imposing upper bounds on the genus $g$ for a general $\nu$-gonal curve $C$. We moreover let the (kernel and cokernel) line bundles $N$ and $L$ as in $(*)$ vary in their own rank-$1$ Brill-Noether loci in all possible degrees and, correspondingly, we construct bundles $\Ff$ as extensions $(*)$ by letting them vary in suitable degeneracy loci $$\mathcal W \subseteq {\rm Ext}^1(L,N),$$  defined in such a way that $\Ff \in \mathcal W$ general has the desired speciality  $i=h^1(C, \Ff)=2$,  according to those of $N$ and $L$ and to the corank of the associated {\em coboundary map} $H^0(C, L) \stackrel{\partial}{\longrightarrow} H^1(C,N)$. 

For any chosen degree pair $(\deg(L), \deg(N)) = (\delta, d-\delta)$, we obtain in this way an irreducible {\em universal extension space} parametrizing triples $(N,L, \Ff)$ and, once the general bundle $\Ff$ is shown to be {\em stable}, any such universal extension space is therefore endowed with a (rational) {\em modular map}$$\left\{(N,L,\Ff)\right\} \stackrel{\pi}{\dasharrow} U^s_C(d),$$whose image will be contained in an irreducible component $\mathcal B$ of a suitable Brill-Noether locus, where the Brill--Noether locus we hit depends therefore on the cohomology of the chosen $N$ and $L$ and on the corank of the coboundary map $H^0(C, L) \stackrel{\partial}{\longrightarrow} H^1(C,N)$. To find out a {\em minimal presentation} $(*)$ for a bundle $\Ff$ corresponding to a general point $[\Ff]$ in ${\rm im}(\pi) \subseteq \mathcal B$ will turn out to be equivalent to finding conditions on $L$, $N$ and $\partial$ ensuring the aforementioned modular map $\pi$ to be dominant onto the component $\mathcal B$, with an appropriate dimension for its general fiber. 

Our analysis proceeds in two stages: first, we construct suitable irreducible loci within the moduli space. If such loci prove to constitute an irreducible component of $B^{k_3}_d \cap U^s_C(d)$, we then classify and study the geometry of such constructed components, hinging on the {\em expected} presentation $(*)$ of the bundle $\Ff$ representing a general point of such a component. This in turn guides the explicit construction of the proposed components, which is then followed by a detailed investigation of their global and local geometric properties. 

\bigskip

Our main result in this paper is as follows:

\bigskip

\begin{theorem}\label{thm:i=2} Let $g \geq 4$, $3 \leq \nu < \lfloor \frac{g+3}{2}\rfloor$ and $2g-2 \leq d \leq 4g-4$ be integers and let $C$ be a general $\nu$-gonal curve of genus $g$.  Then, for $B^{k_2}_d \cap U^s_C(d)$, one has the following cases:

\begin{enumerate}
\item[$(i)$] for $4g-6\leq d \leq 4g-4$, $B^{k_2}_d \cap U^s_C(d)=\emptyset$; 

\item[$(ii)$] for $4g-4-2\nu \leq d \leq 4g-7$, $B^{k_2}_d \cap U^s_C(d)$ is {\em irreducible}, consisting of only one irreducible component, denoted by  $\mathcal B_{\rm reg,2}$, which is {\em regular} and {\em uniruled}. Moreover, if $[\Ff] \in \mathcal B_{\rm reg,2}$ denotes a general point then the corresponding bundle $\Ff$ is stable, with speciality $h^1(\Ff) = 2$ and it arises as an extension of the following type: 

\smallskip

\noindent
$(ii-1)$ for any $4g-4-2\nu \leq d \leq 4g-7$, when   $3 \leq \nu \leq \frac{g}{2}$, or for any 
$3g-4 \leq d \leq 4g-7$,  when $\frac{g+1}{2}\leq \nu < \lfloor\frac{g+3}{2} \rfloor$, $\Ff$ fits in an exact sequence of the form: 
$$0 \to  \omega_{C}(-D) \to \Ff \to \omega_C (-p) \to 0,$$where 
$D \in C^{(4g-5-d)}$  and $p \in C$ are general; 

\smallskip

\noindent
$(ii-2)$ for any $4g-4-2\nu \leq d \leq 3g-5$, when $\frac{g+1}{2}\leq \nu < \lfloor\frac{g+3}{2} \rfloor$, $\Ff$ fits in an exact sequence of the form: 
$$0 \to N \to \Ff \to \omega_C (-p) \to 0,$$where $p \in C$ general, $N \in {\rm Pic}^{d-2g+3}(C)$ general, i.e. special and non-effective. 
\smallskip

\noindent
$(ii-3)$ In any of the above cases, $\omega_C(-p)$ is a line bundle of minimal degree among all effective and special quotient line bundles of $\Ff$;

\item[$(iii)$] for $3g-5 \leq d \leq 4g-5-2\nu$, which occur for $3 \leq \nu \leq \frac{g-2}{2}$ (otherwise we are in case $(ii-2)$), $B^{k_2}_d \cap U^s_C(d)$ is {\em reducible}, consisting of two irreducible components, $\mathcal B_{\rm reg,2}$ and $\mathcal B_{\rm sup,2}$.

\smallskip

\noindent
$(iii-1)$ The component $\mathcal B_{\rm reg,2}$ is {\em regular}  and {\em uniruled}.  A general element $[\mathcal F] \in \mathcal B_{\rm reg,2}$ corresponds to a bundle  $\Ff$ which is stable, of speciality $h^1(\Ff) = 2$ and fitting in an exact sequence as in case $(ii-1)$ above, where $p\in C$ and $D\in C^{(4g-5-d)}$ are general. Moreover, $ \omega_C(-p)$  is of minimal degree among all effective and special quotient line bundles of $\mathcal F$.

\smallskip

\noindent
$(iii-2)$ The component $\mathcal B_{\rm sup,2}$ is {\em generically smooth}, of dimension $6g-6- d- 2\nu > \rho_d^{k_2}$, i.e. it is 
{\em superabundant}. Moreover, it is  {\em ruled} and its general point $[\Ff]$ corresponds to a bundle $\Ff$ which is stable, of speciality $h^1(\Ff) = 2$, fitting   in an exact sequence of the form:
$$ 0\to N \to \mathcal F\to   \omega_C \otimes A^{\vee} \to 0,$$where $A \in {\rm Pic}^{\nu}(C)$ is the unique line bundle on $C$ such that $|A| = g^1_{\nu}$ whereas $N \in {\rm Pic}^{d-2g+2+\nu}(C)$ is general of its degree. Moreover, $\omega_C \otimes A^{\vee}$ is of {\em minimal degree} among all quotient line bundles of $\Ff$.

\item[$(iv)$] For $2g-4+2\nu \leq d \leq 3g-6$, which occurs for $3 \leq \nu \leq \frac{g-2}{2}$ (otherwise  $2g-4+2\nu > 3g-6$  and we are in previous cases),  $B^{k_2}_d \cap U^s_C(d)$ is {\em reducible}, consisting of two components, $\mathcal B_{\rm reg,2}$ and $\mathcal B_{\rm sup,2}$. The description of $\mathcal B_{\rm sup,2}$ is as in $(iii)$ whereas description of its general point is identical  to that described in case $(iii-2)$ above. The description of $\mathcal B_{\rm reg,2}$ is as in $(iii)$ whereas the description  of its general point is identical  to that described in case $(ii-2)$.

\item[$(v)$] For $d = 2g-5+2\nu$ and  for any $3 \leq \nu \leq \frac{g-2}{2}$ (otherwise $2g-5+2\nu \geq 3g-4$ and we are in previous cases), $B^{k_2}_d \cap U^s_C(d)$ is {\em reducible}, consisting of two components, $\mathcal B_{\rm reg,2}$ and $\mathcal B_{\rm sup,2}$. In this case the two components are both {\em regular}. The rest of the description is identical to that described in case $(iv)$ (except for the {\em superabundance} of $\mathcal B_{\rm sup,2}$, which is no-longer true).

\item[$(vi)$] For $2g-2 \leq d \leq 2g-6 + 2\nu$, for any $3 \leq \nu < \lfloor\frac{g+3}{2}\rfloor$, $B_{d}^{k_2}\cap U_C^s(d)$ is {\em irreducible}, consisting of only one irreducible component $\mathcal B_{\rm reg,2}$, which is {\em regular} and {\em uniruled}. The descriptions of such a component and of its general point are identical  to those described in case $(iv)$ above. 

\end{enumerate}
\end{theorem}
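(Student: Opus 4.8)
The plan is to realise \emph{every} irreducible component of $B^{k_2}_d\cap U^s_C(d)$ through the presentation/extension machinery outlined in the Introduction, and then to prove that only the two families $\mathcal B_{\rm reg,2}$ and $\mathcal B_{\rm sup,2}$ can arise. The organising principle is cohomological: for a quotient line bundle $L$ of $\Ff$, with $0\to N\to\Ff\to L\to 0$ and associated coboundary $\partial\colon H^0(C,L)\to H^1(C,N)$, the long exact cohomology sequence gives
\begin{equation*}
h^1(C,\Ff)=h^1(C,L)+\cork(\partial),\qquad \cork(\partial)=h^1(C,N)-\rank(\partial).
\end{equation*}
Imposing speciality $h^1(\Ff)=2$ therefore forces $(h^1(L),\cork\partial)$ to take one of the values $(1,1)$, $(2,0)$ or $(0,2)$, while stability of $\Ff$ requires every quotient line bundle to have degree strictly larger than $d/2$. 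For a general $\Ff$ in a putative component I would single out a quotient line bundle of \emph{minimal} degree among the effective special ones; which line bundle this turns out to be is exactly what steers the whole classification.

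First I would construct the two candidate families as images of modular maps from universal extension spaces. For $\mathcal B_{\rm reg,2}$ the minimal special quotient is $L=\omega_C(-p)$ ($p\in C$ general, degree $2g-3$, $h^1(L)=1$), so the required degeneration is $\cork\partial=1$, and the kernel $N$ of complementary degree $d-2g+3$ is taken to be $\omega_C(-D)$ in case $(ii\text{-}1)$ and a general special non-effective line bundle in case $(ii\text{-}2)$, the dichotomy reflecting whether a general line bundle of degree $d-2g+3$ is effective. For $\mathcal B_{\rm sup,2}$ I would instead exploit the gonality pencil: take $L=\omega_C\otimes A^\vee$ with $|A|=g^1_\nu$, the minimal quotient \emph{overall} (degree $2g-2-\nu<2g-3$), and $N$ general of degree $d-2g+2+\nu$. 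In each situation one assembles an irreducible universal extension space of triples $(N,L,\Ff)$, cuts out the degeneracy locus $\mathcal W\subseteq{\rm Ext}^1(L,N)$ pinning $\cork\partial$ to its prescribed value (a cup-product/rank condition on the extension class), verifies that a general $\Ff\in\mathcal W$ is \emph{stable}, and then analyses the rational modular map $\pi$. Uniruledness of $\mathcal B_{\rm reg,2}$ and ruledness of $\mathcal B_{\rm sup,2}$ would follow from the linear-space fibres of $\mathcal W$ over the line-bundle data, while the dimension of the image is computed by combining $\dim{\rm Ext}^1(L,N)$, the dimensions of the rank-$1$ Brill--Noether loci in which $N$ and $L$ move (here invoking the Hurwitz-space estimates recalled in the Introduction, following Larson), and the generic fibre dimension of $\pi$. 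This yields $\dim\mathcal B_{\rm reg,2}=\rho^{k_2}_d$ and $\dim\mathcal B_{\rm sup,2}=6g-6-d-2\nu$.

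The heart of the argument is \emph{completeness}: that these two families exhaust all components. Here I would take a general $\Ff$ in an arbitrary component, choose the minimal-degree effective special quotient $L$, and use rank-$1$ Brill--Noether theory on the general $\nu$-gonal curve to classify the admissible pairs $(N,L)$: stability bounds $\deg L$ from below, minimality bounds it from above, and the gonality pencil supplies the unique ``extra'' low-degree quotient $\omega_C\otimes A^\vee$. A dimension estimate then shows that the modular map from the corresponding universal extension space dominates the component, forcing it to be $\mathcal B_{\rm reg,2}$ or $\mathcal B_{\rm sup,2}$. Generic smoothness (hence regularity exactly when the dimension is $\rho^{k_2}_d$) I would obtain from a Petri-type computation identifying tangent and obstruction spaces with cohomology of ${\rm End}(\Ff)$-twists. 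Comparing $\dim\mathcal B_{\rm sup,2}=6g-6-d-2\nu$ with $\rho^{k_2}_d=8g-2d-11$ shows superabundance precisely for $d>2g-5+2\nu$, equality (so regularity) at the boundary $d=2g-5+2\nu$ of case $(v)$, and $\dim\mathcal B_{\rm sup,2}<\rho^{k_2}_d$ below it, where the gonal family is absorbed and case $(vi)$ retains a single component; the stability constraint $\deg N<d/2$ for the gonal construction, namely $d<4g-4-2\nu$, explains why $\mathcal B_{\rm sup,2}$ disappears in case $(ii)$. The emptiness statement $(i)$ for $4g-6\le d\le 4g-4$ I would deduce by analysing the Serre-dual bundle $\omega_C\otimes\Ff^\vee$, of low degree $4g-4-d\in\{0,1,2\}$, and showing that the extensions forced to carry two sections are never stable on the general $\nu$-gonal curve — a genuine failure of rank-$2$ Brill--Noether expectations at $d=4g-6$, where $\rho^{k_2}_d=1\ge 0$.

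The steps I expect to be genuinely hard are two. The first is \emph{generic stability} of the extensions in each family: a general extension need not be stable, and one must exclude destabilising sub-line-bundles uniformly over the degeneracy locus $\mathcal W$, which I would do by translating the existence of such a sub-bundle into the geometry of multisections of the scroll $\P(\Ff)$ and ruling it out via the rank-$1$ estimates on the general $\nu$-gonal curve. The second is \emph{completeness}, which demands the full strength of the Hurwitz-space rank-$1$ results both to guarantee that no third minimal quotient can occur for a general bundle in an unforeseen component, and to certify that the superabundant family genuinely forms a separate component rather than lying in the closure of $\mathcal B_{\rm reg,2}$; the delicate boundary transitions at $d=3g-5,\,3g-6,\,2g-5+2\nu$ and $2g-6+2\nu$ then reduce to sharp numerical comparisons of these dimensions together with the non-emptiness ranges of the relevant $W^1_n(C)$.
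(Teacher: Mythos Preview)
Your proposal is correct and follows essentially the same approach as the paper, which likewise organises the argument around the two presentation types (second-type quotient $\omega_C(-p)$ for $\mathcal B_{\rm reg,2}$, first-type quotient $\omega_C\otimes A^\vee$ for $\mathcal B_{\rm sup,2}$), universal extension spaces with degeneracy loci $\mathcal W_1$, Petri-map computations for generic smoothness, and rank-$1$ Brill--Noether theory on the $\nu$-gonal curve for completeness. Two minor notes: the case $(h^1(L),\cork\partial)=(0,2)$ is impossible since Lemma~\ref{lem:1e2note} forces any such quotient $L$ to be special, and in practice the paper short-circuits several of your ``hard'' stability and irreducibility steps by invoking Teixidor's earlier results (Theorem~\ref{TeixidorRes}, including her proof of Claim~2) and the Lange--Narasimhan secant criterion (Proposition~\ref{LN}) rather than arguing purely from scroll geometry.
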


\begin{proof} The proof is given in full details in Sect.\;\ref{S:BN2}; more precisely it will be given by the collection of Propositions \ref{prop:caso1spec2}, \ref{prop:caso2spec2}, \ref{prop:caso3spec2},  \ref{thm:i=1.3}, \ref{prop:caso1sup2}, \ref{lem:i=2.1}, \ref{thm:noothercompsspec2}, Corollaries \ref{cor:aiuto12feb1634}, \ref{cor:finale2} and Remark \ref{rem:finale2}. 
\end{proof}

\bigskip

\begin{remark}\label{rem:i=2}  {\normalfont (i) Notice that Theorem \ref{thm:i=2} above strongly improves \cite[Theorem\,3.1]{CFK} and \cite[Theorem\,1.5]{CFK2}. Indeed 
for the whole range of interest $2g-2 \leq d \leq 4g-4$, Theorem \ref{thm:i=2} classifies all the irreducible components of $B_d^{k_2} \cap U_C^s(d)$ and also precisely describes the geometric behavior of all such components, giving in particular important information on their birational structure as well as on the explicit presentation of the general point of any such a component.

\smallskip

\noindent
(ii) Theorem \ref{thm:i=2} also corrects \cite[Theorem\,3.12-(ii)]{CFK2} where, for $d = 3g-4,\,3g-5$, it was wrongly stated that the component $\mathcal B_{\rm sup,2}$ is non-reduced (cf. also Remark \ref{rem:erroreColl}). This was a consequence of tricky computations on kernels of suitable Petri/multiplication maps. Correct computations are contained in the proof of Proposition \ref{lem:i=2.1} below, which focuses on a wider range of new cases (cf. also Remark \ref{rem:corrColl}).

\smallskip

\noindent
(iii) Theorem \ref{thm:i=2} exhibits also the irreducibility of $B^{k_2}_d\cap U_C^s(d)$ as postulated in Theorem \ref{TeixidorRes}-(i).  Indeed, apart from $4g-6 \leq d \leq 4g-4$ where $B^{k_2}_d\cap U_C^s(d) = \emptyset$, it turns out that $B^{k_2}_d\cap U_C^s(d)$ is irreducible and regular, for any $2g-2 \leq d \leq 2g-6+2\nu$ and $4g-4-2\nu \leq d \leq 4g-7$. These are exactly the ranges of $d$ for which no integer $n$ (as required in Theorem \ref{TeixidorRes}-(i) guaranteeing reducibility) can exist;  cf. the proof of Proposition \ref{prop:caso2spec2} and see Remark \ref{rem:finale2} for explicit computations.  

\smallskip

\noindent
(iv) Finally, Theorem \ref{thm:i=2} also shows that Teixidor I Bigas' component $\mathcal B$ as in Theorem \ref{TeixidorRes}-(i) (existing on any smooth, irreducible projective curve $C$) coincides with our component $\mathcal B_{\rm reg,2}$ (cf. \S\,\ref{ss:reg2}), for which we moreover prove its {\em regularity} and {\em uniruledness} when $C$ is a general $\nu$-gonal curve (but its simple construction still holds for any smooth, projective curve $C$). We furthermore deduce suitable {\em minimal presentation} as line-bundle extension for the bundle corresponding to a general point of such a component.}
\end{remark}

\medskip

As a direct consequence of Theorem \ref{thm:i=2} one has also further information on some other {\em Brill-Noether loci}, e.g. those   parametrizing (isomorphism classes of) stable vector bundles with fixed determinant, instead of fixed degree.  Indeed, in general one has the so called {\em determinantal map}, simply defined as: 
$$U_C(d) \stackrel{\det}{\longrightarrow} {\rm Pic}^d(C), \;\;\; [\Ff] \stackrel{det}{\longrightarrow} \det(\Ff),$$which turns out to be surjective onto $ {\rm Pic}^d(C)$. For any given $M \in {\rm Pic}^d(C)$ one can therefore consider 
$$U_C(M) := (\det)^{-1} (M) = \{[\Ff] \in U_C(d)\,|\, \det(\Ff) = M\},$$which is well-known to be irreducible and of dimension $3g-3$, as well as for any $i\geq 1$ the {\em Brill-Noether loci with fixed determinant} namely 
$$B_M^{k_2}:= B_d^{k_2} \cap U_C(M),$$whose {\em expected dimension} is $\rho_M^{k_2} := 3g-3 - 2\,k_2$, cf. \cite{LNS}. 

As a direct consequence of Theorem \ref{thm:i=2} above, one has also:

\medskip 

\begin{corollary}\label{cor:i=2} With assumptions as in Theorem \ref{thm:i=2}, let $M \in {\rm Pic}^d(C)$ be general. Consider the moduli space 
$$U_C(M)= (\det)^{-1} (M) = \{[\Ff] \in U_C(d)\,|\, \det(\Ff) = M\},$$which is irreducible and of dimension $3g-3$, and in it consider the {\em Brill-Noether locus} $$B_M^{k_2}= B_d^{k_2} \cap U_C(M),$$of {\em expected dimension} $\rho_M^{k_2} = 3g-3 - 2\,k_2 = 7g-11-2d$. 
Then one has the following situation. 

\smallskip

\noindent
$(a)$ For any $2g-5+2\nu \leq d \leq 4g-5 - 2\nu$, $B_M^{k_2}\cap U_C^s(M)$ is not empty even when its expected dimension $\rho_M^{k_2} = 7g-11-2d$ is negative which, under condition $d\leq  4g-5 - 2\nu$, occurs for   $d > \frac{7g-11}{2}$ and $\nu<\frac{g+1}{4}$. 

\smallskip

\noindent
$(b)$ Furthermore, for  $d \geq 3g-3$, $B_M^{k_2}\cap U_C^s(M)$ consists only of $\mathcal B_{\rm sup,2} \cap U_C^s(M)$, which is {\em superabundant}, being of dimension $5g-6 - d - 2 \nu > \rho_M^{k_2} $, where $g-1 \leq 5g-6 - d - 2 \nu \leq 
3g-1 - 4 \nu$, and it is moreover birational to $\Pp(\ext^1(K_C-A, N))$, where $M = K_C - A + N \in {\rm Pic}^d(C)$.

\smallskip

\noindent
$(c)$ For $2g-5 + 2 \nu \leq d \leq {3g-4}$, we have the extra component $\mathcal B_{\rm reg,2} \cap U_C^s(M)$, which  is of the {\em expected dimension} $\rho_M^{k_2}=7g-11-2d$.

\smallskip

\noindent
$(d)$ When $d = 2g-5+2\nu$, also the component $\mathcal B_{\rm sup,2} \cap U_C^s(M)$ has {\em expected dimension}  $\rho_M^{k_2}=7g-11-2d$. 

\smallskip

\noindent
$(e)$ At last, for $2g-2 \leq d \le 2g-6+2\nu$, $B_M^{k_2}\cap U_C^s(M)= \mathcal B_{\rm reg,2} \cap U_C^s(M)$ is irreducible and of {\em expected dimension} $\rho_M^{k_2} = 7g-11-2d$.
\end{corollary}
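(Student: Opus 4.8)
The plan is to derive Corollary \ref{cor:i=2} as a fiberwise consequence of Theorem \ref{thm:i=2}, exploiting the surjective determinantal map $\det \colon U_C(d) \to {\rm Pic}^d(C)$ together with the general-position choice of $M \in {\rm Pic}^d(C)$. Since $U_C(M) = (\det)^{-1}(M)$ is irreducible of dimension $3g-3$ and the whole space $U_C(d)$ has dimension $4g-3$, the map $\det$ is dominant with general fiber of dimension $g$. First I would establish the guiding principle that, for each irreducible component $\mathcal B$ of $B^{k_2}_d \cap U^s_C(d)$ described in Theorem \ref{thm:i=2}, the restriction $\det|_{\mathcal B} \colon \mathcal B \to {\rm Pic}^d(C)$ is again dominant (this must be checked, and follows from the explicit extension presentations, since letting $N$, $L$, $D$, $p$, etc. vary produces all determinants $\det(\Ff) = N \otimes L$ in a dense set of ${\rm Pic}^d(C)$); granting dominance, for $M$ general the fiber $\mathcal B \cap U_C(M) = (\det|_{\mathcal B})^{-1}(M)$ is non-empty of dimension $\dim \mathcal B - g$. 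Applying this dimension count to $\mathcal B_{\rm reg,2}$ (of dimension $\rho^{k_2}_d = 8g-2d-11$) yields $8g-2d-11 - g = 7g-11-2d = \rho^{k_2}_M$, giving parts $(c)$, $(e)$ and the regular case of $(d)$; applying it to $\mathcal B_{\rm sup,2}$ (of dimension $6g-6-d-2\nu$) yields $6g-6-d-2\nu - g = 5g-6-d-2\nu$, which is the asserted dimension in $(b)$ and, when one substitutes $d = 2g-5+2\nu$, collapses to $\rho^{k_2}_M$, giving $(d)$.

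Next I would translate each numerical range of Theorem \ref{thm:i=2} into the corresponding statement for fixed determinant. For part $(b)$, the hypothesis $d \geq 3g-3$ places us in the ranges of cases $(iii)$–$(i)$; I would argue that once $d \geq 3g-3$ the regular component $\mathcal B_{\rm reg,2}$ either is absent or does not meet the general fiber $U_C(M)$ for dimension reasons—specifically, because the range $2g-5+2\nu \leq d \leq 3g-4$ of part $(c)$ ends precisely at $3g-4$, so for $d \geq 3g-3$ only $\mathcal B_{\rm sup,2}$ survives inside $U_C(M)$—leaving $B^{k_2}_M \cap U^s_C(M) = \mathcal B_{\rm sup,2} \cap U^s_C(M)$ as claimed. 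The inequalities $g-1 \leq 5g-6-d-2\nu \leq 3g-1-4\nu$ then follow by substituting the endpoints $d = 4g-5-2\nu$ and $d = 3g-3$ of the relevant $d$-range, while $5g-6-d-2\nu > \rho^{k_2}_M = 7g-11-2d$ reduces to the elementary inequality $d > 2g-5+2\nu$, valid throughout $d \geq 3g-3$ when $\nu < \lfloor\frac{g+3}{2}\rfloor$.

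For the birationality assertion in $(b)$, I would intersect the fixed-determinant condition with the extension description of $\mathcal B_{\rm sup,2}$ from Theorem \ref{thm:i=2}-$(iii$-$2)$. There the general $\Ff$ fits in $0 \to N \to \Ff \to \omega_C \otimes A^\vee \to 0$ with $A$ the unique $g^1_\nu$ and $N$ general of its degree, so that $\det(\Ff) = N \otimes \omega_C \otimes A^\vee$; fixing $\det(\Ff) = M$ therefore forces $N = M \otimes A \otimes \omega_C^{\vee}$, i.e. writing $M = K_C - A + N$ pins down $N$ uniquely. Hence over $U_C(M)$ the pair $(N, \omega_C \otimes A^\vee)$ is rigid and the only remaining parameter is the extension class; the modular map then identifies $\mathcal B_{\rm sup,2} \cap U_C^s(M)$ birationally with $\mathbb P(\ext^1(K_C - A, N))$, as asserted. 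I would verify the dimension is consistent by computing $\dim \mathbb P(\ext^1(K_C-A,N)) = h^1(N \otimes (K_C-A)^\vee) - 1$ via Riemann–Roch and matching it to $5g-6-d-2\nu$.

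For part $(a)$, I would combine the non-emptiness supplied by either component with the observation that the dominance of $\det|_{\mathcal B}$ guarantees $\mathcal B \cap U_C(M) \neq \emptyset$ for general $M$ throughout the stated range $2g-5+2\nu \leq d \leq 4g-5-2\nu$, which is exactly the union of the $d$-ranges where a superabundant (or, at the lower end, regular) component is present; the point is that non-emptiness of the fixed-determinant locus persists even where $\rho^{k_2}_M = 7g-11-2d < 0$, precisely because the surviving component is superabundant. The elementary manipulation showing $\rho^{k_2}_M < 0 \iff d > \frac{7g-11}{2}$, and that this is compatible with $d \leq 4g-5-2\nu$ only when $\nu < \frac{g+1}{4}$, I would present as a short direct calculation. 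The main obstacle I anticipate is not any of these dimension counts but rather the verification that $\det|_{\mathcal B}$ is genuinely \emph{dominant} (equivalently, that the general fiber of the modular map over a general $M$ behaves as a generic fiber over all of ${\rm Pic}^d(C)$, with no jumping of fiber dimension); once dominance and the consequent generic-smoothness of $U^s_C(M) \cap \mathcal B$ are secured, every remaining claim reduces to the routine substitutions above.
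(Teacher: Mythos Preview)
Your approach is essentially the same as the paper's: restrict the determinantal map to each component $\mathcal B \in \{\mathcal B_{\rm reg,2}, \mathcal B_{\rm sup,2}\}$, check whether $\det|_{\mathcal B}$ dominates ${\rm Pic}^d(C)$ using the explicit extension presentations from Theorem \ref{thm:i=2}, and then read off the fiber dimensions as $\dim\mathcal B - g$. The paper verifies dominance for $\mathcal B_{\rm sup,2}$ exactly as you do (generality of $N$ forces $\det\Ff = K_C - A + N$ to be general), and gets the birationality with $\Pp(\ext^1(K_C-A,N))$ from the uniqueness of $N$ once $M$ is fixed, just as in your argument.

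The one step you should make precise is why $\mathcal B_{\rm reg,2}$ does not contribute to $B_M^{k_2}\cap U_C^s(M)$ for general $M$ when $d \geq 3g-3$. Your stated reason (``the range of part $(c)$ ends at $3g-4$'') is circular: it appeals to the conclusion you are trying to establish. The actual mechanism, which the paper invokes, is failure of dominance of $\det|_{\mathcal B_{\rm reg,2}}$. Concretely, for $d \geq 3g-3$ the presentation is (iii-1), so $\det\Ff = 2K_C - D - p$ with $(D,p)\in C^{(4g-5-d)}\times C$; the determinants therefore sweep out a locus of dimension at most $4g-4-d \leq g-1$ in ${\rm Pic}^d(C)$, and a general $M$ is missed. (The paper phrases this as $\dim\mathcal B_{\rm reg,2} = 8g-2d-11 < g$, which is the same obstruction once one tracks the parameters.) With this clarification your outline is complete and matches the paper's proof.
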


\begin{proof} The proof follows the lines of that in \cite[Corollary\,3.2,\,Remark\,3.3]{CFK2}, for the sub-cases studied therein. One simply takes $\mathcal B$ to be any of the two components of $B_d^{k_2} \cap U^s_C(d)$ as in Theorem \ref{thm:i=2}, namely either $\mathcal B = \mathcal B_{\rm sup,2}$ or $\mathcal B =  \mathcal B_{\rm reg,2}$, and studies the determinantal map restricted to any such a $\mathcal B$.

If, e.g. in $(a)$ and $(b)$ we take $[\mathcal F] \in \mathcal B_{\rm sup,2}$ general as in Theorem \ref{thm:i=2} then, since the corresponding bundle $\Ff$ arises as an element of $\ext^1(K_C-A, N)$, with $N \in {\rm Pic}^{d-2g+2+\nu}(C)$ general, one has that \linebreak $\det(\mathcal F) \cong K_C-A+N$ is general too in ${\rm Pic}^d(C)$. Therefore, the restricted determinantal map $det_{|_{\mathcal B_{\rm sup,2}}}$ is certainly dominant onto ${\rm Pic}^d(C)$ thus, for $M \in {\rm Pic}^d(C)$ general, the Brill-Noether locus $B_M^{k_2}(C)$ contains $\mathcal B_{\rm sup, 2} \cap U_C(2,M)$. Notice further that $$\dim\;(\mathcal B_{\rm sup, 2} \cap U_C(2,M)) = 5g-6-2\nu-d > 0,$$ since $M= K_C - A + N \simeq M':= K_C - A + N'$ if and only if $N \simeq N'$. Therefore, from the construction of $B_{\rm sup, 2}$ conducted in Sect.\;\ref{ss:sup2}, it will be clear that $\mathcal B_{\rm sup,2} \cap U^s_C(M)$ is birational to $\mathbb{P}(\ext^1(K_C-A, N))$, where $M \simeq K_C-A+N \in {\rm Pic}^d(C)$ is general.

One can apply the same reasoning also to the other component $\mathcal B_{\rm reg,2}$ as in Theorem \ref{thm:i=2}. In case $(c)$, $\mathcal B_{\rm reg,2}$ also dominates ${\rm Pic}^d(C)$ via the determinantal map, as $\dim (\mathcal B_{\rm reg,2}) \geq g = \dim({\rm Pic}^d(C))$ and $\det(\Ff) = K_C-p + N = M$ is general by the generality of $N$; this gives rise to a component of the expected dimension. When otherwise $d \geq 3g-3$, then  $\mathcal B_{\rm reg,2}$ cannot dominate ${\rm Pic}^d(C)$ as $\dim\;(\mathcal B_{\rm reg,2}) = 8g-2d-11 < g = \dim \; {\rm Pic}^d(C)$. 
\end{proof}

\medskip

\noindent
{\bf Acknowledgments:} The authors thank the Department of Mathematics of the University of Rome "Tor Vergata" (Italy) and the Complex Geometry Center in Daejeon (South Korea), for the friendly atmosphere and the warm hospitality during the collaboration. They moreover deeply thank Prof. G. Pareschi, responsible of the research funds AAAGH (CUP E83C22001710005) which have supported the staying of the first and third authors at the Department of Mathematics, University of Rome ``Tor Vergata", and Prof. D. S. Hwang, J.-M. Hwang, Y. Lee, organizers of the conference ``Workshop in Classical Algebraic Geometry", Daejeon-South Korea, 12-15 December 2023, for the support to the second author as an invited speaker, occasion in which this collaboration has been further developed. The authors thank KIAS for the warm hospitality when the first and the third authors were associate members in KIAS whereas when the second author was invited guest at KIAS. The first author was supported by the National Research Foundation of Korea(NRF) funded by the Korea government(MSIT) (RS-2024-00352592).  The second author acknowledges the support of the MIUR Excellence Department Project MatMod@TOV, MIUR CUP-E83C23000330006, 2023-2027,  awarded to the Department of Mathematics, University of Rome  "Tor Vergata" (Italy). The third  author  was supported by the National Research Foundation of Korea(NRF) funded by the Korea government(MSIT) (2022R1A2C100597713). Finally, the authors would like to thank Ali Bajravani for his careful reading of the first version of this article, for drawing our attention to reference \cite{Re}  which allowed us to slightly shorten the proof of Proposition \ref{prop:caso1spec2} and for other remarks/open questions  that will be the subject of possible future collaborations.


\section{Preliminaries}\label{S:pre} 
In this paper we work over $\mathbb C$. All schemes will be endowed with the Zariski topology. We will  interchangeably use the terms rank-$r$ vector bundle on a smooth, projective variety $X$ and rank-$r$ locally free sheaf; for this reason we may abuse notation and identify divisor classes with the corresponding line bundles, interchangeably using additive and multiplicative notation. The dual bundle of a rank-$r$ vector bundle $\mathcal E$ will be denoted by $\mathcal E^{\vee}$; if $L$ is of rank $1$, i.e. it is a line bundle, we will therefore  accordingly use $L^{\vee}$ or $-L$. 

Given a smooth, projective variety $X$, we will denote by $\sim$ the linear equivalence of Cartier divisors on $X$ and by $\sim_{alg}$ their algebraic equivalence. If $\mathcal M$ is a moduli space, parametrizing objects modulo a given equivalence relation, and if $Y$ is a representative of an equivalence class in $\mathcal M$,  we denote by $[Y] \in \mathcal M$ the point corresponding to $Y$. 

If $C$ denotes any smooth, irreducible, projective curve of genus $g$, as customary,  $W^r_d(C)$ will denote the  {\em Brill-Noether locus} as in the  Introduction,  parametrizing  line bundles $L \in {\rm Pic}^d(C)$ such that $h^0(L) \geq r+1$. The integer
\begin{equation}\label{eq:bnnumber}
\rho(g,r,d) := g - (r+1) (g+r-d)
\end{equation} denotes the {\em Brill-Noether number} and the multiplication map
\begin{equation}\label{eq:Petrilb}
\mu_L : H^0(C,L) \otimes H^0(\omega_C \otimes L^{\vee}) \to H^0(C, \omega_C)
\end{equation} denotes the so called {\em Petri map} of the line bundle $L$. As for the rest, we will use standard terminology and notation as in e.g. \cite{ACGH}, \cite{H}.

\subsection{Brill-Noether theory of line bundles on general $\nu$-gonal curves}\label{ss:BNlbgon} 
In this section, we briefly review some basic results concerning the Brill--Noether theory of line bundles on a general $\nu$-gonal curve, which will be used in the sequel.

From the Introduction, given a positive integer $g \geq 4$, let $\nu$ be an integer such that
\begin{equation}\label{eq:nu}
3 \leq \nu < \left\lfloor \frac{g+3}{2}\right\rfloor.
\end{equation}
We say that a smooth, irreducible, projective curve $C$ is a \emph{general $\nu$-gonal curve of genus $g$} if $[C]$ is a general point of the \emph{$\nu$-gonal stratum} $\mathcal{M}^1_{g,\nu} \subsetneq \mathcal{M}_g$, where $\mathcal{M}^1_{g,\nu}$ denotes the image of the natural modular map
\[
\Phi: \mathcal{H}_{g,\nu} \longrightarrow \mathcal{M}_g,
\]
with $\mathcal{H}_{g,\nu}$ being the \emph{Hurwitz scheme} of $\nu$-sheeted simply branched covers of $\mathbb{P}^1$ by curves of genus $g$, as already mentioned in Introduction. We begin by recalling the following result:

\begin{proposition}[cf. \cite{Ball}]\label{Ballico} Let $C$ be a general $\nu$-gonal curve of genus $g$, let $r$ be a non-negative integer and let $A \in {\rm Pic}^{\nu}(C)$ be the unique line bundle on $C$ associated to the  unique base-point-free pencil on $C$ of degree $\nu$, i.e. $|A| = g^1_{\nu}$. 

Then, if $g\geq r(\nu-1)$, one has $\dim (|A^{\otimes r}|)=r$ whereas if $g< r(\nu-1)$ then $A^{\otimes r}$ is non-special hence $\dim (|A^{\otimes r}|) =r\,\nu-g>r$.
\end{proposition}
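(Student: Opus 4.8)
The plan is to pass to the degree-$\nu$ cover $f:C\to\mathbb{P}^1$ attached to $|A|=g^1_\nu$, so that $A\cong f^*\mathcal{O}_{\mathbb{P}^1}(1)$ and hence $A^{\otimes r}\cong f^*\mathcal{O}_{\mathbb{P}^1}(r)$. Since $f$ is finite, the projection formula gives $H^j(C,A^{\otimes r})\cong H^j\big(\mathbb{P}^1,(f_*\mathcal{O}_C)\otimes\mathcal{O}_{\mathbb{P}^1}(r)\big)$ for $j=0,1$. Writing $f_*\mathcal{O}_C$ in its splitting type $f_*\mathcal{O}_C\cong\mathcal{O}_{\mathbb{P}^1}\oplus\bigoplus_{i=1}^{\nu-1}\mathcal{O}_{\mathbb{P}^1}(-e_i)$, with integers $e_i\ge 1$ (the \emph{scrollar}, or \emph{Maroni}, invariants of the cover), a comparison of Euler characteristics $\chi(\mathcal{O}_C)=\chi(f_*\mathcal{O}_C)$ yields $\sum_{i=1}^{\nu-1}e_i=g+\nu-1$. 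Setting $e_0:=0$ and $a:=\frac{g+\nu-1}{\nu-1}$, the whole problem then reduces to cohomology of line bundles on $\mathbb{P}^1$, namely $h^0(A^{\otimes r})=\sum_{i=0}^{\nu-1}\max\{0,\,r+1-e_i\}$ and $h^1(A^{\otimes r})=\sum_{i=0}^{\nu-1}\max\{0,\,e_i-r-1\}$.

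The crucial input, and the step I expect to be the main obstacle, is that for a \emph{general} $\nu$-gonal curve the splitting type of $f_*\mathcal{O}_C$ is as balanced as possible, i.e. $e_i\in\{\lfloor a\rfloor,\lceil a\rceil\}$ for all $i$; this is precisely the content of \cite{Ball}. Should one wish to argue it directly, I would exploit that $\mathcal{H}_{g,\nu}$ is irreducible and that $h^0(A^{\otimes r})$ is upper semicontinuous in families of covers, so that the general cover realizes the minimal value of $h^0(A^{\otimes r})$; it would then suffice to combine the universal lower bound $h^0(A^{\otimes r})\ge\max\{r+1,\ r\nu-g+1\}$, valid on every $\nu$-gonal curve (the first term because $f^*$ is injective, the second by Riemann--Roch), with a \emph{single} genus-$g$, degree-$\nu$ cover attaining it. A convenient such model is a general divisor $C$ in a suitable linear system on the balanced rational normal scroll $\mathbb{P}\big(\bigoplus_i\mathcal{O}_{\mathbb{P}^1}(a_i)\big)$, whose splitting type can be read off from the structure sequence of $C$ inside the scroll; checking that these invariants are balanced and attain the bound is the only genuinely delicate bookkeeping.

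Granting balancedness, the dichotomy follows by two direct computations. If $g\ge r(\nu-1)$, equivalently $a\ge r+1$, then $e_i\ge\lfloor a\rfloor\ge r+1$ for every $i\ge 1$, so each summand with $i\ge 1$ contributes $0$ to $h^0(A^{\otimes r})$ and only the $e_0=0$ term survives; hence $h^0(A^{\otimes r})=r+1$, i.e. $\dim|A^{\otimes r}|=r$. If instead $g<r(\nu-1)$, equivalently $a<r+1$, then $e_i\le\lceil a\rceil\le r+1$ for all $i$, so every term of $h^1(A^{\otimes r})$ vanishes; thus $A^{\otimes r}$ is non-special, and Riemann--Roch gives $h^0(A^{\otimes r})=\chi(A^{\otimes r})=r\nu-g+1$, i.e. $\dim|A^{\otimes r}|=r\nu-g$, which exceeds $r$ precisely because $g<r(\nu-1)$. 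The two regimes agree on the overlap $g=r(\nu-1)$, where both formulas return $\dim|A^{\otimes r}|=r$ and $A^{\otimes r}$ is non-special, so the statement is consistent at the boundary.
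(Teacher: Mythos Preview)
The paper does not prove this proposition; it simply cites \cite{Ball} and states the result. Your proposal therefore goes beyond what the paper offers, supplying an actual argument.

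Your approach is the standard one and is correct. Passing to the pushforward $f_*\mathcal{O}_C\cong\mathcal{O}_{\mathbb{P}^1}\oplus\bigoplus_{i=1}^{\nu-1}\mathcal{O}_{\mathbb{P}^1}(-e_i)$ and using the projection formula reduces the computation of $h^j(A^{\otimes r})$ to the cohomology of line bundles on $\mathbb{P}^1$, and the constraint $\sum e_i=g+\nu-1$ together with balancedness $e_i\in\{\lfloor a\rfloor,\lceil a\rceil\}$ for the general cover then yields the dichotomy cleanly. Your case analysis is accurate: the inequalities $\lfloor a\rfloor\ge r+1$ (when $g\ge r(\nu-1)$) and $\lceil a\rceil\le r+1$ (when $g<r(\nu-1)$) are exactly what is needed, and the boundary check at $g=r(\nu-1)$ is consistent.

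One small remark: you identify balancedness of the scrollar invariants as ``precisely the content of \cite{Ball}''. Ballico's short note is phrased in terms of $\dim|A^{\otimes r}|$ rather than the splitting type of $f_*\mathcal{O}_C$, so strictly speaking the two statements are equivalent reformulations of each other rather than one being a lemma for the other. Your sketched direct argument for balancedness (semicontinuity over the irreducible $\mathcal{H}_{g,\nu}$, combined with an explicit balanced model on a rational normal scroll) is the right idea and is how this is typically established; it would stand on its own without invoking \cite{Ball}. In any case, since the paper itself treats the proposition as a black-box citation, your write-up already does more than the paper requires.
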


Let therefore $C$ be a general $\nu$-gonal curve of genus $g$, with $f: C\to \mathbb P^1$ denoting the morphism of degree $\nu$, and 
let $L$ be a line bundle of degree $d$ on $C$. Thus $f_*(L)$ is a rank-$\nu$ vector bundle on $\mathbb P^1$. By Grothendieck theorem, there exist uniquely determined integers $e_1 \leq e_2 \leq \ldots \leq e_{\nu}$ for which $$f_*(L)=\mathcal O_{\mathbb P^1}(e_1)\oplus\cdots \oplus \mathcal O_{\mathbb P^1}(e_\nu)$$s.t., by Riemann-Roch theorem and Leray's isomorphism, 
\begin{equation}\label{eq:degf*L}
\deg(f_*(L)) = \sum_{i=1}^{\nu} e_i = d+1-g-\nu.
\end{equation} By projection formula, if we furthermore have $h^0(C,L) = r+1$, then we get 
\begin{equation}\label{eq:rf*L}
r +1 = h^0(C,L)= h^0\left(\Pp^1,  \mathcal O_{\mathbb P^1}(e_1)\oplus\cdots \oplus \mathcal O_{\mathbb P^1}(e_\nu)\right)= 
\sum_{i=1}^{\nu} {\rm max}\,\{e_i+1,\,0\}.   
\end{equation} Following \cite{CJ,Lar}, for any  collection $(e_1,\dots,e_\nu)$ of such integers, one sets $$\vec{e}:=(e_1,\dots,e_\nu),$$which is called the {\em splitting type vector} of the bundle $f_*(L)$, and sets moreover $$\mathcal O_{\mathbb P^1}(\vec{e}): =\mathcal O_{\mathbb P^1}(e_1)\oplus\cdots \oplus\mathcal O_{\mathbb P^1}(e_\nu).$$The splitting type obviously gives rise to a more refined invariant than the rank and the degree of the bundle $f_*(L)$, as it encodes information about all the bundles $f_*\left(L \otimes f^*\left(\mathcal O_{\mathbb P^1}(m)\right)\right)$, for any $m \in \mathbb Z$; indeed, by projection formula we have:  
\[
h^0\left(C, L \otimes f^*\left(\mathcal O_{\mathbb P^1}(m)\right)\right) = h^0(\Pp^1, 
\mathcal O_{\mathbb P^1}(\vec{e}) \otimes 
\mathcal O_{\mathbb P^1}(m)) = \sum_{i=1}^{\nu} {\rm max} \{m+ e_i+1,\,0\}, \; \forall \; m \in \mathbb Z. 
\]For a given splitting type vector $\vec{e}$, one considers the {\em Brill-Noether splitting locus}
\[
W^{\vec{e}} (C) : =\{L\in Pic^d(C)\, |\, f_*(L)=\mathcal O_{\mathbb P^1}(e_1)\oplus\cdots\oplus \mathcal O_{\mathbb P^1}(e_\nu)\},
\] which is locally-closed in ${\rm Pic}^d(C)$ (the above notation is closer to standard Brill-Noether notation and it is taken from \cite{CJ,LLV}, whereas in \cite{Lar} the aforementioned locus is denoted by $\Sigma_{\vec{e}}(C,f)$). The {\em expected codimension} of $W^{\vec{e}} (C) $ in $Pic^d(C) $  is given by 
the {\em magnitude} of $\vec{e}$ (c.f.\,\cite[Def.\,2.3]{CJ}), which is 
\begin{equation}\label{eq:magnitude}
u(\vec{e}) : = h^1(\Pp^1, \mathcal O_{\mathbb P^1}(\vec{e})) = \sum_{1 \leq i < j \leq \nu} {\rm max}\,\{0,\; e_j-e_1-1\}.
\end{equation} Since $\vec{e}$ can be thought as a partition (with possibly negative parts), splitting type vectors follow 
certain {\em ``specialization" rules} as follows: if $\vec{e}$ and $\vec{e}\,'$ are two splitting type vectors, one has a partial ordering $\vec{e}\,' \leq \vec{e}$ if $ e_1'+ e_2' + \ldots + e_j' \leq e_1 + e_2 + \ldots + e_j$, for any 
$1 \leq j \leq \nu$, which in turn implies that $\mathcal O_{\mathbb P^1}(\vec{e})$ 
specializes to $\mathcal O_{\mathbb P^1}(\vec{e}\,')$ (cf. e.g. \cite[\S\,2]{Lar}).  

With this reminded, one defines the {\em Brill-Noether splitting degeneracy loci} 
\[
\overline{W^{\vec{e}}}:= \left\{ L\in Pic^d(C)\, | \, h^0(C, L \otimes f^*(\mathcal O_{\mathbb P^1}(m))) \geq 
h^0(\Pp^1, \; \mathcal O_{\mathbb P^1}(\vec{e}) \otimes \mathcal O_{\mathbb P^1}(m)), \;\; \forall \; m \in \mathbb Z\right\},
\]and gets that $$\overline{W^{\vec{e}}} = \bigcup_{\vec{e}\,' \leq \vec{e}} W^{\vec{e}\,'},$$cf. \cite[(2.1)]{Lar}. A first set of results is given by the following:

\begin{theorem}\label{thm:Lar1} Let $C$ be a general $\nu$-gonal curve of genus $g \geq 4$. Let $\vec{e}$ be a splitting type vector,  satisfying \eqref{eq:degf*L} and let $u(\vec{e})$ be its magnitude as in \eqref{eq:magnitude}. Consider the integer
\begin{equation}\label{eq:expdimLar}
\rho' (g, \vec{e}) := g - u (\vec{e}),
\end{equation} which is the {\em expected dimension} of $W^{\vec{e}}$. Then:

\smallskip

\noindent
(i) $W^{\vec{e}} \neq \emptyset$ if and only if $\rho' (g, \vec{e}) \geq 0$. Moreover, when not empty, 
$W^{\vec{e}}$ is of pure dimension ${\rm min}\{g, \, \rho' (g, \vec{e})\}$ (cf. \cite{CJ}, \cite[Thm.\,1.2]{Lar}); 

\smallskip

\noindent
(ii) When $\rho' (g, \vec{e}) \geq 0$, $W^{\vec{e}}$ is smooth (cf. \cite[Thm.\,1.2]{Lar}). More precisely, when 
$\overline{W^{\vec{e}}} \subsetneq {\rm Pic}^d(C)$, then \linebreak $\overline{W^{\vec{e}}}$ is normal, Cohen-Macaulay and it is smooth 
away from the union of the splitting loci $W^{\vec{e}\,'}$, with $\vec{e}\,' \leq \vec{e}$, having codimension more than $2$ (cf.\cite[Thm.\,1.2-(2')]{LLV}); 

\smallskip

\noindent
(iii) $\overline{W^{\vec{e}}}$ is irreducible when $\rho' (g, \vec{e}) > 0$. When otherwise 
$\rho' (g, \vec{e}) = 0$, the number of points of $\overline{W^{\vec{e}}}$ is computed via Coxeter groups 
(cf. \cite[Thm.\,1.2]{LLV}). 
\end{theorem}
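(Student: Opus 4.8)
The plan is to recover each of the three assertions from a determinantal description of the loci $\overline{W^{\vec{e}}}$ combined with a single well-chosen degeneration of the cover $f \colon C \to \mathbb{P}^1$; the precise statements are those of \cite{CJ}, \cite[Thm.\,1.2]{Lar} and \cite[Thm.\,1.2]{LLV}, so the goal here is to lay out the strategy underlying them. First I would set up the determinantal picture. For a line bundle $L$ of degree $d$ the splitting type of $f_*L$ is encoded by the jump function $m \mapsto h^0(C, L \otimes f^*\mathcal{O}_{\mathbb{P}^1}(m))$, and $\overline{W^{\vec{e}}}$ is cut out in ${\rm Pic}^d(C)$ by imposing, for each relevant $m$, that a morphism of vector bundles over ${\rm Pic}^d(C)$ drops rank. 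Concretely one pushes the Poincaré bundle on $C \times {\rm Pic}^d(C)$ down to $\mathbb{P}^1 \times {\rm Pic}^d(C)$ and replaces cohomology by a two-term complex à la Fitting; a local computation then identifies the expected codimension of the resulting degeneracy locus with the magnitude $u(\vec{e}) = h^1(\mathbb{P}^1, \mathcal{O}_{\mathbb{P}^1}(\vec{e}))$ of \eqref{eq:magnitude}, giving the expected dimension $\rho'(g,\vec{e}) = g - u(\vec{e})$ of \eqref{eq:expdimLar}. This already produces the upper bound on the dimension of every component and reduces (i) and the smoothness part of (ii) to showing that the determinantal expectation is attained.

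Second, for non-emptiness, the exact dimension count, and smoothness I would degenerate $(C,f)$ to a chain of genus-$1$ curves mapping to a chain of $\mathbb{P}^1$'s, equivalently pass to the tropical chain-of-loops model as in \cite{CJ}, where $L$ and its pushforward can be analyzed one elliptic component at a time. On such a degenerate cover the splitting data become a combinatorial datum attached to the nodes, and a limit-linear-series style count shows that whenever $\rho'(g,\vec{e}) \geq 0$ the relevant Brill--Noether condition has a component of exactly the expected dimension $\min\{g,\rho'(g,\vec{e})\}$, and is empty otherwise; by semicontinuity over the Hurwitz space these bounds propagate to the general $\nu$-gonal $C$. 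The same tangent-space computation gives smoothness at a general point, and a finer analysis of the local defining minors of $\overline{W^{\vec{e}}}$ yields the Cohen--Macaulay and normality statements together with the identification of the singular locus with the union of the deeper strata $W^{\vec{e}\,'}$, $\vec{e}\,' \leq \vec{e}$, of codimension $\geq 2$, exactly as in \cite[Thm.\,1.2-(2')]{LLV}.

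Finally, for (iii) I would first establish connectedness of $\overline{W^{\vec{e}}}$ through the degeneration, and then upgrade to irreducibility when $\rho'(g,\vec{e}) > 0$ by a monodromy argument over $\mathcal{H}_{g,\nu}$: the monodromy acts transitively on the combinatorial limit data, forcing the general fiber to be irreducible. When $\rho'(g,\vec{e}) = 0$ the locus is zero-dimensional, and the number of points is obtained by matching the limit configurations with reduced words, or chains, in the affine Coxeter group naturally attached to the splitting type, turning the count into a Schubert-type calculation; this is the content of \cite[Thm.\,1.2]{LLV}. I expect the main obstacle to be precisely this last step: controlling the monodromy action and the Coxeter-theoretic point count demands a detailed understanding of how splitting types degenerate across the nodes and of the partial order $\vec{e}\,' \leq \vec{e}$ governing the stratification $\overline{W^{\vec{e}}} = \bigcup_{\vec{e}\,' \leq \vec{e}} W^{\vec{e}\,'}$. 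By contrast, once the determinantal setup and the chain degeneration are in place, the dimension and smoothness assertions of (i) and (ii) follow comparatively routinely.
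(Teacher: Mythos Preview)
Your sketch is a reasonable outline of the arguments actually appearing in \cite{CJ}, \cite{Lar}, and \cite{LLV}, but note that the paper does not give its own proof of this theorem at all: it is stated purely as a quotation of results from the literature, with explicit citations attached to each item, and is then used as a black box in what follows (most notably in Lemma~\ref{lem:Lar}). So there is no ``paper's own proof'' to compare against; the authors simply invoke the cited theorems.

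That said, your description of the strategy is broadly accurate for (i) and the dimension part of (ii). One caveat: the smoothness, normality, Cohen--Macaulayness, irreducibility, and point-count statements in (ii) and (iii) are \emph{not} obtained in \cite{LLV} by the chain-of-elliptic-curves degeneration you describe. Rather, \cite{LLV} constructs a universal version of $\overline{W^{\vec{e}}}$ over the Hurwitz space, resolves it by a tower of projective bundles (the ``Hurwitz--Brill--Noether'' resolution), and reads off normality, Cohen--Macaulayness, the singular locus, and the class formula directly from this resolution together with a vanishing theorem. The monodromy/connectedness picture you propose would be delicate to make rigorous here, whereas the projective-bundle resolution gives irreducibility and the point count essentially for free once the resolution is built. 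If you want your sketch to match the actual proofs, replace the second and third paragraphs' appeal to monodromy and limit-linear-series smoothness by this resolution-based argument.
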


Such Brill-Noether splitting degeneracy loci provide a refinement of the stratification of ${\rm Pic}^d(C)$ by Brill-Noether loci $W_d^r(C)$. Indeed, fixed a rank $r'$ and a degree $d'$ for vector bundles over $\Pp^1$, there exist a unique 
splitting type vector which is {\em maximal} with respect to the previously defined partial order and this is called the {\em balanced splitting type vector}, which is characterized 
by the condition $| e_i - e_j| \leq 1$, for all $1 \leq i,j\leq r'$. The corresponding {\em balanced bundle} is denoted by $B(r',d')$ whereas the corresponding splitting type vector is said to be {\em maximal} (cf. \cite[p.\,773--774]{Lar}).  Thus, posing $|\vec{e}| := \sum_{i=1}^{\nu} e_i$, one gets (cf. \cite[(2.2)]{Lar}) 
\begin{equation}\label{eq:LarRef2}
W_d^r(C) = \bigcup_{\begin{array}{c} 
\vec{e} \; {\rm s.t.} \; |\vec{e}| = d-g+1+\nu \\ 
h^0(\mathcal O_{\mathbb P^1}(\vec{e})) \geq r+1
\end{array}} W^{\vec{e}} = \bigcup_{\begin{array}{c} 
\vec{e} \; {\rm maximal \; s.t.} \; |\vec{e}| = d-g+1+\nu \\ 
h^0(\mathcal O_{\mathbb P^1}(\vec{e})) \geq r+1
\end{array}} \overline{W^{\vec{e}}}. 
\end{equation} Therefore, characterizing contributions of splitting loci to the Brill-Noether locus $W_d^r(C)$ is equivalent to 
determining those splitting type vectors that are maximal (with respect to the partial order given above) among those satisfying: 
\begin{equation}\label{eq:maximal}
\deg(\mathcal O_{\mathbb P^1}(\vec{e})) = \sum_{i=1}^{\nu} e_i = d+1-g-\nu \;\; {\rm and} \;\; h^0(\mathcal O_{\mathbb P^1}(\vec{e})) = \sum_{i=1}^{\nu} {\rm max}\,\{0,\,e_i+1\} \geq r+1,
\end{equation} as in \eqref{eq:degf*L} and \eqref{eq:rf*L}. These are the so-called  
{\em ``balanced plus balanced"} splitting type vectors in \cite[p.\,769]{Lar}, uniquely determined by the number of their non-negative components. The following is a second set of results:

\begin{theorem}\label{thm:Lar2} Let $C$ be a general $\nu$-gonal curve of genus $g \geq 4$ and let $r$ and $d$ be non-negative integers. 

\noindent
$(1)$ If $r \leq d-g$, then $W_d^r(C) = {\rm Pic}^d(C)$;

\medskip

\noindent
$(2)$ If $r > d-g$, set $d' := d+1-g-\nu$. Then:
\begin{itemize}
\item[(2-i)]  a splitting type vector $\vec{e}$, which is maximal among those satisfying \eqref{eq:maximal}, 
satisfies the condition $h^0(\mathcal O_{\mathbb P^1}(\vec{e}))= \sum_{i=1}^{\nu} {\rm max}\,\{0,\,e_i+1\} = r+1$ (cf. \cite[Lemma\,2.7]{CJ}); 

\item[(2-ii)] maximal splitting type vectors as in (2-i) are vectors $\vec{w}_{r,\ell}$ such that 
$$\mathcal O_{\mathbb P^1}(\vec{w}_{r,\ell}) = B(\nu-r-1+\ell, \; d'-\ell) \oplus B(r+1-\ell, \;\ell),$$where $B(- ,\; -)$ 
as above denotes the unique balanced bundle on $\Pp^1$ of given rank and degree, respectively, for 
${\rm max} \; \{0,\; r+2-\nu\} \leq \ell \leq r$ an integer such that either $\ell=0$ or $\ell \leq g+2r+1-d - \nu$. Moreover 
$$u (\vec{w}_{r,\ell}) = g - \rho' (g, \vec{w}_{r,\ell}) = g - \left(\rho(g, r - \ell, d) - \ell\,\nu\right),$$where $\rho' (g, \vec{w}_{r,\ell})$ as in \eqref{eq:expdimLar} (cf. \cite[Lemma\,2.2]{Lar}); 

\item[(2-iii)] for any integer $\ell$ as in (2-ii), the corresponding splitting type vector $\vec{w}_{r,\ell}$ is the unique 
degree--$d'$, rank--$\nu$ vector with $r+1-\ell$ non-negative coordinates, which is maximal among those with $r+1$ linearly independent global sections (cf. \cite[Cor.\;1.3]{Lar});

\item[(2-iv)] Every component of $W_d^r(C)$ is generically smooth, of (expected) dimension $\rho' (g, \vec{w}_{r,\ell}) = 
g - u (\vec{w}_{r,\ell}) = \rho(g, r - \ell, d) - \ell\,\nu$, for some integer ${\rm max} \; \{0,\; r+2-\nu\} \leq \ell \leq r$  such that either $\ell=0$ or  $\ell \leq g + 2r+1 - d - \nu$. Such a component exists for each integer $\ell$ as above satisfying furthermore $\rho' (g, \vec{w}_{r,\ell}) \geq 0$ (cf. \cite[Cor.\;1.3]{Lar}). 
\end{itemize}
\end{theorem}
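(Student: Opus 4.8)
The plan is to split the two parts: part (1) is immediate from Riemann--Roch, while part (2) reduces, via the decomposition \eqref{eq:LarRef2}, to a combinatorial analysis of the splitting type vectors that are maximal for the dominance order subject to \eqref{eq:maximal}, after which the geometric statements follow by feeding these vectors into Theorem \ref{thm:Lar1}. For part (1), I would observe that $r \le d-g$ gives $r+1 \le d-g+1 = \chi(L)$ for every $L \in {\rm Pic}^d(C)$; since $h^0(C,L) \ge \chi(L)$ by Riemann--Roch, every degree-$d$ line bundle satisfies $h^0 \ge r+1$, whence $W_d^r(C) = {\rm Pic}^d(C)$.

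For part (2), assume $r>d-g$, so that the fully balanced vector of degree $d'$ has fewer than $r+1$ sections and hence does not lie in $W_d^r(C)$; in particular any $\vec e$ satisfying \eqref{eq:maximal} is strictly dominated by the balanced vector, so it is not itself balanced and admits a balancing transfer. To prove (2-i), I would use that a dominance-increasing transfer $e_i \mapsto e_i+1$, $e_j \mapsto e_j-1$ with $e_i<e_j$ never increases $h^0 = \sum_i {\rm max}\{e_i+1,0\}$: the $+1$ gained by raising $e_i$ occurs only when $e_i \ge -1$, the $-1$ lost by lowering $e_j$ occurs only when $e_j \ge 0$, and a short case check shows the net change is $0$ or $-1$. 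Consequently, if some $\vec e$ satisfying \eqref{eq:maximal} had $h^0 \ge r+2$, then, not being balanced, it admits a balancing transfer producing $\vec e' > \vec e$ with $h^0(\vec e') \ge h^0(\vec e)-1 \ge r+1$, contradicting maximality; hence every maximal $\vec e$ has $h^0 = r+1$, as in \cite[Lemma 2.7]{CJ}.

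Writing $b$ for the number of nonnegative coordinates of such a maximal $\vec e$ and setting $\ell := r+1-b$, the equality $h^0=r+1$ forces the nonnegative block to carry degree $\ell$, and maximality forces both the nonnegative block and the (necessarily all $\le -1$) complementary block to be balanced. Since a balanced bundle on $\mathbb P^1$ is uniquely determined by its rank and degree, this identifies $\vec e$ with $\vec w_{r,\ell}$ and yields the admissible range of $\ell$, giving (2-ii) and (2-iii); a direct count then gives $h^0(\mathcal O_{\mathbb P^1}(\vec w_{r,\ell})) = \ell + (r+1-\ell) = r+1$, while the magnitude formula $u(\vec w_{r,\ell}) = g-(\rho(g,r-\ell,d)-\ell\nu)$ is \cite[Lemma 2.2]{Lar}. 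Finally, for (2-iv) I would apply Theorem \ref{thm:Lar1} to each $\overline{W^{\vec w_{r,\ell}}}$: this yields generic smoothness and pure dimension $\rho'(g,\vec w_{r,\ell}) = \rho(g,r-\ell,d)-\ell\nu$, with non-emptiness exactly when this number is $\ge 0$; since distinct maximal vectors are pairwise incomparable, the loci $\overline{W^{\vec w_{r,\ell}}}$ are genuinely distinct components and, by \eqref{eq:LarRef2}, exhaust $W_d^r(C)$.

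The hard part is the combinatorial classification underlying (2-ii)--(2-iii): verifying that maximality singles out precisely the ``balanced-plus-balanced'' vectors and, above all, extracting the sharp boundary condition $\ell \le g+2r+1-d-\nu$ on the admissible $\ell$, which requires tracking how balancing redistributes degree between the effective and non-effective blocks subject to the sign constraints on the coordinates. This is exactly the content imported from \cite{Lar,CJ,LLV}; modulo it, the proof is an assembly of these combinatorial inputs with the geometric conclusions of Theorem \ref{thm:Lar1}.
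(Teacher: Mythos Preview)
The paper does not supply a proof of this theorem: it is stated as a compilation of background results from \cite{CJ,Lar,LLV}, with the relevant reference cited inline at each item, and the text moves directly on to Lemma~\ref{lem:Lar}. So there is no ``paper's own proof'' to compare your proposal against.

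That said, your outline is a faithful sketch of how the cited arguments actually run. Part~(1) is exactly Riemann--Roch; the balancing-transfer monotonicity you describe for~(2-i) is the content of \cite[Lemma~2.7]{CJ}; the reduction to a balanced nonnegative block of rank $r+1-\ell$ and degree $\ell$ together with a balanced negative block is precisely the ``balanced-plus-balanced'' classification of \cite[Lemma~2.2, Cor.~1.3]{Lar}; and feeding each $\vec w_{r,\ell}$ into Theorem~\ref{thm:Lar1} is how one reads off the dimensions, generic smoothness, and existence in~(2-iv). Your closing remark that the sharp constraint $\ell \le g+2r+1-d-\nu$ (ensuring the negative block really has all entries $\le -1$, so that the two blocks do not interact under further balancing) is the delicate point is accurate: that is where the work in \cite{Lar} lies, and you correctly flag it as imported rather than reproved. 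One small addition worth making explicit in your last step: to conclude that the $\overline{W^{\vec w_{r,\ell}}}$ are the \emph{irreducible} components of $W_d^r(C)$ you are implicitly using the irreducibility clause of Theorem~\ref{thm:Lar1}(iii), which requires $\rho'>0$; when $\rho'=0$ the locus is a finite set of points, still a union of components but not a single one.
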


Using this framework,  we prove the following easy result which will be essential for our purposes:

\begin{lemma}\label{lem:Lar} Let $g \geq 4$ and $3 \leq \nu < \lfloor \frac{g+3}{2}\rfloor$ be integers. Let $C$ be a general $\nu$-gonal curve of genus $g$ and let $A \in {\rm Pic}^{\nu}(C)$ be the unique line bundle on $C$ such that $|A| = g^1_{\nu}$. 
Let $t \geq \nu$ be an integer and, for a given integer $0 \leq \ell \leq 1$, let 
$\vec{w}_{1,\ell}$ be the corresponding {\em maximal splitting type vector} and 
$\mathcal O_{\Pp^1}(\vec{w}_{1,\ell})$ be the associated {\em balanced-plus-balanced bundle} on $\Pp^1$ as in Theorem \ref{thm:Lar2}. The one has: 
$$\mathcal O_{\Pp^1}(\vec{w}_{1,0}) = B(\nu-2, \,t+1-g-\nu) \oplus B(2,\,0)\;\; {\rm and} \,\;\mathcal O_{\Pp^1}(\vec{w}_{1,1}) = B(\nu-1, \,t+1-g-\nu) \oplus B(1,\,1),$$ where 
$B(-,-)$ denotes the most balanced bundle on $\Pp^1$ of given $({\rm rank},\;{\rm degree})$, so $B(2,0) \cong \mathcal O_{\Pp^1}^{\oplus 2}$, \linebreak $B(1,1) \cong \mathcal O_{\Pp^1}(1)$ whereas the other summands are bundles of negative degrees. Moreover, 
\[W^1_t(C) = 
\begin{cases}
\overline{W^{\vec{w}_{1,1}}} & \mbox{ if $\nu\leq t< \frac{g+2}{2}$}\\
\overline{W^{\vec{w}_{1,1}}} \ \cup \ \overline{W^{\vec{w}_{1,0}}} &  \mbox{ if $\frac{g+2}{2}\leq t\leq g+2-\nu$ } \\
\overline{W^{\vec{w}_{1,0}}} &  \mbox{ if $g-\nu+2< t < g+1$}\\
{\rm Pic}^t(C) & \mbox{if $t \geq g+1$}
\end{cases}
\]and, when $W^1_t(C) \subsetneq {\rm Pic}^t(C) $, any of its irreducible components is generically smooth of dimension
$$\dim(\overline{W^{\vec{w}_{1,1}}} ) = t-\nu \;\; {\rm and} \;\; \dim(\overline{W^{\vec{w}_{1,0}}} ) = \rho(g,1,t) = 2t-g-2.$$Furthermore:

\begin{itemize}
\item[(i)]  a general element in $\overline{W^{\vec{w}_{1,1}}}$ is given by $\mathcal L_{1,1}:= A \otimes \mathcal O_C(B_{t-\nu})$, where $|A| = g^1_{\nu}$ and $B_{t-\nu} = p_1+ \cdots + p_{t-\nu}$ the effective divisor of base points of $|\mathcal L_{1,1}|$; 

\item[(ii)] a general element in $\overline{W^{\vec{w}_{1,0}}}$ is given by $\mathcal L_{1,0}$, where $|\mathcal L_{1,0}|$ is a base-point-free, complete $g^1_t$.
\end{itemize} 
\end{lemma}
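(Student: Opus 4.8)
The plan is to obtain the whole statement as the specialization $r=1$ of Larson's classification in Theorem \ref{thm:Lar2}, combined with the existence, dimension and irreducibility assertions of Theorem \ref{thm:Lar1}, and to deduce the two geometric descriptions (i)--(ii) from a cohomological reading of the splitting type at the twist $m=-1$ together with an elementary dimension count. Throughout I would keep in mind the decomposition \eqref{eq:LarRef2}, which writes $W^1_t(C)$ as the union of the degeneracy loci $\overline{W^{\vec e}}$ attached to the \emph{maximal} splitting type vectors $\vec e$ realizing $h^0 = r+1 = 2$.

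First I would pin down the relevant maximal vectors. Since $r=1$ and $\nu\geq 3$, the lower bound $\max\{0,\,r+2-\nu\}=\max\{0,\,3-\nu\}$ equals $0$, so the admissible indices are exactly $\ell\in\{0,1\}$, as in the hypothesis. Substituting $r=1$ and $d':=t+1-g-\nu$ into the formula of Theorem \ref{thm:Lar2}-(2-ii) produces the two balanced-plus-balanced bundles $\mathcal O_{\Pp^1}(\vec w_{1,0})$ and $\mathcal O_{\Pp^1}(\vec w_{1,1})$ displayed in the statement, of total degree $d'$ as demanded by \eqref{eq:degf*L}; since $t\geq\nu$, in each bundle all summands other than $B(2,0)\cong\mathcal O_{\Pp^1}^{\oplus 2}$ (resp. $B(1,1)\cong\mathcal O_{\Pp^1}(1)$) have strictly negative degree, whence $h^0=2$ in both cases, as already guaranteed abstractly by Theorem \ref{thm:Lar2}-(2-i). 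The dimension formula $\rho'(g,\vec w_{1,\ell})=\rho(g,1-\ell,t)-\ell\nu$ of (2-ii)/(2-iv) then gives $\rho'(g,\vec w_{1,1})=\rho(g,0,t)-\nu=t-\nu$ and $\rho'(g,\vec w_{1,0})=\rho(g,1,t)=2t-g-2$, the two asserted dimensions.

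Next I would assemble the four-case description by deciding, for each $t$, which of the two loci is non-empty. By Theorem \ref{thm:Lar1}-(i) (equivalently Theorem \ref{thm:Lar2}-(2-iv)) the locus $\overline{W^{\vec w_{1,\ell}}}$ is non-empty precisely when $\rho'(g,\vec w_{1,\ell})\geq 0$, subject to the admissibility constraint of (2-ii): the value $\ell=1$ is allowed only if $\ell\leq g+2r+1-t-\nu$, i.e. $t\leq g+2-\nu$, whereas $\ell=0$ is always admissible. Hence $\overline{W^{\vec w_{1,1}}}$ contributes exactly for $\nu\leq t\leq g+2-\nu$ (its dimension $t-\nu$ being automatically $\geq 0$), while $\overline{W^{\vec w_{1,0}}}$ contributes exactly for $2t-g-2\geq 0$, i.e. $t\geq\frac{g+2}{2}$. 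Overlaying these two ranges, using $\nu\leq\frac{g+2}{2}$ (which follows from $\nu<\lfloor\frac{g+3}{2}\rfloor$) to see that the middle regime is non-degenerate, and invoking Theorem \ref{thm:Lar2}-(1) for $t\geq g+1$ (where $r=1\leq t-g$ forces $W^1_t(C)={\rm Pic}^t(C)$), reproduces verbatim the four displayed regimes.

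Finally I would establish the geometric descriptions. The crucial observation is that, reading the splitting types at the twist $m=-1$ and using $f^*\mathcal O_{\Pp^1}(-1)\cong A^\vee$, a general member $L$ of $W^{\vec w_{1,1}}$ satisfies $h^0(C,L\otimes A^\vee)=1$, so $L\otimes A^\vee$ is effective of degree $t-\nu$ and $L\cong A\otimes\mathcal O_C(B_{t-\nu})$ with $B_{t-\nu}\in C^{(t-\nu)}$; since $|A|=g^1_\nu$ is base-point-free and $h^0(L)=2$, the moving part of $|L|$ is $|A|$ and $B_{t-\nu}$ is exactly its base divisor, giving (i). That such an $L$ indeed has splitting type $\vec w_{1,1}$ (and not a smaller one) follows because the map $C^{(t-\nu)}\to{\rm Pic}^t(C)$, $B\mapsto A\otimes\mathcal O_C(B)$, is generically finite (as $t-\nu<g$ forces $\dim|B|=0$ for general $B$), so its image has dimension $t-\nu=\dim\overline{W^{\vec w_{1,1}}}$ and, by the irreducibility in Theorem \ref{thm:Lar1}-(iii) for $t>\nu$ (a single point $\{A\}$ when $t=\nu$), fills the locus densely. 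For (ii), a general member $\mathcal L_{1,0}$ of $W^{\vec w_{1,0}}$ instead satisfies $h^0(\mathcal L_{1,0}\otimes A^\vee)=0$, so $|\mathcal L_{1,0}|$ is a genuine $g^1_t$; its base-point-freeness follows from a dimension count, since a base point $p$ would place $\mathcal L_{1,0}(-p)$ in $W^1_{t-1}$, and it cannot land in the $\ell=1$ stratum there (else the defining inequality at $m=-1$ would make $\mathcal L_{1,0}\otimes A^\vee$ effective, forcing $\mathcal L_{1,0}\in\overline{W^{\vec w_{1,1}}}$, contradicting that distinct maximal vectors are incomparable), so it must land in the $\ell=0$ stratum of dimension $2t-g-4$, bounding the locus of such $\mathcal L_{1,0}$ by $2t-g-3<2t-g-2$. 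The one genuinely non-formal point, and the step I would be most careful about, is precisely this identification of the exact splitting type of the explicit members in (i)--(ii); everything else is bookkeeping on top of Theorems \ref{thm:Lar1} and \ref{thm:Lar2}.
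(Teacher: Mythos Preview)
Your proof is correct and follows essentially the same route as the paper: both specialize Theorem~\ref{thm:Lar2} to $r=1$, read off the two balanced-plus-balanced bundles and their dimensions $t-\nu$ and $2t-g-2$, and assemble the four regimes from the admissibility and non-negativity constraints. For (i) and (ii) the paper argues by the same dimension counts you give; your use of the twist at $m=-1$ to pin down $h^0(L\otimes A^\vee)$ and to exclude the $\ell=1$ stratum for $\mathcal L_{1,0}(-p)$ is a slightly more explicit version of what the paper leaves implicit, but the substance is identical.
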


\begin{proof} By Riemann-Roch theorem, if $t \geq g+1$, i.e. $1 \leq t-g$, then $W^1_t(C) = {\rm Pic}^t(C)$ (cf. also Theorem \ref{thm:Lar2}-(1)). 

When otherwise $t < g+1$, to compute possible maximal splitting type vectors $\vec{w}_{1,\ell}$ as in Theorem \ref{thm:Lar2}-(2-ii), we need to consider integers $\ell$ such that  $0 = {\rm max} \{0, 3-\nu\} \leq \ell \leq r=1$ with either $\ell = 0$ or $\ell \leq g+2+1-t-\nu = g+3-t-\nu$. Notice that $g+3 - t - \nu \geq 1=r$ iff $t \leq g+2-\nu$. Therefore, for $t \leq g+2-\nu$, one has the two cases $\ell \in \{0,\,1\}$ whereas, for 
$g+2-\nu < t < g+1$, only $\ell =0$ occurs. Moreover, using Riemann-Roch theorem, one gets $\rho(g,1,t) = g - 2 (g-t+1) = 2t-g-2$, which is non-negative as soon as $t \geq \frac{g+2}{2}$. 

Similarly as in Theorem \ref{thm:Lar2}, setting $t':=  t+1-g-\nu$, one has the following:

\smallskip

\noindent
$\bullet$ for $(r,\ell) = (1,0)$, the maximal splitting type vector $\vec{w}_{1,0}$ gives $ \mathcal O_{\Pp^1}(\vec{w}_{1,0}) = B(\nu-2, \,t') \oplus B(2,\,0)$. Notice that $t'= t+1-g-\nu <0$, as $t < g+1$, and that $B(2,\,0) = 
\mathcal O^{\oplus\,2}_{\Pp^1}$. Moreover, in this case, $\rho'(g, \vec{w}_{1,0}) = \rho(g,1,t) = 2t-g-2 \geq 0$ iff $t \geq \frac{g+2}{2}$; 

\smallskip

\noindent
$\bullet$ for $(r,\ell) = (1,1)$, the maximal splitting type vector $\vec{w}_{1,1}$ gives $ \mathcal O_{\Pp^1}(\vec{w}_{1,1}) = B(\nu-1, \,t'-1) \oplus B(1,\,1)$, where $t'-1 = t-g-\nu <0$ and where $B(1,\,1) = \mathcal O_{\Pp^1}(1)$. Moreover, $\rho'(g, \vec{w}_{1,1}) = g - u( \vec{w}_{1,1}) = \rho(g,0,t) - \nu = t-\nu \geq 0$ iff $\nu \leq t$. 

\noindent
Therefore, the first part of the statement is a consequence of the previous computations and of Theorem \ref{thm:Lar2}.

\smallskip 

For what concerns the description of the general element of $\overline{W^{\vec{w}_{1,1}}}$ as in (i), this is a consequence of a simple dimension count; namely 
$$\dim\{g^1_{\nu} + p_1 + \cdots + p_{t-\nu}\} = \dim(C^{(t-\nu)}) = \dim(\overline{W^{\vec{w}_{1,1}}} ).$$To prove (ii) instead, let $\mathcal L_{1,0}$ denote a general element of  
$\overline{W^{\vec{w}_{1,0}}}$; thus one has  $h^0 (\mathcal L_{1,0})=2$ as 
$\mathcal L_{1,0}$ is, by definition, associated  to $B(\nu-2, \,t') \oplus B(2,\,0)$ 
on $\mathbb P^1$.  We set $g^1_t := |\mathcal L_{1,0}|$. 

Suppose by contradiction that  such a pencil  $g^1_t$  has a base point $p$; so $g^1_t (-p)\in W^1_{t-1} (C)$ has the same  global section space as  $g^1_t$. Since $g^1_t $  is associated  to $B(\nu-2, \,t') \oplus B(2,\,0)$, the linear series
$g^1_t (-p)$ must be  therefore associated to  $B(\nu-2, \,t'-1) \oplus B(2,\,0)$ on $\mathbb P^1$, that is    $g^1_t (-p) \in \overline{W^{\vec{w}_{1,0}}}_* \subset W^1_{t-1} (C)$,  where $\overline{W^{\vec{w}_{1,0}}}_*$ denotes the irreducible component of $W^1_{t-1} (C)$ whose  general element is associated  to $B(\nu-2, \,t'-1) \oplus B(2,\,0)$.  This implies that 
$$\dim ( \overline{W^{\vec{w}_{1,0}}} ) \leq 
\dim (\overline{W^{\vec{w}_{1,0}}}_*) +1,$$which is contrary to 
 $\dim(\overline{W^{\vec{w}_{1,0}}} ) = \rho(g,1,t) = 2t-g-2 > \dim (\overline{W^{\vec{w}_{1,0}}}_*) +1=\rho(g,1,t-1) +1 = 2t -g-3$. Therefore the  general element $\mathcal L_{1,0}$ of $ \overline {W^{\vec{w}_{1,0}}} $ gives rise to a base-point-free linear series. 
\end{proof}

\bigskip

\subsection{Ruled surfaces and unisecants}\label{ss:unisec} Here we recall some preliminaries which are fundamental to construct {\em geometric presentations} $(*)$ as in Introduction of bundles $\Ff$ on $C$ which turn-out to be candidates to correspond to general points of possible irreducible components of $B_d^{k_2} \cap U^s_C(d)$.

Let $C$ be any smooth, irreducible, projective curve of genus $g$, and let $\Ff$ be a rank-$2$, degree $d$ vector bundle on $C$. Then  $F:= \Pp(\Ff) \stackrel{\rho}{\to} C$ will denote the {\em (geometrically) ruled
surface} (or simply the {\em surface scroll}) associated to  the pair $(\Ff, C)$; $f$ will denote the general $\rho$-fibre and 
$\Oc_F(1)$ the {\em tautological line bundle}. 

If $H$ denotes a divisor in the {\em tautological linear system} $|\Oc_F(1)| \neq \emptyset$ and if $\widetilde{\Gamma}$ is another divisor on $F$, we obvioulsy set 
$${\rm deg}(\widetilde{\Gamma}) := \widetilde{\Gamma} H;$$in particular one has therefore
$d=\deg(\Ff) = \deg(\det(\Ff)) = H^2 = \deg(H)$. The integer $$i(\Ff):= h^1(C,\Ff)$$is called the {\em speciality} of   the bundle $\Ff$, simply denoted by $i$ if there is no danger of confusion. Thus $\Ff$ (and also the surface scroll $F$) is said to be {\em non-special} if $i= 0$,  {\em special} otherwise.

We briefly remind some basics on unisecant curves on the surface scroll $F$; for more details the reader is referred to e.g. \cite[V-2]{H} or to \cite[\S\,2]{CF}. Let ${\rm Div}^1_F$ be the subscheme of ${\rm Div}_F$ formed by all divisors 
$\widetilde{\Gamma}$ of $F$ such that $\Oc_F(\widetilde{\Gamma}) \simeq \Oc_F(1) \otimes \rho^*(N^{\vee})$, for some $N \in {\rm Pic} (C)$. One has a natural morphism 
$$\Psi_1: {\rm Div}^1_F \to {\rm Pic}(C)\; \; \mbox{defined by}\;\; \widetilde{\Gamma} \stackrel{\Psi_1}{\longrightarrow} N.$$If $D \in {\rm Div} (C)$, then $\rho^*(D)$ will be denoted by $f_D$, so $\widetilde{\Gamma} \in {\rm Div}^1_F$ 
if and only if $\widetilde{\Gamma} \sim H - f_D$, for some $D \in {\rm Div}(C)$, and $\deg(\widetilde{\Gamma}) = \deg(\Ff) - \deg(D)$. Curves in ${\rm Div}^1_F$ are called {\em unisecants}  of the surface scroll $F$ and irreducible unisecants are isomorphic to $C$ and are called  {\em sections} of $F$. For any positive integer $\delta$, we consider 
$${\rm Div}_F^{1,\delta} : = \{\widetilde{\Gamma} \in {\rm Div}_F^1 \; | \; {\rm deg}(\widetilde{\Gamma}) = \delta \},$$which is the {\em Hilbert scheme of unisecants of $F$ of degree $\delta$} (with respect to $H$).

Let  $\widetilde{\Gamma} = \Gamma + f_A$ be a unisecant on $F$, with  $\Gamma$ a section of $F$ and $A $ an effective divisor of $C$. Then, the standard exact sequence defining 
$\widetilde{\Gamma}$ as a Cartier divisor on $F$ gives rise to the exact sequence 
\begin{equation}\label{eq:Fund2}
0 \to N ( - A) \to \Ff \to L \oplus \Oc_{A} \to 0;
\end{equation} in particular if $A =0$, i.e. 
$\widetilde{\Gamma} = \Gamma$ is a section of $F$, then $\Ff$ fits in 
\begin{equation}\label{eq:Fund}
0 \to N \to \Ff \to L \to 0
\end{equation} and the normal bundle of the section $\Gamma$ in $F$ is: 
\begin{equation}\label{eq:Ciro410}
\N_{\Gamma/F} \simeq L \otimes N^{\vee}, \;\; {\rm so} \;\; \Gamma^2 = \deg(L) - \deg(N) = 2\delta - d  
\end{equation} (cf. e.g. \cite[Rem.\,2.1]{CF}). 

Accordingly, if we consider the restriction of the map $\Psi_1$ to the Hilbert scheme ${\rm Div}_F^{1,\delta}$, we get a map 
$$\Psi_{1,\delta}: {\rm Div}^{1,\delta}_F \to {\rm Pic}^{d-\delta}(C)$$ 
which endows ${\rm Div}_F^{1,\delta}$ with the structure of a {\em Quot scheme}, namely as in \cite[\S\,4.4]{Ser}, one has 
\begin{equation}\label{eq:isom1}
\begin{array}{rrcc}
\Psi_{1,\delta}: & {\rm Div}_F^{1,\delta} & \stackrel{\simeq}{\longrightarrow} & {\rm Quot}^C_{\Ff,\delta+t-g+1} \\
 & \widetilde{\Gamma} & \longrightarrow & \left\{\Ff \to\!\!\to L \oplus \Oc_A \right\}.  
\end{array}
\end{equation} From standard results (cf. e.g. \cite[\S\,4.4]{Ser}), \eqref{eq:isom1} allows description of tangent and obstruction spaces; namely using \eqref{eq:Fund2} we have 
$$H^0(\N_{\widetilde{\Gamma}/F}) \simeq T_{[\widetilde{\Gamma}]} ({\rm Div}_F^{1,\delta}) \simeq {\rm Hom} (N (-A), L \oplus \Oc_A)\;\;\; {\rm and} \;\;\; H^1(\N_{\widetilde{\Gamma}/F})  \simeq {\rm Ext}^1 (N (-A), L \oplus \Oc_A).$$Finally, if $\widetilde{\Gamma} \sim H - f_D$ on $F$, for some effective divisor $D$ on $C$, then one has
\begin{equation}\label{eq:isom2}
|\Oc_F(\widetilde{\Gamma})| \simeq \Pp(H^0(\Ff (-D))).  
\end{equation}

\begin{definition}(cf. \cite[Def.\;2.7]{CF})\label{def:ass0} A unisecant $\widetilde{\Gamma} \in {\rm Div}^{1,\delta}_F$ is said to be:  

\noindent
(a) {\em linearly isolated}, (li, for short)  if $\dim(|\Oc_F(\widetilde{\Gamma} )|) =0$; 

\noindent
(b) {\em algebraically isolated}, (ai, for short)  if  $\dim({\rm Div}^{1,\delta}_F) =0$.  

\noindent
(c) Setting $\Oc_{\widetilde{\Gamma}} (1) := \Oc_F(1) \otimes \Oc_{\widetilde{\Gamma}}$, one calls the {\em speciality} of $\widetilde{\Gamma}$ the number $i(\widetilde{\Gamma}) := h^1(\Oc_{\widetilde{\Gamma}} (1))$ and $\widetilde{\Gamma}$ is said to be {\em special} if $i(\widetilde{\Gamma}) >0$. 
\end{definition} If $\widetilde{\Gamma}$ is given by \eqref{eq:Fund2}, by \eqref{eq:isom2} one has 
$\widetilde{\Gamma} \in |\Oc_F(1) \otimes \rho^*(N^{\vee}(A))|$ so, applying $\rho_*$, one gets
\begin{equation}\label{eq:iLa}
i(\widetilde{\Gamma}) = h^1(L \oplus \Oc_{A}) = h^1(L) = i(\Gamma), 
\end{equation}where $\Gamma$ denotes the unique section contained in  $\widetilde{\Gamma}$.

In general, {\em speciality} is not constant either in linear systems or in  (flat) algebraic families of unisecants (cf. e.g. \cite[Examples\,2.8,\,2.9]{CF}). Nevertheless, since ${\rm Div}^{1,\delta}_F$ has a Quot-scheme structure there is a universal quotient $\mathcal Q_{1,\delta} \to {\rm Div}^{1,\delta}_F$ so, taking  $\mathbb P (\mathcal Q_{1,\delta}):= {\rm Proj} (\mathcal Q_{1,\delta}) \stackrel{p}{\to} {\rm Div}^{1,\delta}_F$, one can consider 
\begin{equation}\label{eq:aga}
\mathcal S_F^{1,\delta} := \{\widetilde{\Gamma} \in {\rm Div}^{1,\delta}_F \;\; | \;\; R^1p_*(\Oc_{\Pp(\mathcal Q_{1,\delta})}(1))_{\widetilde{\Gamma}} \neq 0\} \; \;\;  {\rm and} \; \;\; a_F(\delta) := \dim (\mathcal S_F^{1,\delta}), 
\end{equation} i.e.  $\mathcal S_F^{1,\delta}$ is the support of $R^1p_*(\Oc_{\Pp(\mathcal Q_{1,\delta})}(1))$, which parametrizes degree-$\delta$, special unisecants of $F$.

\begin{definition}(cf. \cite[Def. 2.10]{CF})\label{def:ass1} Let $\widetilde{\Gamma}$ be a special unisecant of the surface scroll $F$. 
Assume $\widetilde{\Gamma} \in \mathfrak{F}$, where $\mathfrak{F} \subseteq {\rm Div}^{1,\delta}_F$ is an irreducible subscheme. 

\smallskip

\noindent
$(1)$ We will say that $\widetilde{\Gamma}$ is: 

\noindent
(1-i) {\em specially unique (su)} in $\mathfrak{F}$, if $\widetilde{\Gamma}$ is  the only special unisecant in $\mathfrak{F}$, or   

\noindent
(1-ii) {\em specially isolated (si)} in $\mathfrak{F}$, if $\dim_{\widetilde{\Gamma}} \left(\mathcal S_F^{1,\delta} \cap \mathfrak{F} \right) = 0$.

\smallskip

\noindent
$(2)$ In particular:

\noindent
(2-i) when $\mathfrak{F} = |\Oc_F(\widetilde{\Gamma})|$ is a {\em linear system}, then  $\widetilde{\Gamma}$ is said to be {\em linearly specially unique (lsu)} in case (1-i) and  {\em linearly specially isolated (lsi)} in case (1-ii);

\noindent 
(2-ii) when $ \mathfrak{F} =  {\rm Div}^{1,\delta}_F$,  $\widetilde{\Gamma}$ is said to be {\em algebraically specially unique (asu)} in case (1-i) and 
{\em algebraically specially isolated (asi)} in case (1-ii).  

\smallskip

\noindent
$(3)$ When a section $\Gamma \subset F$ is (asi), $\Ff$ is said to be {\em rigidly specially presented (rsp)} by the sequence \eqref{eq:Fund} corresponding to $\Gamma$. When $\Gamma$ is ai (cf. Def. \ref{def:ass0}), we will also say that 
$\Ff$ is {\em rigidly presented (rp)} via \eqref{eq:Fund}.  
\end{definition} We conclude with the following results, which will be sometimes used in the sequel.

\begin{proposition}\label{prop:CFliasi} (1) Let $\Gamma \subset F$ be a section corresponding to the exact sequence \eqref{eq:Fund}. A section $\Gamma'$, corresponding to a quotient $\Ff \to\!\!\!\!\! \to L'$, is such that  $\Gamma \sim \Gamma'$ on the surface scroll $F$ if and only if the corresponding quotient line bundles are such that $L \simeq L'$. In such a case, in particular one has $i(\Gamma) = i(\Gamma')$. 

Moreover, in such a case, the section $\Gamma$ is (lsu) on $F$ if and only if it is (lsi) on $F$ equivalently if and only if it is (li) on $F$.

\smallskip

\noindent
(2) Assume the vector bundle $\Ff$ to be non-splitting and let $\Gamma \in \mathfrak{F} \subseteq \mathcal{S}^{1,\delta}_F$ be a section of the surface scroll $F$ associated to $\Ff$, where  $\mathfrak{F}$ denotes any irreducible, projective scheme of dimension $k$, parametrizing a flat family of special unisecants. Assume further that either:  

\smallskip

\noindent
(2-i) $k \geq 1$, when $\mathfrak{F}$ is a linear system, or, 

\smallskip

\noindent
(2-ii) when $\mathfrak{F}$ is not a linear family, assume either $k \geq 2$ or  $k=1$ and $\mathfrak{F}$ with base points.

\smallskip

Then, in all the aforementioned cases, the family $\mathfrak{F}$ contains reducible unisecants $ \widetilde{\Gamma} $ with $i(\widetilde{\Gamma}) \geq i(\Gamma)$.
\end{proposition}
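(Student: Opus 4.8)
The plan for Part (1) is to read off linear equivalence of sections from $\Pic(F)$. For a ruled surface $\rho:F\to C$ one has $\Pic(F)=\rho^*\Pic(C)\oplus \ZZ[\Oc_F(1)]$ with $\rho^*$ injective; since a section $\Gamma$ with quotient $L$ and kernel $N$ satisfies $\Oc_F(\Gamma)\cong \Oc_F(1)\otimes\rho^*(N^{\vee})$, two sections $\Gamma,\Gamma'$ are linearly equivalent iff $\rho^*N^{\vee}\cong\rho^*N'^{\vee}$, i.e. iff $N\cong N'$, i.e. (using $N\otimes L\cong\det\Ff\cong N'\otimes L'$) iff $L\cong L'$; the equality $i(\Gamma)=h^1(L)=h^1(L')=i(\Gamma')$ is then \eqref{eq:iLa}. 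For the ``moreover'' part, the key observation I would isolate is that, when $\Gamma$ is special, \emph{every} member of $|\Oc_F(\Gamma)|$ is special. Indeed every irreducible section in the system has quotient $\cong L$ by the first assertion, while a reducible member $\Gamma''+f_A$ linearly equivalent to $\Gamma$ has kernel $N\otimes\Oc_C(A)$, hence quotient $L(-A)$, and the sequence $0\to L(-A)\to L\to\Oc_A\to 0$ gives $h^1(L(-A))\ge h^1(L)=i(\Gamma)>0$. Thus $\mathcal S_F^{1,\delta}\cap|\Oc_F(\Gamma)|=|\Oc_F(\Gamma)|$, and both (lsu) and (lsi) reduce to asking that this whole linear system be a single point, i.e. to (li).

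For Part (2) I would argue by contradiction, assuming $\mathfrak F$ contains no reducible unisecant, so that every member is an irreducible section $\Gamma'$, corresponding to a saturated inclusion $N'\hookrightarrow\Ff$ of a line bundle of degree $d-\delta$. The engine of the proof is a \emph{pencil dichotomy}. Given two independent inclusions $\phi_0,\phi_1\in \mathrm{Hom}(N,\Ff)$ with common kernel class $N$ (a pencil inside one linear system), their wedge $\phi_0\wedge\phi_1$ is a section of $\wedge^2\Ff\otimes(N^{\vee})^{\otimes 2}$, a line bundle of degree $2\delta-d$ (cf. \eqref{eq:Ciro410}). At a point $p$ where this section vanishes, some combination $a\phi_0+b\phi_1$ vanishes, producing a member of the pencil containing the fibre $f_p$, i.e. a reducible unisecant; the only way to avoid every such vanishing is that $\phi_0\wedge\phi_1$ be nowhere zero, which forces $2\delta-d=0$ and $(\phi_0,\phi_1):N^{\oplus 2}\stackrel{\sim}{\longrightarrow}\Ff$, i.e. $\Ff\cong N^{\oplus 2}$ splits. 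Since $\Ff$ is assumed non-splitting, a pencil of sections with fixed kernel always degenerates to a reducible unisecant, and by Part (1) that unisecant lies in the same linear system as the section it degenerates from, hence has speciality $\ge i(\Gamma)$. This settles case (2-i) directly.

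For case (2-ii) I would reduce to the pencil dichotomy through the morphism $\Psi_{1,\delta}\colon \mathfrak F\to\Pic^{d-\delta}(C)$ sending a unisecant to its kernel class. If some fibre of $\Psi_{1,\delta}|_{\mathfrak F}$ is positive-dimensional, it lies in a single linear system and the previous paragraph applies. If $\Psi_{1,\delta}|_{\mathfrak F}$ is generically finite, then I would use the numerical identity $\Gamma'\cdot\Gamma''=2\delta-d$, valid for any two members (computed in $\Pic(F)$ from $\Gamma'\equiv H-\rho^*N'$): when $2\delta-d<0$ two distinct irreducible sections would have negative intersection number, which is impossible, so $\mathfrak F$ would be a single point, against $k\ge 1$; when $2\delta-d=0$ any two distinct members are disjoint, and two disjoint sections of $F$ induce a splitting $\Ff\cong N'\oplus L'$, again contradicting non-splitting; when $2\delta-d>0$ the hypotheses of (2-ii) enter — for $k\ge 2$ the universal section sweeps out $F$, while for $k=1$ with base points one removes the base locus by elementary transformations of $\Ff$ at the base points, lowering $2\delta-d$ until one re-enters the tractable regime (keeping track that the modified bundle stays non-splitting). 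The required bound $i(\widetilde\Gamma)\ge i(\Gamma)$ on the resulting reducible member follows, as above, from the linear-system computation of Part (1) together with the upper semicontinuity of $h^1$ of the flat family of quotient sheaves parametrised by $\mathfrak F$, so that a degenerate, reducible member has speciality at least the generic value $i(\Gamma)$.

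The hard part is precisely the $2\delta-d>0$ sub-case of a genuinely non-linear family in (2-ii): there the sections move in a positive-dimensional family and do meet one another, so neither the disjointness--splitting argument nor a single linear system is available, and one cannot apply the pencil dichotomy verbatim. I expect the real work to lie in organising the reduction — either exhibiting, inside the $k$-dimensional family, a positive-dimensional subfamily with fixed kernel class (to feed into the pencil dichotomy), or performing elementary transformations to decrease $2\delta-d$ — while simultaneously guaranteeing that the non-splitting hypothesis is preserved and that the speciality of the reducible unisecant produced does not drop below $i(\Gamma)$.
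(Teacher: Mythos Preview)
Your Part (1) is correct and matches the paper's (terse) argument. Your pencil dichotomy for (2-i) is also correct and is essentially the algebraic form of what the paper does geometrically: the paper observes that a linear pencil with $\Gamma^2>0$ has base points (two members meet in $\Gamma^2$ points), and at a base point one splits off a fibre; if $\Gamma^2=0$ the pencil is base-point-free, giving two disjoint sections, hence $\Ff$ splits. Your section $\phi_0\wedge\phi_1$ computes exactly this base locus.

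Where you diverge, and where there is a genuine gap, is (2-ii). You route the non-linear case through the fibres of $\Psi_{1,\delta}$ and a trichotomy on $2\delta-d$, and you yourself flag the $2\delta-d>0$ subcase as unfinished. The remark that for $k\ge 2$ ``the universal section sweeps out $F$'' does not by itself produce a reducible member, and the elementary-transformation idea for $k=1$ is neither carried out nor shown to preserve non-splitting while keeping the output inside the original family $\mathfrak F$.

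The paper's argument for (2) avoids all of this and is much shorter. For $k\ge 2$ (linear or not), pick $t$ general fibres $f_1,\dots,f_t$ with $k-2t\in\{0,1\}$, and on each $f_i$ fix two general points. A unisecant meeting $f_i$ in two distinct points must contain $f_i$ as a component, since a section meets each fibre exactly once. Imposing each pair of points is two conditions, so the sublocus $\mathfrak F(-\sum f_i)\subset\mathfrak F$ of members containing $f_1+\cdots+f_t$ is nonempty of dimension $k-2t\ge 0$; every such member is visibly reducible, and $i(\widetilde\Gamma)\ge i(\Gamma)$ follows by upper semicontinuity in the flat family $\mathfrak F$. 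This leaves only $k=1$, where the paper uses exactly your pencil/base-point argument (in the non-linear case the hypothesis ``$\mathfrak F$ has base points'' supplies the point at which to split off a fibre). No trichotomy on $2\delta-d$, no elementary transformations, and no analysis of the fibres of $\Psi_{1,\delta}$ are needed.
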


\begin{proof} The proofs are taken from \cite[Lemma\,2.11, Prop.\;2.12]{CF}; they are briefly included here, for the reader convenience, to let the paper be to some reasonable extent self-contained. As for (1), the first assertion follows from  \eqref{eq:isom2}, whereas the latter ones are both clear.

Concerning the proof of (2), if $k \geqslant 2$ let $t$ be the unique integer such that $0 \leqslant k':= k-2t \leqslant 1$. Let $f_1, \ldots, f_t$ be $t$ general $\rho$-fibres of $F$. Since $k' \geqslant 0$, by imposing to the curves in $\mathfrak{F}$ to contain fixed general pairs of points on $f_1, \ldots, f_t$,  we see that 
$$\mathfrak{F}' := \mathfrak{F} \left(- \sum_{i=1}^t f_i\right) \subset \mathfrak{F}$$is non-empty, all components of it have dimension $k'$, and they all parametrize  unisecants 
$\Gamma'  \sim_{alg} \Gamma -  \sum_{i=1}^t f_i$. Then $\mathfrak{F}$ contains reducible elements $\widetilde{\Gamma}$, and they verify $i(\widetilde{\Gamma}) \geq i(\Gamma)$ by upper-semicontinuity in irreducible, flat families. This proves the assertion when $k \geqslant 2$.  

So, to complete the proof of (2), we are left with the case $k=1$. 

Assume first that $\mathfrak{F}$ is a linear pencil. Since $\mathfrak{F} \subseteq |\Oc_F(\Gamma)|$, from the exact sequence $$0 \to \Oc_F \to \Oc_F(\Gamma) \to \Oc_{\Gamma}(\Gamma) \to 0,$$the line bundle  $\Oc_{\Gamma}(\Gamma)$ is effective which implies $\Gamma^2 \geqslant 0$. Let ${\rm Bs}(\mathfrak{F})$ be the base locus of $\mathfrak{F}$. If  $\Gamma^2 >0$ we take $p \in {\rm Bs}(\mathfrak{F})$ and we can clearly split-off the fibre through $p$ with one condition, thus proving the result. If otherwise $\Gamma^2 = 0$, $\mathfrak{F}$ is a base-point-free pencil, so $F$ contains two disjoint sections and this implies that $\Ff$ is decomposable, a contradiction to our assumptions.  
 
Finally, if $\mathfrak{F}$  is assumed to be non-linear, then ${\rm Bs} (\mathfrak{F}) \neq \emptyset$ and we can argue similarly as in the linear case with $\Gamma^2 >0$. 
\end{proof}

\bigskip

\subsection{Moduli spaces of rank-two (semi)stable bundles and Brill Nother loci}\label{ss:modspaces} Here we briefly recall some fundamental facts on (semi)-stable vector bundles on curves.

Given a rank-$2$ vector bundle $\Ff$ on a smooth, irreducible projective curve $C$ of genus $g$, recall that $\Ff$ is said to be {\em stable} (resp. {\em semistable}) if for every sub-line bundle $N\subset \Ff$ one has $\mu(N)<\mu(\Ff)$ (resp. $\mu(N)\le\mu(\Ff)$), where
 $$\mu(\mathcal E):=\frac{\deg (\mathcal E)}{{\rm rk}(\mathcal E)}$$is the so-called \emph{slope} of any vector bundle $\mathcal E$ on $C$. As in Introduction, $U_C(d)$ will denote the moduli space of (semi)stable, degree $d$, rank-$2$ vector bundles on  $C$, which is irreducible, projective and of dimension $4g-3$. 
The subset $U_C^s(d)\subseteq U_C(d)$ parametrizing (isomorphism classes of) stable bundles, is an open, dense subset. The points in  ${U}^{ss}_C(d):=U_C(d)\setminus U_C^s(d)$ correspond to (S-equivalence classes of) \emph{strictly semistable} bundles on $C$ (cf. e.g. \cite{Ram,Ses}). One has: 

\begin{proposition}\label{prop:sstabh1} Let $C$ be a smooth, irreducible, projective curve of  genus $g \geq 1$ and let $d$ be an integer.

\noindent
(i) If $d \geq 4g-3$, then for any $[\Ff] \in U_C(d)$, one has $i(\Ff) = h^1(\Ff) = 0$.

\noindent 
(ii) If $g \geq 2$ and $d \geq 2g-2$, for $[\Ff] \in U_C(d)$ general,  one has $i(\Ff) =h^1(\Ff) = 0$.  
\end{proposition}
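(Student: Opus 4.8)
The plan is to treat the two parts by a cohomological argument and a moduli-theoretic argument respectively.

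For \emph{part (i)}, I would argue directly via Serre duality and semistability. Writing $h^1(\Ff)=h^0(\omega_C\otimes\Ff^{\vee})$, I note that $\omega_C\otimes\Ff^{\vee}$ is again a rank-two semistable bundle (being the dual of the semistable bundle $\Ff$, twisted by a line bundle), of slope $\mu(\omega_C\otimes\Ff^{\vee})=(2g-2)-\frac d2=\frac{4g-4-d}{2}$. For $d\geq 4g-3$ this slope is $\leq-\frac12<0$. The key observation is then that a semistable bundle of negative slope has no nonzero global sections: any such section would generate a saturated sub-line bundle containing $\mathcal O_C$, hence effective and of non-negative degree, contradicting semistability. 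Thus $h^0(\omega_C\otimes\Ff^{\vee})=0$, i.e. $h^1(\Ff)=0$, for \emph{every} semistable $\Ff$ of degree $d\geq 4g-3$, which settles (i) for all $[\Ff]\in U_C(d)$ at once (the argument uses only semistability, hence is insensitive to the choice of representative within an $S$-equivalence class).

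For \emph{part (ii)}, by Riemann--Roch $\chi(\Ff)=d+2-2g\geq 0$ when $d\geq 2g-2$, so $h^1(\Ff)=0$ is the \emph{expected} generic behaviour. I would first observe that the condition $h^1(\Ff)=0$ is open on the stable locus $U^s_C(d)$, by upper-semicontinuity of $h^0(\omega_C\otimes\Ff^{\vee})$ in families; since $U_C^s(d)$ is dense in the irreducible space $U_C(d)$, it suffices to exhibit \emph{one} stable bundle $\Ff$ of degree $d$ with $h^1(\Ff)=0$. I would build such a bundle as a general extension
$$0\to N\to\Ff\to L\to 0,\qquad \deg N=n,\ \deg L=l,\ n+l=d,\ n<l,$$
with $N\in{\rm Pic}^n(C)$ and $L\in{\rm Pic}^l(C)$ general, controlling both its cohomology and its stability. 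From the associated long exact sequence one gets $h^1(\Ff)=h^1(L)+\dim\coker\!\left(\partial\colon H^0(L)\to H^1(N)\right)$, where $\partial$ is cup product with the extension class, while stability of a general class is guaranteed by the standard dimension count on the locus of extensions admitting a destabilizing sub-line bundle.

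The two cases differ by the parity of $d$ at the boundary. When $d\geq 2g-1$, I would choose nearly balanced degrees with $l-n\in\{1,2\}$ and $n\geq g-1$, so that general $N$ and $L$ are both non-special; then $h^1(N)=h^1(L)=0$ forces $h^1(\Ff)=0$ for \emph{any} extension, and a general extension is stable since $l-n\leq 2\leq g$. \textbf{The main obstacle} is the even boundary case $d=2g-2$, where $\chi(\Ff)=0$: here one cannot keep both factors non-special while enforcing $\deg N<d/2$ for stability. I would instead take $n=g-2$, $l=g$, so that $h^1(L)=0$, $h^0(L)=1$ and $h^1(N)=1$, and reduce $h^1(\Ff)=0$ to the \emph{surjectivity of the coboundary} $\partial\colon H^0(L)\to H^1(N)$ for a general class; by Serre duality this amounts to the non-vanishing of the multiplication $H^0(L)\otimes H^0(\omega_C\otimes N^{\vee})\to H^0(\omega_C\otimes N^{\vee}\otimes L)$, which holds because a product of two nonzero sections of line bundles on an integral curve is nonzero. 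A parallel dimension count shows that a general such extension is stable, so securing stability and $h^1=0$ simultaneously in this delicate range relies on the Petri-type non-degeneracy of the coboundary, rather than on the soft vanishing available in all other ranges.
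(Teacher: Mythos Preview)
Your proof is correct. The paper does not actually prove this proposition: it simply cites \cite[Lemma~5.2]{New} for part~(i) and \cite[p.\,100]{L} or \cite[Rem.\,3]{Ballico} for part~(ii), so there is no in-text argument to compare against. Your treatment of~(i) via Serre duality and the absence of sections on a semistable bundle of negative slope is the standard one (and presumably what Newstead records). For~(ii) you go further than the paper by supplying a self-contained construction: a general extension of two general line bundles of nearly balanced degrees yields a stable bundle with $h^1=0$, and in the boundary case $d=2g-2$ (with $n=g-2$, $l=g$) your coboundary-surjectivity argument is exactly the mechanism formalised later in the paper as Theorem~\ref{CF5.8}/Remark~\ref{rem:wt}, while stability of the general extension with $l-n=2$ is an instance of Proposition~\ref{LN}. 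The one point you leave implicit is why ``$h^1(\Ff)=0$'' is open on $U_C(d)$ when $d$ is even and there is no universal bundle; this follows at once from the determinantal description of $B^{k_1}_d$ recalled in Remark~\ref{rem:BNloci}(b), so the omission is purely expository.
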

 
\begin{proof} For (i), see  \cite[Lemma 5.2]{New}; for (ii) see \cite[p.\,100]{L} or  \cite[Rem.\,3]{Ballico}
\end{proof}

From Proposition \ref{prop:sstabh1}, Serre duality and invariance of stability under operations like tensoring with a line bundle or passing to the dual bundle, for $g \geq 2$ it therefore makes sense to consider the proper sub-loci of $U_C(d)$ parametrizing classes  $[\Ff]$ such that  $i(\Ff) > 0$ in the following range for $d$:    
\begin{equation}\label{eq:congd}
2g-2 \leq d \leq 4g-4. 
\end{equation}

\begin{definition}\label{def:BNloci} Given $C$ any smooth, irreducible, projective curve of genus $g \geq 2$ and considered  $d$ and $ i $ two non-negative integers, we set $k_i := d - 2 g + 2 + i$ and define 
\begin{equation}\label{eq:BdKi}
B^{k_i}_d := \left\{ [\Ff] \in U_C(d) \, | \,  h^0(\Ff) \geq k_i \right\} = 
\left\{ [\Ff] \in U_C(d) \, | \, h^1(\Ff) = i(\Ff) \geq i \right\},
\end{equation} the $k_i^{th}$--{\em Brill-Noether locus}, 
simply {\em Brill-Noether locus}, when no confusion arises.
\end{definition}

\begin{remark}\label{rem:BNloci}  {\normalfont Brill-Noether loci $B^{k_i}_d$ have a natural structure of (locally) determinantal subschemes of $U_C(d)$: 

\medskip

\noindent
$(a)$ When $d$ is an odd integer, one has $U_C(d)=U^s_C(d)$ so $U_C(d)$  is a fine moduli space and the existence of a universal bundle on $C \times U_C(d)$  allows one to construct $B^{k_i}_d$ as the degeneracy locus of a morphism between suitable  vector bundles on $U_C(d)$ (see, e.g. \cite{GT} for details). Accordingly, the  {\em expected dimension} of $B^{k_i}_d$ is 
${\rm max} \{ -1, \ \rho_d^{k_i}\}$, where 
\begin{equation}\label{eq:bn}
\rho_d^{k_i}:= 4g - 3 - i k_i
\end{equation} is  the \emph{Brill-Noether number} in rank-two.  If  $\emptyset \neq B^{k_i}_d \subsetneq U_C(d)$, 
then $B^{k_i+1}_d \subseteq {\rm Sing}(B^{k_i}_d)$. Since any $[\Ff] \in B^{k_i}_d$ is stable, it is a smooth point of $U_C(d)$ and the Zariski tangent space $T_{[\Ff]}(U_C(d))$ at the point $[\Ff]$ to $U_C(d)$ can be identified with $H^0( \omega_C \otimes \Ff \otimes \Ff^{\vee})^{\vee}$. 

If $[\Ff] \in B^{k_i}_d \setminus B^{k_i+1}_d$, the tangent space to $B^{k_i}_d$ at $[\Ff]$ is the  annihilator of the image of the cup--product, i.e. of the {\em Petri map} of $\Ff$ (see, e.g. \cite{Teixidor2}) 
\begin{equation}\label{eq:petrimap}
\mu_{\Ff} : H^0(C, \Ff) \otimes H^0(C, \omega_C \otimes \Ff^{\vee})
\longrightarrow H^0(C, \omega_C \otimes \Ff \otimes \Ff^{\vee}).
\end{equation} 
Thus, if $[\Ff] \in B^{k_i}_d \setminus B^{k_i+1}_d$, one gets  
$\rho_d^{k_i} = h^1(C, \Ff \otimes \Ff^{\vee}) - h^0(C, \Ff) h^1(C, \Ff)$ and $B^{k_i}_d$ is non--singular, of the expected dimension at $[\Ff]$ if and only if the Petri map $\mu_{\Ff}$ is injective.

\medskip 

\noindent
$(b)$ When otherwise $d$ is even, $U_C(d)$ is not a fine moduli space (because $U^{ss}_C(d)\neq \emptyset$). There is a suitable open, non-empty subscheme $ \mathcal Q^{ss} \subset \mathcal Q$ of a certain Quot scheme $\mathcal Q$ defining $U_C(d)$ via a GIT-quotient map $\pi$ (cf. e.g. \cite{Tha} for details) and one can define $B^{k_i}_C(d)$ as the image via $\pi$ of the degeneracy locus of a morphism between suitable vector bundles on $\mathcal Q^{ss}$. It may happen for a component   $\mathcal B $ of a Brill--Noether locus to be  totally contained in $U_C^{ss}(d)$.  In this case the lower bound  $\rho_d^{k_i}$ for the expected dimension of $\mathcal B$ may be no longer valid  (cf. e.g. 
\cite[Remark 7.4]{BGN}).  Nonetheless, the lower bound $ \rho_d^{k_i}$ is still valid if $\mathcal B\cap U^s_C(d)\neq \emptyset$. } 
\end{remark} With the above reminded, we will use the following standard terminology.

\begin{definition}\label{def:regsup} Assume $B_d^{k_i} \neq \emptyset$. A component $ \mathcal B \subseteq B_d^{k_i}$ 
such that $\mathcal B \cap U_C^s(d) \neq \emptyset$ will be called {\em regular},  if it is generically smooth and of (expected) dimension $\dim(\mathcal B) = \rho_d^{k_i}$, 
{\em superabundant}, otherwise. 
\end{definition}

\bigskip

\subsection{Segre invariant and extensions of line bundles}\label{ss:extsegre} The present section reminds some important ingredients and results ensuring stability of vector bundles we are going to contruct later on.

In what follows, $C$ will always denote any smooth, irreducible, projective curve of genus $g \geq 3$. Given a rank-$2$ vector bundle $\mathcal{F}$ on $C$, its \emph{Segre invariant} $s(\mathcal{F}) \in \mathbb{Z}$ is defined as
\[
s(\mathcal{F}) := \min_{N \subset \mathcal{F}} \left\{ \deg(\mathcal{F}) - 2 \deg(N) \right\},
\]
where $N$ runs over all sub-line bundles of $\mathcal{F}$. It is well known that the Segre invariant is preserved under tensoring with line bundles and under dualization; that is,
\[
s(\mathcal{F}) = s(\mathcal{F} \otimes L), \quad \text{for any line bundle } L, \quad \text{and} \quad s(\mathcal{F}) = s(\mathcal{F}^{\vee}).
\]

A sub-line bundle $N \subset \mathcal{F}$ is called a \emph{maximal-degree sub-line bundle} of $\mathcal{F}$ if $\deg(N)$ is maximal among all sub-line bundles of $\mathcal{F}$. In this case, the quotient $\mathcal{F}/N$ is referred to as a \emph{minimal-degree quotient line bundle} of $\mathcal{F}$, i.e., it has minimal degree among all quotient line bundles of $\mathcal{F}$.

In particular, if $\Gamma \in \mathrm{Div}^{1,\delta}_F$ is a section on the surface scroll $F = \mathbb{P}(\mathcal{F})$ such that $\Gamma^2 = s(\mathcal{F})$, then by \eqref{eq:Ciro410} we have
\[
s(\mathcal{F}) = 2\delta - d,
\]
and $\Gamma$ is called a section of \emph{minimal degree} (equivalently, of \emph{minimal self-intersection}) on $F$.
Thus the bundle $\mathcal{F}$ is \emph{semistable} (resp.\ \emph{stable}) if and only if $s(\mathcal{F}) \ge 0$ (resp.\ $s(\mathcal{F}) > 0$).

Let now $\delta$ be a positive integer and consider $L\in \pic^\delta(C)$ and $N\in\pic^{d-\delta}(C)$.  Any vector $u\in\ext^1(L,N)$ gives rise to a degree-$d$, rank-$2$ vector bundle $\mathcal F_u$, fitting in:
\begin{equation}\label{degree}
(u):\;\; 0 \to N \to \mathcal F_u \to L \to 0
\end{equation} (which actually coincides with sequence $(*)$ as in Introduction). If $\Gamma$ is the section on the surface scroll $F_u = \mathbb P(\Ff_u)$ corresponding to the surjection $\mathcal F_u \to\!\!\!\to L$ then, 
since $\Gamma^2 = \deg(L-N)$, a necessary condition  for $\mathcal F_u$ to be semistable is therefore 
\begin{equation}\label{eq:neccond}
\Gamma^2= 2\delta-d \ge s(\mathcal F_u)\ge 0.
\end{equation} In such a case, the Riemann-Roch theorem gives 
\begin{equation}\label{eq:m}
\dim (\ext^1(L,N))=
\begin{cases}
\ 2\delta-d+g-1\ &\text{ if } L\ncong N \\
\ g\ &\text{ if } L\simeq  N.
\end{cases}
\end{equation}

\color{black}

\begin{lemma}\label{lem:1e2note} (cf. \cite[Lemma\,4.1]{CF}) Let $\Ff$ be a (semi)stable, special, rank-$2$ vector bundle on $C$ of degree $d \geq 2g-2$. 

Then $\Ff = \Ff_u$, for a special and effective line bundle $L \in {\rm Pic}^{\delta}(C)$, for some $\delta \geq \frac{d}{2}$, $N = \det(\Ff) \otimes L^{\vee}\in {\rm Pic}^{d-\delta}(C)$ and a vector $u \in {\rm Ext}^1(L,N)$ as in \eqref{degree}.
\end{lemma}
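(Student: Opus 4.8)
The plan is to produce the required presentation directly from the speciality hypothesis, by exhibiting a suitable quotient line bundle of $\Ff$ and then checking its three required properties. Since $\Ff$ is special we have $h^1(C,\Ff)>0$, so by Serre duality $h^0(C,\omega_C\otimes\Ff^\vee)=\operatorname{Hom}(\Ff,\omega_C)\neq 0$. First I would fix a nonzero homomorphism $\phi:\Ff\to\omega_C$ and look at its image. As a nonzero subsheaf of the line bundle $\omega_C$ on a smooth curve, $\im(\phi)$ is itself a line bundle of the form $L:=\omega_C(-E)$, for a uniquely determined effective divisor $E\ge 0$ (the divisor measuring the torsion cokernel of $\im(\phi)\hookrightarrow\omega_C$). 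Then $\phi$ factors as $\Ff\twoheadrightarrow L\hookrightarrow\omega_C$, so $L$ is a quotient line bundle of $\Ff$; setting $N:=\ker(\Ff\to L)$, which is locally free of rank one since it is the kernel of a surjection of bundles onto a bundle on a smooth curve, one obtains the exact sequence \eqref{degree} with $N=\det(\Ff)\otimes L^\vee$ and a class $u\in\ext^1(L,N)$.

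It then remains to verify that $L$ satisfies $\delta:=\deg(L)\ge\frac{d}{2}$, that $L$ is special, and that $L$ is effective. The speciality of $L$ is immediate: $h^1(C,L)=h^0(C,\omega_C\otimes L^\vee)=h^0(C,\Oc_C(E))\ge 1$ because $E$ is effective. For the degree bound I would invoke (semi)stability: $N$ is a sub-line bundle of $\Ff$, hence $\deg(N)\le\frac{d}{2}$, so $\delta=d-\deg(N)\ge\frac{d}{2}\ge g-1$, where the last inequality uses $d\ge 2g-2$.

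The only point requiring genuine care — and the step I expect to be the crux — is the \emph{effectiveness} of $L$, since $h^0(\omega_C(-E))$ need not be positive for an arbitrary effective $E$. The mechanism is to feed the degree bound coming from (semi)stability back through Riemann--Roch. From $\delta=2g-2-\deg(E)\ge g-1$ I obtain $\deg(E)\le g-1$, and then Riemann--Roch for $L$ gives
\[
h^0(C,L)=\delta+1-g+h^1(C,L)=\bigl(g-1-\deg(E)\bigr)+h^0(C,\Oc_C(E))\ge\bigl(g-1-\deg(E)\bigr)+1\ge 1,
\]
so that $L$ is effective. This completes the verification, and the construction delivers exactly the presentation asserted, with $L\in\pic^{\delta}(C)$ special and effective of degree $\delta\ge\frac{d}{2}$, $N=\det(\Ff)\otimes L^\vee\in\pic^{d-\delta}(C)$, and $u\in\ext^1(L,N)$ as in \eqref{degree}.
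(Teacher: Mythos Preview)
Your proof is correct and follows essentially the same approach as the paper: both use Serre duality to produce a nonzero morphism $\Ff\to\omega_C$, take $L$ to be its image (hence special as a subsheaf of $\omega_C$), use (semi)stability to get $\delta\ge d/2\ge g-1$, and then conclude effectiveness from $\chi(L)\ge 0$ combined with $h^1(L)\ge 1$. Your version spells out the Riemann--Roch step in more detail, but the argument is the same.
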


\begin{proof} The proof is short and taken from \cite[Lemma\,4.1]{CF}; it is briefly included here, for the reader convenience, to let the paper be to some reasonable extent self-contained. 

By Serre duality, the fact that $i(\Ff) >0 $ implies there exists a non-zero morphism $\Ff \stackrel{\sigma^{\vee}}{\to} \omega_C$. The line bundle $L:= {\rm Im}(\sigma^{\vee}) \subseteq \omega_C$ is therefore special. 

Set $\delta:= \deg(L)$. Since $\Ff$ is (semi)stable, then  \eqref{eq:neccond} holds hence  $\delta \geqslant \frac{d}{2} \geqslant g-1$, the latter inequality form assumptions on $d$. Therefore one has $\chi(L) \geqslant 0$ which implies that $L$ is also effective.
\end{proof}

\begin{remark}\label{rem:rigid} {\normalfont With assumptions as in Lemma \ref{lem:1e2note}, any (semi)stable, rank-$2$, special vector bundle of degree $d$ must be {\em ``presented"} as an extension \eqref{degree}, where the quotient line bundle $L$ must be effective, with speciality $ 1 \leq h^1(C,L) \leq i(\Ff)$, of some degree $\delta \geq \frac{d}{2}$ (with strict inequality when {\em stability} holds). In terms of the surface scroll  $F= \Pp(\Ff)$, if we let $\Gamma \subset F$ to be the section corresponding to $L$, and if we moreover assume that it is of minimal degree among quotient line bundles of $\Ff$ then, from Proposition \ref{prop:CFliasi}-(2), if $\Ff$ is indecomposable (as it occurs if e.g. $\Ff$ is {\em not polystable}), then $\Gamma  \subset F$ is linearly isolated (li) with $a_{F}(\delta) \leqslant 1$, where $a_{F}(\delta)$ as in \eqref{eq:aga}. Moreover, the bundle $\Ff$ is rigidly specially presented (rsp) via the quotient line bundle $L$ if $a_F(\delta)=0$, and it is even rigidly presented (rp) if $\Gamma$ is algebraically isolated (ai) on $F$.}
\end{remark}

From Lemma \ref{lem:1e2note}, for every (semi)stable, special, rank-$2$ vector bundle $\mathcal F$ on $C$ of degree $d\geq 2g-2$, there exist a special and  effective line bundle $L$ on $C$ such that, considered $N:=\det(\mathcal F)\otimes L^{\vee}$, there exists $u\in Ext^1(L,N)$ for which  $\mathcal F=\mathcal F_u$. In this set-up, we recall the following:

\begin{theorem} \label{thm:barj} (cf. \cite[Prop.\;1]{Barj}) Let $C$ be a smooth, irreducible, projective curve of genus $g \geq 3$. Let $\Ff$ be a rank-$2$ vector bundle on $C$ of degree $d\geq 2g-2$, with speciality $i = h^1(\Ff) \geq 2$. Then $\Ff$ fits into an exact sequence of the form 
\begin{equation}\label{eq:barj-1}
0 \to N \to \Ff \to \omega_C(-D) \to 0,
\end{equation}where $D$ is an effective divisor on $C$ such that either $h^1(\omega_C(-D))=1$ or $h^1(\omega_C(-D)) = i$. 
\end{theorem}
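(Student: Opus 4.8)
The plan is to use Serre duality to turn the hypothesis $i=h^1(\mathcal F)\ge 2$ into the existence of several independent homomorphisms $\mathcal F\to\omega_C$, and then to read off the required quotient from the geometry of the sections of the rank-two bundle $\mathcal G:=\omega_C\otimes\mathcal F^{\vee}$. Set $W:=H^0(C,\mathcal G)=\operatorname{Hom}(\mathcal F,\omega_C)$, so that $\dim W=h^1(\mathcal F)=i\ge 2$ by Serre duality. Any nonzero $\sigma\in W$ has image a rank-one subsheaf of the line bundle $\omega_C$, hence of the form $\omega_C(-D_\sigma)$ for a unique effective divisor $D_\sigma\ge 0$; since $\mathcal F$ is locally free and $C$ is smooth, the kernel $N$ of $\mathcal F\twoheadrightarrow\omega_C(-D_\sigma)$ is again a line bundle, so every such $\sigma$ produces an exact sequence of the desired shape $0\to N\to\mathcal F\to\omega_C(-D_\sigma)\to 0$. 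A local computation in a trivialising chart identifies $D_\sigma$ with the divisorial zero locus of $\sigma$ viewed as a section of $\mathcal G$, and Serre duality gives $h^1(\omega_C(-D_\sigma))=h^0(\mathcal O_C(D_\sigma))$. Thus the whole problem reduces to choosing $\sigma$ so that $h^0(\mathcal O_C(D_\sigma))\in\{1,i\}$.

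The key device is the saturation $\mathcal M\subseteq\mathcal G$ of the subsheaf generated by all of $W$, and the argument splits according to $\operatorname{rank}(\mathcal M)$. If $\operatorname{rank}(\mathcal M)=1$, then $\mathcal M$ is a saturated sub-line bundle through which every section factors, whence $H^0(\mathcal M)=W$ and $h^0(\mathcal M)=i$; as $\mathcal M$ has sections it is effective, so $\mathcal M=\mathcal O_C(D)$, and dualising the sub-bundle inclusion $\mathcal M\hookrightarrow\mathcal G$ and twisting by $\omega_C$ yields a surjection $\mathcal F\twoheadrightarrow\omega_C(-D)$ with kernel a line bundle. Serre duality then gives $h^1(\omega_C(-D))=h^0(\mathcal M)=i$, which is the second alternative of the statement.

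If instead $\operatorname{rank}(\mathcal M)=2$, I will show that a general $\sigma\in W$ yields speciality $1$. Let $D_0$ be the divisorial base locus of $W$, i.e. the common divisorial part of the $D_\sigma$, so that $W=H^0(\mathcal G(-D_0))$ and, after removing $D_0$, the sections of $\mathcal G(-D_0)$ have no common zero in any fibre; note that an isolated common fibre zero would itself be divisorial on the curve $C$, hence already absorbed into $D_0$. For such a base-point-free space of sections of a rank-two bundle, a standard incidence-dimension count shows that a general section vanishes nowhere, so a general $\sigma$ has $D_\sigma=D_0$. The saturated sub-line bundle $M_\sigma\cong\mathcal O_C(D_0)$ it generates is then a sub-bundle of $\mathcal G$, so any $s'\in H^0(M_\sigma)\subseteq W$ has divisorial zero locus $Z(s')\in|D_0|$ which, being a section of $\mathcal G$, must contain the base divisor $D_0$; comparing degrees forces $Z(s')=D_0$. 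Hence all sections of $\mathcal O_C(D_0)$ share the same zero divisor and are therefore proportional, giving $h^0(\mathcal O_C(D_0))=1$ and $h^1(\omega_C(-D_0))=1$, the first alternative.

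The main obstacle is precisely the rank-two case: one must exclude the intermediate values $2,\dots,i-1$ for the speciality of the quotient, and the delicate point is the genericity statement that a general section of a base-point-free space of sections of a rank-two bundle on $C$ vanishes at no point. The crux of the verification is the observation that the only way the generic divisorial zero $D_\sigma$ can exceed the base divisor $D_0$ is through a common fibre zero, which is automatically divisorial on a curve and hence already contained in $D_0$; once this is in place, the degree comparison $Z(s')\supseteq D_0$ together with $\deg Z(s')=\deg D_0$ pins down the speciality to be exactly $1$, completing the dichotomy.
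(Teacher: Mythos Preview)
Your proof is correct and follows what is essentially the standard argument. The paper itself does not give a proof but simply cites \cite[Prop.\,1]{Barj} (and \cite[Cor.\,1.2]{Teixidor2}), so there is no detailed argument to compare against; your approach via the rank dichotomy for the subsheaf of $\mathcal G=\omega_C\otimes\mathcal F^{\vee}$ generated by its global sections is exactly the natural one and is presumably what those references do.

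One small remark on exposition: in the rank-two case your phrase ``base-point-free'' is being used to mean that $W\subseteq H^0(\mathcal G(-D_0))$ has no common zeros, not that the evaluation map $W\otimes\mathcal O_C\to\mathcal G(-D_0)$ is fibrewise surjective. These are different for rank $\ge 2$: there may still be finitely many points where the evaluation has rank $1$. Your incidence count remains valid because those rank-drop points form a $0$-dimensional locus, so the fibre of the incidence variety over them has dimension $i-1$ rather than $i-2$, and the total dimension of the incidence variety is still at most $i-1$. It would strengthen the write-up to make this explicit rather than leaving it implicit in ``standard incidence-dimension count''. Also note that your argument nowhere uses the hypotheses $g\ge 3$ or $d\ge 2g-2$; these are carried in the statement from the cited reference but are not needed for the dichotomy itself.
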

\begin{proof} This is the ``Serre-dual" version of \cite[Lemma\,1]{Barj} or even of \cite[Cor.\,1.2]{Teixidor2}, cited therein 
as a direct consequence on an unpublished result of B. Feinberg. This result has been completely proved in \cite[Prop.\;1]{Barj}. 
\end{proof} 
\color{black}

\begin{definition}\label{def:fstype}
A bundle $\Ff$, fitting in \eqref{eq:barj-1},  of speciality $i = h^1(\Ff) \geq 2$ and satisfying $h^1(\omega_C(-D)) = i$ is said to be {\em a bundle of first type}; otherwise if $h^1(\omega_C(-D)) =1$ then $\Ff$ is said to be {\em a bundle of second type} (cf. \cite[Def.\;1]{Barj}). 
\end{definition} 

In the following sections, we are going to construct and classify all the irreducible components $\mathcal B$ of $B_d^{k_2} \cap U^s_C(d)$ with the use of the above subdivision in presentations of the rank-$2$, degree $d$ vector bundle $\Ff$ corresponding to a general point $[\Ff]$ of such a component $\mathcal B$, namely a bundle $\Ff$ which is either of {\em first type} or of {\em second type} in the sense of Definition \ref{def:fstype}.

\smallskip

We conclude this section by recalling a result by Lange and Narasimhan, cf. \cite{LN}, which allows (via projective-geometry approach) to sometimes deduce when a bundle $\Ff_u$ as in \eqref{degree} turns out to be (semi)stable. When $u \in \ext^1(L,N)$ corresponds to a non-trivial sequence \eqref{degree}, it defines a point $[u] \in \mathbb P (\ext^1(L,N)):= \mathbb{P}$. When the natural map $$\varphi:=\varphi_{|K_C+L-N|}: C\to\mathbb P$$ is a morphism (namely $|K_C+L-N|$ is base-point-free), set $X:=\varphi(C)\subset \mathbb P$ and consider, for any positive integer $h$, the so-called $h^{th}$-{\em secant variety} of $X$, denoted by $\Sec_h(X) \subseteq \mathbb{P}$ and defined as the closure of the union of all linear subspaces $\langle \varphi(D)\rangle\subseteq\mathbb P$,
for general effective divisors $D$ of degree $h$ on $C$. One obviously has $$\dim(\Sec_h(X)) =\min\,\{\dim(\mathbb P)\; ,\;2h-1\}.$$Within this framework, the following result holds:

\begin{proposition} (\cite[Proposition 1.1]{LN})\label{LN} Let $C$ be any smooth, irreducible, projective curve of genus $g \geq 3$. Let $\delta$ be a positive integer and $L\in \pic^\delta(C)$, $N\in\pic^{d-\delta}(C)$ be line bundles on $C$ such that $\deg(L-N) = 2\delta-d\ge 2$. Then, the map 
$$\varphi=\varphi_{|K_C+L-N|}: C\to\mathbb P = \mathbb P (\ext^1(L,N))$$is a morphism.

Set $X = \varphi(C) \subset \mathbb P$; for any point $[u] \in \mathbb P$, let $\Ff_u$ denote the corresponding rank-$2$, degree $d$ vector bundle on $C$ fitting in \eqref{degree}. Then, for any integer
$\sigma \equiv 2\delta-d\pmod 2  \;\; \mbox{such that} \;\; 4+ d-2\delta\le \sigma \le 2\delta-d,$ one has $$s (\mathcal F_u)\ge \sigma \;\; \mbox{if and only if} \;\; [u]\notin \Sec_{\frac{1}{2}(2\delta-d+\sigma-2)}(X).$$In such a case, if $\sigma \geq 1$ then  $\Ff_u$ is stable. 
\end{proposition}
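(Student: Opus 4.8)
The plan is to translate the algebraic condition $s(\mathcal F_u)\geq\sigma$ into the projective-geometric condition $[u]\notin\Sec_h(X)$ by analysing the sub-line bundles of $\mathcal F_u$ one effective divisor at a time. First I would record that $\deg(K_C+L-N)=2g-2+(2\delta-d)\geq 2g$, so $K_C+L-N$ is base-point-free and $\varphi$ is a morphism; this already yields the first assertion. Via Serre duality I identify $\mathbb P=\mathbb P(\ext^1(L,N))=\mathbb P(H^1(N-L))=\mathbb P(H^0(K_C+L-N)^{\vee})$, so that a point $p\in C$ maps to the evaluation functional $\mathrm{ev}_p$ and, for an effective divisor $D$, the linear span $\langle\varphi(D)\rangle$ is the projectivization of the annihilator of $V_D:=\{\,s\in H^0(K_C+L-N)\ :\ s|_D=0\,\}$.

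Next I classify the sub-line bundles $M\subset\mathcal F_u$. Either $M$ maps to $0$ in $L$, whence $M\subseteq N$ and $\deg M\leq d-\delta$; or the composite $M\to\mathcal F_u\to L$ is nonzero, whence (after saturating) $M=L(-D)$ for a unique effective divisor $D$ of degree $h:=\delta-\deg M\geq 0$. The heart of the argument is the lifting criterion: the inclusion $L(-D)\hookrightarrow L$ lifts to an embedding $L(-D)\hookrightarrow\mathcal F_u$ exactly when the pulled-back extension splits, i.e. when $u$ lies in the kernel of the natural map $\ext^1(L,N)\to\ext^1(L(-D),N)$. Under Serre duality this map is the transpose of the twist-down inclusion $H^0(K_C+L-N-D)\to H^0(K_C+L-N)$, so its kernel is precisely $\mathrm{Ann}(V_D)$. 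Hence $L(-D)\subset\mathcal F_u$ if and only if $[u]\in\langle\varphi(D)\rangle$; letting $D$ vary over $C^{(h)}$ and using monotonicity $\Sec_{h-1}(X)\subseteq\Sec_h(X)$ gives that $\mathcal F_u$ admits a sub-line bundle of $L$-type of degree $\delta-h$ if and only if $[u]\in\Sec_h(X)$.

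It then remains to assemble the Segre invariant. Writing $e:=2\delta-d\geq 2$, a sub-line bundle contained in $N$ contributes $\deg\mathcal F_u-2\deg M\geq e$, while an $L$-type sub-line bundle as above contributes $d-2(\delta-h)=2h-e$. The hypothesis $\sigma\leq 2\delta-d$ renders the $N$-contributions harmless, so $s(\mathcal F_u)<\sigma$ holds if and only if there is an $L$-type sub-bundle with $2h-e<\sigma$, i.e. with $h\leq\frac12(e+\sigma-2)$ (an integer, since $\sigma\equiv e\pmod 2$); by the secant filtration this is equivalent to $[u]\in\Sec_{\frac12(2\delta-d+\sigma-2)}(X)$. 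The lower bound $4+d-2\delta\leq\sigma$ is exactly what guarantees the secant index is non-negative. Taking complements yields the stated equivalence, and when $\sigma\geq 1$ we obtain $s(\mathcal F_u)\geq\sigma>0$, so $\mathcal F_u$ is stable by the criterion $s(\mathcal F)>0$ recalled at the start of this section.

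The step I expect to be the main obstacle is the Serre-duality identification in the second paragraph: one must verify carefully that the kernel of $\ext^1(L,N)\to\ext^1(L(-D),N)$ equals the annihilator of the image $V_D$ of $H^0(K_C+L-N-D)$, and that this annihilator is exactly the span $\langle\varphi(D)\rangle$. Bundled with this is the bookkeeping subtlety that only \emph{saturated} sub-line bundles compute the maximal sub-line-bundle degree, so that the relevant invariant is governed by the smallest $h$ with $[u]\in\Sec_h(X)$; once this is handled cleanly, the parity and range conditions on $\sigma$ fall out mechanically.
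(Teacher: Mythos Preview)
The paper does not supply its own proof of this proposition; it simply records the statement with a citation to \cite[Proposition~1.1]{LN}. Your outline is a correct reconstruction of the Lange--Narasimhan argument: the identification of $\mathbb P(\ext^1(L,N))$ with the target of $\varphi_{|K_C+L-N|}$ via Serre duality, the dichotomy for sub-line bundles of $\mathcal F_u$ (contained in $N$ versus mapping nontrivially to $L$), the lifting criterion expressing $L(-D)\subset\mathcal F_u$ as $[u]\in\langle\varphi(D)\rangle$, and the final bookkeeping on $\sigma$ are all exactly as in the original. One cosmetic slip: the lower bound $\sigma\geq 4+d-2\delta$ forces the secant index $\tfrac12(2\delta-d+\sigma-2)\geq 1$, not merely $\geq 0$; this is harmless since $\Sec_1(X)=X$ is the first meaningful stratum.
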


\bigskip

\section{Brill-Noether loci in speciality 2: proof of Theorem \ref{thm:i=2}}\label{S:BN2} Here we focus on the Brill-Noether locus $B_d^{k_2} \cap U^s_C(d)$ for which, from Remark \ref{rem:BNloci}, the expected dimension of any of its irreducible component is  $\rho_d^{k_2} = 8g-11-2d$. 

Therefore from now on, unless otherwise stated, we will consider $C$ to be a general $\nu$-gonal curve of genus $g \geq 4$ and, as in \eqref{eq:nu} and \eqref{eq:congd}, we will set $$3 \leq \nu < \lfloor \frac{g+3}{2}\rfloor\;\;\; {\rm and} \;\;\; 2g-2 \leq d \leq 4g-4.$$

Given more generally $i >0 $ an integer recall that, as in Definition \ref{def:BNloci}, we set $k_i= d-2g+2+i$ and we may consider $B_d^{k_i} \cap U^s_C(d)$ the corresponding Brill-Noether locus, whose expected dimension is $\rho_d^{k_i}$ as in \eqref{eq:bn}. To prove Theorem \ref{thm:i=2} we will make use of Lemma \ref{lem:1e2note} (equivalently, of Theorem \ref{thm:barj}) from which we know that, for any possible irreducible component $\mathcal B$ of $B_d^{k_2} \cap U^s_C(d)$, its general point $[\Ff] \in \mathcal B$ corresponds 
to a rank-$2$, degree $d$ vector  bundle $\mathcal F$ arising as an extension \eqref{degree}, equivalently \eqref{eq:barj-1}, with quotient line bundle $L = \omega_C(-D)$ being effective, special, of bounded degree $\deg(L) := \delta = 2g-2-\deg(D) > \frac{d}{2}$ from stability condition \eqref{eq:neccond}.

\bigskip

\subsection{{\bf Preliminary results to study components of $B_d^{k_2} \cap U^s_C(d)$ }}\label{ss:prespec2} This section presents results and constructions that either constrain our analysis or enable the construction of the irreducible components of interest.

Firstly, within the aforementioned set-up, the subsequent result restricts the speciality of the quotient line bundle $L = \omega_C(-D)$ for such a presentation.

\begin{lemma}\label{specialityhigh} (cf.\,e.g.\,\cite[Lemme\,2.6 and pp.101--102]{L}) Let $C$ be any smooth, irreducible, projective curve of genus $g$. For any integer $2g-2 \leq d \leq 4g-4$ and $i >0$ there is no irreducible component of $B_d^{k_i} \cap U^s_C(d)$ whose general member $[\mathcal F]$ corresponds to  rank-$2$ vector bundle $\Ff$ of speciality $j := h^1(\mathcal F) > i$. 
\end{lemma}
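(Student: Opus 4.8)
The plan is to argue by contradiction, using degree-preserving \emph{elementary modifications} to lower speciality, in the spirit of \cite[Lemme\,2.6]{L}. Suppose $\mathcal{B}$ were an irreducible component of $B_d^{k_i}\cap U^s_C(d)$ whose general member $[\mathcal{F}]$ has $h^1(\mathcal{F})=j>i$. By upper-semicontinuity of $h^1$ in flat families, every member of the closure $\overline{\mathcal{B}}$ then has speciality $\geq j$, so $\overline{\mathcal{B}}\subseteq B_d^{k_j}$. My goal is to produce an \emph{irreducible} subvariety $\overline{\mathcal{Z}}\subseteq B_d^{k_i}$ that meets $U^s_C(d)$ and strictly contains $\overline{\mathcal{B}}$; this contradicts the maximality of $\mathcal{B}$ as a component.

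First I would build, from $\mathcal{F}$, bundles of the same degree $d$ but of speciality $j-1$. Fix general points $p,q\in C$. A general \emph{positive} modification at $p$, i.e. a bundle $\mathcal{G}$ fitting in $0\to\mathcal{F}\to\mathcal{G}\to\mathbb{C}_p\to 0$ determined by a general $1$-dimensional quotient of $\mathcal{F}|_p$, has $\deg(\mathcal{G})=d+1$; from the long exact sequence, the connecting map $\partial\colon H^0(\mathbb{C}_p)\to H^1(\mathcal{F})$ is nonzero for general data, because $h^1(\mathcal{F})=j>0$ forces $h^0(\omega_C\otimes\mathcal{F}^{\vee})=j>0$ by Serre duality, and a nonzero such section is nonzero at the general point $p$, thereby detecting a general modification direction. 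Hence $h^1(\mathcal{G})=j-1$ and $h^0(\mathcal{G})=h^0(\mathcal{F})$. A general \emph{negative} modification at $q$, i.e. $\mathcal{G}'=\ker(\mathcal{G}\to\mathbb{C}_q)$ for a general $1$-dimensional quotient of $\mathcal{G}|_q$, restores $\deg(\mathcal{G}')=d$, remains locally free, and has surjective evaluation $H^0(\mathcal{G})\to\mathbb{C}_q$ at the general $q$ (as $h^0(\mathcal{G})>0$), so that $h^1(\mathcal{G}')=h^1(\mathcal{G})=j-1\geq i$. Since stability is open and $\mathcal{F}$ is stable, the general such $\mathcal{G}'$ is stable; thus $[\mathcal{G}']\in B_d^{k_i}\cap U^s_C(d)$ with speciality $j-1<j$.

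I would then organize these modifications into a single family. Letting $[\mathcal{F}]$ range over $\mathcal{B}$ and the modification data (the two points $p,q$ together with the two modification directions) range over the appropriate irreducible parameter space, the resulting bundles sweep out an irreducible locus $\mathcal{Z}\subseteq U_C(d)$, whose closure $\overline{\mathcal{Z}}$ is irreducible. Crucially, $\mathcal{F}$ itself lies in $\mathcal{Z}$: taking $q=p$ and choosing the negative modification to be the canonical quotient $\mathcal{G}\to\mathcal{G}/\mathcal{F}\cong\mathbb{C}_p$ recovers $\mathcal{G}'=\mathcal{F}$. Hence $\mathcal{B}\subseteq\mathcal{Z}$, so $\overline{\mathcal{B}}\subseteq\overline{\mathcal{Z}}$. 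Moreover the general member of $\mathcal{Z}$ has speciality $j-1$, so by upper-semicontinuity every member of $\overline{\mathcal{Z}}$ has $h^1\geq j-1\geq i$, giving $\overline{\mathcal{Z}}\subseteq B_d^{k_i}$. Finally the bundle $\mathcal{G}'$ constructed above lies in $\overline{\mathcal{Z}}$ but, having speciality $j-1<j$, does not lie in $B_d^{k_j}\supseteq\overline{\mathcal{B}}$; therefore $\overline{\mathcal{B}}\subsetneq\overline{\mathcal{Z}}$, so $\mathcal{B}$ is properly contained in the irreducible subset $\overline{\mathcal{Z}}\cap U^s_C(d)$ of $B_d^{k_i}\cap U^s_C(d)$, contradicting that $\mathcal{B}$ is a component.

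The routine content — that the modifications stay locally free, that stability is preserved on a dense open subset, and the exactness bookkeeping for $h^0$ and $h^1$ — is standard; the only point requiring genuine care is the genericity statement ensuring $\partial\neq 0$, so that a \emph{general} positive modification really drops $h^1$, which is precisely where the hypothesis $j=h^1(\mathcal{F})>0$ enters, together with the symmetric surjectivity of evaluation for the negative modification. I emphasize that the whole argument is valid for an \emph{arbitrary} smooth, irreducible, projective curve $C$ and makes no use of the $\nu$-gonal hypothesis.
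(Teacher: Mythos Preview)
Your proof is correct and takes essentially the same approach as the paper: the paper's proof is a two-line citation of Laumon's Lemme~2.6, noting that the argument there (for $B^0_d$) proceeds via elementary transformations of vector bundles, and you have simply written out that elementary-transformation argument in full detail. Your ordering is slightly informal (stability of the general $\mathcal{G}'$ is asserted before you exhibit $\mathcal{F}$ itself as a member of the modification family, which is what justifies it), but the content is the same.
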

\begin{proof} The reasoning is as follows: if $[\mathcal F] \in B_d^{k_i}\cap U^s_C(d)$ is such that $h^1(\mathcal F) = j > i $
then $h^0(\mathcal F) = d-2g+2+j = k_j > k_i$, so $[\mathcal F] \in {\rm Sing} (B_d^{k_i}\cap U^s_C(d))$ (cf. e.g. \cite[p.\;189]{ACGH}). Therefore the statement follows from \cite[Lemme 2.6]{L}, from which one deduces that no component 
of  $B_d^{k_i} \cap U^s_C(d)$ can be entirely contained or coincide with a component of $B_d^{k_j}\cap U^s_C(d)$, for $j>i$ (the proof is identical to that in \cite[pp.101-102]{L} for $B^0_d$, $1 \leq d \leq 2g-2$, which uses {\em elementary transformations} of vector bundles). 
\end{proof}

Thus, Lemmas \ref{lem:1e2note} and \ref{specialityhigh} imply that a general point $[\mathcal F]$ of any possible irreducible component $\mathcal B$ of the Brill-Noether locus $B_d^{k_2}\cap U^s_C(d) $ corresponds therefore to a rank-$2$ vector bundle $\Ff$ of degree $d$, presented as \eqref{degree} via an effective quotient $L$ s.t. $ 1 \leq h^1(L) \leq i = i (\Ff)=2$. Namely the components $\mathcal B$ of $B_d^{k_2}\cap U^s_C(d)$ will be constructed just in terms of presentations of the bundle $\Ff$ as either of {\em first type} or of {\em second type} in the sense of Definition \ref{def:fstype}, where $L = \omega_C(-D)$.

In what follows we will show in particular that, for any $2g-2 \leq d \leq 4g-4$ as in \eqref{eq:congd}, the case $h^1(L) =1$ (resp., $h^1(L) =2$) only produces either the empty set or the component $\mathcal B_{\rm reg,2}$ (resp., $\mathcal B_{\rm sup,2}$) as in Theorem \ref{thm:i=2}, extending our previous results in \cite{CFK,CFK2} to the whole range $2g-2 \leq d \leq 4g-4$ of interest for $d$ (cf. Proposition \ref{prop:sstabh1} and \eqref{eq:congd} above). Thus, when $B_d^{k_2} \cap U^s_C(d)\neq \emptyset$  it turns out that its irreducible components contain dense open subsets filled up by bundles either of {\em first} or of {\em second type}, as in Definition \ref{def:fstype},  the former case for {\em superabundant} components, the latter for {\em regular} ones.

To do this, recall that for any exact sequence $(u)$ as in \eqref{degree},  if one sets $$H^0(L) \stackrel{\partial_u}{\longrightarrow} H^1(N)$$the associated {\em coboundary map} then, for any integer $t>0$, one can define the corresponding {\em degeneracy locus}: 
\begin{equation}\label{W1}
\mathcal W_t:=\{u\in\ext^1(L,N)\ |\ {\rm corank} (\partial_u)\ge t\}\subseteq \ext^1(L,N), 
\end{equation} which has obviously a natural structure of (locally) determinantal scheme. As such, it has an {\em expected codimension}, which is 
\begin{equation}\label{eq:clrt}
c(t):= {\rm max} \{0,\; t(h^0(L) -h^1(N) +t)\}
\end{equation} (cf. \cite[(5.5)]{CF}). We will remind some details, which are taken from \cite[\S\,5.2]{CF} and which are included here for reader's convenience, to let the paper be as self-contained as possible. 

Put $m := \dim( {\rm Ext}^1(L,N))$. Thus, if $m >0$ and if $\mathcal W_t \neq \emptyset$, any irreducible component $\Lambda_t \subseteq \mathcal W_t$ is such that
\begin{equation}\label{eq:expdimwt} 
\dim(\Lambda_t) \geqslant {\rm min} \left\{m , m- c(t) \right\}, 
\end{equation}where the right-hand-side is therefore the {\em expected dimension} of $\mathcal W_t$. These loci have been considered also in \cite[\S\,2,\,3]{BeFe}, \cite[\S\;6,\,8]{Muk} for low genus and for vector bundles with {\em canonical determinant}.

\begin{remark}\label{rem:wt} {\normalfont The coboundary map $\partial_u$  can be interpreted in terms of multiplication maps  
among global sections of suitable line bundles on $C$. Indeed,  consider $h^1(N) \geqslant t$ and $h^0(L)\geqslant {\rm max} \{1,h^1(N)-t\}$. Denote 
by
$$\cup : H^0(L) \otimes H^1(N - L) \longrightarrow H^1(N),$$the cup-product: for any $u \in H^1(N - L) \cong {\rm Ext}^1(L,N)$, one has $\partial_u (-) = - \cup u.$ 

By Serre duality and by the functorial properties of $\otimes$ and ${\rm Hom}$, considering the map $\cup$ is equivalent to considering the multiplication map
\begin{equation}\label{eq:mu}
\mu := \mu_{L,K_C-N} : H^0(L) \otimes H^0(K_C-N)  \to H^0(K_C+L-N)
\end{equation} (when $N=L$, such a map $\mu$ simply coincides with the rank-$1$ {\em Petri map} $\mu_0(L)$ as defined in \eqref{eq:Petrilb}).  
For any sub-vector space $W \subseteq H^0(K_C-N)$, we set
\begin{equation}\label{eq:muW}
\mu_W:= \mu|_W : H^0(L) \otimes W \to H^0(K_C+L-N).
\end{equation}Imposing $\cork(\partial_u) \geqslant t$ is equivalent to 
$$V_t := {\rm Im}(\partial_u)^{\perp}  \subset H^0(K_C - N) $$ having dimension at least $t$. Therefore one can describe $\mathcal W_t$ also as: 
$$\mathcal W_t = \left\{u \in  H^0(K_C+L-N)^{\vee} \mid 
\exists\,V_t \subseteq H^0(K_C-N),\; {\rm s.t.} \; \dim(V_t) \geqslant t \;{\rm and}\; {\rm Im}(\mu_{V_t}) \subseteq \{u = 0 \}
\right\},$$where $\{u = 0 \} \subset H^0(K+L-N)$ is the hyperplane defined by $u \in H^0(K_C+L-N)^{\vee}$. 
}
\end{remark}

Thus, one has:

\begin{theorem}\label{CF5.8} Let $C$ be a smooth, irreducible, projective curve of genus $g\ge 3$. Let $0 \leq \delta \leq d $ be integers and let $L \in \pic^{\delta}(C)$ be special and effective and $N \in \pic^{d-\delta}(C)$. 
Set $$l := h^0(L),\;\;\; r:= h^1(N)\;\; {\rm and}\;\; m:=\dim(\ext^1(L,N))$$and assume $ r\ge 1,\,l\ge \max\{1,r-1\},\, m \ge l+1$. Then:

\smallskip

\noindent
(i) $\mathcal W_1$ as in \eqref{W1} is irreducible of the {\em expected dimension} 
$m-c(1) = m- (l-r+1)$; 

\smallskip

\noindent
(ii) if $l \geq r$, then $\mathcal W_1 \subsetneq \ext^1(L,N)$ and, for general $u \in  \ext^1(L,N)$,  the associated coboundary map $\partial_u$ is surjective whereas, for general $w \in  \mathcal W_1$, one has  ${\rm corank} (\partial_w)=1$. 
\end{theorem}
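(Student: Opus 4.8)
The plan is to convert the determinantal description of $\mathcal{W}_1$ in \eqref{W1} into an incidence construction over $\mathbb{P}(H^0(K_C-N))$, using the reformulation of the coboundary map via the multiplication map $\mu$ recorded in Remark \ref{rem:wt}. Set $M := H^0(K_C+L-N)$, so that Serre duality gives $\ext^1(L,N) \cong M^\vee$ with $\dim M = m$, and $h^0(K_C-N)=h^1(N)=r$. For each nonzero $s \in H^0(K_C-N)$ write $\mu_s : H^0(L) \to M$, $\sigma \mapsto s\sigma$, for multiplication by $s$; since the product of two nonzero sections on the integral curve $C$ is nonzero, $\mu_s$ is injective, so $\dim {\rm Im}(\mu_s)=l$ and, as $m \geq l+1$, its image is a proper subspace of $M$. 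The $t=1$ instance of Remark \ref{rem:wt} then reads $\mathcal{W}_1 = \bigcup_{[s]} ({\rm Im}\,\mu_s)^\perp \subseteq M^\vee$.

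For (i), first I would form the incidence variety
$$\mathcal{I} := \bigl\{([s],w) \in \mathbb{P}(H^0(K_C-N)) \times M^\vee \;\bigm|\; w \in ({\rm Im}\,\mu_s)^\perp\bigr\}.$$
As $\mu_s$ is injective of constant rank $l$, the universal multiplication $H^0(L)\otimes \mathcal{O}(-1) \to M \otimes \mathcal{O}$ on $\mathbb{P}(H^0(K_C-N))$ has image a rank-$l$ subbundle, whose annihilator is a subbundle of $M^\vee \otimes \mathcal{O}$ of rank $m-l$; its total space is exactly $\mathcal{I}$. Thus $\mathcal{I}$ is the total space of a vector bundle over an irreducible base, hence irreducible of dimension $(r-1)+(m-l)$. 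Its image under the second projection is $\mathcal{W}_1$, which is therefore irreducible with $\dim \mathcal{W}_1 \leq \dim \mathcal{I} = m-(l-r+1) = m-c(1)$ (here $c(1)=l-r+1\geq 0$ by $l \geq r-1$, cf. \eqref{eq:clrt}). Since $r \geq 1$ and $m \geq l+1$ force $\mathcal{I} \neq \emptyset$, the reverse inequality $\dim \mathcal{W}_1 \geq m-c(1)$ is the general determinantal bound \eqref{eq:expdimwt}, and (i) follows.

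For (ii) assume $l \geq r$, so $c(1)=l-r+1 \geq 1$ and $\dim \mathcal{W}_1 \leq m-1 < m$; hence $\mathcal{W}_1 \subsetneq \ext^1(L,N)$ is proper closed, and on its dense complement $\operatorname{corank}(\partial_u)=0$, i.e. $\partial_u$ is surjective. For the remaining claim I would examine the fibres of $\pi : \mathcal{I} \to \mathcal{W}_1$. Using the Serre-duality adjunction $w(s\sigma)=\langle s, \partial_w(\sigma)\rangle$, one identifies $\{s : w \in ({\rm Im}\,\mu_s)^\perp\} = {\rm Im}(\partial_w)^\perp$, so the fibre $\pi^{-1}(w)$ is the projective space $\mathbb{P}({\rm Im}(\partial_w)^\perp)$, of dimension $\operatorname{corank}(\partial_w)-1$ (as $\dim {\rm Im}(\partial_w)^\perp = \operatorname{corank}(\partial_w)$). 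Since $\mathcal{I}$ and $\mathcal{W}_1$ are irreducible of the same dimension $m-c(1)$ and $\pi$ is dominant, $\pi$ is generically finite; a positive-dimensional projective space being infinite, the general fibre must be a single point, which forces $\operatorname{corank}(\partial_w)=1$ for general $w \in \mathcal{W}_1$.

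The routine inputs, namely the injectivity of each $\mu_s$ and the consequent vector-bundle structure of $\mathcal{I}$, are immediate. The step demanding the most care is the fibre analysis in (ii): identifying $\pi^{-1}(w)$ with $\mathbb{P}({\rm Im}(\partial_w)^\perp)$ so that its dimension reads off $\operatorname{corank}(\partial_w)$, and then invoking the equality $\dim \mathcal{I} = \dim \mathcal{W}_1$ to deduce generic finiteness and hence single-point general fibres. This is precisely where the hypotheses $m \geq l+1$ and $l \geq r$ enter, and it is the crux of the proof.
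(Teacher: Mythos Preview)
Your argument is correct and follows essentially the same route as the paper's proof: both build an incidence variety over $\mathbb{P}(H^0(K_C-N))$ using the injectivity of each $\mu_s$, compute its dimension as $(r-1)+(m-l)=m-c(1)$, and combine dominance onto $\mathcal{W}_1$ with the determinantal lower bound \eqref{eq:expdimwt} to pin down $\dim\mathcal{W}_1$. The only cosmetic difference is that the paper works with the projectivized incidence $\mathcal{J}\subset\Sigma\times\mathbb{P}$ and describes the fibres as linear systems $|\mathcal{I}_{\widehat{\sigma}|\mathbb{P}^\vee}(1)|$, whereas you work with the affine cone and phrase $\mathcal{I}$ directly as the total space of the annihilator subbundle of $M^\vee\otimes\mathcal{O}$; your fibre identification $\pi^{-1}(w)\cong\mathbb{P}({\rm Im}(\partial_w)^\perp)$ in part (ii) is more explicit than the paper's brief statement, but the underlying reasoning is the same.
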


\begin{proof} The proof is taken from \cite[Thm.\,5.8, Cor.\,5.9]{CF} and included here, for reader's convenience, to let the paper be as self-contained as possible.

Notice that, by numerical assumptions, $c(1) \geq 0$ and $m - c(1) \geq r$. Let us prove $(i)$. Since $l, r \geqslant 1$, both line bundles $L$ and $K_C-N$ are effective. One has an inclusion $$0 \to L \to K_C+L-N$$obtained by 
tensoring by $L$ the injection $\Oc_C \hookrightarrow K_C-N$ given by a given non--zero  section of $K_C-N$. 
Thus, for any $V_1 \in \Pp(H^0(K_C-N))$, the $\mu_{V_1}$ as in \eqref{eq:muW} is injective. Since $m \geqslant \, l + 1$, one has $\dim({\rm Im} (\mu_{V_1})) = l \leqslant m-1 $, i.e. ${\rm Im}(\mu_{V_1})$ is certainly  contained in some hyperplane of 
$H^0(K_C+L-N)$.  Let  
$$\Sigma:= \left\{\sigma := H^0(L) \otimes V_1^{\sigma} \subseteq H^0(L) \otimes H^0(K_C-N) \; \mid \; V_1^{\sigma} \in \Pp(H^0(K_C-N)) \right\}.$$Thus 
$\Sigma \cong \Pp(H^0(K_C-N))$,  so it is irreducible of dimension $r-1$. Set $\mathbb P:= \Pp(H^0(K_C+L-N)^{\vee})$; we can define the incidence variety 
$$\Jj := \left\{(\sigma, \pi) \in \Sigma \times \Pp \; | \; \mu_{V_1^{\sigma}}(\sigma) \subseteq \pi \right\} \subset \Sigma \times \Pp.$$ Let 
$$ \Sigma \stackrel{pr_1}{\longleftarrow} \Jj \stackrel{pr_2}{\longrightarrow}  \Pp$$be the two  projections. By the above arguments, the map $pr_1$ is surjective. In particular $\Jj \neq \emptyset$ and, for any $\sigma \in \Sigma$, 
$$pr_1^{-1} (\sigma) = \left\{ \pi \in \Pp\,|\,\mu_{V_1^{\sigma}}(\sigma) \subseteq \pi\right\} \cong 
|\Ii_{\widehat{\sigma}\vert \Pp^{\vee}} (1)|,$$where we have denoted 
$\widehat{\sigma} := \Pp(\mu_{V_1^{\sigma}}(\sigma))$, $\Ii_{\widehat{\sigma}\vert \Pp^{\vee}}$ the ideal sheaf of $\widehat{\sigma}$ in $ \Pp^{\vee}$ and by 
$|\Ii_{\widehat{\sigma}\vert \Pp^{\vee}} (1)|$ the complete linear system of hyperplanes 
of  $ \Pp^{\vee}$ passing through the linear subspace $\widehat{\sigma}$.

Since $\dim(\widehat{\sigma}) = l -1$, then $\dim(pr_1^{-1} (\sigma)) = m -1 - l \geqslant 0$.  This shows that  $\Jj$ is irreducible and $\dim(\Jj) = m-1 - c(1) \leqslant m-1$. Then, $$\widehat{\mathcal W}_1 := \Pp(\mathcal W_1) = \overline{pr_2(\Jj)}.$$Recalling \eqref{eq:expdimwt}, $\mathcal{W}_1 \neq \emptyset$ is irreducible, of the expected dimension $m - c(1)$, which completes the proof of $(i)$. 

As for $(ii)$, since $l \geq r$ then, by definition of $c(1)$ as in \eqref{eq:clrt}, one gets $c(1) \geq 1$ which therefore implies $\mathcal W_1 \subsetneq \ext^1(L,N)$. Thus, for general $u \in  \ext^1(L,N)$,  the associated coboundary map $\partial_u$ must be surjective whereas, for general $w \in  \mathcal W_1$, one has  ${\rm corank} (\partial_w)=1$.  
\end{proof}

We conclude this sub-section with the following auxiliary result which will be frequently used in what follows. 

\begin{lemma}\label{lem:technical} Let $C$ be a smooth, irreducible projective curve of genus $g$, let $L$ and $N$ be line bundles on $C$ such that $h^0(N-L) = 0$. Assume further there exist $u, u' \in {\rm Ext}^1(L,N)$ such that: 
 
 \smallskip

\noindent
(i) the corresponding rank-two vector bundles $\Ff_u$ and $\Ff_{u'}$ are stable, and

\smallskip

\noindent
(ii) there exists an isomorphism $\varphi$ 
\[
\begin{array}{ccccccl}
0 \to & N & \stackrel{\iota_1}{\longrightarrow} & \Ff_{u'} & \to & L & \to 0 \\ 
 & & & \downarrow^{\varphi} & & &  \\
0 \to & N & \stackrel{\iota_2}{\longrightarrow} & \Ff_u & \to & L & \to 0
\end{array}
\]such that $\varphi \circ \iota_1 = \lambda \iota_2$, for some $\lambda \in \mathbb C^*$. 

Then $\Ff_u = \Ff_{u'}$, i.e.  $u, u'$ are proportional vectors in ${\rm Ext}^1(L,N)$. 
\end{lemma}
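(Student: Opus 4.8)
The plan is to upgrade the partial commutativity $\varphi\circ\iota_1=\lambda\iota_2$ into a genuine morphism of short exact sequences whose two outer vertical arrows are nonzero scalars, and then to read off the relation between $u$ and $u'$ from the bifunctoriality of ${\rm Ext}^1(L,N)$. Denote by $\pi_1\colon \mathcal F_{u'}\to L$ and $\pi_2\colon \mathcal F_u\to L$ the two quotient maps. The point is that, since $L$ and $N$ are line bundles on the connected curve $C$, one has ${\rm End}(L)={\rm End}(N)=\mathbb C$, so every vertical arrow on the outer terms is automatically multiplication by a scalar; the content of the statement is then purely the comparison of the two extension classes.

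First I would record that $\lambda\neq 0$: as $\varphi$ is an isomorphism and $\iota_1$ is injective, $\varphi\circ\iota_1=\lambda\iota_2$ is injective, hence $\lambda\neq 0$. Next I would produce the vertical arrow on the quotient side. Because $\pi_2\circ\varphi\circ\iota_1=\lambda\,\pi_2\circ\iota_2=0$, the homomorphism $\pi_2\circ\varphi$ kills $\iota_1(N)=\ker(\pi_1)$, so it descends to a map $\beta\colon L\to L$ with $\pi_2\circ\varphi=\beta\circ\pi_1$; writing $\beta=\mu\,{\rm id}_L$ with $\mu\in\mathbb C$, the surjectivity of $\pi_2\circ\varphi$ (a composition of the isomorphism $\varphi$ with the surjection $\pi_2$) gives $\mu\neq 0$. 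Thus $(\lambda\,{\rm id}_N,\varphi,\mu\,{\rm id}_L)$ is a morphism from the extension $(u')$ to the extension $(u)$. Here the hypotheses enter as follows: stability of $\mathcal F_u$ and $\mathcal F_{u'}$ forces both extensions to be non-split, i.e.\ $u,u'\neq 0$, so that proportionality is the correct notion and the conclusion $\mathcal F_u=\mathcal F_{u'}$ is meaningful; the vanishing $h^0(N-L)=0$, i.e.\ ${\rm Hom}(L,N)=0$, guarantees (together with $u'\neq 0$, via the long exact sequence obtained by applying ${\rm Hom}(-,N)$ to $(u')$) that ${\rm Hom}(\mathcal F_{u'},N)=0$, which rigidifies the situation and ensures that the filtration-compatibility recorded above is not accidental.

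Finally I would invoke the naturality of the connecting homomorphism (equivalently, the Yoneda description of ${\rm Ext}^1$): a commutative diagram of extensions with outer vertical maps $\alpha$ on $N$ and $\beta$ on $L$ forces $\alpha_*[\,u'\,]=\beta^*[\,u\,]$ in ${\rm Ext}^1(L,N)$. Since $\alpha=\lambda\,{\rm id}_N$ acts on ${\rm Ext}^1(L,N)$ as multiplication by $\lambda$ and $\beta=\mu\,{\rm id}_L$ acts as multiplication by $\mu$, this reads $\lambda u'=\mu u$, whence $u'=(\mu/\lambda)\,u$; as $u\neq 0$ the two classes are proportional and $\mathcal F_u=\mathcal F_{u'}$, as claimed. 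The main obstacle I anticipate is purely bookkeeping: getting the variance right in the naturality relation (covariance in $N$ versus contravariance in $L$) and confirming that a scalar endomorphism of a line bundle induces the same scalar on ${\rm Ext}^1$, together with the verification that $\mu\neq 0$; none of these is deep, but they must be stated with care so that the final identity $\lambda u'=\mu u$ is not off by an inversion of one of the scalars.
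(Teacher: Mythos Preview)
Your argument is correct and considerably more streamlined than the paper's. The paper proceeds by choosing local trivializations and writing $\varphi$ as an upper-triangular matrix $\left(\begin{smallmatrix}\lambda&\gamma_i\\0&\beta_i\end{smallmatrix}\right)$; it then works through the cocycle compatibility conditions to show that, after coordinate changes, $\beta_i$ becomes a constant $\mu$ and the off-diagonal entries $\gamma_i$ can be killed (this last step is where the hypothesis $h^0(N-L)=0$ is invoked). Once $\varphi$ is diagonal, proportionality of $u,u'$ is read off from the coboundary maps. Your route bypasses all of this: you simply observe that $\pi_2\circ\varphi$ kills $\iota_1(N)$, hence descends to $\mu\,{\rm id}_L$ with $\mu\neq 0$, and then invoke the naturality of ${\rm Ext}^1$ under morphisms of extensions to obtain $\lambda u'=\mu u$.

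Two remarks. First, your argument in fact never uses $h^0(N-L)=0$: your sentence about ${\rm Hom}(\mathcal F_{u'},N)=0$ ``rigidifying the situation'' is correct as a statement but plays no logical role, since the compatibility $\varphi\circ\iota_1=\lambda\iota_2$ is already part of hypothesis~(ii). So you have effectively proved a slightly stronger lemma. Second, the stability hypothesis enters only to ensure $u,u'\neq 0$ (so that ``proportional'' is meaningful); strictly speaking semistability, or even just the extensions being non-split, would suffice. The paper's coordinate approach, while longer, has the virtue of being entirely self-contained and avoids citing the bifunctoriality of ${\rm Ext}^1$; yours is cleaner and more conceptual.
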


\begin{proof} The proof has been inspired by that in \cite[Lemma 1]{Ma3}. If $\{ U_i\}_{1 \leqslant i \leqslant n}$ is a sufficiently fine open covering of $C$, on any $U_i$ we can choose local coordinates 
$$\left(\begin{array}{c}
 u_i \\
 v_i 
 \end{array}\right) \; \mbox{for} \; \Ff_{u}|_{U_i}\; \mbox{and} \; \left(\begin{array}{c}
 u'_i \\
 v'_i 
 \end{array}\right) \; \mbox{for} \; \Ff_{u'}|_{U_i}, \; 1 \leqslant i \leqslant n,$$such that: 
 
 \smallskip 
 
 \noindent
 $\bullet$ the transition matrices on $U_i \cap U_j$ are
 $$\left(\begin{array}{cc}
 a_{ij} & c_{ij} \\
 0 & b_{ij}  
 \end{array}\right)  \;  \mbox{for} \;\; \Ff_{u}|_{U_i} \;\;\;\; \mbox{and} \;\;\;\; \left(\begin{array}{cc}
 a'_{ij} & c'_{ij} \\
 0 & b'_{ij}  
 \end{array}\right) \; \mbox{for} \;\; \Ff_{u'}|_{U_i},$$where 
 $a_{ij}, a'_{ij}, b_{ij}, b'_{ij} \in \Oc^*_C(U_i \cap U_j)$, $c_{ij}, c'_{ij}\in \Oc_C(U_i \cap U_j)$, for
 $1 \leqslant i, j \leqslant n$,

 \smallskip

 \noindent
 $\bullet$ $N$ is defined by 
 $$ v_i = 0 \;\;  \mbox{for} \;\; \Ff_u|_{U_i}, \;\; \mbox{and by} \;\; v'_i = 0 \;\;  \mbox{for} \; \Ff_{u'}|_{U_i}$$and its transition functions on $U_i \cap U_j$ 
are given by 
$$ a_{ij} \;  \mbox{when} \; N \subset \Ff_u \; \mbox{and by} \; a'_{ij} \;  \mbox{when} \; N \subset \Ff_{u'}, \;  \mbox{for } 1 \leqslant i, j \leqslant n.$$

 \noindent
 In the above setting, transition functions on $U_i \cap U_j$  for $L$ are given by 
 $$ b_{ij} \;  \mbox{when} \; \Ff_u \to \!\! \to L \; \mbox{and by} \; b'_{ij} \;  \mbox{for} \; 
 \; \Ff_{u'}\to \!\! \to L,\;  \mbox{for } \;  1 \leqslant i, j \leqslant n.$$ 
 The map $\varphi$ induces isomorphisms 
 $$\varphi_i := \varphi|_{U_i} : \Ff_{u'}|_{U_i} \stackrel{\cong}{\longrightarrow} \Ff_{u}|_{U_i}, \; \mbox{for any} \;\; 1 \leqslant i \leqslant n.$$By $(ii)$, one has 
 \begin{equation}\label{eq:fi}
 \varphi_i = \left(\begin{array}{cc}
 \lambda & \gamma_i \\
 0 & \beta_i  
 \end{array}\right), \; 1 \leqslant i \leqslant n,
 \end{equation}where $\lambda \in \mathbb C^*$, $\beta_i \in \Oc^*_{C}(U_i)$, $\gamma_i \in \Oc_{C}(U_i)$. Moreover, any $\varphi_i$ has to commute with the transition matrices, i.e. 
$$\left(\begin{array}{cc}
 a_{ij} & c_{ij} \\
 0 & b_{ij}  
 \end{array}\right)\left(\begin{array}{cc}
 \lambda & \gamma_j \\
 0 & \beta_j  
 \end{array}\right) 
 = \left(\begin{array}{cc}
 \lambda & \gamma_i \\
 0 & \beta_i  
 \end{array}\right) \left(\begin{array}{cc}
 a'_{ij} & c'_{ij} \\
 0 & b'_{ij}  
 \end{array}\right), \;\;  \mbox{for}  \;  1 \leqslant i,j \leqslant n,$$ which  read 
 \begin{equation}\label{eq:1}
a_{ij} = a'_{ij}, \;  b'_{ij} = \frac{\beta_j}{\beta_i} b_{ij}, \; \mbox{and} \; \gamma_{j}  a_{ij} -  \gamma_i \frac{\beta_j}{\beta_i} b_{ij} =  \lambda c'_{ij} - \beta_j c_{ij}, 
\;\;  \mbox{for}  \;  1 \leqslant i, j \leqslant n.
\end{equation} 
The second equality in \eqref{eq:1} gives \begin{equation}\label{eq:2} b'_{ij} = \frac{\beta_j}{\beta_i} b_{ij}, \; \mbox{for} \; 1 \leqslant i, j \leqslant n;
\end{equation}correspondingly, the third equality in \eqref{eq:1} becomes  $$\gamma_{j}  a_{ij} -  \gamma_i \frac{\beta_j}{\beta_i} b_{ij} =  \lambda c'_{ij} - \beta_j c_{ij}, \; \mbox{for} \; 1 \leqslant i, j \leqslant n.$$
 Since  $b_{ij}, b'_{ij} \in \Oc^*_C(U_i \cap U_j)$ are both transition functions for $L$, the second equality in
\eqref{eq:1} implies that, on $U_i \cap U_j$, $\beta_i$ and $\beta_j$ differ by a coboundary, i.e. there exist 
$h_i \in  \Oc_C^*(U_i)$, for $1 \leqslant i \leqslant n$, such that $\frac{\beta_j}{\beta_i} = \frac{h_j}{h_i}$ on $U_i \cap U_j$, for $ 1 \leqslant i,  j \leqslant n$. 
Therefore, $$\frac{\beta_i}{h_i} \in \Oc_C^*(U_i) \;\; \mbox{and} \;\; 
\frac{\beta_i}{h_i} = \frac{\beta_j}{h_j} \in \Oc_C^*(U_i \cap U_j), \; \mbox{for}  \; 1 \leqslant i, j \leqslant n,$$i.e. 
\begin{equation}\label{eq:scalar}
\frac{\beta_i}{h_i} = \mu \in \C^*, \; \mbox{for any}  \; 1 \leqslant i \leqslant n.
\end{equation} Make the following change of local coordinates on $\Ff_{u'}|_{U_i}$ 
$$u'_i = x'_i, \;\; v_i' = \frac{1}{h_i} y'_i, \;\mbox{for any}\; 1 \leqslant i \leqslant n.$$
In these coordinates, one  has that: 

\noindent
$\bullet$ from \eqref{eq:fi} and \eqref{eq:scalar}, the representation of $\varphi_i$  becomes   
\begin{equation}\label{eq:fibis}
\varphi_i = \left(\begin{array}{cc}
 \lambda & \widetilde{\gamma}_i \\
 0 & \mu 
 \end{array}\right), \;\mbox{for}  \;  1 \leqslant i \leqslant n
 \end{equation}where $\widetilde{\gamma}_i := \frac{\gamma_i}{h_i} \in \Oc_{C}(U_i)$; 
 
 \noindent
 $\bullet$ the transition matrices for $\Ff_{u'}$ on $U_i \cap U_j$ become 
 $$\left(\begin{array}{cc}
 a'_{ij} & \widetilde{c'}_{ij} \\
 0 & \widetilde{b}'_{ij}  
 \end{array}\right), \; \mbox{for}  \;  1 \leqslant i,j\leqslant n$$ 
 where $ \widetilde{c'}_{ij} := \frac{c'_{ij}}{h_i} \in \Oc_C(U_i \cap U_j)$, 
 $\widetilde{b'}_{ij} := b'_{ij} \frac{h_i}{h_j} \in \Oc_C^*(U_i \cap U_j)$;

\noindent
$\bullet$  the compatibility conditions as in \eqref{eq:1} become
\begin{equation}\label{eq:*}
a_{ij} = a'_{ij}, \;\; b_{ij} = \widetilde{b'}_{ij} \; \; {\rm and} \;\; \widetilde{\gamma}_{j} a_{ij}  - \widetilde{\gamma}_i b_{ij} = \lambda \widetilde{c'}_{ij} - \mu c_{ij}, \;  1 \leqslant i, j \leqslant n.
\end{equation}For the third equality in \eqref{eq:*}, two cases have to be discussed.

\noindent
(a) Assume first $\lambda \widetilde{c'}_{ij} - \mu c_{ij} = 0$. Thus,  
$$\widetilde{\gamma}_i = \frac{a_{ij}}{b_{ij}} \widetilde{\gamma}_j, \; 
 \mbox{for} \; 1 \leqslant i, j \leqslant n,$$i.e. the collection $\{ \widetilde{\gamma}_i\}_{i\leqslant i \leqslant n}$ defines a global section of $N-L$. Since 
 $h^0(N-L) = 0$, from \eqref{eq:fibis} one has 
 $$ \varphi = \left(\begin{array}{cc}
 \lambda & 0\\
 0 & \mu  
 \end{array}\right), \; \lambda, \mu \in \mathbb C^*.$$If $\lambda= \mu$, then $\varphi = \lambda\,{\rm Id}$ so $\Ff_u = \Ff_{u'}$ as vector bundles, 
 $\varphi \in {\rm End}(\Ff_u)$ and  $u, u' \in {\rm Ext}^1(L,N)$ are proportional. 
 
 Next we exclude that  $\lambda \neq \mu$. Indeed, in this case we  get the diagram
 \[
\begin{array}{ccccccl}
0 \to & N \otimes L^{\vee} & \longrightarrow & \Ff_{u'} \otimes L^{\vee} & \to & \Oc_C & \to 0 \\ 
 & \downarrow^{\lambda \mu^{-1}}& & \downarrow^{\widehat{\varphi}} & & \downarrow^{id} &  \\
 0 \to & N \otimes L^{\vee} & \longrightarrow & \Ff_u \otimes L^{\vee} & \to & \Oc_C & \to 0,
 \end{array}
\]where $\widehat{\varphi} = \mu^{-1} \varphi$. Taking the coboundary maps 
$$\widehat{\partial}_{u'} : H^0(\Oc_C) \to H^1(N-L) \;\; \mbox{and} \;\; \widehat{\partial}_u : H^0(\Oc_C) \to H^1(N-L),$$ we  
get $$\widehat{\partial}_{u'}(1) = (\lambda \mu^{-1})  \widehat{\partial}_u(1),$$which  implies again that
$u, u' \in {\rm Ext}^1(L,N)$ are proportional vectors, i.e. $\Ff_u = \Ff_{u'}$ as vector bundles. In this case, 
$\varphi \in {\rm End}(\Ff_u) \setminus \mathbb C^*$, contradicting assumption $(i)$ ($\Ff_u$ stable implies that $\Ff_u$ is simple, cf. e.g. \cite[Prop. 6-c, p.17]{Ses}).     So the case  $\lambda \neq \mu$ cannot occur, and we are done. 

\medskip

\noindent
(b) Assume now $\lambda \widetilde{c'}_{ij} - \mu c_{ij} \neq 0$. In this case, we argue as in \cite[Lemma 1]{Ma3}. Indeed, 
for $1 \leqslant i \leqslant n$, we consider the following further change of coordinates 
\[\left(\begin{array}{c}
 x'_i \\
 y'_i 
 \end{array}\right) := \left(\begin{array}{cc}
 1 & - \frac{2\widetilde{\gamma}_i}{\lambda} \\
 0 & 1 
 \end{array}\right) \left(\begin{array}{c}
 \xi'_i \\
 \eta'_i 
 \end{array}\right) \;\; \mbox{for} \;\; \Ff_{u'}|_{U_i}\;\; {\rm and} \;\; 
 \left(\begin{array}{c}
 u_i \\
 v_i 
 \end{array}\right) := \left(\begin{array}{cc}
 1 & - \frac{\widetilde{\gamma}_i}{\mu} \\
 0 & 1 
 \end{array}\right) \left(\begin{array}{c}
 \xi_i \\
 \eta_i 
 \end{array}\right) \; \mbox{for} \;\; \Ff_u|_{U_i}.
\] Taking into account  \eqref{eq:*}, in these  coordinates the transition matrices become
$$\left(\begin{array}{c}  \xi'_i \\  \eta'_i 
 \end{array}\right) 
 = \left(\begin{array}{cc}
a_{ij} & \frac{1}{\lambda} (2 \mu c_{ij} - \lambda \widetilde{c'}_{ij}) \\
 0 & b_{ij}  
 \end{array}\right) \left(\begin{array}{c}
 \xi'_j \\
 \eta'_j 
 \end{array}\right), \; \mbox{for} \;\; \Ff_{u'}|_{U_i}$$
 and 
 $$\left(\begin{array}{c}
 \xi_i \\
 \eta_i 
 \end{array}\right) =
  \left(\begin{array}{cc}
 a_{ij} & \frac{1}{\mu} (2 \mu c_{ij} - \lambda \widetilde{c'}_{ij}) \\
 0 & b_{ij}  
 \end{array}\right) \left(\begin{array}{c}
 \xi_j \\
 \eta_j 
 \end{array}\right), \; \mbox{for} \;\; \Ff_u|_{U_i}$$whereas $\varphi_i$ reads as $$\left(\begin{array}{cc}
 \lambda & 0 \\
 0 & \mu 
 \end{array}\right).$$One can then conclude as 
 in case (a) above.  
 \end{proof}

\medskip

\subsection{{\bf Regular components in speciality 2}}\label{ss:reg2} In this subsection we prove that, for any $2g-2 \leq d \leq 4g-4$ as in \eqref{eq:congd}, $B_d^{k_2}  \cap U_C^s(d)$ is either empty or it contains only one irreducible component whose general point $[\Ff]$ correspond to a rank-$2$ stable bundle $\mathcal F$ of degree $d$ and of {\em second type} in the sense of Definition \ref{def:fstype}. 

When not empty, such a component will turn out to be exactly the component $\mathcal B_{\rm reg,2}$ as in Theorem \ref{thm:i=2}; we moreover deduce several properties of the bundle $\Ff$ corresponding to $[\Ff] \in \mathcal B_{\rm reg,2}$ general  (e.g., speciality, bounds on its Segre invariant, minimal presentation of $\Ff$ as an extension \eqref{degree}, etcetera)  as well as information on the {\em birational structure} of the component $\mathcal B_{\rm reg,2}$.  For values of $d$ near the upper-bound $4g-4$, some of these properties have already been (partially) proved in \cite{CFK}. 


\bigskip

\bigskip

\noindent
$\boxed{4g-6 \leq d \leq 4g-4}$ In this range the naive count $\rho_d^{k_2} = 8g - 11-2d \geq 0$ (which holds true if and only if $d \leq 4g - \frac{11}{2}$) suggests that $B_d^{k_2}  \cap U_C^s(d) = \emptyset$ and this is exactly what occurs as stated in Theorem \ref{thm:i=2}-(i).

\begin{proposition}\label{prop:caso1spec2} Let $C$ be any smooth, irreducible, projective curve of genus $g\geq 4$. Then, for any integer $4g-6 \leq d \leq 4g-4$ one has $B_d^{k_2}  \cap U_C^s(d) = \emptyset$. 
\end{proposition}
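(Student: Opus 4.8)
The plan is to reduce the statement, via Serre duality, to an elementary fact about stable rank-two bundles of very low degree. Suppose for contradiction that $[\mathcal F]\in B_d^{k_2}\cap U_C^s(d)$. Since $k_2=d-2g+4$, Riemann--Roch shows that $h^0(\mathcal F)\geq k_2$ is equivalent to $h^1(\mathcal F)\geq 2$. Set $\mathcal G:=\omega_C\otimes \mathcal F^{\vee}$, a rank-two bundle of degree $e:=4g-4-d$; as $4g-6\leq d\leq 4g-4$, one has $e\in\{0,1,2\}$. Because the Segre invariant is preserved under dualization and under tensoring by a line bundle, $\mathcal G$ is again stable, and Serre duality gives $h^0(\mathcal G)=h^1(\mathcal F)\geq 2$. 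Thus it suffices to prove that a stable rank-two bundle $\mathcal G$ with $\deg(\mathcal G)=e\leq 2$ satisfies $h^0(\mathcal G)\leq 1$.

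If $e=0$ this is immediate: any nonzero section $\mathcal O_C\hookrightarrow\mathcal G$ would be a sub-line bundle of degree $\geq 0=\mu(\mathcal G)$, contradicting (strict) stability, so $h^0(\mathcal G)=0$. Assume now $e\in\{1,2\}$, so $\mu(\mathcal G)=e/2\leq 1$, and suppose $h^0(\mathcal G)\geq 2$; choose two linearly independent sections $s_0,s_1$. I claim they force a sub-line bundle $M\subseteq\mathcal G$ with $\deg(M)\geq 1$. Consider $s_0\wedge s_1\in H^0(\det\mathcal G)$, where $\deg(\det\mathcal G)=e\geq 1$. If $s_0\wedge s_1\equiv 0$, then $s_0,s_1$ are everywhere proportional, so they generate a rank-one subsheaf whose saturation $M$ is a sub-line bundle with $h^0(M)\geq 2$, whence $\deg(M)\geq 1$. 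If instead $s_0\wedge s_1\not\equiv 0$, it must vanish at some point $p$ (as $\deg(\det\mathcal G)\geq 1$); there the fibres $s_0(p),s_1(p)$ are linearly dependent, so either some $s_i$ vanishes at $p$, or an appropriate constant combination $s_1-c\,s_0$ (with $c\in\mathbb C$) does. In either case one obtains a nonzero section of $\mathcal G$ vanishing at $p$, and the saturation $M$ of the sub-line bundle it generates satisfies $\deg(M)\geq 1$.

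In all cases $\deg(M)\geq 1\geq e/2=\mu(\mathcal G)$, contradicting the stability of $\mathcal G$ (which forces $\deg(M)<\mu(\mathcal G)$ for every sub-line bundle $M$). Hence $h^0(\mathcal G)\leq 1$, so $B_d^{k_2}\cap U_C^s(d)=\emptyset$. The one genuinely delicate point occurs for $d=4g-6$, i.e.\ $e=2$: here the rank-two Brill--Noether number $\rho_d^{k_2}=8g-11-2d=1$ is positive, so emptiness cannot be read off from a naive dimension count and depends essentially on \emph{strict} stability --- indeed \emph{semistable} bundles of degree $2$ carrying two sections do exist. Accordingly, the crux of the argument is the subcase $s_0\wedge s_1\not\equiv 0$ with both sections nonvanishing at $p$, where the required sub-line bundle of degree $\geq 1$ is produced from the proportionality of the fibres at $p$. (Alternatively, one could invoke a Clifford-type upper bound on $h^0$ for stable bundles of slope $\leq 1$, which is presumably the shortcut afforded by the reference acknowledged in the introduction, but the elementary argument above is self-contained.)
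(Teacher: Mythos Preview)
Your proof is correct. The paper takes a slightly different route: for $d\in\{4g-5,4g-4\}$ it invokes a rank-two Clifford-type inequality from \cite{Re} to get $h^0(\mathcal F)\leq d/2+1$, which combined with $h^0(\mathcal F)\geq d-2g+4$ forces $d\leq 4g-6$; for $d=4g-6$ it passes to the Serre-dual bundle of degree $2$ and refers to \cite{Teixidor1} for the conclusion. Your argument handles all three cases uniformly via Serre duality and then supplies, by hand, the elementary fact that a \emph{stable} rank-two bundle of degree $e\leq 2$ has $h^0\leq 1$ (using $s_0\wedge s_1$ and saturating a section with a zero). This is more self-contained than the paper's version and makes explicit the distinction you flag in the $e=2$ case: semistable degree-$2$ bundles with two sections certainly exist, so strict stability is essential there. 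Your closing remark correctly anticipates that the paper's shortcut for the higher degrees is precisely a Clifford-type bound for stable bundles.
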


\begin{proof} For $4g-5 \leq d \leq 4g-4$, if by contradiction we had $B_d^{k_2}  \cap U_C^s(d) \neq \emptyset$, then $[\Ff]$ general in any component $\mathcal B \subseteq B_d^{k_3}  \cap U_C^s(d)$ has speciality exactly $i=2$, 
from Lemma \ref{specialityhigh}. On the other hand, by refinements of Clifford's Theorem as in \cite[Propositions\,2,3,4]{Re}, we would have also $h^0(\Ff) \leq \frac{d}{2} +1$. Therefore, we would get $h^0(\Ff) = k_2 = d - 2 g +4 \leq \frac{d}{2} +1$ which would give $d \leq 4g-6$, against the assumptions.

When otherwise $d = 4g-6$, if once again by contradiction we had $B_d^{k_2}  \cap U_C^s(d) \neq \emptyset$ and if $[\Ff]$ were general in any component $\mathcal B \subseteq B_d^{k_3}  \cap U_C^s(d)$, we could consider the stable and effective residual bundle $\omega_C \otimes \Ff^{\vee}$, which is of degree $2$. Then one concludes with same arguments as in \cite[end of page 123]{Teixidor1}. 
\end{proof}

\bigskip

\bigskip

\noindent
$\boxed{4g-4-2\nu \leq d \leq 4g-7}$ This range for $d$ was partially considered in \cite[Rem.\;3.14-(ii)]{CFK} and \cite[Rem.\;3.1]{CFK2} where, as a consequence 
of  Teixidor I Bigas' Theorem  Theorem \ref{TeixidorRes}-$(i)$, we observed that $B_d^{k_2}  \cap U_C^s(d)$ is irreducible, consisting only of the component $\mathcal B_{\rm reg,2}$ of the expected dimension $\rho_d^{k_2} = 8g -11-2d$ which coincides with the component 
$\mathcal B$ mentioned in Theorem \ref{TeixidorRes}, existing for any smooth, irreducible, projective curve $C$ of genus $g$. 

Here we strongly improve our results in \cite{CFK,CFK2}. Indeed, first of all we show the {\em regularity} of such a component when $C$ is a general $\nu$-gonal curve (recall that Theorem \ref{TeixidorRes} shows regularity of her component $\mathcal B = \mathcal B_{\rm reg,2}$ only when $C$ is with {\em general moduli}); moreover we prove further properties concerning the {\em birational structure} of $\mathcal B_{\rm reg,2}$ as well as a suitable {\em parametric representation}  of such a component. Precisely, we prove (cf. Theorem \ref{thm:i=2}-(ii) in Introduction) the following:

\begin{proposition}\label{prop:caso2spec2} Let $g \geq 4$, $3 \leq \nu < \lfloor \frac{g+3}{2} \rfloor$ and $4g-4-2\nu \leq d \leq 4g-7$ be integers and let $C$ be a general $\nu$-gonal curve of genus $g$. Then one has:  

\begin{itemize}
\item[(i)] $B_d^{k_2}  \cap U_C^s(d)$ is {\em irreducible}, consisting only of one  component, denoted $\mathcal B_{\rm reg,2}$, which is of the expected dimension $\rho_d^{k_2} = 8g-11-2d$; 
\item[(ii)] $\mathcal B_{\rm reg,2}$ is {\em uniruled} and generically smooth, so in particular it is {\em regular}; 
\item[(iii)] if $[\Ff] \in \mathcal B_{\rm reg,2}$ denotes a general point, the corresponding rank-$2$, degree-$d$ vector bundle $\Ff$ on $C$ is {\em of second type} (as in Definition \ref{def:fstype}), arising as an extension of line bundles of the following type: 

\smallskip

\noindent
$(iii-1)$ for either $4g-4-2\nu \leq d \leq 4g-7$, when $3 \leq \nu \leq \frac{g}{2}$, or for $3g-4 \leq d \leq 4g-7$, when $\frac{g+1}{2}\leq \nu < \lfloor\frac{g+3}{2} \rfloor$, the bundle $\Ff$ fits in an exact sequence of the form: 
$$0 \to \omega_{C}(-D) \to \Ff \to \omega_C (-p) \to 0,$$where 
$D \in C^{(4g-5-d)}$  and $p \in C$ are general; 

\smallskip

\noindent
$(iii-2)$  for $4g-4-2\nu \leq d\leq 3g-5$, when $\frac{g+1}{2}\leq \nu < \lfloor\frac{g+3}{2} \rfloor$, the bundle $\Ff$ fits in an exact sequence of the form: $$0 \to N \to \Ff \to \omega_C (-p) \to 0,$$where $p \in C$ is general and  $N \in {\rm Pic}^{d-2g+3}(C)$ is general, i.e. special and non-effective. 

\end{itemize}
\end{proposition}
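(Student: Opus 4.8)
The plan is to realize $\mathcal{B}_{\rm reg,2}$ as the image of a \emph{universal extension space} built from second-type presentations. By Lemmas \ref{lem:1e2note} and \ref{specialityhigh}, the general member $\Ff$ of any component of $B_d^{k_2}\cap U_C^s(d)$ fits in a sequence \eqref{degree} with effective, special quotient $L=\omega_C(-D)$ of degree $\delta>\tfrac{d}{2}$ and $1\le h^1(L)\le 2$. First I would isolate the \emph{second-type} case $h^1(L)=1$ with $L=\omega_C(-p)$ of minimal degree $\delta=2g-3$, so that $h^0(L)=g-1$; fixing $N\in\Pic^{d-2g+3}(C)$ and $u\in\ext^1(L,N)$ one has $h^1(\Ff_u)=h^1(L)+\cork(\partial_u)=1+\cork(\partial_u)$, so I impose $u\in\mathcal W_1$ as in \eqref{W1} in order to force $h^1(\Ff_u)=2$. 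The numerical hypotheses of Theorem \ref{CF5.8} are satisfied \emph{precisely} in this range: indeed $m:=\dim\ext^1(L,N)=5g-7-d$ by \eqref{eq:m}, and $m\ge h^0(L)+1=g$ is equivalent to $d\le 4g-7$. Hence $\mathcal W_1$ is irreducible of the expected dimension and a general $u\in\mathcal W_1$ has $\cork(\partial_u)=1$, so $h^1(\Ff_u)=2$ exactly. Letting $N$ and $p$ vary and projectivizing $\mathcal W_1$, I obtain an irreducible parameter space, and a routine Riemann--Roch count (using $c(1)$ from \eqref{eq:clrt}) gives its dimension to be $\rho_d^{k_2}=8g-11-2d$ in both presentations described below.

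Next I would establish stability of the general $\Ff_u$ and the minimality of its quotient. Since $\Gamma^2=2\delta-d=4g-6-d\ge 1$, I would apply Proposition \ref{LN} for $d\le 4g-8$, supplemented by a direct estimate on the degrees of sub-line bundles, to obtain $s(\Ff_u)>0$ and that $\omega_C(-p)$ is a minimal-degree special effective quotient; the boundary value $d=4g-7$ I would treat separately. The resulting (rational) \emph{modular map} $\pi$ is then defined on a dense open set, and the minimality of the quotient---through Proposition \ref{prop:CFliasi} and Remark \ref{rem:rigid}---makes its general fibre finite. Thus $\overline{\im(\pi)}$ is irreducible of dimension $\rho_d^{k_2}$; as it meets $U_C^s(d)$, its dimension is bounded below by $\rho_d^{k_2}$ (Remark \ref{rem:BNloci}), so it is an irreducible component of the \emph{expected} dimension and generically smooth, i.e.\ \emph{regular}. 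Matching this second-type presentation with Teixidor I Bigas' bundle shows $\overline{\im(\pi)}$ is exactly the component $\mathcal B$ of Theorem \ref{TeixidorRes}.

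For uniruledness I would exploit the linear structure of $\mathcal W_1$: by Remark \ref{rem:wt}, $\mathcal W_1=\bigcup_{V_1}(\im\mu_{V_1})^{\perp}$ is a union of linear subspaces of $\ext^1(L,N)$, each of projective dimension $m-h^0(L)-1=4g-7-d$. When $d\le 4g-8$ these are positive-dimensional, so through the general point of $\mathbb P(\mathcal W_1)$ there passes a line; since $\pi$ is generically finite, its image is a covering family of rational curves on $\mathcal B_{\rm reg,2}$, proving uniruledness (the boundary case being handled by hand). The explicit presentations (iii-1)/(iii-2) I would read off from $N=\det(\Ff)\otimes\omega_C^{\vee}(p)\in\Pic^{d-2g+3}(C)$: the twist $\omega_C\otimes N^{\vee}$ has degree $4g-5-d$, so a general $N$ is \emph{special} precisely when $4g-5-d\ge g$, i.e.\ $d\le 3g-5$ (giving $N$ general, special and non-effective, as in (iii-2)), whereas for $d\ge 3g-4$ a general $N$ is non-special and one is forced to take $N=\omega_C(-D)$ with $D\in C^{(4g-5-d)}$ general (as in (iii-1)). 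The threshold $d=3g-5,\,3g-4$ falls inside the range $[4g-4-2\nu,\,4g-7]$ exactly when $\nu\ge\frac{g+1}{2}$, which is why the two descriptions are sorted by $\nu$.

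The main obstacle is assertion (i), namely that there is \emph{no other} component in this range. I would argue that the complementary, \emph{first-type}, case $h^1(L)=2$ forces $L=\omega_C(-D)$ with $h^0(\mathcal O_C(D))=2$, so $D$ varies in a locus $W^1_e(C)$, $e=\deg D$. Bounding $\dim W^1_e$ by the gonal Brill--Noether dimensions of Lemma \ref{lem:Lar} (namely $e-\nu$ or $2e-g-2$) and computing the dimension of the corresponding extension family, I expect to show that for $d\ge 4g-4-2\nu$ every such first-type locus has dimension strictly less than $\rho_d^{k_2}$ and is therefore absorbed into $\overline{\mathcal B_{\rm reg,2}}$ rather than forming a new component. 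Equivalently, this is exactly the range in which the integer $n$ producing an extra component in the criterion of Theorem \ref{TeixidorRes}-(i) fails to exist for a general $\nu$-gonal curve (cf.\ Remark \ref{rem:finale2}). This sharp gonal dimension estimate, which detects the threshold $d=4g-4-2\nu$ separating irreducibility from reducibility, is the delicate point of the argument.
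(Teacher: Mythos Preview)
Your plan is largely sound and in fact runs parallel to the paper's, with one notable twist: you work directly with the quotient $\omega_C(-p)$, whereas the paper first uses the canonical quotient $\omega_C$ (so that its modular map has one-dimensional general fibre, identified with the pencil $|\mathcal O_{F_u}(\Gamma)|\simeq\mathbb P^1$ of canonical sections) and only afterwards recovers the $\omega_C(-p)$ presentation by splitting off a base-point fibre via Proposition~\ref{prop:CFliasi}. Your choice has the pleasant side-effect that $r=h^1(N)=1$ uniformly in both sub-cases (iii-1) and (iii-2), so $\mathcal W_1$ is always a \emph{linear} subspace of $\ext^1(\omega_C(-p),N)$ and one may degenerate to the split bundle throughout; by contrast the paper's kernel in (iii-2) has $h^1=2$, forcing a more delicate diagram chase through $\omega_C\otimes\Ff_u\otimes\Ff_u^{\vee}$ to control the Petri map. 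For part (i) your reasoning is essentially the paper's: one simply checks that no integer $n$ with $2n<4g-4-d$ and $\dim W^1_n(C)\ge 2g+2n-d-5$ exists on a general $\nu$-gonal curve once $d\ge 4g-4-2\nu$, and then cites Theorem~\ref{TeixidorRes}.

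There is, however, a genuine gap. Your sentence ``its dimension is bounded below by $\rho_d^{k_2}$, so it is an irreducible component of the expected dimension and generically smooth'' is a non sequitur: matching dimensions does \emph{not} force $\overline{\im(\pi)}$ to be a full component (it could sit properly inside a larger one), and even after you invoke Teixidor to identify $\overline{\im(\pi)}$ with her component $\mathcal B$, her theorem gives the expected dimension but asserts generic smoothness only for $C$ with general moduli, not for a general $\nu$-gonal curve. Generic smoothness must be proved by showing the Petri map $\mu_{\Ff_u}$ is injective at a general point, which the paper does explicitly (via the split bundle $\Ff_0=N\oplus\omega_C$ and the injectivity of the four constituent multiplication maps, using $h^0(D)=1$); with your quotient choice the same specialisation is available, but you must actually carry it out. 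A secondary imprecision: your appeal to Proposition~\ref{prop:CFliasi} and Remark~\ref{rem:rigid} for finite fibres is misapplied, since those concern quotients of minimal degree among \emph{all} quotients, which $\omega_C(-p)$ typically is not. The correct argument is direct: $\mathcal N_{\Gamma/F_u}\simeq \omega_C(-p)\otimes N^{\vee}$ is general of degree $4g-6-d\le g-2$ in case (iii-1) (and similarly in (iii-2)), hence $h^0(\mathcal N_{\Gamma/F_u})=0$, so $\Gamma$ is algebraically isolated and Lemma~\ref{lem:technical} then gives birationality of $\pi$.
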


\begin{proof} (i) This is an easy  consequence of Theorem \ref{TeixidorRes}-$(i)$, where for any smooth, irreducible, projective curve $C$ of genus $g$, it is proved the existence of a component $\mathcal B$, of expected dimension $\rho_d^{k_2}$. Moreover conditions therein, guaranteeing reducibility of $B_d^{k_2}  \cap U_C^s(d)$, are:  
$$2n < 4g-4-d, \;\; W^1_n (C) \neq \emptyset \;\; {\rm and} \; \; \dim(W^1_n (C)) \geq 2g + 2n - d - 5.$$Since on a general $\nu$-gonal curve one must have $\nu \leq n$ (cf. e.g. \cite{AC}), then $2 \nu \leq 2 n < 4g-4-d$ forces $d \leq 4g-5-2\nu$, which explains why $B_d^{k_2} \cap U^s_C(d)$ must be irreducible for $d \geq 4g-4-2\nu$.  

\smallskip

\noindent
(ii)-(iii) In order to give an explicit parametric representation of the component $\mathcal B_{\rm reg,2}$, as in \cite[Proofs of $(1)$ and of Claim 2]{Teixidor1} related to the ``Serre dual" version of these cases, the bundle $\Ff$ corresponding to a general point in $\mathcal B_{\rm reg,2}$ is proved to have a {\em presentation} of the form $$0 \to N \to \Ff \to \omega_C \to 0,$$where $N$ of degree $2g-2-2\nu \leq \deg (N) = d - 2g+2 \leq 2g-5$, necessarily special, as $i(\Ff)=2$. We will use this fact when $C$ is a general $\nu$-gonal curve to prove (ii) and (iii). 

Notice that, in the above exact sequence, one may take the kernel line bundle $N$ as $N := K_C-D_t$, with $D_{t} \in C^{(t)}$ a general effective divisor such that $t = 4g-4-d$, $h^1(K_C-D_t) =1$ and $h^0(K_C-D_t) = g-t \geq 1$, if and only if $1 \leq t \leq g-1$. This is therefore possible if $t = 4g-4-d \leq g-1$, namely $d \geq 3g-3$. When otherwise $d=3g-4$ and $N$, of degree $d-2g-2$, is general of its degree then $\deg (N)= d-2g+2 = g-2$ so $h^1(N)=1$ by generality, which gives $N=K_C-D_{g}$, with $D_g \in C^{(g)}$ general. On the other hand, since $d \geq 4g-4-2\nu$, notice that $4g-4-2\nu \geq 3g-3$ iff $3 \leq \nu \leq \frac{g-1}{2}$.

To sum-up, to study the cases as in $(iii-1)$ in the statement, we take 
$N := K_C-D_{4g-4-d}$, where  $D_{4g-4-d} \in C^{(4g-4-d)}$ is general (cases as in $(iii-2)$ will be treated later on). Thus \linebreak $h^0(N) = h^0(K_C-D_{4g-4-d}) = d+4-3g \geq 1$. Using notation as in  Theorem \ref{CF5.8}, one has $r:= h^1(N) = h^1(K_C-D_{4g-4-d}) = 1$. Moreover $m:= \dim(\ext^1(K_C, K_C-D_{4g-4-d})) = h^0(K_C+D_{4g-4-d}) = 5g-5-d$. 

Notice that the degeneracy locus $\mathcal W_1$ as in \eqref{W1} is such that 
$\mathcal W_1 \neq \emptyset$, as one obviously has \linebreak $\omega_C \oplus \omega_C(-D_{4g-4-d}) \in \mathcal W_1$, and also that $u \in \mathcal W_1$ if and only if 
$\partial_u = 0$, since $h^1(K_C-D_{4g-4-d}) = 1$. Recalling  from above that the coboundary maps $\partial$ are induced by the cup-product 
$$\cup: H^0(K_C) \otimes H^1(-D_{4g-4-d}) \to H^1(K_C-D_{4g-4-d}),$$we have the natural induced morphism 
\[
\begin{array}{ccc}
\ext^1(K_C, K_C-D_{4g-4-d}) \cong H^1(-D_{4g-4-d}) & \stackrel{\Phi}{\longrightarrow} & {\rm Hom} \left(H^0(K_C),\; H^1(K_C-D_{4g-4-d})\right) \\
u & \longrightarrow & \partial_u
\end{array}
\]which shows that $\mathcal W_1 \subset \ext^1(K_C, K_C-D_{4g-4-d})$ turns out to be 
\begin{equation}\label{eq:vaff931}
{\small {\mathcal W}_1 = \left(\coker \left(H^0(K_C+D_{4g-4-d}) \stackrel{\Phi^{\vee}}{\leftarrow} H^0(K_C) \otimes H^0(D_{4g-4-d})\right)\right)^{\vee},}
\end{equation} namely $\mathcal W_1 = \left(\im \; \Phi^{\vee}\right)^{\perp}$ is a sub-vector space of $\ext^1(K_C, K_C-D_{4g-4-d})$.  Since 
$h^0(D_{4g-4-d}) =1$, the map $\Phi^{\vee}$ is injective and so $\mathcal W_1$ has codimension $g$, namely $\dim(\mathcal W_1) = m-g = 4g-5-d \geq 2$; moreover, for $u \in \mathcal W_1$ general, the corresponding rank-two vector bundle $\Ff_u$ has speciality $2$.

We let $\widehat{\mathcal W}_1 := \Pp(\mathcal W_1)$ denote the corresponding linear sub-space in $\Pp := \Pp(\ext^1(K_C, K_C-D_t))$. The stability of the bundle $\Ff_u$, for $D_{4g-4-d} \in C^{(4g-4-d)}$ and $[u] \in \widehat{\mathcal W}_1 $ general, can be proved using its ``Serre dual version" 
$\mathcal E_u := \omega_C \otimes \Ff_u^{\vee}$ as in \cite[proof of Claim\,2]{Teixidor1}. Since for $D_{4g-4-d}$ varying in a suitable dense subset $S \subseteq C^{(4g-4-d)}$ each corresponding $\widehat{\mathcal W}_1$ is a linear space of (constant) dimension $4g-6-d$, these linear spaces fill-up an irreducible scheme, denoted by $\widehat{\mathcal W}_1^{Tot}$, which is ruled over $S$ by the linear spaces $\widehat{\mathcal W}_1$'s as $D_{4g-4-d}$ varies in $S$ and which is of dimension $$\dim(\widehat{\mathcal W}_1^{Tot}) = (4g-4-d) + (4g-6-d) = 8g-10-2d.$$By stability and speciality $2$ of the general bundle, $\widehat{\mathcal W}_1^{Tot}$ is endowed with a natural (rational) modular map 
\[
 \begin{array}{ccc}
 \widehat{\mathcal W}_1^{Tot}& \stackrel{\pi}{\dashrightarrow} & U^s_C(d) \\
 (D_{4g-4-d}, [u]) & \longrightarrow &[\mathcal F_u]
 \end{array}
 \] for which $\im(\pi) \subseteq B^{k_2}_d \cap U_C^s(d)$. 
 
\begin{claim}\label{cl:20feb816} The general fiber of $\pi$ is $1$-dimensional, isomorphic to $\Pp^1$.
\end{claim}
\begin{proof}[proof of Claim \ref{cl:20feb816}] Take $[\Ff] \in \im(\pi)$ general, so $\Ff= \Ff_u$ for 
$(D_{4g-4-d}, [u]) \in  \widehat{\mathcal W}_1^{Tot}$ general. Then one has that 
$$\pi^{-1}([\Ff_u]) = \left\{ (D_{4g-4-d}', u') \in  \widehat{\mathcal W}_1^{Tot} \, | \; \Ff_{u'} \simeq \Ff_u \right\}.$$Since $2 K_C - D_{4g-4-d} = \det(\Ff_u) \simeq \det(\Ff_{u'}) = 2 K_C- D'_{4g-4-d}$ and since both bundles are presented via the canonical quotient, one gets that $D_{4g-4-d} = D'_{4g-4-d}$. 

For $[u'] \in \pi^{-1}([\Ff_u]) \setminus \{[u]\}$, taking into account that 
$\Ff_u \simeq \Ff_{u'}$ and that both of them are stable (so simple), there exists therefore an isomorphism $\varphi : \Ff_{u'} {\to} \Ff_u$ (which is not an endomorphism) giving rise to the following diagram: 
\[
\begin{array}{ccccccl}
0 \to & K_C-D_{4g-4-d} & \stackrel{\iota_1}{\longrightarrow} & \Ff_{u'} & \to & K_C & \to 0 \\ 
 & & & \downarrow^{\varphi} & & &  \\
0 \to & K_C-D_{4g-4-d} & \stackrel{\iota_2}{\longrightarrow} & \Ff_u & \to & K_C & \to 0.
\end{array}
\]Thus the maps $\varphi \circ \iota_1$ and $\iota_2$ determine two non-zero global sections $ s_1 \neq s_2 \in H^0(\Ff_u (-K_C+D_{4g-4-d})) $. 

If these sections are linearly independent, they give rise to two non-proportional independent ways to inject $K_C-D_{4g-4-d}$ as a sub-line bundle of $\Ff_u$; in other words the fiber $\pi^{-1}([\Ff_u])$ is determined by the family of possible injections (up to scalar multiplication) of $K_C-D_{4g-4-d}$ as a sub-line bundle of $\Ff_u$. Thus, if $F_u = \mathbb P(\Ff_u)$ denotes the surface scroll associated to $\Ff_u$ and if $\Gamma$ denotes a canonical section of $F_u$ corresponding to the canonical quotient of $\Ff_u$, from \eqref{eq:isom2} one has therefore $\pi^{-1}([\Ff_u]) = \Pp(H^0(\Ff(-K_C+D_{4g-4-d})) \simeq |\mathcal O_{F_u} (\Gamma)|$. From the fact that $\Ff_u$ is of rank two, with $\det(\Ff_u) = 2K_C - D_t$ and $h^1(\Ff_u)=2$, by Serre duality one has 
$\dim(|\mathcal O_{F_u} (\Gamma)|) = h^0(\Ff_u (-K_C + D_{4g-4-d})) -1 = h^1(\Ff_u) -1 = 1$. 

If otherwise the two sections were linearly dependent, since $\deg(N-L) = d - 2 \delta <0$, 
one has $h^0(N-L) = 0$ and we would get a contradiction by Lemma \ref{lem:technical}. 
\end{proof}

From Claim \ref{cl:20feb816}, we have that$$\dim(\im(\pi)) = 8g-11-2d = \rho_d^{k_2}$$so it is dense in the unique component $\mathcal B_{\rm reg,2}$, namely the component $\mathcal B$ as in Theorem \ref{TeixidorRes}-(i) existing for any curve $C$, which is of expected dimension and also the only component by (i). 

Since $\im(\pi)$ is dominated by $\widehat{\mathcal W}_1^{Tot}$, which is {\em ruled}  over $S$ by the linear spaces $\widehat{\mathcal W}_1$'s, it is clear that $ \im(\pi)$ and so also $\mathcal B_{\rm reg,2}$ are {\em uniruled}. Moreover, since the general fiber of $\pi$ is isomorphic to $\Pp^1$ by Claim \ref{cl:20feb816}, it also follows that the bundle $\Ff_u$ corresponding to $[\Ff_u] \in \mathcal B_{\rm reg,2}$ general is not rigidly specially presented (not rsp), as in Definition \ref{def:ass1}, via the exact sequence 
$$0 \to \omega_{C}(-D_{4g-4-d}) \to \Ff_u \to \omega_C \to 0.$$To prove the generic smoothness  of $\mathcal B_{\rm reg,2}$, we need to study the Petri map $\mu_{\Ff}$ for  general $[\mathcal F ]\in \im(\pi)$. 

To do this, one can apply arguments similar to those in \cite[Prop.\,3.9]{CFK}. Indeed, from the fact that $\mathcal F = 
\mathcal F_u$, for some $[u]$ in some fiber $\widehat{\mathcal W}_1$ over $S$ of $\widehat{\mathcal W}_1^{Tot}$, it follows that the corresponding coboundary map $\partial_u$ is the zero-map; in other words 
$$H^0(\mathcal F) \cong H^0(K_C-D_{4g-4-d}) \oplus H^0(K_C) \;\;\;{\rm and} \;\;\; H^1(\mathcal F) \cong  H^1(K_C-D_{4g-4-d}) \oplus H^1(K_C).$$This means that, for any such bundle, the domain of the Petri map $\mu_{\mathcal F}$ coincides with that 
of $\mu_{{\mathcal F}_0}$, where  the bundle $\mathcal F_0 := (K_C-D) \oplus (K_C)$ corresponds to the zero vector in $\mathcal W_1 \subset \ext^1(K_C, K_C-D_{4g-4-d})$. We will concentrate in computing $\mu_{{\mathcal F}_0}$. To ease notation, we will set
$D:= D_{4g-4-d}$.  Observe that 
\[\begin{array}{ccl}
H^0(\mathcal F_0) \otimes H^0(\omega_C \otimes \mathcal F_0^{\vee}) & \cong &  \left(H^0(K_C-D) \otimes H^0(D)\right)  \oplus \left(H^0(K_C-D) \otimes H^0(\mathcal O_C)\right) \oplus\\
& & \left(H^0(K_C) \otimes H^0(D)\right) \oplus \left(H^0(K_C) \otimes H^0(\mathcal O_C)\right). 
\end{array}
\]Moreover we have $\omega_C \otimes \mathcal F_0 \otimes \mathcal F_0^{\vee} \cong K_C \oplus (K_C-D) \oplus (K_C+D) \oplus K_C$. Therefore, for Chern classes reason,  
$$\mu_{\mathcal F_0} = \mu_{D} \oplus \mu_{K_C-D, \mathcal O_C} \oplus \mu_{K_C, D} \oplus \mu_{K_C, \mathcal O_C}$$where the maps 
\begin{eqnarray*}
\mu_{D}: H^0(D)\otimes H^0(K_C-D)\to H^0(K_C) & {\rm and} & \mu_{K_C-D,\mathcal O_c}: H^0(K_C-D)\otimes H^0(\mathcal O_C)\to H^0(K_C-D) \\
 \mu_{K_C,D}: H^0(K_C)\otimes H^0(D)\to H^0(K_C+D) & {\rm and} &
  \mu_{K_C, \mathcal O_C}: H^0(K_C)\otimes H^0(\mathcal O_C)\to H^0(K_C)
 \end{eqnarray*}are natural multiplication maps. Since $h^0(D)=1$, all the maps are injective, so it is $\mu_{\mathcal F_0}$. By semicontinuity on $\mathcal W_1$, it then follows that also $\mu_{\mathcal F}$ is injective, proving that $\mathcal B_{\rm reg,2}$ is {\em regular} and {\em uniruled} in the above cases.

We now focus on the left cases, namely   
$$4g-4-2\nu \leq d \leq 3g-5,\;{\rm when} \; \frac{g+1}{2}\leq \nu < \lfloor\frac{g+3}{2} \rfloor.$$In this situation we take the kernel line bundle $N \in {\rm Pic}^{d-2g+2}(C)$ general of its degree; from the bounds on $d$ one has $g-4 \leq 2g-2-2\nu \leq \deg(N) \leq g-2$, so $N$ general is non-effective with speciality $h^1(N) := r \in \{1,2,3\}$. More precisely, we have 
\[
\left\{\begin{array}{cccc}
{\rm for} \; \nu = \frac{g+1}{2} & & 3g-5= 4g-4-2\nu\leq d \leq 3g-4 \;\; \mbox{and} & 1 \leq r \leq 2 \\
{\rm for} \; \nu = \frac{g+2}{2} & &  3g-6= 4g-4-2\nu\leq d \leq 3g-4 \;\; \mbox{and} & 1 \leq r \leq 3
\end{array}
\right.
\] On the other hand, the case $ \nu = \frac{g+2}{2}$ cannot occur as $g$, being necessarily even, gives $\frac{g+2}{2} = \lfloor \frac{g+3}{2} \rfloor$ and, by assumption we have $\nu <  \lfloor \frac{g+3}{2} \rfloor$. Moreover, when $\nu = \frac{g+1}{2}$, then $3g-5= 4g-4-2\nu\leq d \leq 3g-4$, neverthless the case $d=3g-4$ (so $\deg(N) = g-2$) has been already studied in cases $(iii-1)$. 

Therefore we are left with $d = 3g-5$, with $N$ general of degree $\deg(N) = g-3$ as in $(iii-2)$, and by generality $r : = h^1(N) = 2$. Using notation as in Theorem \ref{CF5.8}, one has $l:= h^0(K_C) = g$, $m := 5g-5-d$ and it is easy to observe that all the numerical assumptions therein are satisfied. Therefore, $\mathcal W_1 \subset \ext^1(K_C, N)$ is irreducible of dimension 
$\dim(\mathcal W_1) = 7g-9-d$ and, for general $u \in \mathcal W_1$, the corresponding 
coboundary map  $\partial_u$ has corank $1$, so $h^1(\Ff_u)=2$. 

As above, we let $\widehat{\mathcal W}_1$ be the projectification $\Pp(\mathcal W_1)$ in $\Pp := \Pp(\ext^1(\omega_C, N))$ and, for $N$ varying in a suitable open dense subset $S \subseteq {\rm Pic}^{d-2g+2}$, these  $\widehat{\mathcal W}_1$'s are all of (constant) dimension $7g-10-2d$. Thus, they fill-up an irreducible scheme, denoted once again by $\widehat{\mathcal W}_1^{Tot}$, as $N$ varies in $S$, which is of dimension $$\dim(\widehat{\mathcal W}_1^{Tot}) = g + (7g-10-2d) = 8g-10-2d.$$Furthermore $\widehat{\mathcal W}_1^{Tot}$ turns out to be {\em uniruled} indeed, from the proof of Theorem \ref{CF5.8}, any $\widehat{\mathcal W}_1$ is {\em unirational}, being dominated by the incidence variety $\mathcal J$ therein which was a projective bundle, over $\Sigma \cong \mathbb P(H^0(K_C-N))$, whose fibers were projective spaces $|\Ii_{\widehat{\sigma}\vert \Pp^{\vee}} (1)|$ of dimension $m-1 -l >0$.

The stability of the bundle $\Ff_u$, for $N \in {\rm Pic}^{d-2g+2}(C)$ and $[u] \in \widehat{\mathcal W}_1 $ general, can be proved once again via the ``Serre dual version" 
$\mathcal E_u := \omega_C \otimes \Ff_u^{\vee}$ using \cite[proof of Claim\,2]{Teixidor1}. By stability and speciality $2$ of the general bundle in it, $\widehat{\mathcal W}_1^{Tot}$ is once again endowed with a (rational) modular map 
\[
 \begin{array}{ccc}
 \widehat{\mathcal W}_1^{Tot}& \stackrel{\pi}{\dashrightarrow} & U^s_C(d) \\
 (N, [u]) & \longrightarrow &[\mathcal F_u]
 \end{array}
 \] and $\im(\pi) \subseteq B^{k_2}_d \cap U_C^s(d)$. 
 
Similarly as in Claim \ref{cl:20feb816}, one shows also in this case that the general fiber of $\pi$ is $\Pp^1$, being isomorphic to the complete linear system $|\mathcal O_{F_u} (\Gamma)|$  of the canonical section $\Gamma$ on the surface scroll $F_u$ corresponding to $\Ff_u$ general in $\im(\pi)$. Hence, as above, $ \im(\pi)$ has dimension $8g-11-2d = \rho_d^{k_2}$ so it is dense in the unique component $\mathcal B_{\rm reg,2}$, which is therefore {\em uniruled} as $\im(\pi)$ is dominated by $\widehat{\mathcal W}_1^{Tot}$ (which is unirational from above); moreover $[\Ff_u] \in \mathcal B_{\rm reg,2}$ general is  not rigidly specially presented (not rsp), as in Definition \ref{def:ass1}, via 
 \begin{equation}\label{eq:Petri20feb}
 0 \to N \to \Ff_u \to \omega_C \to 0.
\end{equation} To study the Petri map, consider the cohomology sequences arising from \eqref{eq:Petri20feb}, namely:
$${\small 0 \to H^0(\Ff_u) \to H^0(K_C) \to {\rm Im}(\partial_u) \to 0 \;\; {\rm and} \;\; 
0 \to {\rm Coker}(\partial_u) \simeq \mathbb C \to H^1(\Ff_u) \to H^1(K_C) \cong \mathbb C \to 0.}$$ Set $W \simeq H^0(\Ff_u)$ the image inside $H^0(K_C)$ via the above inclusion 
$H^0(\Ff) \hookrightarrow H^0(K_C)$; thus $W$ is of codimension $r-1 =1$ in $H^0(K_C)$. If we take the dual of the second exact sequence and tensor it by $W$, we get the following diagram:
\begin{equation}\label{eq:Feb201010}
\begin{array}{ccccll}
\\[1ex]
  & 0 & & & & \\
 &\downarrow & & &  & \\[1ex]
0 \to & W \otimes H^0(\mathcal O_C) & \stackrel{\mu_{W, \mathcal O_C}}{\longrightarrow} & W & \subset & H^0(K_C)  \\[1ex]
 &  \downarrow &  & & &  \\[1ex]
& W \otimes H^0(\omega_C \otimes \Ff_u^{\vee}) &  \stackrel{\alpha}{\longrightarrow} & H^0(\Ff_u^{\vee} (2K_C)) & \simeq & H^0(\Ff_u(K_C-N)) \\[1ex]
&\downarrow & & & & \\[1ex]
0 \to & W \otimes {\rm Coker}(\partial_u)^{\vee}  & \stackrel{{\mu_{K_C, K_C-N}}_|}{\longrightarrow} & H^0(2K_C -N) & = & H^0(2K_C-N)    \\[1ex]
& \downarrow & & & & \\
& 0 & & & &  
\end{array}
\end{equation}where the first map $\mu_{W, \mathcal O_C}$ is the natural multiplication map, which is injective, the isomorphism in the second row follows from $\Ff_u^{\vee} \simeq \Ff_u (- \det(\Ff_u))$ and $\det(\Ff_u) \simeq K_C+N$, whereas the latter horizontal map is given by the restriction $ {\mu_{K_C, K_C-N}}_|$ to the vector space $W \otimes {\rm Coker}(\partial_u)^{\vee}$ of the natural multiplication map \linebreak $H^0(K_C) \otimes H^0(K_C -N) \stackrel{{\mu_{K_C, K_C-N}}}{\longrightarrow} H^0(2K_C -N)$, since one clearly has inclusions $W \subset H^0(K_C)$ and \linebreak ${\rm Coker}(\partial_u)^{\vee} \subset H^0(K_C-N)$ as proper sub-vector spaces. 

This restricted map is also injective as
$  {\rm Coker}(\partial_u)^{\vee} \simeq \mathbb C$ and because of the composition of 
injections $W \hookrightarrow H^0(K_C) \hookrightarrow H^0(2K_C-N)$, the latter inclusion following from the effectiveness of $K_C-N$. Observe that one can complete the right-most column of the previous diagram; indeed, tensoring \eqref{eq:Petri20feb} by its dual sequence and then tensoring by $K_C$ gives the following exact diagram:
\begin{equation}\label{eq:20Feb1055}
\begin{array}{ccccccccccccccccccccccc}
&&0&&0&&0&&\\[1ex]
&&\downarrow &&\downarrow&&\downarrow&&\\[1ex]
0&\lra& N  & \rightarrow & \mathcal F_u& \rightarrow & K_C &\rightarrow & 0 \\[1ex]
&&\downarrow && \downarrow && \downarrow& \\[1ex]
0&\lra & \mathcal F_u^{\vee} (N+K_C) \simeq \mathcal F_u& \rightarrow & \omega_C \otimes \mathcal F_u\otimes \mathcal F_u^{\vee} & \rightarrow &
\mathcal F_u^{\vee} (2K_C) \cong \Ff_u(K_C-N) &\rightarrow & 0 \\[1ex]
&&\downarrow &&\downarrow &&\downarrow  &&\\[1ex]
0&\lra & K_{C}& \lra & \mathcal F_u(K_C -N)&\lra& 2K_C-N &\lra& 0 \\[1ex]
&&\downarrow &&\downarrow&&\downarrow&&\\[1ex]
&&0&&0&&0&&
\end{array}
\end{equation} From the cohomology sequence of the right-most column of the previous diagram, we get that \eqref{eq:Feb201010} completes to the following commutative diagram:
\[
\begin{array}{ccccll}
\\[1ex]
  & 0 & & & & 0 \\
 &\downarrow & & &  & \downarrow \\[1ex]
0 \to & W \otimes H^0(\mathcal O_C) & \stackrel{\mu_{W, \mathcal O_C}}{\longrightarrow} & W & \subset & H^0(K_C)  \\[1ex]
 &  \downarrow &  & & & \downarrow \\[1ex]
 & W \otimes H^0(\omega_C \otimes \Ff_u^{\vee}) &  \stackrel{\alpha}{\longrightarrow} & H^0(\Ff_u^{\vee} (2K_C)) & \simeq & H^0(\Ff_u(K_C-N)) \\[1ex]
&\downarrow & & & & \downarrow\\[1ex]
0 \to & W \otimes {\rm Coker}(\partial_u)^{\vee}  & \stackrel{{\mu_{K_C, K_C-N}}_|}{\lra} & H^0(2K_C -N) & = & H^0(2K_C-N)    \\[1ex]
& \downarrow & & & & \downarrow \\
& 0 & & & &  0
\end{array}
\]which proves the injectivity of the map $\alpha$. 
Taking into account the cohomology sequence obtained from the middle column in \eqref{eq:20Feb1055}, the fact that $W \simeq H^0(\Ff_u)$, the Petri map $\mu_{\Ff_u}$ and the map $\alpha$ previously defined, one gets the following diagram:
\[
\begin{array}{cccc}
\\[1ex]
  &  &  & 0 \\
 & &   & \downarrow \\[1ex]
 &  &    & H^0(\Ff_u)  \\[1ex]
 &   &  & \downarrow \\[1ex]
 & W \otimes H^0(\omega_C \otimes \Ff_u^{\vee}) &  \stackrel{\mu_{\Ff_u}}{\longrightarrow} & H^0(\omega_S \otimes \Ff_u \otimes \Ff_u^{\vee})) \\[1ex]
&|| & &  \downarrow^{\pi}\\[1ex]
0 \to & W \otimes H^0(\omega_C \otimes \Ff_u^{\vee})  & \stackrel{\alpha}{\lra} &  H^0(\Ff_u(K_C-N))    
\end{array}
\]Take $x \in {\rm Ker}(\mu_{\Ff_u})$ so that $\mu_{\Ff_u}(x) = 0$; since 
$\pi \circ \mu_{\Ff_u} = \alpha$, one must have  
$0 = \pi (0) = \pi(\mu_{\Ff_u}(x)) = \alpha(x)$, namely $x \in \ker(\alpha) = \{0\}$ as $\alpha$ was injective. Therefore $\mathcal B_{\rm reg,2}$ is {\em regular} and {\em uniruled} also in these cases.

To conclude with the description of a {\em minimal presentation} of $[\Ff] \in \mathcal B_{{\rm reg},2}$ general as stated, we observed that Teixidor's presentation of $\Ff_u$ general in such a component was not rigidly special (not rp), since the map $\pi$ has a $1$-dimensional general fiber. Nevertheless, since this general fiber is isomorphic to the linear pencil of canonical sections $\Gamma$ on the corresponding surface scroll $F_u$ (recall the proof of Claim \ref{cl:20feb816}) and since $\Gamma^2 = 4g-4-d >0$, this linear pencil certainly has some base points. If $p$ is any of such base points, as in Proposition \ref{prop:CFliasi}-(2-i), one can split off the fiber $f_p$ and gets a section $\Gamma'$, of degree $2g-3$ and of speciality $1$, since it corresponds to the quotient line bundle $\Ff_u \to \omega_C(-p) \to 0$. In such a case, $\Ff_u$ turns out to be rigidly specially presented (rsp), as in Definition \ref{def:ass1}, via the exact sequence $0 \to N' \to \Ff_u \to \omega_C(-p) \to 0$ where by determinantal reasons in cases $(iii-1)$, where $N = K_C - D_{4g-4-d}$, one has $N' = K_C - D_{4g-4-d} + p$, 
whereas in cases $(iii-2)$ one has  $N' = N+p$. 
\end{proof}

\bigskip

\noindent
$\boxed{3g-5 \leq d \leq 4g-5-2\nu}$. These cases occur when $3 \leq \nu \leq \frac{g}{2}$ (otherwise, for $\frac{g}{2} < \nu < \lfloor \frac{g+3}{2}\rfloor$, we are in the previous case where $B_{d}^{k_2}\cap U_C^s(d) = \mathcal B_{\rm reg,2}$) and have been studied in \cite[Theorem\;1.5-(i)]{CFK2} where we precisely proved:

\begin{proposition}\label{prop:caso3spec2} (cf. \cite[Theorem\;1.5-(i)]{CFK2})  Let $g \geq 6$, $3 \leq \nu \leq \frac{g}{2}$ and $3g-5 \leq d \leq 4g-5-2\nu$ be integers and let $C$ be a general $\nu$-gonal curve of genus $g$. Then $B_{d}^{k_2}\cap U_C^s(d)$ contains a unique component which is {\em regular}, denoted by $\mathcal B_{\rm reg,2}$. Such a component is moreover {\em uniruled} and a general element $[\mathcal F] \in \mathcal B_{\rm reg,2}$ corresponds to a bundle fitting   in an exact sequence
\begin{equation}\label{eq:aiuto13feb1}
 0\to \omega_C(-D) \to \mathcal F\to   \omega_C(-p) \to 0,  
\end{equation} where $p\in C$ and $D\in C^{(4g-5-d)}$ are general. Moreover, $\omega_C(-p)$  is of minimal degree among special quotient line bundles of $\mathcal F$.
\end{proposition}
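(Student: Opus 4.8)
The plan is to run the universal–extension–space construction of Proposition \ref{prop:caso2spec2} in the remaining range $3g-5\le d\le 4g-5-2\nu$ and to extract regularity, uniruledness and the minimal presentation from it. As a starting point, Teixidor I Bigas' Theorem \ref{TeixidorRes}-(i) already guarantees, on any curve $C$, a regular component $\mathcal B$ of dimension $\rho_d^{k_2}=8g-11-2d$ whose general member has speciality $2$; by Lemmas \ref{lem:1e2note} and \ref{specialityhigh} this member is of \emph{second type} (Definition \ref{def:fstype}), and I set $\mathcal B_{\rm reg,2}:=\mathcal B$. The task is then to produce an explicit parametric model of $\mathcal B_{\rm reg,2}$ realizing the presentation \eqref{eq:aiuto13feb1} and to verify its local and birational geometry when $C$ is a general $\nu$-gonal curve.

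Concretely, I would take the quotient $L=\omega_C(-p)$ with $p\in C$ general and the kernel $N=\omega_C(-D)$ with $D\in C^{(4g-5-d)}$ general. Since $3g-5\le d$ forces $\deg(D)=4g-5-d\le g$, Lemma \ref{lem:Lar} gives $h^0(\mathcal{O}_C(D))=1$, hence $r:=h^1(N)=1$, while $l:=h^0(L)=g-1$ and $m:=\dim(\ext^1(L,N))=5g-7-d$; one checks the hypotheses $r\ge 1$, $l\ge\max\{1,r-1\}$, $m\ge l+1$ of Theorem \ref{CF5.8} hold throughout (the last amounts to $d\le 4g-7$, implied by $d\le 4g-5-2\nu$). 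Theorem \ref{CF5.8}-(i) then yields an irreducible degeneracy locus $\mathcal W_1\subset\ext^1(L,N)$ of dimension $4g-6-d$ whose general $u$ gives $h^1(\mathcal F_u)=2$; letting $D$ and $p$ vary and projectivizing, the fibres $\widehat{\mathcal W}_1=\Pp(\mathcal W_1)\cong\Pp^{4g-7-d}$ sweep out an irreducible, ruled scheme $\widehat{\mathcal W}_1^{\mathrm{Tot}}$ of dimension $8g-11-2d$ over $C^{(4g-5-d)}\times C$. Stability of the general $\mathcal F_u$ I would obtain from its Serre dual $\mathcal E_u:=\omega_C\otimes\mathcal F_u^{\vee}$, repeating the argument of the proof of Claim $2$ in \cite{Teixidor1}, so that $\widehat{\mathcal W}_1^{\mathrm{Tot}}$ carries a rational modular map $\pi$ into $B_d^{k_2}\cap U_C^s(d)$.

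Next I would show $\pi$ is birational onto its image, exactly along the lines of Claim \ref{cl:20feb816}: for $\mathcal F_u$ general the determinant pins down the class of $D+p$, the special quotient of minimal degree pins down $p$ (hence $D$), and the possibility of two independent injections of $N$ into $\mathcal F_u$ is excluded by Lemma \ref{lem:technical} using $h^0(N-L)=0$. Therefore $\dim(\im\pi)=8g-11-2d=\rho_d^{k_2}$, so $\im(\pi)$ is dense in $\mathcal B_{\rm reg,2}$, which is \emph{uniruled} since it is birational to the $\Pp^{4g-7-d}$-bundle $\widehat{\mathcal W}_1^{\mathrm{Tot}}$ (note $4g-7-d\ge 2\nu-2>0$). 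Regularity then reduces to injectivity of the Petri map $\mu_{\mathcal F}$: specializing to the split bundle $\mathcal F_0=(\omega_C-D)\oplus\omega_C$ attached to the zero class of $\mathcal W_1$ splits $\mu_{\mathcal F_0}$ into the four multiplication maps $\mu_D$, $\mu_{K_C-D,\mathcal O_C}$, $\mu_{K_C,D}$, $\mu_{K_C,\mathcal O_C}$, each injective because $h^0(\mathcal O_C(D))=1$; upper semicontinuity on $\mathcal W_1$ then transfers injectivity to the general member, exactly as at the end of the proof of Proposition \ref{prop:caso2spec2}. Finally, the minimality of $\omega_C(-p)$ among special quotient line bundles follows from the rigidity of the presentation: the corresponding section of $\Pp(\mathcal F_u)$ is algebraically specially isolated, so Remark \ref{rem:rigid} together with Proposition \ref{prop:CFliasi} rules out special quotients of smaller degree for the general bundle, while stability forces every quotient to have degree $>d/2$.

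The main obstacle I anticipate is not the formal dimension count but the combination of \emph{generic smoothness} with the identification of $\mathcal B_{\rm reg,2}$ as a genuine, and the unique, regular component. On one hand one must secure $h^0(\mathcal O_C(D))=1$ at the sharp boundary $d=3g-5$, where $\deg(D)=g$ and a na\"ive estimate is no longer automatic: here the \emph{$\nu$-gonal genericity} of $C$ (via Lemma \ref{lem:Lar}, since $g-\nu+2<g<g+1$ places $W^1_g(C)$ in a proper subvariety) is exactly what keeps the Petri decomposition injective, in contrast to Teixidor's existence result, which needs only an arbitrary curve. On the other hand, because in this range the Brill--Noether locus is reducible, one must certify that the constructed locus is not absorbed into the superabundant component $\mathcal B_{\rm sup,2}$ of Proposition \ref{prop:caso1sup2}; this is guaranteed by generic smoothness at the expected dimension $\rho_d^{k_2}$, which forces $\im(\pi)$ to be a regular component, while the uniqueness of the regular component follows once $\mathcal B_{\rm sup,2}$ is shown to be the only other component and to be superabundant.
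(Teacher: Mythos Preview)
Your overall architecture matches the paper's, but two steps are handled differently and contain genuine gaps.

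\textbf{Stability.} You propose to obtain stability of the general $\mathcal F_u$ by passing to $\mathcal E_u=\omega_C\otimes\mathcal F_u^{\vee}$ and ``repeating the argument of the proof of Claim~2 in \cite{Teixidor1}''. That argument (and the paper's use of it in Proposition~\ref{prop:caso2spec2}) concerns extensions with quotient $\omega_C$, whose Serre dual has kernel $\mathcal O_C$. Here the quotient is $\omega_C(-p)$, so the dual has kernel $\mathcal O_C(p)$, and Teixidor's specific argument does not transfer directly; worse, you need stability not of a \emph{general} extension but of a general point of the proper linear subspace $\widehat{\mathcal W}_1\subset\Pp(\ext^1(K_C-p,K_C-D))$. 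The paper's proof handles exactly this: it applies Proposition~\ref{LN}, compares $\dim\widehat{\mathcal W}_1=4g-7-d$ with $\dim\Sec_{\frac12(4g-8-d+\sigma)}(X)=4g-9-d+\sigma$ for $\sigma\in\{1,2\}$, and then invokes Voisin's bound \cite{Voisin} on linear subspaces of secant varieties of a non-degenerate curve to conclude $\widehat{\mathcal W}_1\not\subset\Sec_{\dots}(X)$. This Lange--Narasimhan/Voisin step is the core technical ingredient and is absent from your plan.

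\textbf{Birationality and minimality.} Your argument that ``the special quotient of minimal degree pins down $p$'' presupposes that $\omega_C(-p)$ is the minimal special quotient, which is what you later set out to prove; so the birationality step is circular. The paper does not claim birationality here: it shows only that $\pi$ is \emph{generically finite} via $h^0(\mathcal N_{\Gamma/F_u})=h^0(\mathcal O_C(D-p))=0$ for general $D,p$, which suffices since the Petri computation then forces $\overline{\im(\pi)}$ to coincide with the regular component. Likewise, your final appeal to (asi) plus Remark~\ref{rem:rigid} does not exclude special quotients of \emph{smaller} degree; the paper obtains minimality (and uniqueness of $\mathcal B_{\rm reg,2}$ among second-type components) from the separate argument of \cite[Prop.~3.8, Rem.~3.9]{CFK2}, which also yields the residual canonical-quotient presentation $0\to\omega_C(-D-p)\to\mathcal F\to\omega_C\to0$. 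A minor slip: since your $\mathcal W_1\subset\ext^1(\omega_C(-p),\omega_C(-D))$, the split bundle for the Petri semicontinuity is $(\omega_C(-D))\oplus(\omega_C(-p))$, not $(\omega_C-D)\oplus\omega_C$; the four summand maps then involve $p$ as well as $D$ (cf.\ $\mu_{0,D},\mu_{K_C-D,p},\mu_{K_C-p,D},\mu_{0,p}$), and injectivity still follows from $h^0(\mathcal O_C(D))=h^0(\mathcal O_C(p))=1$.
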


\begin{proof}[Sketch of the proof of  Proposition \ref{prop:caso3spec2}] The strategy used in \cite[Theorem\;1.5-(i)\,and\,\S\;3.2]{CFK2}  is the following: take a general effective divisor $D \in C^{4g-5-d}$, where $\deg (D) = 4g-5-d \leq g$, and  $p\in C$ general and consider $\mathcal W_1 \subseteq \ext^1(K_C-p,K_C-D)$ as in \eqref{W1}. Since we proved $\mathcal W_1$ to be a sub-vector space of $ \ext^1(K_C-p,K_C-D)$ and of dimension $\dim(\mathcal W_1) = 4g-6-d$, we deduced that, for $u \in \mathcal W_1$ general, the corresponding rank-$2$ vector bundle $\Ff_u$ as in \eqref{eq:aiuto13feb1} was stable (more precisely $s(\Ff_u) \geq 1$-- resp. $2$-- if $d$ is odd--resp. even), of speciality $i=h^1(\Ff)=2$; this has been done with the use of Proposition \ref{LN}. 

Precisely we first took, as in Proposition \ref{LN}, the morphism $\varphi$ associated to the complete linear system $|K_C+L-N| = |K_C+D-p|$ as in \eqref{eq:aiuto13feb1} and,  correspondingly, the image curve $X := \varphi(C) \subset \mathbb P = \mathbb P({\rm Ext}^1(K_C-p, K_C-D))$. Considering $\sigma = 1$ or $2$, according to the parity of $d$, we took the corresponding secant variety $${\rm Sec}_{\frac{1}{2} (2\delta-d+\sigma-2)}(X) = {\rm Sec}_{\frac{1}{2}(4g-8-d+\sigma)}(X),$$whose dimension is $\dim\left({\rm Sec}_{\frac{1}{2}(4g-8-d+\sigma)}(X) \right) = 4g-9-d+\sigma$, where $\sigma = 1,2$.  
Since we proved $\mathcal W_1 \subseteq \ext^1(K_C-p,K_C-D)$ to be a sub-vector space, then $\widehat{\mathcal W}_1:= \mathbb P(\mathcal W_1)$ was therefore a linear sub-space of $\mathbb P$ of dimension $\dim( \widehat{\mathcal W}_1) = \dim(\mathcal W_1) -1 = 4g-7-d$. Thus, for $\sigma =1,2$, we got that 
$$\dim\left({\rm Sec}_{\frac{1}{2}(4g-8-d+\sigma)}(X) \right) = \dim(\widehat{\mathcal W}_1) + 1 -\sigma.$$Using the fact that $X \subset \mathbb P$ is non-degenerate and in accordance with some results by \cite{Voisin} related to maximal linear sub-spaces contained in a secant variety of an embedded curve, we deduced that general $[u] \in \widehat{\mathcal W}_1$ did not belong to ${\rm Sec}_{\frac{1}{2}(4g-8-d+\sigma)}(X)$ so, from Proposition \ref{LN}, we got 
$s(\Ff_u) \geq \sigma \in \{1,2\}$ namely the stability of such a $\Ff_u$. Moreover, being $u \in \widehat{\mathcal W}_1$ general and $h^1(\omega_C(-p)) =1$, we also deduced $h^1(\Ff_u)=2$. 

We then proved the existence of a natural projective bundle $\mathbb P(\mathcal E_{d})\to S$, where $S \subseteq W^0_{4g-5-d}(C)\times C$ is an open dense subset: namely, such a projective bundle $\mathbb P(\mathcal E_{d})$ is determined by the family of 
projective spaces $\mathbb P(\ext^1(K_C-p,K_C-D))$'s  as  $(D, p) \in S$ varies. Since, for any such $(D, p) \in S$, any $\widehat{\mathcal W}_1$ as above is a linear sub-space of (constant) dimension $4g-7-d$, one deduced the existence of an irreducible projective variety $$\widehat{\mathcal W}_1^{Tot}:= \left\{ (D,p,[u]) \in \mathbb P(\mathcal E_{d}) \; | \; H^0(K_C-p) \stackrel{\partial_u = 0}{\longrightarrow} H^1(K_C-D)\right\} \subsetneq \mathbb P(\mathcal E_{d})$$which is therefore {\em ruled} over $S$, of 
dimension 
$$\dim \left(\widehat{\mathcal W}_1^{Tot} \right) = \dim (S) + 4g - 7 - d = 4g - d - 4 + 4g - 7 - d = 8 g - 2d - 11 = \rho_d^{k_2}.$$
From stability and speciality $2$ of the bundle $\Ff_u$ arising from the general triple $(D,p,u) \in \widehat{\mathcal W}_1^{Tot}$, we deduced a natural (rational) modular map
 \[
 \begin{array}{ccc}
 \widehat{\mathcal W}_1^{Tot}& \stackrel{\pi}{\dashrightarrow} & U^s_C(d) \\
 (D,p, [u]) & \longrightarrow & [\mathcal F_u]
 \end{array}
 \] s.t. $\im(\pi) \subseteq B^{k_2}_d \cap U_C^s(d)$.

We set $\mathcal B_{\rm reg,2}$ to be the closure of $\im(\pi)$ in $B^{k_2}_d \cap U_C^s(d)$; to show that $ \mathcal B_{\rm reg,2}$ is actually a generically smooth component of $B_d^{k_2} \cap U^s_C(d)$, with (expected) dimension $\rho_d^{k_2} = 8g-11-2d$, i.e. $\mathcal B_{\rm reg,2}$ is {\em regular}, we can apply arguments as in \cite[Prop.\;3.9]{CFK}.  From  the fact that $\im(\pi)$ contains stable bundles, any component of $B^{k_2}_d \cap U_C^s(d)$  containing it has dimension at least $\rho_d^{k_2}$. One can compute $\dim (T_{\mathcal F}(B^{k_2}_d))$, for  general $\mathcal F \in \im(\pi)$ by considering the Petri map 
$$\mu_\mathcal F: H^0(\mathcal F)\otimes H^0(\omega_C\otimes \mathcal F^{\vee})\to H^0(\omega_C\otimes \mathcal F\otimes \mathcal F^{\vee})$$for a general $\mathcal F\in \im(\pi)$. From the fact that $\mathcal F = 
\mathcal F_u$, for some $u$ in some fiber $\widehat{\mathcal W}_1$ of $\widehat{\mathcal W}_1^{Tot}$, the corresponding coboundary map $\partial_u$ is the zero-map; in other words 
$$H^0(\mathcal F) \cong H^0(K_C-D) \oplus H^0(K_C-p) \;\;\;{\rm and} \;\;\; H^1(\mathcal F) \cong  H^1(K_C-D) \oplus H^1(K_C-p).$$This means that, for any such bundle, the domain of the Petri map $\mu_{\mathcal F}$ coincides with that 
of $\mu_{{\mathcal F}_0}$, where  
$\mathcal F_0 := (K_C-D) \oplus (K_C-p)$ corresponds to the zero vector in $\mathcal W_1 \subset \ext^1(K_C-p, K_C-D)$. So one can focus on computing the kernel of $\mu_{{\mathcal F}_0}$ and then reason via semi-continuity. Observe that 
\[\begin{array}{ccl}
H^0(\mathcal F_0) \otimes H^0(\omega_C \otimes \mathcal F_0^{\vee}) & \cong &  \left(H^0(K_C-D) \otimes H^0(D)\right)  \oplus \left(H^0(K_C-D) \otimes H^0(p)\right) \oplus\\
& & \left(H^0(K_C-p) \otimes H^0(D)\right) \oplus \left(H^0(K_C-p) \otimes H^0(p)\right). 
\end{array}
\]Moreover 
$$\omega_C \otimes \mathcal F_0 \otimes \mathcal F_0^{\vee} \cong K_C \oplus (K_C+p-D) \oplus (K_C+D-p) \oplus K_C.$$Therefore, for Chern classes reason,  
$$\mu_{\mathcal F_0} = \mu_{0, D} \oplus \mu_{K_C-D, p} \oplus \mu_{K_C-p, D} \oplus \mu_{0, p}$$where the maps 
\begin{eqnarray*}
\mu_{0,D}: & H^0(D)\otimes H^0(K_C-D)\to H^0(K_C),\\
 \mu_{K_C-D,p}:& H^0(K_C-D)\otimes H^0(p)\to H^0(K_C-D+p)\\
 \mu_{K_C-p,D}: & H^0(K_C-p)\otimes H^0(D)\to H^0(K_C+D-p)\\
  \mu_{0,p}: & H^0(p)\otimes H^0(K_C-p)\to H^0(K_C)
 \end{eqnarray*}are natural multiplication maps. Since $h^0(D) = h^0(p) =1$, the maps $\mu_{0,D}, \; \mu_{K_C-D,p},\; \mu_{K_C-p,D}\;\mu_{0,p}$ are all injective and so is $\mu_{\mathcal F_0}$. By semicontinuity on $\mathcal W_1$, one therefore has that $\mu_{\mathcal F}$ is injective, for $\mathcal F$ general in $\widehat{\mathcal W}_1$. 
 
The previous argument shows that a general $\mathcal F \in \im(\pi) \subset \mathcal B_{\rm reg,2}$  is contained in only one irreducible component $\mathcal B$  of $B_d^{k_2} \cap U^s_C(d)$ for which 
\begin{eqnarray*}
\dim (\mathcal B)= \dim (T_{\mathcal F}(\mathcal B))&=&4g-3-h^0(\mathcal F)h^1(\mathcal F) \\
&=&4g-3-2(d-2g+4) = 8g-11-2d, 
\end{eqnarray*} i.e.  $\mathcal B$ is generically smooth and of dimension $\rho_d^{k_2}$. 

To conclude that $B_{\rm reg,2} = \mathcal B$ it suffices to observe that the rational map $\pi$ is generically finite onto its image. To do this, let $F_u = \mathbb{P} (\mathcal F_u)$ be the surface scroll, for general $u \in \widehat{\mathcal W}_1^{Tot} $, and let $\Gamma$ be the section corresponding to the 
quotient $\mathcal F_u \to \!\!\!\! \to K_C-p$; then its normal bundle is $N_{\Gamma/F} \simeq D-p$ which has no sections. Thus, one deduces the generically finiteness of $\pi$.

This implies that $\mathcal B_{\rm reg,2}$ is a generically  smooth component of  $B^{k_2}_d\cap U^s_C(d)$ with (expected) dimension $\rho_d^{k_2} = 8g-11-2d$, i.e. $\mathcal B_{\rm reg,2}$ is {\em regular}. Moreover, by the parametric description of $\mathcal B_{\rm reg,2}$, one deduces that it is {\em uniruled}, being finitely dominated by $\widehat{\mathcal W}_1^{Tot}$, whose general fiber over $S$ is a projective space, and the general point of $ \mathcal B_{\rm reg,2}$ arises as an extension in \eqref{eq:aiuto13feb1} with corank-one coboundary map.

Finally, in \cite[Prop.\;3.8]{CFK2}, we showed the uniqueness of $ \mathcal B_{\rm reg,2} $ among possible components of \linebreak $B_d^{k_2} \cap U_C^s(d)$, whose general point represents a bundle $\mathcal F$ of speciality $2$ and presented via an extension 
\eqref{degree} with $h^1(L) =1$.  As observed in \cite[Rem.\;3.9]{CFK2}, the proof of the uniqueness of $\mathcal B_{\rm reg,2} $ among such type of components also shows that, for $[\Ff] \in \mathcal B_{\rm reg,2}$ general, the line bundle $\omega_C(-p)$ is minimal among special quotient line bundles for $\mathcal F$ general in $\mathcal B_{\rm reg,2}$ and that it admits also a {\em presentation} via a canonical quotient, i.e. it fits in  $$0 \to \omega_C(-D-p) \to \mathcal F \to \omega_C\to 0,$$whose residual presentation coincides with that in the proof of \cite[Theorem]{Teixidor1}.

In other words, the component  $\mathcal B_{\rm reg,2}$ coincides with the component $\mathcal B$ stated in  Theorem \ref{TeixidorRes} (cf. \cite[Theorem]{Teixidor1}, which therein turns out to be also the only component when $C$ is with general moduli). The proof of generic smoothness of $\mathcal B = \mathcal B_{\rm reg,2}$ in \cite[Thm.\,1]{Teixidor2} (in her ``Serre dual notation" $W^1_{2,d}$) was done only under the assumption on $C$ with {\em general moduli}.

Above (and also in  \cite[Prop.\,3.9]{CFK}) we instead proved generic smoothness of $\mathcal B = \mathcal B_{\rm reg,2}$ for $C$ a general $\nu$-gonal curve. 
\end{proof}

\bigskip

\noindent
$\boxed{2g-2 \leq d \leq 3g-6}$ These cases are brand new and they appear in the statement of Theorem \ref{thm:i=2}. To approach these new cases, we start with the following preliminary results.

\begin{lemma}\label{lem:i=1.2} Let $d=3g-5-j$ be an integer, with $1 \leq  j\leq g-3$. Let $N\in {\rm Pic}^{d-2g+3}(C)$ be general and let $\mathcal W_1 \subseteq \ext^1(K_C-p, N)$  be the degeneracy locus as in \eqref{W1}. 

Then $\mathcal W_1$ is irreducible, of dimension $g+2j-1$ and {\em unirational}. Moreover, for $u\in \mathcal W_1$ general, the corresponding rank-two, degree-$d$ vector bundle $\mathcal F_u$ is of speciality $h^1(\mathcal F_u)=2$, with Segre invariant $s(\mathcal F_u) \ge 1$ (resp., $2$)  if $d$ is odd (resp., even).  In particular $\Ff_u$ is stable. 
\end{lemma}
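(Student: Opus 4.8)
The plan is to apply Theorem~\ref{CF5.8} to the pair $(L,N)$ with $L=\omega_C(-p)=K_C-p$ and $N\in {\rm Pic}^{d-2g+3}(C)$ general, deducing irreducibility, dimension, unirationality and the corank of the generic coboundary map, and then to establish stability through the Lange--Narasimhan criterion (Proposition~\ref{LN}), in the spirit of the proof of Proposition~\ref{prop:caso3spec2} but with a cleaner dimension count.

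First I would record the invariants. The quotient $L=K_C-p$ has $\deg(L)=2g-3$, $l:=h^0(L)=g-1$ and $h^1(L)=h^0(\mathcal O_C(p))=1$, so it is special and effective as Theorem~\ref{CF5.8} requires. The kernel has $\deg(N)=d-2g+3=g-2-j$, with $1\le \deg(N)\le g-3<g$; hence $N$ general is non-effective, $h^0(N)=0$, and Riemann--Roch gives $r:=h^1(N)=j+1$. Finally $m:=\dim(\ext^1(L,N))=2\delta-d+g-1=2g-2+j$ by \eqref{eq:m}, since $L\ncong N$. One checks that the numerical hypotheses $r\ge 1$, $l\ge \max\{1,r-1\}$, $m\ge l+1$ hold for all $1\le j\le g-3$, and that $l=g-1>j+1=r$. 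Theorem~\ref{CF5.8}(i)--(ii) then yields at once that $\mathcal W_1$ is irreducible of the expected dimension $m-(l-r+1)=g-1+2j$ and that the general $u\in\mathcal W_1$ has $\corank(\partial_u)=1$. Unirationality follows from the construction inside the proof of Theorem~\ref{CF5.8}: $\widehat{\mathcal W}_1:=\mathbb P(\mathcal W_1)$ is dominated by the incidence variety $\mathcal J$, a projective bundle over $\Sigma\cong\mathbb P(H^0(K_C-N))$, hence rational. Since $h^0(N)=0$, the cohomology sequence of \eqref{degree} gives $h^1(\mathcal F_u)=\corank(\partial_u)+h^1(L)=1+1=2$ for general $u$, settling the speciality.

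The substantive part is stability, which I would obtain from Proposition~\ref{LN}. As $\deg(L-N)=2\delta-d=g-1+j\ge 2$, the map $\varphi=\varphi_{|K_C+L-N|}\colon C\to\mathbb P=\mathbb P(\ext^1(L,N))$ is a morphism with non-degenerate image $X=\varphi(C)$, the linear system being complete. Put $\sigma=1$ if $d$ is odd and $\sigma=2$ if $d$ is even; then $\sigma\equiv 2\delta-d\pmod 2$ and $\sigma$ lies in the admissible range $4+d-2\delta\le\sigma\le 2\delta-d$. The relevant secant variety has index $h=\tfrac12(2\delta-d+\sigma-2)=\tfrac12(g-3+j+\sigma)$, an integer by the parity of $\sigma$, and by Proposition~\ref{LN} satisfies $\dim(\Sec_h(X))=2h-1=g-4+j+\sigma<\dim(\mathbb P)$.

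The decisive comparison is then $\dim(\widehat{\mathcal W}_1)-\dim(\Sec_h(X))=(g-2+2j)-(g-4+j+\sigma)=2+j-\sigma\ge 1$, strictly positive for both values of $\sigma$ and every $j\ge 1$. Since $\widehat{\mathcal W}_1$ is irreducible of dimension strictly larger than that of $\Sec_h(X)$, it cannot be contained in the latter, so a general $[u]\in\widehat{\mathcal W}_1$ avoids $\Sec_h(X)$; Proposition~\ref{LN} then gives $s(\mathcal F_u)\ge\sigma\ge 1$, whence $\mathcal F_u$ is stable. Intersecting the two dense open loci --- where $\corank(\partial_u)=1$ and where $[u]\notin\Sec_h(X)$ --- shows that the general $u\in\mathcal W_1$ produces a bundle that is at once of speciality $2$ and stable. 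The point requiring care, rather than a genuine obstacle, is precisely this dimension count: unlike the borderline situation of Proposition~\ref{prop:caso3spec2}, where $\widehat{\mathcal W}_1$ is a linear space meeting the secant variety in the same dimension and one must invoke Voisin's results on linear subspaces of secant varieties, here the strict inequality $2+j-\sigma\ge 1$ lets a pure dimension count close the argument. What must be verified uniformly over $1\le j\le g-3$ and both parities of $d$ is only this inequality, the integrality of $h$, and the non-degeneracy of $X$.
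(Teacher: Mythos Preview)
Your proof is correct and follows essentially the same route as the paper: compute the invariants $l=g-1$, $r=j+1$, $m=2g-2+j$, apply Theorem~\ref{CF5.8} for irreducibility, dimension, unirationality and the corank-one condition, then invoke Proposition~\ref{LN} with $\sigma\in\{1,2\}$ and the dimension comparison $\dim(\widehat{\mathcal W}_1)-\dim(\Sec_h(X))=2+j-\sigma\ge 1$. Your observation that the strict inequality here spares you the appeal to Voisin's results (needed in the borderline situation of Proposition~\ref{prop:caso3spec2}) is apt, and your statement that an irreducible variety cannot be contained in one of strictly smaller dimension makes explicit a step the paper leaves implicit.
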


\begin{proof} We first prove the assertions on $\mathcal W_1$ for any $d\leq 3g-6$. Notice that $\deg(N) = d-2g+3 = g-2-j \leq g-3$ so, by generality, $N$ is special and non-effective. Thus we have the following set-up: 
\begin{equation}\label{degree0}
\begin{CD}
&(u)& : \;\;\; 0\to &N&\to &\ \ \mathcal F_u\ \ & \to  \ &\ \ K_C-p\ \ &\to 0\\
&\deg&       &g-2-j&& d&& 2g-3&\\
&h^0&       &0&&  && g-1&\\
&h^1&       &j+1&&  && 1&.
\end{CD}
\end{equation}  Since $\deg (K_C-p) = 2g-3  > g-2-j =\deg (N)$, then 
$K_C-p \ncong N$ so, using  \eqref{eq:m} and notation as in Theorem \ref{CF5.8}, 
one has $l:=h^0(K_C-p) = g-1$, $r:= h^1(N) = j+1$ and $m:=\dim(\ext^1(K_C-p,N))=2g-2+j$;   
therefore $l = g-1> r = j+1$ and $m=2g-2+j \geq l+1=g$. 

We can therefore apply Theorem \ref{CF5.8} to get that $\mathcal W_1=\{u\in\ext^1(K_C-p, N)\ |\ \corank(\partial_u)\ge 1\}$ is irreducible, of  (expected) dimension  
\begin{equation}\label{dim:w1}
 \dim (\mathcal W_1) = m- 1(l-r+1)=g+2j -1.
\end{equation} Set $\mathbb{P}:= \mathbb{P}\left(\ext^1(K_C-p, N)\right)$ and 
$\widehat{\mathcal W}_1 := \mathbb{P}(\mathcal W_1)$; from  the proof of Theorem \ref{CF5.8}, $\widehat{\mathcal W}_1$ is {\em unirational}, being dominated by the incidence variety $\mathcal J$, which turned-out to be a projective bundle over $\Sigma \cong \mathbb P(H^0(K_C-N))$, whose fibers were projective spaces of dimension $m-1 -l >0$. This proves the assertions on $\mathcal W_1$. 

Concerning the speciality of the bundle $\mathcal F_u$ corresponding to $u \in \mathcal W_1$ general, by Theorem \ref{CF5.8}-(ii) and by \eqref{degree0}, one has $$h^1(\mathcal F_u) =h^1(K_C-p) + {\rm corank} (\partial_u) = 2,$$and we are done. Concerning the bounds on the Segre invariant (and so the stability, by \eqref{eq:neccond}) of the bundle $\Ff_u$ for $u \in \mathcal W_1$ general,  recall that $\widehat{\mathcal W}_1$ has dimension $g+2j-2$; posing $\delta := 2g-3$ and considering that $d= 3g-5-j$, for $1 \leq j \leq 2g-3$, one has $2\delta - d =g-1+j\geq g \geq 4$. We are therefore in position to apply  Theorem \ref{LN} and consider the natural morphism $ C\stackrel{\varphi}{\longrightarrow}\mathbb P$, given by the complete linear system $|2K_C-p-N|$. 
Set $X := \varphi\left(C\right)$ and let $\sigma$  be an integer such that  
$ \sigma \equiv 2\delta-d\  {\rm{ (mod \;2) \ \ and  }} $ $ 0 \le \sigma \le 2\delta - d$; then we have 
$$\dim \left(\Sec_{\frac{1}{2}(2\delta-d+\sigma-2)}(X) \right)=2\delta - d + \sigma - 3 = g - 1 + j + \sigma -3 \le g-2+2j = \dim (\widehat{\mathcal W}_1)$$
if and only if 
$\sigma \le 2+j$, where the equality holds if and only if $\sigma=2+j$. 
Thus, since $j\geq 1$,  we may take $\sigma =1$ for $d$ odd (resp., $\sigma=2$ for $d$ even) which shows in particular the stability of $\Ff_u$. 
\end{proof}

To construct the locus $\mathcal B_{\rm reg,2}$ and to show that it actually is an irreducible component of $B_d^{k_2}\cap U^s_C(d)$ as in Theorem \ref{thm:i=2} notice that, as in \cite[\S\;3.2]{CFK}, one has a projective bundle $\mathbb P(\mathcal E_{d})\to S$ over an open dense subset $S \subseteq {\rm Pic}^{d-2g+3}(C)\times C$ of dimension $g+1=4g-4-j-d$; namely, 
$\mathbb P(\mathcal E_{d})$ is the family of $\mathbb P(\ext^1(K_C-p, N))$'s  as  $(N, p) \in S$ varies. Since, 
for any such $(N, p) \in S$, $\widehat{\mathcal W}_1$ is irreducible, {\em unirational} and of (constant) dimension $g-2+2j=4g-7+j-d$, 
one has an irreducible projective variety 
$$\widehat{\mathcal W}_1^{Tot}:= \left\{ (N,p,u) \in \mathbb P(\mathcal E_{d}) \; | \; H^0(K_C-p) \stackrel{\partial_u}{\longrightarrow} H^1(N),\ \ \codim(\partial_u)=1\right\},$$
which is  therefore {\em uniruled} over $S$ and of dimension $$\dim(\widehat{\mathcal W}_1^{Tot}) = (4g -4 -j -d )+ (4g - 7 + j - d) = 8 g - 2d - 11 = 
\rho_d^{k_2}.$$From stability, proved in Lemma \ref{lem:i=1.2} for $(N,p,u) \in \widehat{\mathcal W}_1^{Tot}$ general, one has a natural (rational) modular map
\[
 \begin{array}{ccc}
 \widehat{\mathcal W}_1^{Tot}& \stackrel{\pi}{\dashrightarrow} & U^s_C(d) \\
 (N, p, [u]) & \longrightarrow & [\mathcal F_u]
 \end{array}
 \] and  $\im(\pi) \subseteq B^{k_2}_d \cap U_C^s(d)$. 

\begin{lemma}\label{lem:pigenfin} The map $\pi: \widehat{\mathcal W}_1^{Tot}{\dashrightarrow} U^s_C(d)$ is {\em birational}. In particular, $\im(\pi)$ is {\em uniruled} and 
$\dim(\im(\pi)) = \dim(\widehat{\mathcal W}_1^{Tot}) = 8 g - 2d - 11 = 
\rho_d^{k_2}$. 
\end{lemma}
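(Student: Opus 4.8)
The plan is to prove birationality by analysing the general fibre of $\pi$ and recovering the parameters $(N,p,[u])$ from the isomorphism class $[\Ff]$. First I would record that $\pi$ is dominant onto $\im(\pi)\subseteq B^{k_2}_d\cap U^s_C(d)$ and that the bundles involved are stable by Lemma \ref{lem:i=1.2}, so it suffices to show that the general fibre is a single reduced point. Fix $[\Ff]\in\im(\pi)$ general, with $\Ff=\Ff_u$ arising from a triple $(N,p,[u])$. A point of $\pi^{-1}([\Ff])$ is a triple $(N',p',[u'])$ with $\Ff_{u'}\cong\Ff$; via \eqref{degree} each such triple is the same datum as a section $\Gamma'$ on the scroll $F=\Pp(\Ff)$ of degree $2g-3$ whose quotient line bundle is special of the shape $K_C-p'$ (hence of speciality $1$), the kernel being then forced to be $N'=\det(\Ff)\otimes(K_C-p')^{\vee}=N-p+p'$.

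This reduces the computation of the fibre to two tasks. For the \emph{extension step}, once a quotient $L=K_C-p'$ and the kernel $N'$ are fixed, I would invoke Lemma \ref{lem:technical} to conclude that the class $[u']\in\Pp(\ext^1(L,N'))$ is uniquely determined: its hypothesis $h^0(N'-L)=0$ holds because $\deg(N'-L)=\deg(N)-(2g-3)=1-g-j<0$, while $\Ff_u,\Ff_{u'}$ are stable, hence simple. Thus the fibre is identified with the set of degree-$(2g-3)$ special quotient line bundles of $\Ff$ that produce a presentation lying in the parameter locus $S$. For the \emph{recovery step}, it then remains to prove that, for $\Ff$ general, there is exactly one such quotient, namely $K_C-p$; this is the generic injectivity of $\pi$.

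The main obstacle is precisely this last uniqueness, and here the situation is genuinely harder than in Proposition \ref{prop:caso3spec2}. There the kernel $K_C-D$ was \emph{effective}, the normal bundle of the section was $\N_{\Gamma/F}\cong D-p$ with $h^0(D-p)=0$, so $\Gamma$ was rigid and finiteness of $\pi$ was immediate from \eqref{eq:Ciro410}. In the present range the kernel $N$ is general and \emph{non-effective}, so by \eqref{eq:Ciro410} the section has $\N_{\Gamma/F}\cong (K_C-p)\otimes N^{\vee}$ of positive degree $g-1+j$, which is effective; hence $\Gamma$ moves, and the degree-$(2g-3)$ unisecants sweep out a positive-dimensional family whose general member has a \emph{non-special} quotient. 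Only the \emph{special} unisecants, i.e. the points of $\mathcal S^{1,2g-3}_F$ as in \eqref{eq:aga}, contribute to the fibre. So the crux is to show that $\mathcal S^{1,2g-3}_F$ is finite — equivalently that $\Ff$ is rigidly specially presented via $0\to N\to\Ff\to K_C-p\to 0$ in the sense of Definition \ref{def:ass1}, so $a_F(2g-3)=0$ — and, more delicately, that among its finitely many points exactly one returns a kernel $N'=N-p+p'$ which is general and non-effective, hence lies in $S$. I would attack this by describing the special sections through the canonical pencil on $F$ (as in Claim \ref{cl:20feb816}, coming from the $2$-dimensional space $H^0(\Ff^{\vee}\otimes K_C)$ dual to $H^1(\Ff)$): the special degree-$(2g-3)$ quotients correspond to the degeneracy (Wronskian) scheme of the evaluation $H^0(\Ff^{\vee}\otimes K_C)\otimes\Oc_C\to\Ff^{\vee}\otimes K_C$, a divisor in $|K_C-N+p|$, and the substance of the uniqueness is that, for $\Ff$ general in the component, only the point $p$ of this degeneracy divisor yields a presentation inside $S$. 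Controlling this degeneracy locus together with the generality of the associated kernels is the step I expect to require the most care.

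Finally, granting generic injectivity together with the extension step, $\pi$ is birational onto $\im(\pi)$. Uniruledness and the dimension count are then formal: $\widehat{\mathcal W}_1^{Tot}$ is uniruled over $S$ with unirational fibres $\widehat{\mathcal W}_1$ (from the proof of Theorem \ref{CF5.8}, recalled in Lemma \ref{lem:i=1.2}), uniruledness is preserved under the dominant map $\pi$, and $\dim(\im(\pi))=\dim(\widehat{\mathcal W}_1^{Tot})=8g-2d-11=\rho_d^{k_2}$.
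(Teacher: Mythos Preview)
Your overall architecture matches the paper's: reduce to the general fibre, identify fibre points with special degree-$(2g-3)$ sections on the scroll $F_u$, then argue (a) uniqueness of $[u']$ once the quotient $K_C-p'$ is fixed and (b) uniqueness of $p'$. But both steps, as you present them, have gaps relative to what is actually needed, and the paper fills them with one key device you do not use.

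For step (a), invoking Lemma~\ref{lem:technical} with only $h^0(N'-L)=0$ is not enough: that lemma also requires hypothesis~(ii), namely that the isomorphism $\varphi$ satisfies $\varphi\circ\iota_1=\lambda\iota_2$, i.e.\ that the two injections $N\hookrightarrow\Ff_u$ are proportional. This is exactly the statement that $\Gamma$ is \emph{linearly isolated}, equivalently $h^0(\Ff_u\otimes N^{\vee})=1$. The paper proves this first (Claim~\ref{lem:linearly_isolated}): it twists \eqref{degree0} by $\Oc_C(p)$, uses the identification $\ext^1(K_C,N+p)\simeq\ext^1(K_C-p,N)$, and compares $\dim\mathcal W_1=g-1+2j$ with the dimension $g-3+2j$ of the relevant degeneracy locus $\widetilde{\mathcal W}_1\subset\ext^1(K_C,N+p)$ to force the twisted coboundary to be surjective, giving $h^1(\Ff_u(p))=1$ and hence $h^0(\Ff_u\otimes N^{\vee})=1$. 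Without this, two non-proportional injections would give two distinct sections in $|\Oc_{F_u}(\Gamma)|$ and the fibre would not be a point.

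For step (b), the paper does not merely show that $\mathcal S^{1,2g-3}_{F_u}$ is finite with a unique point landing in $S$; it proves the stronger statement that $\Gamma$ is \emph{algebraically specially unique} (asu): there is literally no other special quotient $K_C-q$ with $q\neq p$. The argument again twists by $p$: a hypothetical second quotient $K_C-q$ yields a presentation of $\Ff_u(p)$ with quotient $K_C+p-q$, forcing the corresponding extension to lie in a degeneracy locus $\widetilde{\widetilde{\mathcal W}}_1$ of dimension $g-2+2j$, strictly smaller than $g-1+2j=\dim\mathcal W_1$; for $u$ general this forces $h^1(\Ff_u(p))=h^1(K_C+p-q)=0$, contradicting the linear-isolation computation $h^1(\Ff_u(p))=1$. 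Your Wronskian/evaluation-degeneracy approach is reasonable in spirit, but (i) you leave it as a sketch, (ii) it only targets the weaker conclusion ``finitely many special sections, one in $S$'', and (iii) the paper's stronger (asu) statement is reused later (Corollary~\ref{cor:aiuto12feb1634}). The single technique you are missing---twisting by $\Oc_C(p)$ and comparing dimensions of successive $\mathcal W_1$-type loci via Theorem~\ref{CF5.8}---is what drives both (li) and (asu) and makes the paper's proof go through without any Wronskian analysis.
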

\begin{proof} Once we show that $\pi$ is birational onto its image, the uniruledness and the stated dimension of $\im(\pi)$ will follow from the fact that it is birational 
to $\widehat{\mathcal W}_1^{Tot}$, which is uniruled and of computed dimension.  

To prove that $\pi$ is birational onto its image, let $\Ff_u$ be the bundle corresponding to the general point of $\im(\pi)$. If $F_u = \Pp(\Ff_u)$ denotes the associated surface scroll and if $\Gamma$ denotes the section of $F_u$ corresponding to the quotient line bundle $K_C-p$ of $\Ff_u$, from \eqref{eq:Ciro410} one has that $\mathcal N_{\Gamma/F_u} \simeq K_C-p-N$ is general, since $N$ is, and of degree $g-1 + j$. Therefore $h^0(\mathcal N_{\Gamma/F_u})= h^0(K_C-p-N)=j>0$ and $h^1(\mathcal N_{\Gamma/F_u})=0$, i.e. the Hilbert scheme ${\rm Div}^{1,2g-3}_{F_u}$ is unobstructed at $[\Gamma]$ and of dimension $j$. 

We will now prove that $[\mathcal F_u]\in \im (\pi)$ general is rigidly specially presented (rsp) via \eqref{degree0} (recall Definition 
\ref{def:ass1}) more precisely that the section $\Gamma \subset F_u$ is algebraically specially unique (asu).

\begin{claim}\label{lem:linearly_isolated} $\Gamma \subset F_u$ is linearly isolated (li) and  algebraically specially unique (asu), but not of minimal degree on $F_u$. 
 \end{claim} 

\begin{proof}[Proof of Claim \ref{lem:linearly_isolated}] First of all we show that $\Gamma$ is linearly isolated (li) in $F_u$ which, by \eqref{eq:isom2}, is equivalent to proving that $h^0(\mathcal F_u \otimes N^{\vee}) =1$. From Serre duality, the facts $\Ff_u^{\vee} \simeq \Ff_u (-{\rm det}(\Ff_u))$ 
and ${\rm det} (\Ff_u) = K_C - p + N$ and \eqref{degree0}, this is equivalent to proving that $h^1(\mathcal F_u(p))=1$. To do this, we tensor by $\mathcal O_C(p)$ the exact sequence \eqref{degree0} to get
\begin{equation}\label{w1_p}
0\  \to \ N+p \  \to \  \mathcal F_v:=\mathcal F_u(p)\   \to  \  K_C\  \to \ 0,
\end{equation} where $v\in \ext^1(K_C, N+p)$. Since $\Ff_u$ in \eqref{degree0} arises from a general pair $(N,p) \in {\rm Pic}^{d-2g+3}(C) \times C$ and for \linebreak $u\in \mathcal W_1 \subseteq \ext^1(K_C-p, N)$ general, where $\dim(\mathcal W_1)=g-1+2j$ by \eqref{dim:w1}, and since $$\ext^1(K_C, N+p) \simeq \ext^1(K_C-p, N)$$(being both isomorphic to $H^1(N+p-K_C)$), it follows that $v$ is a general point of an irreducible, unirational and $(g-1+2j)$-dimensional locus in $\ext^1(K_C, N+p)$ which, by small abuse of notation, we will always denote by $\mathcal W_1 \subseteq  \ext^1(K_C, N+p)$. 

Now we consider the relevant degeneracy sub-locus in $\ext^1(K_C, N+p)$ as in \eqref{W1}, namely: 
\begin{equation}\label{W1tilde}
 \widetilde{\mathcal W_1}:=\{u\in\ext^1(K_C,N+p)\ |\ {\rm corank} (\partial_u)\ge 1\}\subseteq \ext^1(K_C,N+p).
\end{equation} We check the validity of conditions in Theorem \ref{CF5.8}; note that from \eqref{w1_p} we have $l:=h^0(K_C)=g$ and $r:= h^1(N+p) = j$, as we had $h^1(N) = j+1$ and $N$ general of $\deg(N) \leq g-3$. 
Since $1\leq j\leq g-3$, we get therefore $l> j$; moreover, from above $m= \dim(\ext^1(K_C, N+p))= 2g-2+j \geq l+1=g+1$. Hence, by Theorem \ref{CF5.8}, a general element $w\in\widetilde{\mathcal W_1}$ gives corank($\partial_w)=1$ and 
$$\dim(\widetilde{\mathcal W_1})=m-(g-j+1)=g-3+2j.$$Since $v$ in \eqref{w1_p} was general in $\mathcal W_1 \subseteq \ext^1(K_C, N+p)$ such that $\dim(\mathcal W_1) = g-1+2j > g-3+2j = \dim(\widetilde{\mathcal W_1})$, then a general $v\in\mathcal W_1 \subseteq  \ext^1(K_C, N+p) $ gives rise to a surjective coboundary map $\partial_v$ and we get 
\begin{equation}\label{eq:linearly_isolated}
h^1(\mathcal F(p))=h^1(K_C)=1,
\end{equation} so $\Gamma$ is linearly isolated (li) in $F_u$ as desired.

Now we will show that, for $[\Ff_u] \in \im(\pi)$ general, the section  $\Gamma$ is even algebraically specially unique (asu) on the scroll $F_u$. Assume by contradiction that $\Gamma$ is (not asu) on $F_u$ then, by Proposition \ref{prop:CFliasi}-$(1)$,  we have 
the following diagram for some point  $p\neq q \in C$:
\begin{equation*}
\begin{array}{ccccccccccccccccccccccc}
&&&&0&&&&\\[1ex]
&& &&\downarrow&&&&\\[1ex]
&&  &  & N' &  &  & &  \\[1ex]
&& && \downarrow && & \\[1ex]
0&\lra & N & \rightarrow & \mathcal F_u& \rightarrow &
K_C-p &\rightarrow & 0 \\[1ex]
&& &&\downarrow && &&\\[1ex]
& & &  & K_C-q&& && \\[1ex]
&& &&\downarrow&&&&\\[1ex]
&&&&0&&&&
\end{array}
\end{equation*} where $N' := N + q - p$ since $\det(\Ff_u) = K_C + N -p \simeq K_C-q + N'$. Tensoring with the line bundle $\mathcal O_C(p)$, we get
\begin{equation*}
\begin{array}{ccccccccccccccccccccccc}
&&&&0&&&&\\[1ex]
&& &&\downarrow&&&&\\[1ex]
&&  &  & N + q &  &  & &  \\[1ex]
&& && \downarrow && & \\[1ex]
0&\lra & N+p & \rightarrow & \mathcal F_v:=\mathcal F(p) & \rightarrow &
K_C &\rightarrow & 0 \\[1ex]
&& &&\downarrow &&  &&\\[1ex]
& & &  & K_C+p-q&& && \\[1ex]
&& &&\downarrow&&&&\\[1ex]
&&&&0&&&&
\end{array}
\end{equation*} As above, the extension $v \in \ext^1(K_C+p-q,\; N+q)$ is a general point in the irreducible sub-locus $\mathcal W_1$ of dimension $g-1+2j$, with $N$ and $p$ general. On the other hand, consider the relevant degeneracy 
sub-locus in $ \ext^1(K_C+p-q,N+q)$: 
\[
\widetilde{\widetilde{\mathcal W_1}}:=\{u\in\ext^1(K_C+p-q,N+q)\ |\ {\rm corank} (\partial_u)\ge 1\}\subseteq \ext^1(K_C+p-q,N+q).
\]Once again we check the validity of conditions as in Theorem \ref{CF5.8}; 
in this case we have $l:=h^0(K_C +p-q)=g-1$, since $p \neq q$ because $\Gamma$ is linearly isolated (cf. Prop.\,\ref{prop:CFliasi}-(1)), and $r:= h^1(N+q) = j$, as 
$N$ was general of its degree $\deg(N) \leq g-3$ with speciality $j+1$. Thus, 
from $1\leq j\leq g-3$, we get $l=g-1> r=j$. 

Now set $m:=\dim ({\rm Ext}^1(K_C+q-p, N + q)) = h^0(2K_C+p-N-2q)$; since 
$\deg(2K_C+p-N-2q) = 3g-3 + j > 2g-2$, one has $m = 2g-2+j$, certainly greater than $l+1 = g$.  Hence, from Theorem \ref{CF5.8}, it follows that a general element $w\in\widetilde{\widetilde{\mathcal W_1}}$ gives 
corank($\partial_w)=1$. Notice that $$\dim(\widetilde{\widetilde{\mathcal W_1}})=m-(g-i)=g-2+2j.$$Since $v $ was general in $\mathcal W_1 \subset \ext^1(K_C+p-q, N+q)$ of dimension $\dim(\mathcal W_1) = g-1+2j$, which is bigger 
than $\dim(\widetilde{\widetilde{\mathcal W_1}})$, we deduce that a general $v\in\mathcal W_1$ gives rise to a surjective coboundary map $\partial_v$. This in turn gives $h^1(\mathcal F_u(p))=h^1(K_C+p-q)=0$, which contradicts 
\eqref{eq:linearly_isolated}. Therefore, $\Gamma$ is (asu) on $F_u$.

Finally, the fact that $\Gamma$ is not of minimal degree among sections of $F_u$ follows from $h^0(\mathcal N_{\Gamma/F_u}) = \dim_{[\Gamma]}({\rm Div}^{1,2g-3}_{F_u} ) = j > 0$, from $F_u$ being not decomposable and from Proposition \ref{prop:CFliasi}-(2).
\end{proof} 

We now show how Claim \ref{lem:linearly_isolated} implies that the map $\pi$ is birational; we argue as in the proof of Claim \ref{cl:20feb816}. Let $[\Ff] \in \im(\pi)$ be general, so $\Ff= \Ff_u$ for $(N,p,[u]) \in  \widehat{\mathcal W}_1^{Tot}$ general. By definition of the map $\pi$ we have that $\pi^{-1}([\Ff_u]) = \left\{ (N', p', [u']) \in  \widehat{\mathcal W}_1^{Tot} \, | \; \Ff_{u'} \simeq \Ff_u \right\}$. 

Assume by contradiction $(N',p',[u']) \in \pi^{-1}([\Ff_u]) \setminus \{(N,p,[u])\}$; thus $N' \simeq N+ p' - p$. If $p = p'$, then $N \simeq N'$; thus $[u] \neq [u']$ belong to the same $\widehat{\mathcal W}_1 \subset \mathbb{P} (\ext^1(K_C-p, N)) $. Since $\Ff_u$ is stable (so simple), this implies the existence of an isomorphism $\varphi : \Ff_{u'} {\to} \Ff_u$ (which is not an endomorphism) giving rise to the following diagram: 
\[
\begin{array}{ccccccl}
0 \to & N & \stackrel{\iota_1}{\longrightarrow} & \Ff_{u'} & \to & K_C -p & \to 0 \\ 
 & & & \downarrow^{\varphi} & & &  \\
0 \to & N & \stackrel{\iota_2}{\longrightarrow} & \Ff_u & \to & K_C-p & \to 0.
\end{array}
\]The maps $\varphi \circ \iota_1$ and $\iota_2$ determine, respectively, two non-zero global sections $ s_1 \neq s_2 \in H^0(\Ff_u \otimes N^{\vee}) $. 
They must be linearly dependent, otherwise one would have $h^0(\Ff_u \otimes N^{\vee}) > 1$, contradicting Claim \ref{lem:linearly_isolated} as $ h^0(\Ff_u \otimes N^{\vee}) -1= \dim (|\mathcal O_{F_u}(\Gamma)|)$ by \eqref{eq:isom2}. On the other hand, if they were linearly dependent, then Lemma \ref{lem:technical} would give $[u]= [u']$, a contradiction. If on the contrary $p \neq p'$, the sections $\Gamma$ and $\Gamma'$, respectively corresponding to the quotient line bundles $\omega_C(-p)$ and $\omega_C(-p')$ of $\Ff_u \simeq \Ff_{u'}$, would be such that $\Gamma \sim_{\rm alg} \Gamma'$, contradicting Claim \ref{lem:linearly_isolated} 

This completes the proof of Lemma \ref{lem:pigenfin}. 
\end{proof}

Let $\mathcal B$ be any component of $B^{k_2}_d \cap U_C^s(d)$ containing $\im(\pi)$. From \eqref{lem:pigenfin}, one must have therefore that $\dim(\mathcal B) \geq \dim(\im(\pi)) = \rho_d^{k_2}$. The next result will show that there exists only one of such a component, which turns out to be also {\em regular} and {\em uniruled}.

\begin{proposition}\label{thm:i=1.3} The dimension of  the tangent space to $B^{k_2}_d \cap U_C^s(d)$ at a general point 
$[\mathcal F_u] \in \im(\pi)$ equals the expected dimension $\rho_d^{k_2} = 8g-11-2d$. 

Hence there exists a unique irreducible component of $B^{k_2}_d \cap U_C^s(d)$, denoted by $\mathcal  B_{\rm reg,2}$,  which contains $\im(\pi)$ as an open dense subset. Moreover, $\mathcal B_{\rm reg,2}$ is generically smooth, of the expected dimension $\rho_d^{k_2} = 8g-11 -2d$, i.e. it is 
{\em regular}. In particular, the general point of 
$\mathcal B_{\rm reg,2}$ corresponds to a stable bundle, arising as in Lemma \ref{lem:i=1.2}, and $\mathcal B_{\rm reg,2}$ is {\em uniruled}.  
\end{proposition}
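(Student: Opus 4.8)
The plan is to reduce the entire proposition to the injectivity of the Petri map of $\Ff_u$, and then to read off the structural conclusions from Lemma \ref{lem:pigenfin}. First I would observe that, since $[\Ff_u]\in\im(\pi)$ is general, Lemma \ref{specialityhigh} forces $h^1(\Ff_u)=2$ exactly, so $[\Ff_u]\in B_d^{k_2}\setminus B_d^{k_2+1}$. By Remark \ref{rem:BNloci}-(a) the tangent space $T_{[\Ff_u]}(B_d^{k_2})$ is the annihilator of $\im(\mu_{\Ff_u})$ inside $T_{[\Ff_u]}(U_C(d))\cong H^1(\Ff_u\otimes\Ff_u^\vee)$, whence $\dim T_{[\Ff_u]}(B_d^{k_2})=(4g-3)-\rank(\mu_{\Ff_u})$. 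As $h^0(\Ff_u)h^1(\Ff_u)=2k_2$, proving that $\mu_{\Ff_u}$ is injective yields $\dim T_{[\Ff_u]}(B_d^{k_2})=4g-3-2k_2=8g-11-2d=\rho_d^{k_2}$, which is the first assertion; thus the whole analytic content of the proposition is the injectivity of $\mu_{\Ff_u}$.

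Here the coboundary map satisfies $\cork(\partial_u)=1$ by Lemma \ref{lem:i=1.2}, so—unlike the split situations of Proposition \ref{prop:caso3spec2} and Proposition \ref{prop:caso2spec2}-$(iii\text{-}1)$—the decomposable bundle $N\oplus(K_C-p)$ has strictly larger speciality and one \emph{cannot} reduce $\mu_{\Ff_u}$ to the Petri map of a split bundle by semicontinuity. Instead I would adapt the diagram method carried out for Proposition \ref{prop:caso2spec2}-$(iii\text{-}2)$, replacing the quotient $\omega_C$ there by $\omega_C(-p)$ and $\mathcal O_C$ by $\mathcal O_C(p)$. Writing $W:=\im\bigl(H^0(\Ff_u)\hookrightarrow H^0(K_C-p)\bigr)$, the two cohomology sequences of $0\to N\to\Ff_u\to K_C-p\to0$ give $h^0(\Ff_u)=\dim W$, together with the Serre-dual sequence
\[
0\to H^0(\mathcal O_C(p))\to H^0(\omega_C\otimes\Ff_u^\vee)\to \Coker(\partial_u)^\vee\to 0,
\]
where $\Coker(\partial_u)^\vee\cong\mathbb{C}$. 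Tensoring the defining sequence by its dual and by $\omega_C$ (the analogue of \eqref{eq:20Feb1055}, using $\det(\Ff_u)=K_C+N-p$ and $\Ff_u^\vee\cong\Ff_u(-K_C-N+p)$) produces the $3\times3$ diagram computing $\omega_C\otimes\Ff_u\otimes\Ff_u^\vee$, whose right-hand column yields a map $H^0(\omega_C\otimes\Ff_u\otimes\Ff_u^\vee)\to H^0(\Ff_u(K_C-N+p))$ through which $\mu_{\Ff_u}$ factors, composing to the map $\alpha$ built exactly as in \eqref{eq:Feb201010}. It then suffices to prove $\alpha$ injective.

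The injectivity of $\alpha$ will follow, as in the template diagram, from the injectivity of the restriction of the multiplication map $H^0(K_C-p)\otimes H^0(K_C-N)\to H^0(2K_C-p-N)$ to $W\otimes\Coker(\partial_u)^\vee$. Since $\Coker(\partial_u)^\vee$ is the one-dimensional subspace of $H^0(K_C-N)$ spanned by a nonzero section $s_0$ (note $K_C-N$ is effective, as $h^0(K_C-N)=h^1(N)=j+1>0$), this restriction is simply multiplication by $s_0$ on $W$, which is injective because $C$ is integral; the remaining columns are injective via the inclusions coming from effectiveness of $K_C-N$, precisely as in Proposition \ref{prop:caso2spec2}-$(iii\text{-}2)$. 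I expect the one genuinely delicate point—and the main obstacle—to be bookkeeping on the twists by $p$ so that all dimension counts close up, and in particular confirming that the argument is insensitive to the codimension $j$ of $W$ in $H^0(K_C-p)$ (which, unlike the codimension-$1$ situation of the template, may be large): it is exactly the multiplication-by-$s_0$ step that makes the conclusion hold for arbitrary $j$.

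With $\mu_{\Ff_u}$ injective, a general $[\Ff_u]\in\im(\pi)$ is a smooth point of $B_d^{k_2}\cap U_C^s(d)$ of local dimension $\rho_d^{k_2}$, hence lies on a \emph{unique} irreducible component. By Lemma \ref{lem:pigenfin}, $\pi$ is birational onto $\im(\pi)$ and $\im(\pi)$ is uniruled of dimension exactly $\rho_d^{k_2}$; therefore $\im(\pi)$ is dense in that unique component $\mathcal B_{\rm reg,2}:=\overline{\im(\pi)}$, which is consequently generically smooth of the expected dimension (i.e. \emph{regular}) and \emph{uniruled}, with general member the stable bundle described in Lemma \ref{lem:i=1.2}.
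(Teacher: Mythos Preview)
Your proposal is correct, and in fact it is more precise than the paper's own proof, which consists only of the sentence ``The proof of the first part of the statement is similar to that in Proposition \ref{prop:caso3spec2} above \ldots\ which deals with computations of the kernel of the Petri map \ldots\ via suitable degenerations''. You correctly observe that this literal reference is problematic: in Proposition \ref{prop:caso3spec2} one has $h^1(N)=1$, so $\partial_u\equiv 0$ on all of $\mathcal W_1$ and the split bundle $N\oplus(K_C-p)$ has the \emph{same} speciality $2$, whence semicontinuity on $\mathcal W_1$ applies; in the present setting $h^1(N)=j+1\ge 2$ and $\cork(\partial_u)=1$ for general $u$, so the split bundle has speciality $j+2>2$, and moreover its Petri map is genuinely non-injective (the summand $H^0(K_C-p)\otimes H^0(K_C-N)\to H^0(2K_C-p-N)$ has source of dimension $(g-1)(j+1)$ exceeding the target dimension $2g-4+j$ for every $j\ge 1$). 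So the degeneration argument of Proposition \ref{prop:caso3spec2} does not transfer.

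Your remedy---adapting the diagram method of Proposition \ref{prop:caso2spec2}-$(iii\text{-}2)$ with $\omega_C$ replaced by $\omega_C(-p)$---is exactly the right one: the $3\times 3$ diagram built from $0\to N\to\Ff_u\to K_C-p\to 0$ and its $\omega_C$-twisted dual has right column $0\to K_C\to\Ff_u(K_C-N)\to 2K_C-p-N\to 0$, and the factorisation $\pi\circ\mu_{\Ff_u}=\alpha$ together with injectivity of the two graded pieces (multiplication by the generator of $H^0(p)$ on the top, multiplication by the single section $s_0$ spanning $\Coker(\partial_u)^\vee\subset H^0(K_C-N)$ on the bottom) forces $\alpha$ injective, hence $\mu_{\Ff_u}$ injective. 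Your remark that the argument is insensitive to the codimension $j$ of $W$ in $H^0(K_C-p)$ is the key point: it is precisely the one-dimensionality of $\Coker(\partial_u)^\vee$, not the size of $W$, that makes both graded pieces injective. The deduction of regularity and uniruledness of $\mathcal B_{\rm reg,2}$ from Lemma \ref{lem:pigenfin} is then immediate, as you say.
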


\begin{proof} The proof of the first part of the statement is similar to that in Proposition \ref{prop:caso3spec2} above (cf. also \cite[Prop.\;3.9]{CFK}) which deals with computations of the kernel of the Petri map as in \eqref{eq:petrimap} of $\Ff_u$ via suitable degenerations. The second part directly follows from Lemma \ref{lem:pigenfin} and from the aforementioned computation of the Petri map. 
\end{proof}

Proposition \ref{thm:i=1.3} shows that a general $[\Ff] \in \mathcal B_{\rm reg,2}$ certainly has a {\em special presentation} as in \eqref{degree0}, with 
$N$ and $p$ general. The next results will show in particular that $\omega_C(-p)$ is of {\em minimal degree} among special quotient line bundles of $\mathcal F$.

\begin{lemma}\label{reg_seq_1} Let $C$ be a general $\nu$-gonal curve of genus $g$ and let $\mathcal F$ be a rank-$2$ vector bundle of degree $d$, speciality $i = h^1(\Ff) = 2$ and of {\em second type} in the sense of Definition \ref{def:fstype}. Let $D_s \in C^{(s)}$, be an effective divisor of degree $s >0$ 
and $N \in {\rm Pic}^{d-2g+2+s}(C)$, such that 

\smallskip

\noindent
(i) $1 \leq h^1(N) \leq g-s$, 

\smallskip

\noindent
(ii) $h^0(2K_C-(N+D_s)) \geq g-s+2$,  

\smallskip

\noindent
(iii) $\Ff= \Ff_u$ for $u\in \mathcal W_1 \subset Ext^1(\omega_C(-D_s),\; N)$ general. 

\smallskip

Then $\Ff$ fits also in the exact sequence: $0 \to N(-D_s) \to \Ff \to \omega_C \to 0$. 

\end{lemma}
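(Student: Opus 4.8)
The plan is to reduce the statement to producing a single surjection $\mathcal{F}\twoheadrightarrow\omega_C$, and then to obtain such a surjection by showing that the two-dimensional linear system ${\rm Hom}(\mathcal{F},\omega_C)$ is base-point-free. First I would observe that any surjection $\mathcal{F}\twoheadrightarrow\omega_C$ \emph{automatically} has kernel isomorphic to $N(-D_s)$: its kernel is a sub-line bundle $M$ with $\det(\mathcal{F})\cong M\otimes\omega_C$, and since $(iii)$ together with \eqref{degree} gives $\det(\mathcal{F})\cong N\otimes\omega_C(-D_s)$, one gets $M\cong N(-D_s)$. Hence it suffices to produce the surjection. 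By Serre duality $h^0(\mathcal{F}^\vee\otimes\omega_C)=h^1(\mathcal{F})=2$, so, writing $V:=\mathcal{F}^\vee\otimes\omega_C={\rm Hom}(\mathcal{F},\omega_C)$, we have a $2$-dimensional space of homomorphisms $\mathcal{F}\to\omega_C$, and a section of the rank-$2$ bundle $V$ is nowhere vanishing precisely when the corresponding homomorphism is fibrewise non-zero, i.e. surjective onto the line bundle $\omega_C$. On the curve $C$ a standard degeneracy argument then shows that if $H^0(V)$ is base-point-free a general section is nowhere vanishing: the determinant of ${\rm ev}\colon H^0(V)\otimes\mathcal{O}_C\to V$ cuts out a finite subscheme $Z$ (of degree $\deg\det V=4g-4-d$), base-point-freeness forces ${\rm ev}$ to have rank exactly $1$ along $Z$, so only finitely many members of the pencil $\mathbb{P}H^0(V)$ acquire a zero, and a general member avoids them. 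Thus the whole statement reduces to base-point-freeness of $H^0(V)$.

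Next I would localize the base locus. The presentation $(iii)$, composed with the inclusion $\omega_C(-D_s)\hookrightarrow\omega_C$, yields a distinguished section $\phi_0\in H^0(V)$ whose zero divisor is exactly $D_s$. Since $\mathcal{F}$ is of second type (Definition \ref{def:fstype}), $h^1(\omega_C(-D_s))=h^0(\mathcal{O}_C(D_s))=1$, so $|D_s|=\{D_s\}$ and $\phi_0$ vanishes precisely on ${\rm Supp}(D_s)$. Consequently the base locus of $H^0(V)$ is contained in the finite set ${\rm Supp}(D_s)$, and it remains only to check that no $p\in{\rm Supp}(D_s)$ is a base point. As $p$ is a base point iff $h^0(V(-p))=2$, and $h^0(V(-p))=h^1(\mathcal{F}(p))$ by Serre duality, I must prove $h^1(\mathcal{F}(p))\le 1$ for every $p\in{\rm Supp}(D_s)$.

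Finally, for $p\in{\rm Supp}(D_s)$ I would twist \eqref{degree} by $\mathcal{O}_C(p)$, getting $0\to N(p)\to\mathcal{F}(p)\to\omega_C(p-D_s)\to 0$, whence $h^1(\mathcal{F}(p))={\rm corank}(\partial_p)+h^1(\omega_C(p-D_s))$ for the coboundary $\partial_p\colon H^0(\omega_C(p-D_s))\to H^1(N(p))$. Since $D_s-p\ge 0$ has degree $s-1$ and $h^0(D_s)=1$, one computes $h^1(\omega_C(p-D_s))=h^0(D_s-p)=1$, so it suffices to show $\partial_p$ is surjective for $\mathcal{F}=\mathcal{F}_u$ with $u\in\mathcal{W}_1$ general. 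The twisting identifies $\ext^1(\omega_C(p-D_s),N(p))\cong\ext^1(\omega_C(-D_s),N)$ (both equal $H^1(N\otimes\omega_C^\vee(D_s))$) compatibly with the extension class $u$, so the non-surjectivity locus of $\partial_p$ is the degeneracy locus $\widetilde{\mathcal{W}}_1^{(p)}=\{{\rm corank}\ge 1\}$ for the pair $(\omega_C(p-D_s),N(p))$ sitting inside the same ambient space as $\mathcal{W}_1$, as in \eqref{W1}. The numerical hypotheses are tailored for Theorem \ref{CF5.8}: with $l=h^0(\omega_C(-D_s))=g-s$, $r=h^1(N)$ and $m=\dim\ext^1(\omega_C(-D_s),N)=h^0(2K_C-N-D_s)\ge g-s+2$, conditions $(i)$ and $(ii)$ guarantee that the theorem applies to $\mathcal{W}_1$, giving $\dim\mathcal{W}_1=m-(g-s-r+1)$, and (when $r\ge 2$) also to $\widetilde{\mathcal{W}}_1^{(p)}$, where the target $H^1(N(p))$ has dimension $r-1$ (or $r$, if $p$ is a base point of $|\omega_C\otimes N^\vee|$), yielding in either case $\dim\widetilde{\mathcal{W}}_1^{(p)}<\dim\mathcal{W}_1$; the case $r=1$ is trivial, as then $H^1(N(p))=0$ and $\partial_p$ is vacuously surjective. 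Since $\mathcal{W}_1$ is irreducible of strictly larger dimension, it cannot be contained in the finite union $\bigcup_{p\in{\rm Supp}(D_s)}\widetilde{\mathcal{W}}_1^{(p)}$, so a general $u\in\mathcal{W}_1$ makes every $\partial_p$ surjective, forcing $h^1(\mathcal{F}(p))=1$ on ${\rm Supp}(D_s)$ and establishing base-point-freeness of $H^0(V)$. I expect the main obstacle to be exactly this last step: verifying that the extension-class identification is compatible with $u$ under the twist, and carrying out the dimension comparison uniformly over the finitely many points of ${\rm Supp}(D_s)$; the earlier reductions are essentially formal.
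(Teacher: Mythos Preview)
Your proof is correct and follows essentially the same route as the paper's: both reduce to base-point-freeness of $H^0(\omega_C\otimes\mathcal F^\vee)$, localize the possible base points to ${\rm Supp}(D_s)$ via the distinguished section coming from $\mathcal F\twoheadrightarrow\omega_C(-D_s)\hookrightarrow\omega_C$, and for each $p\in{\rm Supp}(D_s)$ twist by $\mathcal O_C(p)$ and compare $\dim\mathcal W_1$ with the dimension of the corresponding corank-$1$ locus $\widetilde{\mathcal W}_1^{(p)}$ inside the same extension space. One small imprecision: your claim that the case $r=1$ is trivial because $H^1(N(p))=0$ overlooks the possibility that $p$ lies in the unique divisor of $|K_C-N|$, in which case $h^1(N(p))=1$; but this subcase is handled by the very same dimension comparison you use for $r\ge 2$ (noting that $l'=h^0(\omega_C(p-D_s))=g-s+1$ increases by one, so $\widetilde c(1)>c(1)$ regardless of whether $r'=r$ or $r'=r-1$), exactly as the paper does uniformly without singling out $r=1$.
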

\begin{proof} Notice that from the assumptions on $\Ff$, being of speciality $2$ and of {\em second type}, one must have $h^1(\omega_C(-D_s)) =1$, so we have $\ell:= h^0(\omega_C(-D_s)) = g-s$ and $h^1(N) \geq 1$, as in (i). 

Notice further that assumption (i) implies $\ell = h^0(\omega_C(-D_s)) \geq h^1(N)$ whereas assumption (ii) implies that $m := \dim ({\rm Ext}^1(K_C-D_s, N)) \geq \ell +2$. Therefore, from Theorem \ref{CF5.8}, we have that $\mathcal W_1 \subset {\rm Ext}^1(\omega_C(-D_s),\; N)$ as in \eqref{W1} is non-empty, irreducible, of the expected codimension $c(1) = \ell - r + 1$ and $u \in \mathcal W_1$ general is such that ${\rm corank}(\partial_u)=1$. So the associated bundle $\mathcal F_u$ has degree $d$, speciality $2$ and it is of {\em second type}, which explains assumption (iii).

By assumption (iii), $\Ff= \Ff_u$ fits in a sequence of the form \eqref{eq:barj-1}, namely $$0 \to N \to \Ff_u \to \omega_C(-D_s) \to 0.$$Composing the line-bundle injection $N (-D_s) \hookrightarrow N$ with the above exact sequence shows that $N(-D_s)$ is also a sub-line bundle of $\mathcal F_u$. To prove the statement, we need to show that $\frac{\Ff_u}{N(-D_s)} \simeq \omega_C$.

Since by \eqref{eq:barj-1} one has $\det(\Ff_u) \simeq \omega_C \otimes N \otimes \mathcal O_C(-D_s)$, by determinantal reasons it is enough to show that the quotient sheaf $\frac{\Ff_u}{N(-D_s)}$ is  torsion-free on $C$, namely that the sub-line bundle $N(-D_s) \hookrightarrow \Ff_u$ is {\em saturated} in $\mathcal F_u$. 

Notice that the injection $N(-D_s) \hookrightarrow \Ff_u$ gives rise to a global section of the rank-$2$ vector bundle $\Ff_u \otimes N^{\vee}  \otimes \mathcal O_C(D_s) \simeq \omega_C\otimes\mathcal F_u^\vee$, where the isomorphism follows from the fact that $\Ff_u$ is of rank $2$ with determinant $N \otimes \omega_C(-D_s)$. Thus, to prove that the injection $N(-D_s) \hookrightarrow \Ff_u$ is saturated  it is enough to show that the general global section of $\omega_C\otimes\mathcal F_u^\vee$ is nowhere vanishing, namely that 
the intersections of the zero-loci of global sections of
$\omega_C\otimes\mathcal F_u^\vee$ is empty, i.e. $h^0(\omega_C\otimes\mathcal F_u^\vee(-q))=h^1(\mathcal F_u(q))\leq 1, \;\; \mbox{for every point}\; q\in C$. 

To do this, tensor  the exact sequence \eqref{eq:barj-1} by $N^{\vee} \otimes \mathcal O_C(D_s)$ to get 
\[
0\ \to\ \mathcal O_C(D_s) \ \stackrel{\sigma}{\to} \ \omega_C\otimes\mathcal F_u^\vee 
= N^{\vee}  \otimes \mathcal O_C(D_s) \otimes \Ff_u \ \to \omega_C \otimes N^{\vee} \ \ \to \ \ 0
\]The injection  $\sigma$ gives rise to a global section $\tilde{\sigma}\in H^0(\omega_C\otimes\mathcal F_u^\vee)$ which is not zero at any $q\notin D_s$. 

We claim that, for any $q \in D_s$, there also exists a global section in $H^0(\omega_C\otimes\mathcal F_u^\vee)$ which is not zero at $q$.  Thus, assume $q \in D_s$ and tensor \eqref{eq:barj-1} by $\mathcal O_C(q)$ to get 
\begin{equation}\label{eq:6gencogl}
0 \to N(q) \to \mathcal F_v := \mathcal F_u \otimes \mathcal O_C(q) \to K_C-(D_s-q) \to 0.
\end{equation}Since $q \in D_s$, then $\widetilde{\ell}:= h^0(K_C-(D_s-q)) = \ell + 1 = g-s + 1$, whereas $\widetilde{r}:= h^1(N(q))$ is either $h^1(N)$ or $h^1(N)-1$ and, in any case, assumption (i) implies $\widetilde{\ell} \geq \widetilde{r}$. Notice that $$\ext^1(K_C-(D_s-q)), N(q)) \simeq \ext^1(K_C-D_s, N)$$since they both are isomorphic to $H^1(N+D_s - K_C)$, whose dimension is $m := h^0(2K_C - (N+D_s))$ which, from assumption (ii), is such that 
$m \geq \widetilde{\ell} + 1 = h^0(K_C-(D_s-q)) + 1 = g-s+2$. Therefore, from  Theorem \ref{CF5.8}, the relevant degeneracy locus 
$$\widetilde{\mathcal W}_1 \:= \left\{ w \in \ext^1 (K_C-(D-q), N(q)) \; | \; {\rm corank}(\partial_w) \geq 1\right\} \subseteq \ext^1 (K_C-(D-q), N(q)) \simeq \ext^1(K_C-D_s, N) $$is not empty, 
irreducible, of the expected codimension $\widetilde{c(1)} := \widetilde{\ell} - \widetilde{r} +1$ and $w \in \widetilde{\mathcal W}_1$ general is such that ${\rm corank}(\partial_w) = 1$. 

Since both $\widetilde{\mathcal W}_1, \mathcal W_1$ are contained in $\ext^1(K_C-D_s, N)$ and $\widetilde{c(1)} > c(1)$, it follows that $$\dim(\widetilde{\mathcal W}_1) < \dim(\mathcal W_1).$$From the generality assumption of $u \in \mathcal W_1$, the point $v$ defined by  $\mathcal F_v = \mathcal F_u \otimes \mathcal O_C(q)$ as in \eqref{eq:6gencogl} is such that $v \notin \widetilde{\mathcal W}_1$, which implies therefore that the corresponding coboundary map $\partial_v$ is surjective. This gives therefore $1 = h^1(K_C-(D-q)) = h^1(\Ff_v) = h^1(\Ff_u(q))$, as desired. 
\end{proof}

\begin{proposition}\label{thm:forseutile} A general $[\mathcal F] \in  \mathcal B_{\rm reg,2}$ as above cannot have either 
an effective quotient line bundle $L$ with $h^1(L)=2$ or an effective quotient line bundle $L$ with $h^1(L) = 1$ and of degree $\delta < 2g-3$.
\end{proposition}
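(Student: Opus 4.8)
The plan is to reduce the whole statement to a single cohomological vanishing on the general bundle $\mathcal F_u$ produced in Lemma \ref{lem:i=1.2} and Proposition \ref{thm:i=1.3} (which, by Lemma \ref{lem:pigenfin}, is the general member of $\mathcal B_{\rm reg,2}$). I claim it suffices to show that $\mathcal F_u$ admits \emph{no} special effective quotient line bundle of degree $<2g-3$, a single assertion covering both forbidden cases. Indeed, any special effective quotient $L$ has $h^1(L)\geq 1$, so $K_C-L$ is effective and $L=\omega_C(-D)$ for some effective $D$ with $h^1(L)=h^0(\mathcal O_C(D))$. A quotient with $h^1(L)=2$ then has $h^0(\mathcal O_C(D))=2$, forcing $\deg D\geq 2$ (in fact $\geq\nu$) and $\deg L=2g-2-\deg D<2g-3$; a quotient with $h^1(L)=1$ and $\deg L<2g-3$ has $\deg D=2g-2-\deg L\geq 2$. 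So in both cases $\deg D\geq 2$ and $\deg L<2g-3$, as claimed. To rule these out, observe that if $\mathcal F_u\twoheadrightarrow\omega_C(-D)$ with $\deg D\geq 2$ and $D_2\leq D$ is an effective subdivisor of degree $2$, then composing with $\omega_C(-D)\hookrightarrow\omega_C(-D_2)$ gives a nonzero map $\mathcal F_u\to\omega_C(-D_2)$; by Serre duality ${\rm Hom}(\mathcal F_u,\omega_C(-D_2))\cong H^1(\mathcal F_u(D_2))^{\vee}$. Hence it is enough to prove $h^1(\mathcal F_u(D_2))=0$ for the general $u\in\mathcal W_1$ and every $D_2\in C^{(2)}$.

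First I would tensor the presentation \eqref{degree0} by $\mathcal O_C(D_2)$, obtaining $0\to N(D_2)\to\mathcal F_u(D_2)\to\omega_C(D_2-p)\to 0$. Since $\deg(D_2-p)=1$, the quotient $\omega_C(D_2-p)$ is nonspecial, so the long exact sequence gives $h^1(\mathcal F_u(D_2))=\coker(\partial_{D_2})$, where $\partial_{D_2}\colon H^0(\omega_C(D_2-p))\to H^1(N(D_2))$ is the coboundary; the vanishing is therefore equivalent to surjectivity of $\partial_{D_2}$. This is exactly the mechanism exploited in Claim \ref{lem:linearly_isolated} and Lemma \ref{reg_seq_1}. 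Under the canonical identification $\ext^1(\omega_C(D_2-p),N(D_2))\cong\ext^1(\omega_C(-p),N)$ (both being $H^1(N+p-K_C)$, unaffected by the twist), $u$ is carried to itself, and the classes with non-surjective coboundary form the determinantal locus $\mathcal W_1(D_2):=\{w:{\rm corank}(\partial_w)\geq 1\}$ as in \eqref{W1}.

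With $l':=h^0(\omega_C(D_2-p))=g$, $r':=h^1(N(D_2))$ and ambient dimension $m=2g-2+j$, Theorem \ref{CF5.8} yields $\dim\mathcal W_1(D_2)=m-(l'-r'+1)=g-3+j+r'$ whenever $r'\geq 1$ (if $r'=0$ the target of $\partial_{D_2}$ vanishes and surjectivity is automatic). Since $N$ is general, $h^1(N)=j+1$, and the surjection $H^1(N)\twoheadrightarrow H^1(N(D_2))$ forces $r'\leq j+1$; for a general $D_2$ the generic value is $r'=\max\{0,j-1\}$. Comparing with $\dim\mathcal W_1=g+2j-1$ from \eqref{dim:w1}, I obtain $\dim\mathcal W_1(D_2)\leq g+2j-4<\dim\mathcal W_1$ for general $D_2$, so the general $u\in\mathcal W_1$ already lies outside $\mathcal W_1(D_2)$ and $\partial_{D_2}$ is surjective.

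To upgrade this to \emph{all} $D_2$ simultaneously I would bound the incidence variety
\[
\mathcal I:=\{(D_2,u)\in C^{(2)}\times\mathcal W_1\ :\ {\rm corank}(\partial_{D_2,u})\geq 1\}
\]
by stratifying $C^{(2)}$ according to $r'=h^1(N(D_2))$. On the generic stratum the fibres have dimension $\leq g+2j-4$, contributing $\leq 2+(g+2j-4)=g+2j-2$; on the jumping locus $r'=j$ the base has dimension $\leq 1$ (the value $r'=j+1$ being excluded, as $|K_C-N|$ is base-point-free for $N$ general), contributing $\leq 1+(g+2j-3)=g+2j-2$. In every case $\dim\mathcal I\leq g+2j-2<g+2j-1=\dim\mathcal W_1$, so the second projection $\mathcal I\to\mathcal W_1$ is not dominant; a general $u\in\mathcal W_1$ thus has ${\rm corank}(\partial_{D_2})=0$ for all $D_2\in C^{(2)}$, giving $h^1(\mathcal F_u(D_2))=0$ throughout and proving the Proposition. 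I expect the genuine difficulty to lie precisely in this last step: controlling the dimensions of the jumping strata of $C^{(2)}$ where $h^1(N(D_2))$ exceeds its generic value. This control rests on the generality of $N$—specifically on $K_C-N$ being nonspecial with base-point-free linear series—which is where Brill--Noether theory on the general $\nu$-gonal curve $C$ enters.
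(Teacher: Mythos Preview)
Your approach is correct and genuinely different from the paper's. The paper treats the two cases separately: for $h^1(L)=2$ it invokes Lemma \ref{reg_seq_1} to obtain the auxiliary canonical-quotient presentation $0\to N(-p)\to\mathcal F\to\omega_C\to 0$ and then runs a diagram chase with the composed map $\beta:N-p\to\mathcal F\to K_C-E$, deriving a contradiction from injectivity of the dual of the coboundary; for $h^1(L)=1$ with $\delta<2g-3$ it performs a straight parameter count on the family of all such presentations and shows the total dimension falls below $\rho_d^{k_2}$. Your argument instead unifies both cases into the single vanishing $h^1(\mathcal F_u(D_2))=0$ for every $D_2\in C^{(2)}$, which you establish by an incidence-variety dimension count entirely in the spirit of Claim \ref{lem:linearly_isolated}. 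This is conceptually cleaner and more uniform; the paper's route has the advantage of being more elementary in each separate case (no stratification of $C^{(2)}$ needed) and of making the role of Lemma \ref{reg_seq_1} explicit.

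Two small points worth tightening. First, Theorem \ref{CF5.8} as stated asks that the quotient be \emph{special}, whereas your $\omega_C(D_2-p)$ has degree $2g-1$ and is nonspecial; however, inspection of its proof shows that only effectivity of $L$ and of $K_C-N$ is used, so the dimension formula $\dim\mathcal W_1(D_2)=m-(l'-r'+1)$ still holds. Second, your claim that the jumping stratum $\{D_2:r'=j\}$ has dimension $\le 1$ follows simply from the fact that it is a \emph{proper} closed subset of $C^{(2)}$: since $|K_C-N|$ is base-point-free for general $N$ (your own observation), a generic $D_2$ imposes two independent conditions, so the generic $r'$ is $j-1$. You need not compute the exact intersection dimensions with $W^0_{g-j}$; the bound $\le 1$ is automatic and already suffices for $\dim\mathcal I\le g+2j-2<\dim\mathcal W_1$.
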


\begin{proof} We start by proving that speciality-$2$ line bundle quotients of $\Ff$ are avoided. Assume by contradiction that a general $[\mathcal F]\in  \mathcal B_{\rm reg,2}$ has an effective quotient line bundle $L$ with $h^1(L)=2$; set $L:=K_C-E$, where $h^0(E)=2$. Notice further that, from Lemma \ref{lem:i=1.2}, $[\mathcal F]\in  \mathcal B_{\rm reg,2}$ general satisfies all the assumptions as in Lemma \ref{reg_seq_1}, with $D_s = p \in C$ general. Then, one has the following commutative diagram:
\begin{equation*}
\begin{array}{ccccccccccccccccccccccc}
&&&&0&&&&\\[1ex]
&&&&\downarrow&&&&\\[1ex]
&&&& N-p & = & N-p & &  \\[1ex]
&&&& \downarrow \tilde{\alpha} && \downarrow {\beta} & \\[1ex]
0&\lra & N_{L} & \rightarrow & \mathcal F &  \stackrel{\phi}{\rightarrow} &
L=K_C-E &\rightarrow & 0 \\[1ex]
&& &&\downarrow &&  &&\\[1ex]
& &  &  & K_C && && \\[1ex]
&& &&\downarrow&&&&\\[1ex]
&&&&0&&&&
\end{array}
\end{equation*}where the vertical exact sequence comes from Lemma \ref{reg_seq_1}, whereas $N_L$ denotes the 
kernel of the surjection $\mathcal F\to\!\!\to K_C-E$ and where $\beta$ is the composition map $$\beta: N-p \to \mathcal F \to K_C-E.$$For determinantal reason, one has $N_L \simeq N + E -p$. We claim that $\beta$ is not the zero-map: otherwise, there would exist an injective line-bundle map $N-p\hookrightarrow N_{L} \simeq N+E-p$, so the composition $N-p\to N_{L}\to \mathcal F$ would not be a bundle homomorphism, which gives a contradiction. Hence $0 \neq \beta \in {\rm Hom}(N-p, K_C-E)$ and, considering the maps:
\[
{\small
{\rm Hom}(N-p, \mathcal F)\stackrel{\psi\rq{}}{\to} {\rm Hom}(N-p, K_C-E)\simeq H^0(K_C-N-E+p) \stackrel{\psi}{\to} {\rm Ext}^1(N-p, N_{L})\simeq H^1(N_{L}-N+p),
}
\]one gets $\psi(\beta) = 0$, since $\beta=\psi\rq{}(\tilde{\alpha})$. On the other hand, since $h^1(\mathcal F)=h^1(L)=2$, the coboundary map $H^0(K_C-E)\to H^1(N_{L})$ is surjective. Hence the dual homomorphism 
$$H^1(N_{L})^{\vee}\simeq H^0(K_C-N_{L})\simeq H^0(K_C-N-E+p)\stackrel{\psi}{\to} H^0(K_C-E)^{\vee}\simeq H^1(E)\simeq H^1(N_{L}-N+p)$$ is
injective. Hence $\psi(\beta)\neq 0$, which gives a contradiction.

Now assume by contradiction that a general $[\mathcal F]\in  \mathcal B_{\rm reg,2}$ has an effective quotient line bundle $L$ 
with $h^1(L)=1$ and $\deg(L) = \delta < 2g-3$. Thus, $K_C-L = D_{2g-2-\delta}$ is an effective divisor of degree $2g-2-\delta \geq 2$ and $L = K_C - D_{2g-2-\delta}$. Let $N'$ denote the kernel line bundle of the surjection $\Ff \to\!\!\to K_C - D_{2g-2-\delta}$. Since $\Ff$ fits also in \eqref{degree0}, then $N' \simeq N+D-p$ which is general of $\deg(N') = d - \delta$, as $N$ is general of $\deg(N) = d - 2g+3$.

If $d - \delta \geq g-1$, then $h^1(N') = 0$ and this is a contradiction since $h^1(\Ff) = 2$ and $h^1(K_C - D_{2g-2-\delta}) = 1$. If otherwise $d - \delta \leq g-2$, we take into account parameters for this construction. We first take into account $g$ parameters for $N'$ and $2g-2-\delta$ parameters for $K_C - D_{2g-2-\delta}$. Taking notation as in Theorem \ref{CF5.8}, we now consider $m:= \dim (\ext^1(K_C - D_{2g-2-\delta}, N')) = h^0(2K_C - N'- D_{2g-2-\delta})$; notice that $$\deg(2K_C - N'- D_{2g-2-\delta}) = 2g-2 + 2 \delta - d,$$where $2 \delta - d \geq s(\Ff) \geq 1$, by stability; thus the line bundle 
$2K_C - N'- D_{2g-2-\delta}$ is non-special so \linebreak $m =  \dim (\ext^1(K_C - D_{2g-2-\delta}, N'))= g-1 + 2 \delta -2$.  

Now $l := h^0(K_C- D_{2g-2-\delta}) = \delta - g + 2$, as $K_C- D_{2g-2-\delta} $ is of speciality $1$, whereas $r:= h^1(N') = g-1+ \delta - d$, by generality of $N'$ of degree $d-\delta \leq g-2$. From the bounds on $d$ and the fact that $d-\delta \leq g-2$, it follows that $l \geq r$ and $m \geq l+1$. Therefore, by Theorem \ref{CF5.8}-(ii), 
$\mathcal W_1 \subsetneq \ext^1(K_C - D_{2g-2-\delta}, N')$ is irreducible, of dimension 
$\dim(\mathcal W_1) = m - (l -r +1) = 3g-5+ 2 \delta - d$. Regardless stability, in this way we get as in the previous proofs an irreducible scheme 
$\widehat{\mathcal W}_1^{Tot}$ of dimension 
$$\dim(\widehat{\mathcal W}_1^{Tot}) = g + (2g-2-\delta) + (3g-6+ 2\delta - 2d) = 
6g - 8 + \delta - 2d$$and the latter is strictly less than $\rho_d^{k_2} = 8g-11-2d$, since 
$\delta < 2g-3$, and we are done also in this case. 
\end{proof}

As a direct consequence of the proof of Proposition \ref{thm:forseutile}, one has:

\begin{corollary}\label{cor:aiuto12feb1634} For $p \in C$ general and $[\Ff] \in \mathcal B_{\rm reg,2}$ general as above, the line bundle $\omega_C(-p)$ is of minimal degree among special and effective quotient line bundles of $\Ff$, even if it is not of minimal degree among all possible quotients of $\Ff$. Moreover $\Ff$ is {\em rigidly specially presented} (rsp) via the extension \eqref{degree0}. 

On the contrary, $\Ff$ is {\em not rigidly specially presented} (not rsp) via canonical quotient line bundle, i.e. via extensions as in Lemma \ref{reg_seq_1}.   
\end{corollary}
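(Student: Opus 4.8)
The three assertions can be read off from the structural facts already established for a general $[\Ff]\in\mathcal B_{\rm reg,2}$. For the \emph{minimality} statement, I would first recall from Proposition \ref{thm:forseutile} that such an $\Ff$ admits no effective quotient line bundle of speciality $2$, and no effective quotient line bundle of speciality $1$ of degree strictly less than $2g-3$. Since any effective special quotient $L$ of $\Ff$ satisfies $1\le h^1(L)\le h^1(\Ff)=2$, these two exclusions force $h^1(L)=1$ and $\deg(L)\ge 2g-3$ for every such $L$. As $\omega_C(-p)$ is itself an effective special quotient of degree exactly $2g-3$ (it is the quotient in \eqref{degree0}, with $h^0(\omega_C(-p))=g-1$ and $h^1(\omega_C(-p))=1$), it realizes this minimum. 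That $\omega_C(-p)$ is nonetheless \emph{not} of minimal degree among all quotients of $\Ff$ is precisely the last assertion of Claim \ref{lem:linearly_isolated}: there $h^0(\mathcal N_{\Gamma/F_u})=j>0$, which via Proposition \ref{prop:CFliasi}-(2) yields quotients of strictly smaller (necessarily non-special) degree.

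For the \emph{rigidity} via \eqref{degree0}, I would invoke Claim \ref{lem:linearly_isolated}, which shows that the section $\Gamma\subset F_u$ corresponding to the quotient $\omega_C(-p)$ is algebraically specially unique (asu). Being (asu) means $\Gamma$ is the unique special unisecant of degree $2g-3$ on $F_u$, so $\mathcal S_{F_u}^{1,2g-3}$ reduces to the single point $[\Gamma]$ and $a_{F_u}(2g-3)=0$; by Remark \ref{rem:rigid} this is exactly the statement that $\Ff$ is rigidly specially presented via \eqref{degree0}.

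For the \emph{failure} of rigidity via the canonical quotient, I would start from the presentation $0\to N(-p)\to\Ff\to\omega_C\to 0$ furnished by Lemma \ref{reg_seq_1} (applied with $D_s=p$), and let $\Gamma'\subset F_u$ denote the corresponding canonical section, of degree $2g-2$. The goal is to produce a positive-dimensional family of special unisecants of this degree, which forces $a_{F_u}(2g-2)>0$. By \eqref{eq:isom2} the linear system $|\mathcal O_{F_u}(\Gamma')|$ is $\mathbb P(H^0(\Ff\otimes(N-p)^{\vee}))$; since $\det(\Ff)\simeq\omega_C\otimes N(-p)$ one has $\Ff\otimes(N-p)^{\vee}\simeq\omega_C\otimes\Ff^{\vee}$, and Serre duality gives $h^0(\omega_C\otimes\Ff^{\vee})=h^1(\Ff)=2$, so $|\mathcal O_{F_u}(\Gamma')|$ is a pencil. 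The key input, already contained in the proof of Lemma \ref{reg_seq_1}, is that the general section of $\omega_C\otimes\Ff^{\vee}$ is nowhere vanishing; hence the general member of the pencil is an irreducible section, with quotient line bundle $\cong\omega_C$ by Proposition \ref{prop:CFliasi}-(1), and so of speciality $h^1(\omega_C)=1>0$. Thus the general member of this $1$-dimensional system is a special unisecant of degree $2g-2$, whence $\dim(\mathcal S_{F_u}^{1,2g-2})\ge 1$ and $\Ff$ is not rigidly specially presented via the canonical quotient.

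The only delicate point is this last step: one must rule out that the special members of the pencil $|\mathcal O_{F_u}(\Gamma')|$ form merely a finite subset (of \emph{reducible} special unisecants) while the general member is non-special. This is exactly what the saturation computation in the proof of Lemma \ref{reg_seq_1} prevents, since it shows the general member is an irreducible canonical section, hence automatically special; as a consistency check one may note that Proposition \ref{prop:CFliasi}-(2-i), applied to the special section $\Gamma'$ inside the $1$-dimensional linear system $|\mathcal O_{F_u}(\Gamma')|$ with $\Gamma'^2=4g-4-d>0$, independently guarantees the abundance of special unisecants in this family.
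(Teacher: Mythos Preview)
Your proposal is correct and follows essentially the same route as the paper. The minimality and non-minimality statements are handled identically (via Proposition \ref{thm:forseutile} and Claim \ref{lem:linearly_isolated}); for the (rsp) assertion you invoke the (asu) conclusion of Claim \ref{lem:linearly_isolated} directly, whereas the paper goes through the birationality of $\pi$ in Lemma \ref{lem:pigenfin}, but these are equivalent since that lemma's proof rests on the same claim. For the failure of (rsp) via the canonical quotient, your argument is in fact slightly more direct than the paper's: the paper reconstructs the relevant $\widehat{\mathcal W}_1^{Tot}$ with canonical quotient and identifies the $\mathbb P^1$-fiber of the modular map with $|\mathcal O_{F_u}(\Gamma')|$, while you compute $\dim|\mathcal O_{F_u}(\Gamma')|=h^1(\Ff)-1=1$ straight from Serre duality and then use the saturation statement in the proof of Lemma \ref{reg_seq_1} to see that the general member is an irreducible (hence canonical, hence special) section---this bypasses the parametric construction but yields the same conclusion $a_{F_u}(2g-2)\ge 1$.
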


\begin{proof} The fact that  $\omega_C(-p)$ is of minimal degree among special and effective quotient line bundles of $\Ff$ follows from \eqref{degree0} and from Theorem \ref{thm:forseutile}. The fact that it is not of minimal degree among all possible quotients comes from Claim \ref{lem:linearly_isolated}. 

The fact that $\Ff$ general is rigidly specially presented via \eqref{degree0} is a consequence of Lemma \ref{lem:pigenfin} and of the density of $\im(\pi)$ in $\mathcal B_{\rm reg,2}$. The fact that $\Ff$ general admits also canonical quotients comes from Lemma \ref{reg_seq_1}, as the bundle $\Ff$ satisfies all the assumptions therein. Therefore, one can construct the relevant $\widehat{\mathcal W}_1^{Tot}$ 
as in Lemma \ref{lem:i=1.2} but using insetad the canonical quotient. 

The associated dominant, rational map $\pi$ onto $\mathcal B_{\rm reg,2}$ turns out to have general fiber isomorphic to $\mathbb P^1$ being identified, as e.g. in Claim \ref{cl:20feb816}, to the linear system $|\mathcal O_{F_u}(\Gamma)|$ on the surface scroll $F_u$. 
\end{proof}

\bigskip

\subsection{\bf Superabundant components in speciality 2}\label{ss:sup2} It remains to study possible irreducible components $\mathcal B$ of $B_d^{k_2} \cap U^s_C(d)$ for which $[\mathcal F]\in \mathcal B $ general is of {\em first type} in the sense of Definition \ref{def:fstype}, namely which fits into an exact sequence as in \eqref{degree}, with $L = \omega_C(-D)$ s.t. $h^1(\mathcal F) =h^1(L)=2$. 

As occurred for the {\em regular} component $B_{\rm reg,2}$ studied in \S\,\ref{ss:reg2}, some values of $d$ have been already partially considered in \cite{CFK,CFK2}. For reader's convenience and in order to present a complete picture of the whole range of interest, i.e. $2g-2 \leq d \leq 4g-4$ as in \eqref{eq:congd}, we will go brief when reminding the main results in \cite{CFK,CFK2}.

\bigskip

\bigskip

\noindent
$\boxed{4g-4-2\nu \leq d \leq 4g-4}$ From \S\;\ref{ss:reg2}, in this range there is no component $\mathcal B \subset B_d^{k_2} \cap U^s_C(d)$ for which $[\mathcal F]\in \mathcal B $ general can be s.t. $h^1(\mathcal F) =h^1(L)=2$ with $L = \omega_C(-D)$ effective, as it follows either from Proposition \ref{prop:caso1spec2} (from which $B_d^{k_2}  \cap U_C^s(d) = \emptyset$ when $4g-6 \leq d \leq 4g-4$) or from 
Proposition \ref{prop:caso2spec2} (from which $B_d^{k_2}  \cap U_C^s(d)$ is irreducible, for any $3 \leq \nu < \lfloor \frac{g+3}{2}\rfloor$, consisting only of the component $\mathcal B_{\rm reg,2}$ already studied in \S\;\ref{ss:reg2}).

\bigskip

\noindent
$\boxed{3g-3 \leq d \leq 4g-5-2\nu}$ These cases have already been studied in \cite[Theorem\;1.5-(ii)]{CFK2}, and they occur when the gonality $\nu$ is such that $3 \leq \nu \leq \frac{g-2}{2}$, in particular $g \geq 8$ (indeed, for $\frac{g-2}{2} < \nu < 
\lfloor \frac{g+3}{2} \rfloor $, one has $3g-3 >  4g-5-2\nu$ so we are in the previous case, where $B_d^{k_2}  \cap U_C^s(d)$ is either empty or it is irreducible, consisting only of the regular component $\mathcal B_{\rm reg,2}$). In \cite[Theorem\;1.5-(ii)]{CFK2} we precisely proved the following:

\begin{proposition}\label{prop:caso1sup2} (cf. \cite[Theorem\;1.5-(ii)]{CFK2}) Let $g \geq 8$, $3 \leq \nu \leq \frac{g-2}{2}$ and $3g-3 \leq d \leq 4g-5-2\nu$ be integers and let $C$ be a general $\nu$-gonal curve of genus $g$. Then $B_d^{k_2}  \cap U_C^s(d)$ contains an extra irreducible component, denoted by $\mathcal B_{\rm sup,2}$, which  is generically smooth, of dimension $6g-6- d- 2\nu > \rho_d^{k_2}$, i.e. $\mathcal B_{\rm sup,2}$ is 
{\em superabundant}. Moreover, it is  {\em ruled} and a general element $[\mathcal F] \in \mathcal B_{\rm sup,2}$ corresponds to a rank two vector bundle of degree $d$ and speciality $2$, fitting in the exact sequence
\begin{equation}\label{eq:aiuto12feb2044}
 0\to N \to \mathcal F\to   \omega_C\otimes A^{\vee} \to 0,  
\end{equation}where $A \in {\rm Pic}^{\nu}(C)$ is the unique line bundle on $C$ such that $|A| = g^1_{\nu}$ and where $N \in {\rm Pic}^{d-2g+2+\nu}(C)$ is general of its degree. In particular, $ s(\Ff) = 4g-4-d-2\nu$ and $\omega_C\otimes A^{\vee}$ is of minimal degree among quotient line bundles of $\Ff$.

Moreover, $\mathcal B_{\rm sup, 2}$ is the only component of $B_{d}^{k_2}\cap U_C^s(d)$ for which the general bundle $\mathcal F$ is such that $h^1(\mathcal F) = h^1(L) =2$, for effective and special quotient line bundles $L$ as in \eqref{degree}.
\end{proposition}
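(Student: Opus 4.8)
The plan is to realize $\mathcal{B}_{\rm sup,2}$ as the closure of the image of an explicit modular map from a family of extensions whose quotient is the \emph{fixed} minimal line bundle $L_0 := \omega_C \otimes A^{\vee}$, and then to deduce the uniqueness statement from a dimension count that is controlled by the gonality. First I would fix $L_0 = \omega_C \otimes A^{\vee}$, of degree $\delta := 2g-2-\nu$ with $h^0(L_0) = h^1(A) = g-\nu+1$ and $h^1(L_0) = h^0(A) = 2$, and let $N$ range over a dense open $S \subseteq \pic^{d-2g+2+\nu}(C)$. Since $\deg N = d-2g+2+\nu \geq g-1+\nu \geq g+2$, a general such $N$ is non-special, so $h^1(N) = 0$; consequently the coboundary $H^0(L_0) \to H^1(N) = 0$ is trivially surjective and every $\mathcal{F}_u$ fitting in \eqref{eq:aiuto12feb2044} has speciality $h^1(\mathcal{F}_u) = h^1(L_0) = 2$ and is of first type in the sense of Definition \ref{def:fstype}. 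By \eqref{eq:m} one has $\dim \ext^1(L_0,N) = 5g-5-2\nu-d$.

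Next I would establish stability and the exact value of the Segre invariant via Lange--Narasimhan (Proposition \ref{LN}). For $d \leq 4g-6-2\nu$ (the borderline $d = 4g-5-2\nu$, where $2\delta-d=1$, being treated directly) I take $\sigma := 2\delta-d = 4g-4-2\nu-d$ and the morphism $\varphi_{|K_C+L_0-N|}\colon C \to \mathbb{P} := \mathbb{P}(\ext^1(L_0,N))$, which is indeed a morphism since $\deg(K_C+L_0-N) = 6g-6-2\nu-d \geq 2g-1$. Writing $X := \varphi(C)$ and $h := 2\delta-d-1$, the secant variety $\Sec_h(X)$ has dimension $\min\{\dim\mathbb{P},\, 2h-1\}$, and $2h-1 < \dim\mathbb{P}$ reduces to $d > 3g-5-2\nu$, which holds throughout the range. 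Hence a general $[u]$ lies off $\Sec_h(X)$, giving $s(\mathcal{F}_u) \geq \sigma$; combined with the reverse bound $s(\mathcal{F}_u) \leq \Gamma^2 = 2\delta-d$ coming from $L_0$ via \eqref{eq:Ciro410}, this yields $s(\mathcal{F}_u) = 4g-4-d-2\nu > 0$, so $\mathcal{F}_u$ is stable and $L_0$ is of minimal degree among quotient line bundles. Letting $N$ vary, the spaces $\mathbb{P}(\ext^1(L_0,N))$ sweep out an irreducible scheme that is ruled over $S$, of dimension $g + (5g-6-2\nu-d) = 6g-6-2\nu-d$, carrying a rational modular map $\pi$ to $U_C^s(d)$ with image in $B_d^{k_2}$. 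Since $\mathcal{N}_{\Gamma/F} \simeq L_0 - N$ has degree $4g-4-2\nu-d$, with $1 \leq \deg \leq g-1-2\nu \leq g-1$, the general $N$ gives $h^0(\mathcal{N}_{\Gamma/F}) = h^0(L_0-N) = 0$, so the section $\Gamma$ is algebraically isolated and $\pi$ is birational onto its image; thus $\dim \mathcal{B}_{\rm sup,2} = 6g-6-2\nu-d$, which exceeds $\rho_d^{k_2} = 8g-11-2d$ precisely when $d > 2g-5+2\nu$, and this holds because $d \geq 3g-3$ and $\nu \leq \tfrac{g-2}{2}$. Generic smoothness is then obtained by splitting the Petri map \eqref{eq:petrimap} of the split bundle $N \oplus L_0$ into the associated multiplication maps, checking their injectivity as in the corrected computation of Proposition \ref{lem:i=2.1}, and concluding by semicontinuity; this makes $\mathcal{B}_{\rm sup,2}$ a genuine, ruled, superabundant component.

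For the concluding uniqueness statement I would argue as follows. Let $\mathcal{B}'$ be any component of $B_d^{k_2}\cap U_C^s(d)$ whose general member $\mathcal{F}$ is of first type. Then $\mathcal{F}$ admits an effective special quotient $L = \omega_C(-D)$ with $h^1(L) = h^0(\mathcal{O}_C(D)) = 2$, so $|D|$ is a pencil of degree $t := \deg D \geq \nu$. Stability forces $\deg L = 2g-2-t > \tfrac{d}{2}$, hence $t < 2g-2-\tfrac{d}{2} \leq \tfrac{g-1}{2}$ for $d \geq 3g-3$; since $\tfrac{g-1}{2} < \tfrac{g+2}{2}$, Lemma \ref{lem:Lar} excludes base-point-free pencils and forces $\mathcal{O}_C(D) \in \overline{W^{\vec{w}_{1,1}}}$, that is $\mathcal{O}_C(D) = A \otimes \mathcal{O}_C(B_{t-\nu})$. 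Parametrizing the resulting first-type bundles (pencil: $t-\nu$ parameters; $N$: $g$; extension class: $5g-6-2t-d$) gives a locus of dimension $6g-6-\nu-t-d$, strictly maximized at $t = \nu$, where it equals $\dim\mathcal{B}_{\rm sup,2}$ and the pencil is the base-point-free $g^1_{\nu} = |A|$. For $t > \nu$ the dimension drops strictly, and Proposition \ref{prop:CFliasi}(2), applied to the positive-dimensional family of sections obtained by moving the base points $B_{t-\nu}$, produces reducible unisecants that exhibit $\omega_C \otimes A^{\vee}$ as a quotient of $\mathcal{F}$; hence those bundles already lie in $\overline{\mathcal{B}_{\rm sup,2}}$ and cannot form an independent component. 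Therefore $\mathcal{B}' = \mathcal{B}_{\rm sup,2}$.

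The main obstacle I expect is twofold. The first and most delicate part is the generic smoothness, i.e. the injectivity of the Petri map for the general bundle of $\mathcal{B}_{\rm sup,2}$: this is exactly the computation whose earlier mishandling (for $d = 3g-4,\,3g-5$) is corrected in Proposition \ref{lem:i=2.1} and commented on in Remark \ref{rem:i=2}, and it must be carried out with care since it decides both the dimension and the reducedness of the component. The second is making the uniqueness argument fully rigorous: the stability bound $t < \tfrac{g-1}{2}$ cleanly removes base-point-free pencils, but one must still verify that the reducible-unisecant degeneration genuinely absorbs every $t > \nu$ first-type locus into $\overline{\mathcal{B}_{\rm sup,2}}$ rather than leaving a stray smaller component behind.
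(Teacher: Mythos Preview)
Your overall structure matches the paper's approach closely: construct $\mathcal B_{\rm sup,2}$ as the image of the modular map from $\mathbb P(\ext^1(K_C-A,N))$ with $N$ general and non-special, prove stability and the exact value of the Segre invariant via Lange--Narasimhan, deduce birationality from algebraic isolation of the section, and then treat generic smoothness and uniqueness. The first two steps are correct and essentially the paper's own argument. The two remaining steps, however, contain genuine gaps.

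\textbf{Generic smoothness.} Your plan ``split the Petri map of $N\oplus L_0$ into multiplication maps, check their injectivity, and conclude by semicontinuity'' cannot work as stated. One of the summands is the Petri map of the gonal bundle, $\mu_A\colon H^0(A)\otimes H^0(K_C-A)\to H^0(K_C)$, and its kernel $H^0(K_C-2A)$ has dimension $g-2\nu+2>0$ by Proposition~\ref{Ballico}; this non-injectivity is precisely what makes $C$ non-Brill--Noether general and what produces the superabundance. The paper's method (carried out in Proposition~\ref{lem:i=2.1}, to which this proposition defers) is different: using the base-point-free pencil trick on $|A|$ one identifies $\ker\mu_{\mathcal F_u}\simeq H^0(\mathcal F_u\otimes A^{\vee})$ for \emph{general} $u$, and then computes this directly from the twisted sequence $0\to N-A\to\mathcal F_u\otimes A^{\vee}\to K_C-2A\to 0$ to obtain $\dim\ker\mu_{\mathcal F_u}=d-2g+5-2\nu$, which matches the excess dimension. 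Your semicontinuity idea can be salvaged if, instead of claiming injectivity, you compute $\dim\ker\mu_{\mathcal F_0}=h^0(K_C-2A)+h^0(N-A)=d-2g+5-2\nu$ exactly and combine the upper bound from semicontinuity with the lower bound $\dim T_{[\mathcal F_u]}\ge\dim\mathcal B_{\rm sup,2}$; but this is not what you wrote.

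\textbf{Uniqueness.} Your invocation of Proposition~\ref{prop:CFliasi}(2) goes in the wrong direction. Moving the base points $B_{t-\nu}$ gives a family of sections of fixed degree $2g-2-t$; the reducible unisecants produced by that proposition split off fibres and therefore contain sections of \emph{lower} degree, i.e.\ they exhibit quotients of $\mathcal F$ of degree $<2g-2-t$. What you need is a quotient of \emph{higher} degree, namely $K_C-A$ of degree $2g-2-\nu$. The correct mechanism is the one the paper refers to (\cite[Lemma~3.14]{CFK2}): from $0\to N'\to\mathcal F\to K_C-A-B_{t-\nu}\to 0$ one has the sub-line bundle $N:=N'-B_{t-\nu}\hookrightarrow N'\hookrightarrow\mathcal F$, and one must show that for general data this inclusion is \emph{saturated}, so that $\mathcal F/N\simeq K_C-A$ and $[\mathcal F]\in\mathcal B_{\rm sup,2}$. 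This amounts to producing a section of $\mathcal F\otimes N^{\vee}$ not vanishing along $B_{t-\nu}$, which is a cohomological computation analogous to Lemma~\ref{reg_seq_1}, not an application of Proposition~\ref{prop:CFliasi}(2).
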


\begin{remark}\label{rem:erroreColl} {\normalfont  To be more precise,  in \cite{CFK2} we studied the cases $3g-5 \leq d \leq 4g-5-2\nu$; nevertheless, as mentioned in the Introduction, in \cite[Theorem 1.5-(ii)]{CFK2} it was wrongly stated that for $d=3g-4, 3g-5$ the component $\mathcal B_{\rm sup,2}$ therein was even {\em non-reduced}. This is due to tricky computations of kernels of suitable Petri/multiplication maps in 
\cite[Thm.\;3.12-(ii)]{CFK2}. We will fix here this mistake (cf. Proposition \ref{lem:i=2.1} and Remark \ref{rem:corrColl}-(a) below) by reproving the cases $d= 3g-4, 3g-5$ in the brand new part of the paper, whereas we briefly remind here the correct part of \cite[Theorem\;1.5-(ii)]{CFK2}, i.e. what was proved for $3g-3 \leq d \leq 4g-5-2\nu$. 
} 
\end{remark}

\begin{proof} [Sketch of the proof of Proposition \ref{prop:caso1sup2}] The strategy used in \cite[\S\;3.3]{CFK2} to study cases in the range \linebreak $3g-3 \leq d \leq 4g-5-2\nu$ was as follows: first, in \cite[Lemma\,3.10]{CFK2} we proved that if $\mathcal F$ is a rank-two vector bundle arising as a general extension in ${\rm Ext}^1(K_C-A, N)$, where $N$ any line bundle in ${\rm Pic}^{d-2g+2 + \nu}(C)$, then $\mathcal F$ is stable with $s(\mathcal F)= 4g-4-2\nu-d$, i.e. $K_C-A$ is a minimal quotient line bundle of  $\mathcal F$. Moreover, if $N$ is non-special, one obviously has $h^1(\mathcal F) = h^1(K_C-A) = 2$.

Next, to construct the component $\mathcal B_{\rm sup,2}$, in \cite[Thm.\;1.5-(ii)]{CFK2} we proved the existence of a vector bundle $\mathcal E_{d,\nu}$ on a suitable open, dense subset $S \subseteq \pic^{d-2g+2+\nu}(C)$,  whose rank is  exactly $m= \dim(\ext^1(K_C-A,N))= 5g-5- 2 \nu -d $ as in \eqref{eq:m} as follows: let $\mathcal N\to\pic^{d-2g+2+\nu}(C)\times C$ be  a Poincar$\acute{e}$ line bundle so we have the following diagram:
\begin{center}
  \begin{picture}(300,100)
    \put(58,13){$\pic^{d-2g+2+\nu}(C)$}
    \put(200,15){$C$}
    \put(115,55){$\pic^{d-2g+2+\nu}(C)\times C$}
    \put(145,50){\vector(-3,-2){40}}
    \put(160,50){\vector(3,-2){40}}
    \put(150,90){$\mathcal N$}
     \put(153,85){\vector(0,-1){15}}
    \put(112,35){$p_1$}
    \put(187,35){$p_2$}
      \put(240,50){$K_C-A$}
        \put(245,45){\vector(-3,-2){30}}
  \end{picture}
\end{center}Set $\mathcal E_{d,\nu}:={R^1p_1}_*(\mathcal N\otimes p_2^*(A-K_C))$; by \cite[pp.\;166-167]{ACGH}, $\mathcal E_{d,\nu}$ is the aforementioned vector bundle on a suitable open, dense subset $S \subseteq \pic^{d-2g+2+\nu}(C)$ of 
rank $m= \dim{\ext^1(K_C-A,N)}= 5g-5- 2 \nu -d $ as in \eqref{eq:m}, since $K_C-A \ncong N$.  

Taking the associated projective bundle $\mathbb P(\mathcal E_{d,\nu})\to S$ (consisting of the family of projective spaces $\mathbb P\left(\ext^1(K_C-A,N)\right)$'s varying as  $N$ varies in $S$) we got$$ \dim (\mathbb P(\mathcal E_{d,\nu})) 
=g+ (5g-5- 2 \nu -d) -1=6g-6-2 \nu  -d.$$We proved stability for the general bundle arising from the general pair 
$(N, u) \in \mathbb P(\mathcal E_{d,\nu})$ and, using the fact that such a bundle has speciality $2$, we deduce the existence of a natural (rational) modular map
 \begin{eqnarray*}
 &\mathbb P(\mathcal E_{d,\nu})\stackrel{\pi_{d,\nu}}{\dashrightarrow} &U_C(d) \\
 &(N, [u])\to &\mathcal F_u;
 \end{eqnarray*} for which $ \im (\pi_{d,\nu})\subseteq B^{k_2}_d \cap  U^s_C(d)$. We then proved that 
$\pi_{d,\nu}$ is birational onto its image (cf. \cite[Claim\;3.11]{CFK2}); this in particular showed that the closure 
of $\im (\pi_{d,\nu})$ in $U_C(d)$, denoted by $\mathcal B_{\rm sup,2}$, is {\em ruled},  being birational to $ \mathbb P(\mathcal E_{d,\nu})$, and such that $\dim (\mathcal B_{\rm sup,2}) = 6g - 6 - 2\nu - d$. 

The content of \cite[Thm.\;3.12-(i)]{CFK2} was that, for $3g-3 \leq d \leq 4g-5-2\nu$, one has $\dim (\mathcal B_{\rm sup,2}) > \rho_d^{k_2}$ and also that $\mathcal B_{\rm sup,2}$ is generically smooth; the latter has been proved via a careful study of suitable Petri/multiplication maps. At last, we also proved that $\mathcal B_{\rm sup, 2}$ is the only component of $B_{d}^{k_2}\cap U_C^s(d)$ for which the general bundle $\mathcal F$ is such that $h^1(\mathcal F) = h^1(L) =2$, for quotient line bundles $L$ as in \eqref{degree} (cf. \cite[Lemma\;3.14]{CFK2}). 
\end{proof}

\bigskip

\noindent
$\boxed{2g-2 \leq d \leq 3g-4}$ This range is brand new; in these cases $d$ can be also written as $d = 3g-4-j$, with $0 \leq j \leq g-2$. From the cases previously studied, notice that $3g-4-j < 4g-5 - 2\nu$ iff $\nu < \frac{g-1+j}{2}$. Thus, according to the choice of $d$, equivalently of the integer $j$, if $\frac{g-1+j}{2} \leq \nu < \lfloor \frac{g+3}{2} \rfloor$, these cases have been already discussed in the previous case. We will prove here the following result.

\begin{proposition}\label{lem:i=2.1} Let  $g \geq 8$ and $3 \leq \nu \leq \frac{g-2}{2}$ be integers and let $C$ be a general $\nu$-gonal curve of genus $g$. Then: 
\begin{itemize} 
\item[(i)] for any $2g-5+2\nu \leq d \leq 3g-4$,   $B_{d}^{k_2}\cap U_C^s(d)$ contains an extra-component, different from $\mathcal B_{\rm reg,2}$ as in Proposition \ref{thm:i=1.3}, denoted by $\mathcal B_{\rm sup,2}$, which is {\em ruled}, {\em generically smooth} and {\em superabundant}, being of dimension $\dim(\mathcal B_{\rm sup,2}) = 6g-6- d-2\nu > \rho_d^{k_2}$. Furthermore, $[\Ff] \in \mathcal B_{\rm sup,2}$ general is rigidly presented (rp) as an extension \eqref{eq:aiuto12feb2044}, where $A \in {\rm Pic}^{\nu}(C)$ is the unique line bundle on $C$ such that $|A|= g^1_{\nu}$ and $N \in {\rm Pic}^{d-2g+2+\nu}(C)$ is general;

\item[(ii)] for $d= 2g-5+2\nu$, $B_{d}^{k_2}\cap U_C^s(d)$ contains an extra-component, different from $\mathcal B_{\rm reg,2}$ as in Proposition \ref{thm:i=1.3}, denoted as above by $\mathcal B_{\rm sup,2}$, which is {\em ruled} and {\em regular}. Furthermore, as above,  $[\Ff] \in \mathcal B_{\rm sup,2}$ general is rigidly presented (rp) as an extension \eqref{eq:aiuto12feb2044}, where $A \in {\rm Pic}^{\nu}(C)$ is the unique line bundle such on $C$ that $|A|= g^1_{\nu}$ and $N \in {\rm Pic}^{d-2g+2+\nu}(C)$ is general;

\item[(iii)]  for $2g-2 \leq d \leq 2g-6+2\nu$, $B^{k_2}_d \cap U^s_C(d)$ is irreducible, consisting only of  $\mathcal B_{\rm reg,2}$ as in Proposition \ref{thm:i=1.3}.

\end{itemize}
\end{proposition}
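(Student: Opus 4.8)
The plan is to treat parts (i) and (ii) together by constructing the candidate superabundant component out of bundles of \emph{first type} whose minimal special quotient is the line bundle $\omega_C\otimes A^\vee$, and then to deduce (iii) by a dimension comparison. By Lemma \ref{lem:1e2note}, Theorem \ref{thm:barj} and Lemma \ref{specialityhigh}, the general point of any component of $B^{k_2}_d\cap U^s_C(d)$ whose bundle is of first type is presented by \eqref{degree} with an effective quotient $L=\omega_C(-D)$ of speciality $2$, i.e. with $D$ moving in a $g^1_m$, $m=\deg D$. On a general $\nu$-gonal curve the pencils $g^1_m$ are classified by Lemma \ref{lem:Lar}, and the smallest admissible $m$ is the gonality $\nu$, forcing $D\in|A|=g^1_\nu$ and $L=\omega_C\otimes A^\vee$, of degree $2g-2-\nu$ and speciality exactly $2$ since $h^0(A)=2$ (Proposition \ref{Ballico}). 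A parameter count over the remaining components of $W^1_m$ with $m>\nu$ from Lemma \ref{lem:Lar} shows that every other choice of $L$ yields a strictly smaller first-type locus, so $\omega_C\otimes A^\vee$ is the unique candidate minimal quotient; this also separates the would-be $\mathcal B_{\rm sup,2}$ from the second-type component $\mathcal B_{\rm reg,2}$ of Proposition \ref{thm:i=1.3}.

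I would then reproduce the construction of Proposition \ref{prop:caso1sup2}: for $N\in{\rm Pic}^{d-2g+2+\nu}(C)$ general, set $m:=\dim\mathrm{Ext}^1(\omega_C\otimes A^\vee,N)=5g-5-2\nu-d$ by \eqref{eq:m} (note $\omega_C\otimes A^\vee\ncong N$ since $d\le 3g-4<4g-4-2\nu$), form the bundle $\mathcal E_{d,\nu}:=R^1p_{1*}(\mathcal N\otimes p_2^*(A-K_C))$ and the projective bundle $\mathbb P(\mathcal E_{d,\nu})\to S$ of dimension $g+(m-1)=6g-6-2\nu-d$. Since $\deg(\omega_C\otimes A^\vee-N)=4g-4-2\nu-d\ge g-2\nu\ge2$ under the hypothesis $\nu\le\frac{g-2}{2}$, Proposition \ref{LN} applies and, as in \cite[Lemma 3.10]{CFK2}, yields stability of $\Ff_u$ with $s(\Ff_u)=4g-4-2\nu-d$, so that $\omega_C\otimes A^\vee$ is a minimal-degree quotient; moreover, for general $u$ the coboundary $\partial_u$ is surjective—automatically when $N$ is non-special, and by Theorem \ref{CF5.8}-(ii) (using $l=h^0(\omega_C\otimes A^\vee)=g+1-\nu\ge h^1(N)$) when $N$ is special—whence $h^1(\Ff_u)=2$. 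This produces a modular map $\pi_{d,\nu}\colon\mathbb P(\mathcal E_{d,\nu})\dashrightarrow U^s_C(d)$ with image in $B^{k_2}_d\cap U^s_C(d)$.

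The first genuine step is to show $\pi_{d,\nu}$ is birational onto its image, so that $\im(\pi_{d,\nu})$ is ruled of dimension $6g-6-2\nu-d$ and the presentation \eqref{eq:aiuto12feb2044} is rigid. Since the quotient $\omega_C\otimes A^\vee$, and hence the sub-bundle $N$, are determined up to isomorphism by $\det\Ff$, the fibre of $\pi_{d,\nu}$ is governed by sections of $F_u=\mathbb P(\Ff_u)$ with quotient $\cong\omega_C\otimes A^\vee$, i.e. by $h^0(\Ff_u\otimes N^\vee)=\dim|\mathcal O_{F_u}(\Gamma)|+1$. Tensoring \eqref{eq:aiuto12feb2044} by $N^\vee$ and using the generality of $u$ to force the cup-product map $H^0(\omega_C\otimes A^\vee\otimes N^\vee)\to H^1(\mathcal O_C)$ to be injective (the argument paralleling Claim \ref{lem:linearly_isolated}, with Lemma \ref{lem:technical} ruling out proportional sections) shows that the minimal section $\Gamma$ is linearly isolated, and even algebraically isolated when $\deg(\omega_C\otimes A^\vee-N)\le g-1$ so that $h^0(\omega_C\otimes A^\vee\otimes N^\vee)=0$; this gives the rigid presentation (rp) and the birationality of $\pi_{d,\nu}$, whence $\dim\mathcal B_{\rm sup,2}=6g-6-2\nu-d$.

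It then remains to prove generic smoothness, which is the corrected heart of the matter and the main obstacle, since it is exactly here that \cite[Thm.\,3.12-(ii)]{CFK2} erred in asserting non-reducedness for $d=3g-4,3g-5$ (cf. Remark \ref{rem:erroreColl}). By Remark \ref{rem:BNloci}-(a) one has $\dim T_{[\Ff]}(B^{k_2}_d)=\rho^{k_2}_d+\dim\ker\mu_\Ff$, so generic smoothness amounts to the exact computation $\dim\ker\mu_\Ff=d-2g+5-2\nu$. Unlike the regular case, for special $N$ one cannot degenerate to a split bundle of speciality $2$ (the coboundary is now \emph{surjective}, not zero), so I would analyse $\mu_\Ff$ directly, tensoring \eqref{eq:aiuto12feb2044} by its dual and by $\omega_C$ to obtain commutative diagrams as in \eqref{eq:Feb201010}--\eqref{eq:20Feb1055}, thereby reducing $\ker\mu_\Ff$ to kernels of explicit multiplication maps built from $A$ and $N$; the base-point-freeness of $|A|=g^1_\nu$ (Proposition \ref{Ballico}) and the splitting-type information of Lemma \ref{lem:Lar} control these kernels and pin down the dimension. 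Comparison with $\rho^{k_2}_d=8g-11-2d$ then gives superabundance $6g-6-2\nu-d>\rho^{k_2}_d$ precisely for $d>2g-5+2\nu$ (part (i)) and equality, i.e. regularity with $\mu_\Ff$ injective, at $d=2g-5+2\nu$ (part (ii)). Finally, for part (iii) the same count gives $\dim\mathcal B_{\rm sup,2}=6g-6-2\nu-d<\rho^{k_2}_d$ when $d\le 2g-6+2\nu$; as every component meeting $U^s_C(d)$ has dimension at least $\rho^{k_2}_d$ (Remark \ref{rem:BNloci}-(b)), the first-type locus cannot be a component and must lie inside the regular component $\mathcal B_{\rm reg,2}$, which exists throughout this range by Proposition \ref{thm:i=1.3}; hence $B^{k_2}_d\cap U^s_C(d)=\mathcal B_{\rm reg,2}$ is irreducible.
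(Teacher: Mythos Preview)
Your overall strategy matches the paper's: construct $\mathcal B_{\rm sup,2}$ as the image of $\pi_{d,\nu}$ from $\mathbb P(\mathcal E_{d,\nu})$, prove birationality of $\pi_{d,\nu}$ via linear isolation of the section corresponding to $\omega_C\otimes A^\vee$, compute the Petri kernel to get generic smoothness and the exact dimension, and compare with $\rho_d^{k_2}$ for the trichotomy (i)--(iii). Two technical points, however, are handled differently in the paper and are not quite covered by your sketch.

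First, for the linear isolation step you propose to show directly that the coboundary $H^0(\omega_C\otimes A^\vee\otimes N^\vee)\to H^1(\mathcal O_C)$ is \emph{injective}. Theorem~\ref{CF5.8} only yields \emph{surjectivity} of coboundary maps for general $u$ (under $l\ge r$), so injectivity in your sequence is not immediate. The paper instead twists by $A$ rather than $N^\vee$: from $0\to N+A\to \Ff_u\otimes A\to \omega_C\to 0$ one checks the hypotheses of Theorem~\ref{CF5.8} to get surjectivity of $H^0(\omega_C)\to H^1(N+A)$, whence $h^1(\Ff_u\otimes A)=1$, and then uses Serre duality together with $\Ff_u^\vee(K_C-A)\simeq \Ff_u(-N)$ to conclude $h^0(\Ff_u\otimes N^\vee)=1$.

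Second, and more importantly, the Petri computation is sharper than ``reducing to multiplication maps built from $A$ and $N$''. Since the coboundary in \eqref{eq:aiuto12feb2044} is surjective, one has the identification $H^0(\omega_C\otimes \Ff_u^\vee)\simeq H^0(A)$, so $\mu_{\Ff_u}$ factors through the multiplication $\alpha:H^0(\Ff_u)\otimes H^0(A)\to H^0(\Ff_u\otimes A)$ (with $H^0(\Ff_u\otimes A)\hookrightarrow H^0(\omega_C\otimes\Ff_u\otimes\Ff_u^\vee)$ coming from the middle column of the tensored diagram). The base-point-free pencil trick then gives the clean identification $\ker\mu_{\Ff_u}\simeq H^0(\Ff_u\otimes A^\vee)$. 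Finally, $h^0(\Ff_u\otimes A^\vee)$ is computed by applying Theorem~\ref{CF5.8} once more to the twisted sequence $0\to N-A\to \Ff_u\otimes A^\vee\to K_C-2A\to 0$ (using Proposition~\ref{Ballico} for $h^0(K_C-2A)=g-2\nu+2$), yielding $h^0(\Ff_u\otimes A^\vee)=d-2g+5-2\nu$ exactly when $d\ge 2g-5+2\nu$. The diagrams \eqref{eq:Feb201010}--\eqref{eq:20Feb1055} you cite are from the \emph{regular} case with canonical quotient and do not directly apply here; the analogous diagram in the superabundant case is the one obtained from \eqref{eq:aiuto12feb2044} and its dual, but the decisive simplification is the pencil-trick identification above, not the diagram chase.
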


\smallskip

To prove Proposition \ref{lem:i=2.1}, we start with the following more general result. 

\begin{lemma}\label{lem:i=2.2} Let $g \geq 8$, $2g-2 \leq d \leq 3g-4$ and $3 \leq \nu \leq \frac{g-2}{2}$ be integers and let $C$ be a general $\nu$-gonal curve. Let $\Ff = \mathcal F_u$ be  a degree-$d$, rank-two vector bundle, arising as a general extension  $u \in {\rm Ext}^1(K_C-A, N)$, where $A \in \pic^{\nu}(C)$ is the unique line bundle on $C$ such that $|A| = g^1_{\nu}$ and $N \in {\rm Pic}^{d-2g+2 + \nu}(C)$ is general. Then  $\mathcal F_u$ is stable with $h^1(\mathcal F_u) = h^1(K_C-A) = 2$, i.e. $\mathcal F_u$ is of {\em first type} in the sense of Definition \ref{def:fstype}. 
\end{lemma}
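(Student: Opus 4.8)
The plan is to prove the two assertions—stability and $h^1(\mathcal F_u)=2$—separately, each for a general $u$ (with $N$ a fixed general line bundle), and then to intersect the two open conditions. First I would record the relevant cohomology. Since $|A|=g^1_\nu$, Serre duality gives $h^1(K_C-A)=h^0(A)=2$, and Riemann--Roch yields $l:=h^0(K_C-A)=g+1-\nu$. For $N$ general of degree $d-2g+2+\nu$ one has $r:=h^1(N)=\max\{0,\,3g-3-d-\nu\}$, and because $\deg\!\big(N-(K_C-A)\big)=d-4g+4+2\nu<0$ throughout the range, $K_C-A\ncong N$ and $m:=\dim\ext^1(K_C-A,N)=h^1\!\big(N-(K_C-A)\big)=5g-5-2\nu-d$ by \eqref{eq:m}.

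For the speciality, the long exact cohomology sequence of \eqref{eq:aiuto12feb2044} gives $h^1(\mathcal F_u)=h^1(K_C-A)+\corank(\partial_u)=2+\corank(\partial_u)$, so it suffices to show the coboundary map $\partial_u\colon H^0(K_C-A)\to H^1(N)$ is surjective for general $u$. If $d\geq 3g-3-\nu$ then $r=0$ and surjectivity is automatic. If $d\leq 3g-4-\nu$ then $r\geq 1$, and I would apply Theorem \ref{CF5.8}-(ii) with the special, effective bundle $L=K_C-A$: the hypotheses $r\geq 1$, $l\geq\max\{1,r-1\}$, $m\geq l+1$ and $l\geq r$ all reduce, using $g\geq 8$, $\nu\leq\frac{g-2}{2}$ and $2g-2\leq d\leq 3g-4$, to inequalities that hold uniformly (for instance $m\geq l+1$ is $d\leq 4g-7-\nu$, and $l\geq r$ is $d\geq 2g-4$). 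Hence $\mathcal W_1\subsetneq\ext^1(K_C-A,N)$, and a general $u$ gives $\corank(\partial_u)=0$, i.e. $h^1(\mathcal F_u)=2$. Since $A$ is effective, $K_C-A=\omega_C(-D_A)$ with $h^1(\omega_C(-D_A))=2=i$, so $\mathcal F_u$ is indeed of first type.

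For stability I would invoke Lange--Narasimhan (Proposition \ref{LN}) with $\delta=2g-2-\nu$. One checks $\deg\!\big((K_C-A)-N\big)=2\delta-d=4g-4-2\nu-d\geq 2$ in the given range, so $\varphi=\varphi_{|2K_C-A-N|}\colon C\to\mathbb P=\mathbb P(\ext^1(K_C-A,N))$ is a morphism and $X:=\varphi(C)$ is non-degenerate, being the image of a complete linear system. Taking $\sigma=1$ for $d$ odd and $\sigma=2$ for $d$ even, the admissibility $4+d-2\delta\leq\sigma\leq 2\delta-d$ holds (note $\sigma\equiv 2\delta-d\equiv d\pmod 2$, and $2\delta-d$ is odd and $\geq 2$, hence $\geq 3$, when $d$ is odd). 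The corresponding secant variety has dimension $\min\{\dim\mathbb P,\,2h-1\}$ with $2h-1\leq 2\delta-d-1=4g-5-2\nu-d<5g-6-2\nu-d=\dim\mathbb P$, so $\Sec_h(X)\subsetneq\mathbb P$. A general $[u]$ therefore avoids it, and Proposition \ref{LN} gives $s(\mathcal F_u)\geq\sigma\geq 1$, whence $\mathcal F_u$ is stable.

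Since both ``$\mathcal F_u$ stable'' and ``$\corank(\partial_u)=0$'' are dense open conditions on $u$, a general $u$ satisfies both, which proves the lemma. I do not expect a genuine obstacle here: the argument is a direct application of Theorem \ref{CF5.8} and Proposition \ref{LN}. The only delicate point is the bookkeeping that separates the special from the non-special behaviour of $N$ (the boundary $d=3g-3-\nu$) and the verification that the numerical hypotheses of Theorem \ref{CF5.8} together with the secant-dimension inequality of Proposition \ref{LN} hold uniformly across the whole range $2g-2\leq d\leq 3g-4$; no geometric input beyond the cited results is required.
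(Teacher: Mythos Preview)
Your proof is correct and follows essentially the same approach as the paper: both use Theorem \ref{CF5.8} to show surjectivity of $\partial_u$ for general $u$ (hence $h^1(\mathcal F_u)=2$), and both use Proposition \ref{LN} with a secant-dimension count to establish stability. Your treatment is marginally more careful in explicitly separating the case $r=h^1(N)=0$ (where surjectivity is vacuous) from $r\geq 1$, and in verifying the admissibility bound $4+d-2\delta\leq\sigma$ via the parity observation that $2\delta-d$ is odd and $\geq 3$ when $d$ is odd; the paper simply takes $2\leq\sigma\leq\min\{2\delta-d,g\}$ without this split, but the substance is identical.
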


\begin{proof} Notice that, from the bounds on $d$ and $\nu$, one has $\deg(N) = d-2g+2 + \nu < 2g-2-\nu = \deg(K_C-A) $; thus 
$N \ncong K_C-A$. Setting $d =3g-4-j$, for $0\leq j \leq g-2$, and $\delta:= \deg(K_C-A) = 2g-2 - \nu$, 
by \eqref{eq:m} one gets $m:=\dim (\ext^1(K_C-A,N))=2g-2\nu+1+j$, which certainly is positive by the bounds on $\nu$. 
Let $u \in \ext^1(K_C-A,N)$ be general and let $\Ff_u$ be the corresponding rank-two vector bundle.  Then  $\mathcal F_u$ fits into the exact sequence \eqref{eq:aiuto12feb2044}. 

As in Theorem \ref{CF5.8}, set $l := h^0(K_C-A) = g+1-\nu$, $m = \dim (\ext^1(K_C-A,N))=2g-2\nu+1+j$ and $r:=h^1(N)$. Since $N$ is general of degree $\deg(N) = g-2+\nu-j$, if $\nu-j \leq 0$ one has 
$h^1(N) = - \chi(N) = 1+j-\nu$ whereas, for $\nu - j \geq 1$, $h^1(N) = 0$; therefore $r = {\rm max} \{0,\;1+j-\nu\}$. Since $0 \leq j \leq g-2$, then $l \geq r$  and $m \geq l+1$ hold true. Hence we can apply Theorem \ref{CF5.8} to deduce that, for general $u \in {\rm Ext}^1(K_C-A, N)$, the corresponding coboundary map is surjective, therefore one has $h^1(\mathcal F_u) = 2$.

To prove stability of $\mathcal F_u$, as $\delta = 2g-2-\nu$ and $\nu \leq \frac{g-2}{2}$, one has that 
$2\delta-d \geq 2(2g-2-\nu)-(3g-4)=g -2\nu\geq 2$ so we can apply Theorem \ref{LN}. Thus, set $\Pp := \Pp(\ext^1(K_C-A, N))$ and consider the natural morphism 
$ C \stackrel{\varphi}{\longrightarrow} X\subset\mathbb P$ given by the complete linear system $|2K_C-A-N|$, where $X := \varphi\left(C\right)$; let $\sigma$ be any integer such that  $ \sigma \equiv 2\delta-d\  {\rm{ (mod \;2) \ \ and  }} $ 
$ 2\le \sigma \le \min\{2\delta-d,g\}$ one has 
$$\dim \left(\Sec_{\frac{1}{2}(2\delta-d+\sigma-2)}(X) \right)=2\delta - d + \sigma - 3 < 2\delta - d + g-2 = \dim \left(\mathbb P(\ext^1(L,N))\right),$$as 
$\sigma \leq g$. Hence $\mathcal F_u$ is stable.  
\end{proof}The parametric construction of the locus $ \mathcal B_{\rm sup,2}$, when $N$ varies in a suitable open dense subset \linebreak $S \subseteq \pic^{d-2g+2+\nu}(C)$, is similar to that in the proof of Proposition \ref{prop:caso1sup2}: indeed, as therein one can construct a vector bundle  $\mathcal E_{d,\nu}$ over $S$, of rank $m= \dim(\ext^1(K_C-A,N))= 5g-5- 2 \nu -d $ as in \eqref{eq:m} and consider the associated projective bundle $\mathbb P(\mathcal E_{d,\nu})\to S$, which parametrizes the family of 
$\mathbb P\left(\ext^1(K_C-A,N)\right)$'s as  $N$ varies in $S$. One has$$ \dim (\mathbb P(\mathcal E_{d,\nu})) 
=g+ (5g-5- 2 \nu -d) -1=6g-6-2 \nu  -d. $$ From stability and speciality $2$ as from Lemma \ref{lem:i=2.2}, we have a natural (rational) modular map
 \begin{eqnarray*}
 &\mathbb P(\mathcal E_{d,\nu})\stackrel{\pi_{d,\nu}}{\dashrightarrow} &U_C(d) \\
 &(N, [u])\to & [\mathcal F_u],
 \end{eqnarray*} such that $ \im (\pi_{d,\nu})\subseteq B^{k_2}_d \cap  U^s_C(d)$. Once we show that $\pi_{d,\nu}$ is birational onto its image, 
we will get that the closure of $\im (\pi_{d,\nu})$ in $U_C(d)$, denoted by  $\mathcal B_{\rm sup,2}$, is {\em ruled} (being birational to $ \mathbb P(\mathcal E_{d,\nu})$) of dimension $$\dim (\mathcal B_{\rm sup,2}) = \dim (\mathbb P(\mathcal E_{d,\nu})) = 6g - 6 - 2\nu - d.$$In the same spirit of the proof of Claim \ref{lem:linearly_isolated}, we first prove some results concerning families of sections on associated surface scrolls.

\begin{lemma}\label{lem:linearly_isolated_sup} With assumptions as in Lemma \ref{lem:i=2.1}, let $N \in \pic^{d-2g+2+\nu}(C)$ be general and let $\Ff_u$ correspond to $u \in {\rm Ext}^1(K_C-A, N)$ general. Let $F_u$ denote the surface scroll associated to $\Ff_u$ and let $\Gamma \subset F_u$ be the section of $F_u$ corresponding to the 
quotient line bundle $K_C-A$ of $\Ff_u$. 

\smallskip

\noindent
(i) If  $3g-3-2\nu \leq d\leq 3g-4$, then $\Gamma$ is algebraically isolated (ai). In particular, $\Ff_u$ is rigidly presented (rp) via the  quotient line bundle $K_C-A$.

\smallskip

\noindent
(ii) If $2g-2 \leq d \leq 3g-4-2\nu$, then one has $h^0(\mathcal F_u\otimes N^{\vee})=1$. In particular, the section $\Gamma \subset F_u$ is linearly isolated on the scroll $F_u$. 
\end{lemma}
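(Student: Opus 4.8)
The plan is to treat the two ranges separately, in each case reducing the geometric statement about the section $\Gamma\subset F_u$ to a cohomological computation for the general bundle $\Ff_u$ fitting in \eqref{eq:aiuto12feb2044}, and then exploiting the generality of $N\in\pic^{d-2g+2+\nu}(C)$ together with Theorem \ref{CF5.8}.

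\smallskip
\noindent\emph{Part (i).} First I would identify the Zariski tangent space to the Hilbert scheme of degree-$\delta$ unisecants at $[\Gamma]$, where $\delta=\deg(\omega_C\otimes A^{\vee})=2g-2-\nu$. By the Quot-scheme description recalled in \S\ref{ss:unisec}, this tangent space is $H^0(\mathcal N_{\Gamma/F_u})$ and, from \eqref{eq:Ciro410}, one has $\mathcal N_{\Gamma/F_u}\simeq (\omega_C\otimes A^{\vee})\otimes N^{\vee}$, a line bundle of degree $4g-4-2\nu-d$. Since $\omega_C\otimes A^{\vee}$ is fixed and $N$ is general, this is a general line bundle of its degree; in the range $3g-3-2\nu\le d\le 3g-4$ one has $0<g-2\nu\le 4g-4-2\nu-d\le g-1$, so its degree lies in $[0,g-1]$ and a general line bundle of such degree is non-effective. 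Hence $h^0(\mathcal N_{\Gamma/F_u})=0$, the point $[\Gamma]$ is a reduced isolated point of $\mathrm{Div}^{1,\delta}_{F_u}$, i.e.\ $\Gamma$ is algebraically isolated (ai) in the sense of Definition \ref{def:ass0}, and therefore $\Ff_u$ is rigidly presented (rp) via the quotient $\omega_C\otimes A^{\vee}$ by Definition \ref{def:ass1}-(3) (cf.\ Remark \ref{rem:rigid}).

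\smallskip
\noindent\emph{Part (ii).} As in the proof of Claim \ref{lem:linearly_isolated}, linear isolation of $\Gamma$ is equivalent, via \eqref{eq:isom2}, to $h^0(\Ff_u\otimes N^{\vee})=1$. Since $\Ff_u$ has rank two with $\det(\Ff_u)\simeq N\otimes(\omega_C\otimes A^{\vee})$, one has $\Ff_u^{\vee}\simeq\Ff_u\otimes(\det\Ff_u)^{\vee}$, whence by Serre duality $h^0(\Ff_u\otimes N^{\vee})=h^1(\Ff_u(A))$. Tensoring \eqref{eq:aiuto12feb2044} by $A$ gives the sequence $0\to N+A\to\Ff_u(A)\to\omega_C\to 0$, and from its cohomology, using $h^1(\omega_C)=1$, one obtains $h^1(\Ff_u(A))=1+\dim\coker(\partial_u)$, where $\partial_u\colon H^0(\omega_C)\to H^1(N+A)$ is the coboundary, i.e.\ cup product with the extension class. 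Thus the claim reduces to proving that $\partial_u$ is surjective for general $u$.

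\smallskip
To conclude I would apply Theorem \ref{CF5.8} with special, effective quotient $\omega_C$ and kernel $N+A$. Here $l:=h^0(\omega_C)=g$ while, by Serre duality and the generality of $N$, $r:=h^1(N+A)=h^0(K_C-A-N)$ with $K_C-A-N$ general of degree $4g-4-2\nu-d\ge g$ throughout $2g-2\le d\le 3g-4-2\nu$, so that $r=3g-3-2\nu-d$ and $m:=\dim\ext^1(\omega_C,N+A)=5g-5-2\nu-d$. A direct check gives $r\ge 1$, $l\ge r$ (equivalently $d\ge 2g-3-2\nu$) and $m\ge l+1$ (equivalently $d\le 4g-6-2\nu$) over this whole range, so Theorem \ref{CF5.8}-(ii) yields surjectivity of $\partial_u$ for general $u$; intersecting this dense open condition with the one from Lemma \ref{lem:i=2.2} guaranteeing $h^1(\Ff_u)=2$, we get $h^1(\Ff_u(A))=1$, hence $h^0(\Ff_u\otimes N^{\vee})=1$ and the linear isolation of $\Gamma$. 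I expect the main obstacle to be in part (ii): correctly identifying $h^0(\Ff_u\otimes N^{\vee})$ with a cokernel of a cup-product map through the rank-two Serre-duality manipulation, and then checking that the numerical hypotheses of Theorem \ref{CF5.8} hold uniformly over $2g-2\le d\le 3g-4-2\nu$ — in particular that $K_C-A-N$ stays non-special, a condition which forces precisely the upper bound $d\le 3g-4-2\nu$ and thereby explains the sharp dichotomy between the linearly isolated and algebraically isolated regimes of parts (ii) and (i).
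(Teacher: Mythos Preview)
Your proof is correct and follows essentially the same route as the paper: for (i) you both compute $h^0(\mathcal N_{\Gamma/F_u})=h^0((K_C-A)\otimes N^{\vee})=0$ by generality of $N$, and for (ii) you both tensor \eqref{eq:aiuto12feb2044} by $A$, reduce to surjectivity of the coboundary $H^0(K_C)\to H^1(N+A)$, and verify the numerical hypotheses of Theorem~\ref{CF5.8}. The only cosmetic difference is that the paper states explicitly the identification $\ext^1(K_C,N+A)\simeq\ext^1(K_C-A,N)$ (both being $H^1(N+A-K_C)$), which is what makes ``general $u$'' mean the same thing in both extension spaces --- you use this implicitly when intersecting open conditions, and it would be worth making it explicit.
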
 
\begin{proof} (i) Note that, if $3g-3-2\nu \leq d\leq 3g-4$, from \eqref{eq:Ciro410} and from the generality of $N$ it follows that $h^0(\mathcal N_{\Gamma/F})\simeq h^0(K_C-A-N)=0$, hence it also follows that $\Ff_u$ is rigidly presented via the quotient line bundle $K_C-A$ (recall Definition \ref{def:ass1}). 

\medskip

\noindent
(ii) The second assertion follows directly from the first and from \eqref{eq:isom2}. Therefore, we focus on proving the first assertion. 

Consider the exact sequence obtained from  \eqref{eq:aiuto12feb2044} tensored by $A$, namely:  
\[
\begin{CD}
&& 0\to &N+A&\to &\ \ \mathcal F\otimes A\ \ & \to  \ &\ \ K_C \ \ &\to 0\\
&\deg&       &d+2\nu-2g+2&& d+2\nu&& 2g-2&\\
\end{CD}
\]  Since the bundle $\mathcal F$ corresponds to a general element in $\ext^1(K_C-A, N)$, for $N$ general, and since 
$$\ext^1(K_C, N + A) \simeq \ext^1(K_C-A, N),$$being both isomorphic to $H^1(N+A-K_C)$, then also $\Ff \otimes A$ corresponds to a general element of $\ext^1(K_C, N + A)$, with $N+A$ also general of its degree since $N$ is. As $\deg (N+A) = d+2\nu +2 - 2g$ and from the bounds on $d$, one has that $2 \nu \leq \deg(N+A) \leq g-2$. Therefore, since $N+A$ is general, it is not effective and of speciality $r:= h^1(N+A) = 3g-3-d-2\nu \geq 1$.

We want to apply  Theorem \ref{CF5.8} to show that the coboundary map $\partial: H^0(K_C)\to H^1(N + A)$ associated 
to such a general extension is surjective. To do this, set  $l:=h^0(K_C)=g$, hence we get $l \geq r$ since, from the bounds on $d$, we have $h^1(N+A)\leq g-1-2\nu < g$.

Now set $m:=\dim ({\rm Ext}^1(K_C, N + A))=h^0(2K_C-N-A)$; since 
$\deg(2K_C-A-N)\geq 3g-2 > 2g-2$, then $2K_C-A-N$ is non-special so we have $m=5g-5-d-2\nu$;  thus $m=5g-5-d-2\nu \geq l+1=g+1$ certainly holds from the bounds on $d$. 

From Theorem \ref{CF5.8} it follows that, for a general vector $u \in \ext^1(K_C, N + A)$ the corresponding coboundary map $H^0(K_C)\stackrel{\partial_u}{\longrightarrow} H^1(N + A)$ is surjective, which yields $h^1(\mathcal F_u \otimes A)=h^1(C, K_C)=1$. By Serre duality, this is $h^0(\Ff_u^{\vee} (K_C-A))=1$. Since $\Ff_u$ is of rank two, with determinant  $\det(\Ff_u) =  N-A+K_C$, one has therefore $\Ff_u^{\vee} (K_C-A) \simeq \Ff_u(-N)$ and we are done by \eqref{eq:isom2}.
\end{proof}

\begin{corollary}\label{cor:pidnubir} With assumptions as in Proposition \ref{lem:i=2.1}, the map $\pi_{d,\nu}$ is birational onto its image,  whose closure $\mathcal B_{\rm sup,2}$ in $B_{d}^{k_2} \cap U^s_C(d)$ is {\em ruled}, of dimension  
$$\dim (\mathcal B_{\rm sup,2}) = 6g - 6 - 2\nu - d.$$The general point $[\Ff_u] \in \mathcal B_{\rm reg,2}$ corresponds to a rank-two, degree $d$ stable vector bundle, of speciality $2$ which is {\em rigidly presented} (rp) as an extension \eqref{eq:aiuto12feb2044}.
\end{corollary}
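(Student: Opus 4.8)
The plan is to establish that $\pi_{d,\nu}$ is birational by a generic-fiber computation modeled on the proofs of Claim \ref{cl:20feb816} and Lemma \ref{lem:pigenfin}, and then to read off ruledness, dimension, and the structure of the general bundle directly from the auxiliary results already in place. First I would fix a general $[\mathcal{F}_u] \in \im(\pi_{d,\nu})$, arising from a general pair $(N,[u]) \in \mathbb{P}(\mathcal{E}_{d,\nu})$, and describe the fiber $\pi_{d,\nu}^{-1}([\mathcal{F}_u]) = \{(N',[u']) \mid \mathcal{F}_{u'} \simeq \mathcal{F}_u\}$. Since every bundle in the family is presented via the \emph{fixed} quotient $K_C-A$, taking determinants in \eqref{eq:aiuto12feb2044} gives $\det(\mathcal{F}_u)\simeq (K_C-A)\otimes N$ and likewise $\det(\mathcal{F}_{u'})\simeq (K_C-A)\otimes N'$; hence $\mathcal{F}_{u'}\simeq \mathcal{F}_u$ forces $N'\simeq N$, so that $[u]$ and $[u']$ lie in the same projective space $\mathbb{P}({\rm Ext}^1(K_C-A,N))$.

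Next I would show $[u]=[u']$. By Lemma \ref{lem:i=2.2} the bundle $\mathcal{F}_u$ is stable, hence simple, so there is an isomorphism $\varphi\colon \mathcal{F}_{u'}\to\mathcal{F}_u$; composing the two inclusions of $N$ with $\varphi$ produces two nonzero sections $s_1 := \varphi\circ\iota_1$ and $s_2 := \iota_2$ inside $H^0(\mathcal{F}_u\otimes N^{\vee})\cong {\rm Hom}(N,\mathcal{F}_u)$. The crucial input is that $h^0(\mathcal{F}_u\otimes N^{\vee})=1$: in the range $2g-2\le d\le 3g-4-2\nu$ this is exactly Lemma \ref{lem:linearly_isolated_sup}-(ii), while in the range $3g-3-2\nu\le d\le 3g-4$ it follows from Lemma \ref{lem:linearly_isolated_sup}-(i), since algebraic isolation of $\Gamma$ forces linear isolation (equivalently $\dim|\mathcal{O}_{F_u}(\Gamma)|=0$, equivalently $h^0(\mathcal{F}_u\otimes N^{\vee})=1$ by \eqref{eq:isom2}). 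Consequently $s_1,s_2$ are proportional, i.e. $\varphi\circ\iota_1=\lambda\iota_2$ for some $\lambda\in\mathbb{C}^*$. Since $\deg(N-(K_C-A)) = d-4g+4+2\nu \le -g+2\nu<0$ under $\nu\le \frac{g-2}{2}$, one has $h^0(N-(K_C-A))=0$, so Lemma \ref{lem:technical} applies and yields that $u,u'$ are proportional, i.e. $[u]=[u']$. Thus the fiber is a single reduced point and $\pi_{d,\nu}$ is birational onto its image.

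Finally I would conclude as follows. Since $\mathbb{P}(\mathcal{E}_{d,\nu})\to S$ is a projective bundle over the open dense subset $S\subseteq\pic^{d-2g+2+\nu}(C)$, it is ruled; birationality transports ruledness to the closure $\mathcal{B}_{\rm sup,2}$ of $\im(\pi_{d,\nu})$, and the count $\dim\mathbb{P}(\mathcal{E}_{d,\nu}) = g+(m-1) = g+(5g-5-2\nu-d-1) = 6g-6-2\nu-d$, with $m=\dim{\rm Ext}^1(K_C-A,N)$ as in \eqref{eq:m}, computes $\dim(\mathcal{B}_{\rm sup,2})$. Stability and speciality $h^1(\mathcal{F}_u)=2$ of the general member are Lemma \ref{lem:i=2.2}, and the rigid presentation (rp) via \eqref{eq:aiuto12feb2044} is precisely Lemma \ref{lem:linearly_isolated_sup}-(i). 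The heavy cohomological work having been absorbed into the preceding lemmas, the main obstacle here is confined to the fiber analysis: the delicate point is securing $h^0(\mathcal{F}_u\otimes N^{\vee})=1$ uniformly across both subranges of $d$ and then reducing the abstract isomorphism $\varphi$ to the proportionality hypothesis of Lemma \ref{lem:technical}, rather than any fresh estimate.
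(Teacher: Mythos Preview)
Your proposal is correct and follows essentially the same route as the paper. The paper's proof is more telegraphic: it observes that the quotient $K_C-A$ is fixed, reduces birationality (via reference to Proposition \ref{prop:CFliasi}-(1) and the template of Claim \ref{lem:linearly_isolated}) to showing that the section $\Gamma$ is linearly isolated, and then cites Lemma \ref{lem:linearly_isolated_sup}; you unpack this same argument explicitly, using the determinant to pin down $N'\simeq N$ and then invoking Lemma \ref{lem:technical} to force $[u]=[u']$, which is exactly what the reference to Claim \ref{lem:linearly_isolated} is meant to encode. One small difference in emphasis: for the (rp) assertion you cite Lemma \ref{lem:linearly_isolated_sup}-(i) directly (which, strictly speaking, only covers the subrange $3g-3-2\nu\le d\le 3g-4$), whereas the paper phrases it as a consequence of the general fiber of $\pi_{d,\nu}$ being a singleton; neither justification is materially stronger than the other, and both leave the same minor looseness in the lower subrange of $d$.
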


\begin{proof} If one shows that the map $\pi_{d,\nu}$ is birational onto its image, it then follows that $\mathcal  B_{\rm sup,2}$ is ruled and of the stated dimension, since it is birational to $\mathbb P(\mathcal E_{d,\nu})$; moreover one will deduce also that $[\Ff_u] \in \mathcal B_{\rm reg,2}$ general is rigidly presented (rp) via extension \eqref{eq:aiuto12feb2044}, since the general fiber of  $\pi_{d,\nu}$ is a singleton. 

To prove therefore that $\pi_{d,\nu}$ is birational onto its image, one observes that the quotient line bundle $K_C-A$ is fixed in the triples $(N, K_C-A, \Ff_u)$ parametrized by the projective bundle $ \mathbb P(\mathcal E_{d,\nu})$ so, similarly as in the proof of Claim \ref{lem:linearly_isolated}, from Proposition 
\ref{prop:CFliasi}-$(1)$ it is enough to show that for general $(N, K_C-A, \Ff_u) \in \mathbb P(\mathcal E_{d,\nu})$ the section $\Gamma \subset F_u$ corresponding to the 
quotient line bundle $K_C-A$ is linearly isolated (li) on $F_u$. This follows from 
Lemma \ref{lem:linearly_isolated_sup}. 
\end{proof} We finally have all the necessary ingredients to prove Proposition \ref{lem:i=2.1}. 

\begin{proof}[Proof of Proposition \ref{lem:i=2.1}] Point $(iii)$ is a consequence of the following easy observation: from Lemma \ref{lem:i=2.2}, $\im (\pi_{d,\nu}) \subseteq \mathcal B_{\rm sup,2}$ contains stable bundles thus, from Remark \ref{rem:BNloci}, any component of $B^{k_2}_d \cap U^s_C(d)$ containing $\im (\pi_{d,\nu})$ must have at least the expected dimension $\rho_d^{k_2} = 8g-11-2d$. Then one concludes by observing that, for  $2g-2 \leq d \leq 2g-6+2\nu$, one has $\dim(\mathcal B_{\rm sup,2}) = 6g-6- d-2\nu < \rho_d^{k_2}$, thus $\mathcal B_{\rm sup,2}$ cannot fill-up an irreducible component of $B^{k_2}_d \cap U^s_C(d)$. Nonetheless, from the previous section we know the existence of $\mathcal B_{\rm reg, 2}$; furthermore, from the results contained in 
\S\,\ref{ss:nocomp2} below, we will also deduce that no further component can exist. Therefore, for $2g-2 \leq d \leq 2g-6+2\nu$, we have 
$$\im (\pi_{d,\nu}) \subseteq \mathcal B_{\rm sup,2} \subsetneq \mathcal B_{\rm reg, 2}.$$The reader is referred to \S\,\ref{ss:nocomp2} below for the precise conclusion of the proof.

Concerning points $(i)$ and $(ii)$, they will be proved together. The fact that the locus 
$\mathcal B_{\rm sup,2}$ is ruled and of the stated dimension has been showed in Corollary \ref{cor:pidnubir}. Take now 
$[\Ff_u ]\in \mathcal B_{\rm sup,2}$ general, so that $\Ff_u$ arises as a 
general extension in \eqref{eq:aiuto12feb2044} with $N \in \pic^{d-2g+2+\nu}(C)$ general, and, from Lemma \ref{lem:i=2.2}, we have $h^1(\mathcal F_u)=h^1(K_C-A) =2$. 

\medskip

Consider the following exact diagram:

\begin{equation*}\label{eq1b2}
\begin{array}{ccccccccccccccccccccccc}
&&0&&0&&0&&\\[1ex]
&&\downarrow &&\downarrow&&\downarrow&&\\[1ex]
0&\lra& N + A - K_C & \rightarrow & \mathcal F_u\otimes (A-K_C)& \rightarrow & \mathcal O_C &\rightarrow & 0 \\[1ex]
&&\downarrow && \downarrow && \downarrow& \\[1ex]
0&\lra & N \otimes  \mathcal F_u^{\vee}& \rightarrow & \mathcal F_u\otimes \mathcal F_u^{\vee} & \rightarrow &
\mathcal  (K_C-A) \otimes \mathcal F^{\vee} &\rightarrow & 0 \\[1ex]
&&\downarrow &&\downarrow &&\downarrow  &&\\[1ex]
0&\lra & \mathcal O_{C}& \lra & \mathcal F_u\otimes N^{\vee}&\lra& (K_C-A)\otimes N^{\vee}&\rightarrow & 0 \\[1ex]
&&\downarrow &&\downarrow&&\downarrow&&\\[1ex]
&&0&&0&&0&&
\end{array}
\end{equation*} which arises from  \eqref{eq:aiuto12feb2044} and from its dual sequence. If we tensor the column in the middle by  $\omega_C$ and pass to cohomology, we get the injection  $H^0(\mathcal F_u\otimes A)\hookrightarrow  H^0(\omega_C\otimes \mathcal F_u\otimes \mathcal F_u^{\vee})$. 

Since $H^1(\Ff_u) \simeq H^1(K_C-A)$, from \eqref{eq:aiuto12feb2044} and from Lemma \ref{lem:i=2.2}, by Serre duality one has  \linebreak  $H^0(\omega_C \otimes \Ff_u^{\vee}) \simeq H^0(A)$, hence the kernel of the Petri map $\mu_{\Ff_u}$ as in \eqref{eq:petrimap} coincides 
with the kernel of the multiplication map 
$$H^0(\mathcal F_u)\otimes H^0(A)\stackrel{\alpha}{\to}  H^0(\mathcal F_u\otimes A) \subseteq H^0(\omega_C\otimes \mathcal F_u\otimes \mathcal F_u^{\vee}).$$By the 
base-point-free pencil trick, we have $\ker(\alpha) \simeq H^0(\mathcal F_u\otimes A^{\vee})$. Now consider the exact sequence
\[
0\to N - A\to \mathcal F_u\otimes A^{\vee}\to K_C - 2A\to 0, 
\]obtained from \eqref{eq:aiuto12feb2044} tensored by $A^{\vee}$. Since $\mathcal F_u\in \ext^1(K_C-A, N)$ is
general, then also \linebreak $\mathcal F_u\otimes A^{\vee}\in \ext^1(K_C - 2A, N - A)$ is general, as the two extension spaces are isomorphic.

In notation of Theorem \ref{CF5.8}, set  $l:=h^0(K_C - 2A)$. Since $g \geq 2\nu -2$, from Proposition \ref{Ballico}, one has $3 = h^0(2A) = h^1(K_C-2A)$ so, by Riemann-Roch theorem, one has $l=g-2\nu+2$. Now set $r:= h^1(N-A)$; note that $\deg(N-A)=d-2g+2$ which, from the bounds 
$2g-5+2\nu \leq d \leq 3g-4$, gives $2\nu-3 \leq \deg(N-A) \leq g-2$. Thus, by the generality of $N$, 
$N-A$ is non-effective and of speciality $H^1(N-A)=3g-d-3 >0$. 
By the assumption of $d\geq 2g-5+2\nu$, we have in particular $l=g-2\nu+2 \geq r=3g-d-3$. 

Finally, set $m:=\dim ({\rm Ext}^1(K_C - 2A,N - A))=h^0(2K_C-A_N)$; we want to prove that  $m\ge l+1$. 
Since $\deg(2K_C-A_N)> 2g-2$, we have $m=5g-5-d-2\nu$ and it gives 
$m=5g-5-d-2\nu > l+1=g-2\nu+3$ since $d\leq 3g-4$. Thus, from Theorem \ref{CF5.8}, the associated coboundary map $\partial_u: H^0(K_C - 2A)\to H^1(N - A)$ is surjective, so 
$H^1(\Ff_u\otimes A^{\vee})\simeq H^1(K_C\otimes 2A^{\vee})$ which gives $H^0(\Ff_u\otimes A^{\vee})=d-2\nu-2g+5$. Thus 
\begin{eqnarray*}
\dim T_{[\mathcal F_u]}(B^{k_2}_d \cap U_C^s(2,d))&=&4g-3-2(d-2g+4)+(g + 2 -2 \nu) + (d-3g+3)\\
&=&6g-6-2 \nu -d = \dim (\mathcal B_{\rm sup,2}).
\end{eqnarray*} To complete the proof, it suffices to observe that  since $d \geq 2g-5+2\nu$, then $$\rho_d^{k_2} = 8g-11-2d \leq 6g-6-2\nu -d,$$where equality holds if and only if $d=2g-5+2\nu$.
\end{proof}

\begin{remark} \label{rem:corrColl} {\normalfont (a) Notice that the proof of Proposition \ref{lem:i=2.1}-(i) above for $d = 3g-4, \; 3g-5$ fixes \cite[Theorem 3.12-(ii)]{CFK2}, where it was wrongly stated that $\mathcal B_{\rm sup,2}$ was non-reduced. 

\noindent
(b) When $d = 2g-5+2\nu$, the components $\mathcal B_{\rm sup,2}$ and $ \mathcal B_{\rm reg,2}$, even if of the same dimension (because both are {\em regular}), are distinct since their general points have different {\em presentations}, respectively \eqref{degree0} and  \eqref{eq:aiuto12feb2044}. 
}
\end{remark}

\bigskip

\subsection{\bf No other components in speciality 2}\label{ss:nocomp2}  In this section, we will show the following: 

\begin{proposition}\label{thm:noothercompsspec2}  Let $C$ be a general $\nu$-gonal curve of genus $g$ and let $2g-2 \leq d \leq 4g-4$ be an integer. 

Then $B_d^{k_2}\cap U^s_C(d)$ is either empty or it contains no other components than $ \mathcal B_{\rm reg,2}$ and $\mathcal B_{\rm sup,2}$ constructed in Sections \ref{ss:reg2} and \ref{ss:sup2}. 
\end{proposition}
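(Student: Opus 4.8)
The plan is to prove that any irreducible component of $B_d^{k_2}\cap U_C^s(d)$ coincides with one of $\mathcal B_{\rm reg,2}$ and $\mathcal B_{\rm sup,2}$ by analysing the \emph{minimal special quotient} of its general member and matching it against the rank-one Brill--Noether theory of the $\nu$-gonal curve. First I would fix a component $\mathcal B$ (nothing is to be proved when the locus is empty, cf.\ Proposition \ref{prop:caso1spec2}) and take its general point $[\Ff]$. By Lemma \ref{specialityhigh} one has $h^1(\Ff)=2$, and by Lemma \ref{lem:1e2note} the bundle $\Ff$ is presented as in \eqref{degree} through an effective, special quotient $L$ of degree $\delta>d/2$ with $1\le h^1(L)\le 2$. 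This produces the governing dichotomy: either the general $\Ff$ admits such a quotient with $h^1(L)=2$ (\emph{first type}, in the sense of Definition \ref{def:fstype}), or all its effective special quotients have $h^1(L)=1$ (\emph{second type}). Since these two possibilities are exhaustive, it suffices to show that each one is realized by a single irreducible family, namely the one built in \S\,\ref{ss:reg2}, resp.\ \S\,\ref{ss:sup2}.

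In the second-type case I would write the minimal special quotient as $L=\omega_C(-D)$ with $h^0(D)=1$ and $\deg(L)=\delta$. A second-type bundle always carries the canonical quotient $\omega_C$ (by Theorem \ref{thm:barj}), from which a quotient $\omega_C(-p)$ of degree $2g-3$ is produced by splitting off a base point as in Proposition \ref{prop:CFliasi}-(2-i); hence the minimal special quotient has degree $\delta\le 2g-3$. The dimension count already performed at the end of Proposition \ref{thm:forseutile} shows that the family of bundles with such a presentation has dimension $6g-8+\delta-2d$, which is $<\rho_d^{k_2}=8g-11-2d$ whenever $\delta<2g-3$. Thus only $\delta=2g-3$ can fill a locus of dimension $\ge\rho_d^{k_2}$; for this value the bundle is presented exactly as in \eqref{degree0}, so $\mathcal B$ meets $\im(\pi)$ of Proposition \ref{thm:i=1.3}, and the Petri-map computation there (generic smoothness and $\dim=\rho_d^{k_2}$) forces $\mathcal B=\mathcal B_{\rm reg,2}$.

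In the first-type case I would set $E:=K_C-L$, so that $h^0(E)=h^1(L)=2$, i.e.\ $E\in W^1_{\deg(E)}(C)$, and write $E=E_0+B$ with $|E_0|$ base-point-free. By Lemma \ref{lem:Lar}, on a general $\nu$-gonal curve $E_0$ is either the gonality pencil $A=g^1_\nu$ or a complete base-point-free $g^1_{e_0}$ with $e_0\ge\frac{g+2}{2}$. When $E_0=A$ and $B=0$ the bundle $\Ff$ is presented exactly as in \eqref{eq:aiuto12feb2044}, with $N\in {\rm Pic}^{d-2g+2+\nu}(C)$, so $\Ff\in\overline{\im(\pi_{d,\nu})}=\mathcal B_{\rm sup,2}$; for $3g-3\le d\le 4g-5-2\nu$ the uniqueness already established in Proposition \ref{prop:caso1sup2} identifies $\mathcal B$ with $\mathcal B_{\rm sup,2}$, while the construction of \S\,\ref{ss:sup2} (Proposition \ref{lem:i=2.1}) extends this to the new range $2g-5+2\nu\le d\le 3g-4$; for $2g-2\le d\le 2g-6+2\nu$ one checks $\dim(\mathcal B_{\rm sup,2})=6g-6-2\nu-d<\rho_d^{k_2}$, so no first-type component occurs there (completing the deferred point of Proposition \ref{lem:i=2.1}-(iii)). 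For the remaining possibilities $E_0=g^1_{e_0}$ with $e_0>\nu$ I would run the dimension count for the associated universal-extension family using $\dim(W^1_{e_0}(C))=2e_0-g-2$ together with the constraint $e_0\le\deg(E)<2g-2-d/2$ coming from stability, and verify that the resulting locus has dimension strictly below $\rho_d^{k_2}$; hence these never fill a component.

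Finally I would assemble the ranges of $d$ so that the two cases reproduce exactly the case division of Theorem \ref{thm:i=2}. The hard part is the first-type analysis in the genuinely new range: one must rule out, as separate components, the families whose minimal special quotient is $\omega_C\otimes(A+B)^\vee$ with base locus $B\ne 0$, since their naive dimension $6g-6-2\nu-\deg(B)-d$ can exceed $\rho_d^{k_2}$ for $d\ge 2g-4+2\nu$. The crux is to show that every such bundle either already belongs to $\overline{\mathcal B_{\rm sup,2}}$ --- lying in the proper sublocus where the distinguished quotient ceases to be of minimal degree, as detected by the strictly larger Segre invariant $s(\Ff)=4g-4-2\nu-2\deg(B)-d$ --- or else sweeps out a locus of dimension $<\rho_d^{k_2}$; this is precisely where the explicit base-point description of $W^1_t(C)$ on a general $\nu$-gonal curve (Lemma \ref{lem:Lar}) is indispensable.
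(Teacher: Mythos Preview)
Your overall architecture---the dichotomy $h^1(L)=1$ versus $h^1(L)=2$ and, in the latter case, the use of Lemma~\ref{lem:Lar} to pin down $K_C-L\in W^1_t(C)$---matches the paper's Propositions~\ref{lem:i=1.1} and~\ref{lem:21Feb1632} exactly. The first-type analysis is essentially the paper's, and your honest flag that the base-locus case $L=K_C-(A+B)$ with $B\neq 0$ is the crux is accurate: the paper does not argue this \emph{ab initio} either, but refers to \cite[Lemma~3.14]{CFK2} for the fact that such extensions sit in the closure of the $B=0$ family. So on the first-type side you are aligned with the paper.

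The second-type argument, however, has two genuine gaps. First, Theorem~\ref{thm:barj} does \emph{not} say that a second-type bundle surjects onto $\omega_C$: it only produces a quotient $\omega_C(-D)$ with $h^0(D)=1$, and $D$ may well be nonzero. Equivalently, $h^1(\Ff)=2$ gives a pencil in $H^0(\omega_C\otimes\Ff^{\vee})$, but a section of a rank-two bundle of positive degree need not be nowhere vanishing; the paper obtains a canonical quotient only \emph{a posteriori}, for $[\Ff]$ already known to lie in $\mathcal B_{\rm reg,2}$ (Lemma~\ref{reg_seq_1}, under the hypothesis $u\in\mathcal W_1$ general), so you cannot invoke it for an arbitrary component $\mathcal B$. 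Second, the count $6g-8+\delta-2d$ you borrow from Proposition~\ref{thm:forseutile} was computed with $N'$ \emph{general} in ${\rm Pic}^{d-\delta}(C)$, so that $r=h^1(N')=g-1+\delta-d$; for an unknown component $\mathcal B$ the kernel can be more special, which simultaneously shrinks the locus of admissible $N'$ and enlarges $\dim(\mathcal W_1)=m-(l-r+1)$, and the two effects do not cancel automatically. The paper's Proposition~\ref{lem:i=1.1} addresses exactly this: it keeps $\delta$ arbitrary, bounds the parameter space by $\dim(W^{r-1}_{2g-2-d+\delta}(C))+(2g-2-\delta)+\dim(\widehat{\mathcal W}_1)$, and then invokes the (line-bundle) Brill--Noether estimates to force the conclusion. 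To repair your proof you should drop the canonical-quotient step and run this refined count directly, as the paper does.
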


\begin{proof} For $3g-3 \leq d \le 4g-4$, Proposition \ref{thm:noothercompsspec2} follows from what discussed in the previous sections. Namely, for $4g-6 \leq d \leq 4g-4$, we have $B_d^{k_2}\cap U^s_C(d) = \emptyset$, as proved in Proposition \ref{prop:caso1spec2} whereas, for $4g-4-2\nu \leq d \leq 4g-7$, we have $B_d^{k_2}\cap U^s_C(d)= \mathcal B_{\rm reg,2}$, as proved in Proposition \ref{prop:caso2spec2}. For $3g-3 \leq d \leq 4g-5-2\nu$, the statement of Proposition \ref{thm:noothercompsspec2} has been already proved in \cite[Prop.\,3.8,\,Lemma\,3.14]{CFK2} where the interested reader is referred. 

For $2g-2 \leq d \leq 3g-4$, what stated will be a direct consequence of the next Propositions \ref{lem:i=1.1} and \ref{lem:21Feb1632}.
\end{proof}

\smallskip

Let us assume therefore $2g-2 \leq d \leq 3g-4$ and that $\mathcal B \subset B_d^{k_2}\cap U^s_C(d)$ is any irreducible component; from Remark \ref{rem:BNloci}, then $\dim (B)\geq \rho_d^{k_2} = 8g-11-2d$. Moreover, Lemma \ref{specialityhigh} gives that $h^1(\Ff) = 2$ for $[\Ff] \in \mathcal B$ general. Finally, from Lemma \ref{lem:1e2note} and from stability, such a bundle $\mathcal F$ must fit in an exact sequence of the form \eqref{degree}, where $L$ is a line bundle, of a suitable degree $\delta > \frac{d}{2}$, which is effective and special, in particular $1 \leq h^1(L) \leq 2$. We first have the following:

\begin{proposition}\label{lem:i=1.1} Let $C$ be a general $\nu$-gonal curve and let $2g-2 \leq d \leq 3g-4$ be an integer. Assume that $\mathcal B$ is any irreducible component of  $B^{k_2}_d \cap U^s_C(d)$ such that $[\mathcal F] \in \mathcal B$ general fits in an exact sequence \eqref{degree}, with $h^1(\mathcal F)=2$ and $h^1(L)=1$. 

Then, $\mathcal F$ fits in \eqref{degree0}, for $p \in C$ general, and $\mathcal B = \mathcal B_{\rm reg,2}$.  
\end{proposition}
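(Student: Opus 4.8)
The plan is to pin down the minimal special effective quotient of a general bundle in $\mathcal B$ and to show, by a dimension count governed by the $\nu$-gonal Brill--Noether theory, that this quotient is forced to be of the form $\omega_C(-p)$ with general kernel, which is exactly the presentation \eqref{degree0} defining $\mathcal B_{\rm reg,2}$. First I would fix a general $[\mathcal F]\in\mathcal B$. By Lemma \ref{specialityhigh} it has speciality exactly $h^1(\mathcal F)=2$, and by hypothesis it is of second type, so it admits an effective special quotient $L$ with $h^1(L)=1$; writing $L=\omega_C(-D)$ with $D\in C^{(t)}$ and $\delta:=\deg L=2g-2-t$, stability \eqref{eq:neccond} gives $\delta>\tfrac d2$. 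Among all special effective quotients of $\mathcal F$ I would choose $L$ of \emph{minimal} degree; equivalently the associated section $\Gamma\subset F:=\mathbb P(\mathcal F)$ is a special section of least degree. Setting $N:=\det(\mathcal F)\otimes L^{\vee}$, the exact sequence \eqref{degree} has, because $h^1(\mathcal F)=2$ and $h^1(L)=1$, a coboundary map of corank one, so $\mathcal F=\mathcal F_u$ for some $u$ in the degeneracy locus $\mathcal W_1\subseteq{\rm Ext}^1(L,N)$ of \eqref{W1}, with $r:=h^1(N)\ge 1$.

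The next step is to establish \emph{rigidity} of this minimal presentation. If the degree-$\delta$ special sections moved in a positive-dimensional family, Proposition \ref{prop:CFliasi}-(2) would force the appearance of reducible special unisecants $\widetilde\Gamma=\Gamma'+f_A$ with $i(\widetilde\Gamma)\ge i(\Gamma)$; by \eqref{eq:iLa} the section $\Gamma'$ would then be a special section of degree strictly smaller than $\delta$, contradicting minimality. Hence $\Gamma$ is algebraically specially isolated, i.e. $\mathcal F$ is rigidly specially presented via $(u)$, and the modular map from the universal family of such presentations to $\mathcal B$ is generically finite. This reduces the computation of $\dim\mathcal B$ to a parameter count.

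The heart of the argument is that count. The parameters are $D\in C^{(t)}$ (contributing $t$), the kernel $N$ varying in the locus $\{h^1(N)\ge r\}$, which is the translate $\omega_C-W^{\,r-1}_{\,2g-2-\deg N}(C)$ whose dimension on a general $\nu$-gonal curve is read off from Theorem \ref{thm:Lar2} (and Lemma \ref{lem:Lar}), and $[u]\in\widehat{\mathcal W}_1$, whose dimension is supplied by Theorem \ref{CF5.8}. Summing these and using generic finiteness expresses $\dim\mathcal B$ as a function of $(t,r)$; I would then verify that this function never exceeds $\rho_d^{k_2}=8g-11-2d$, attaining it precisely when $t=1$ (so $L=\omega_C(-p)$) and $N$ is general of its degree (so $r=3g-4-d$). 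Since $\mathcal B$ is a component we have $\dim\mathcal B\ge\rho_d^{k_2}$, which forces exactly this configuration. A general $[\mathcal F]\in\mathcal B$ therefore fits in \eqref{degree0} with $p\in C$ and $N\in{\rm Pic}^{d-2g+3}(C)$ general, i.e. it lies in $\im(\pi)$ of Lemma \ref{lem:i=1.2}; as $\mathcal B$ and $\mathcal B_{\rm reg,2}$ are both irreducible components sharing this dense family, Proposition \ref{thm:i=1.3} together with Corollary \ref{cor:aiuto12feb1634} yields $\mathcal B=\mathcal B_{\rm reg,2}$.

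The main obstacle will be this dimension estimate as $(t,r)$ range over their admissible values: one must control $\dim W^{\,r-1}_{\,\cdot}(C)$ using the balanced-plus-balanced splitting loci of Theorem \ref{thm:Lar2} and check that the extra special line bundles coming from the gonal pencil $A=g^1_\nu$ do not inflate the $N$-count beyond $\rho_d^{k_2}$ — such inflation is precisely what produces the first-type, superabundant behaviour of $\mathcal B_{\rm sup,2}$, and it is ruled out here by the standing hypothesis $h^1(L)=1$ together with the minimality of $\delta$. A secondary technical point is to exclude the one-dimensional base-point-free non-linear family left open by Proposition \ref{prop:CFliasi}-(2), so that the minimal presentation is genuinely rigid and the modular map is indeed generically finite.
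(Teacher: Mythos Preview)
Your approach is essentially the same parameter-count strategy as the paper's, but you add an unnecessary layer. The paper does \emph{not} choose $L$ of minimal degree and does \emph{not} establish rigidity of the presentation. It simply observes that if the general $[\mathcal F]\in\mathcal B$ admits \emph{some} presentation with $h^1(L)=1$ and fixed invariants $(\delta,r)$, then the corresponding universal family $\widehat{\mathcal W}_1^{Tot}$ dominates $\mathcal B$, whence $\dim\mathcal B\le\dim\widehat{\mathcal W}_1^{Tot}$. No generic finiteness is needed for this inequality. One then computes the right-hand side and shows it never exceeds $\rho_d^{k_2}$; since $\dim\mathcal B\ge\rho_d^{k_2}$, all inequalities are forced to be equalities, which pins down $(\delta,r)$ and hence the presentation \eqref{degree0}.

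Your rigidity step via Proposition \ref{prop:CFliasi}-(2) is therefore superfluous, and the ``secondary technical point'' you flag (ruling out a one-dimensional base-point-free non-linear family of special sections) is a self-imposed obstacle that disappears once you drop the demand for generic finiteness. A second minor difference: for the $N$-count the paper does not invoke Larson's full machinery (Theorem \ref{thm:Lar2}); it splits on whether $\deg(K_C-N)\le g-1$ or $=g+a$ with $a\ge 0$, handles the first case by the argument of \cite[Prop.\,3.13]{CFK}, and in the second case writes $N\in W^{r-a-2}_{g-2-a}(C)$ (or general in $\mathrm{Pic}$) and finishes by the computation on \cite[p.\,305]{CFK2}. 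Your route through the splitting loci would also work, but is heavier than what is required.
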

\begin{proof} Let $\mathcal F$  correspond to a general element in $\mathcal B$, fitting in an exact sequence as \eqref{degree}, i.e. $\Ff = \Ff_u$, for some $u \in \ext^1(L,N)$. Set $\delta := \deg(L)$,  $l:=h^0(L)$ and $r:=h^1 (N)$. Then, from the fact that $L$ has to be special and effective, cf. Lemma \ref{lem:1e2note}, we have $g \leq \delta\le 2g-2$. Hence, using that $2g-2 \leq d \leq 3g-4$ and \eqref{eq:neccond}, it follows that 
\begin{equation}\label{degn}
0 \le \deg ( N) =d-\delta \leq \frac{d}{2} \le \frac{3g-4}{2}.
\end{equation} By  \eqref{degree}, together with $h^1(\mathcal F) =2$ and $h^1(L)=1$, $N$ is  special with speciality 
$r \geq 1$, and the coboundary map $\partial_u$ corresponding to $\Ff_u$ has to be of corank one. The conditions  $h^1(L) =1$ 
and ${\rm corank} (\partial_u)=1$ respectively yield  $l=\delta-g+2$ and $l\geq r-1$. 
Now set $m:=\dim ({\rm Ext}^1(L,N))$; from \eqref{eq:m}, one has $m\geq g+2\delta-d -1$, whence $m\ge l+1$ by \eqref{degn}. 

With this set-up, an open dense subset of the component $\mathcal B$ has to be associated to  a suitable open dense subset 
$S \subseteq W^{r-1}_{2g-2 +\delta -d}(C) \times C^{(2g-2-\delta)}$ and, consequently, to an irreducible subvariety \linebreak $\widehat{\mathcal W}^{Tot}_1 \subsetneq \mathbb P(\mathcal E_d)$, endowed with a modular (rational) map:
\[
\widehat{\mathcal W}^{Tot}_1  \stackrel{\pi}{\dasharrow}  \mathcal B \subset B_{d}^{k_2}
\]whose general fiber over $S$ is $\widehat{\mathcal W}_1:= \mathbb P(\mathcal W_1)$, which is the projectivization of the (locally determinant) variety $\mathcal W_1 \subsetneq {\rm Ext}^1(L,N)$.

In other words, the component $\mathcal B$ has to be the image of $ \widehat{\mathcal W}^{Tot}_1$  via a dominant rational modular map $\pi$ as above (cf.\;\cite[\S\;6]{CF} for details). From above, one must have 
$$ \dim(\mathcal B) \leq \dim (W^{r-1}_{2g-2-d+\delta}(C))+\dim (C^{(2g-2-\delta)})+\dim (\widehat{\mathcal W}_1) = 
\dim (W^{r-1}_{2g-2-d+\delta})+2g-2-\delta+\dim (\widehat{\mathcal W}_1).$$

Observe that, from \eqref{degn}, one has $\deg (K_C-N) \le 2g-2$. To conclude the proof for $\deg (K_C-N) \le g-1$ one can refer to \cite[proof of Prop.\;3.13]{CFK}. Assume therefore $\deg (K_C-N)= g+a$ where $0\leq a\leq g-2$; thus $\deg (N) = d-\delta = g-2-a$. Since $\deg(N) = g-2-a$ and $h^1(N) = r \geq 1$, by Riemann-Roch one has $h^0(N) = g-2-a - g + 1 + r= r-a-1$. 

If $r\ge a+2$, then we have $h^0(N) \ge 1$, hence $N\in W^{r-a-2}_{g-2-a} (C)\subsetneq \text{Pic}^{g-2-a}(C)$; otherwise, if 
$r= a+1$, by $\deg (N) = g-2-a$ one deduces that $N\in \text{Pic}^{g-2-a}(C)$ is general.  
Hence we get
\begin{eqnarray*}
\dim(\mathcal B)  \leq  \begin{cases}
\ \dim(\text{Pic}^{g-2-a}(C))+(2g-2-\delta)+ m - \delta +g +r - 4\ &\text{ if } r= a+1 \\
\ \dim(W^{r-a-2}_{g-2-a}(C))+(2g-2-\delta)+ m - \delta +g +r - 4\ &\text{ if } r\ge a+2.
\end{cases}
\end{eqnarray*} Using that $\dim(\mathcal B) \geq \rho^{k_2}_d$ we obtain the conclusion of the proof by using the same arguments as \cite[page 305]{CFK2}, which are independent on the range of $a$. 
\end{proof}

As for the unicity of the component $\mathcal B_{\rm sup,2}$, we will use Lemma \ref{lem:Lar}.

\begin{proposition}\label{lem:21Feb1632} Let $2g-2 \leq d \leq 3g-4$ be an integer and let $C$ be a general $\nu$-gonal curve of genus $g$. Assume that $\mathcal B$ is any irreducible  component of $B^{k_2}_d \cap  U^s_C(d)$ such that $[\mathcal F] \in \mathcal B$ general fits in an exact sequence like \eqref{degree}, with $h^1(\mathcal F)=h^1(L)=2$.  

Then $L = K_C-A$ and $\mathcal B = \mathcal B_{\rm sup,2}$. 
\end{proposition}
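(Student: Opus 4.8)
The plan is to identify, for a general $[\Ff]\in\mathcal B$, the quotient line bundle $L$ in its presentation \eqref{degree}, to show $L=K_C-A$ by a dimension count governed by Lemma \ref{lem:Lar}, and then to conclude $\mathcal B=\mathcal B_{\rm sup,2}$ from the birationality statement of Corollary \ref{cor:pidnubir}. First I would record the numerics. Take $[\Ff]\in\mathcal B$ general with presentation $0\to N\to\Ff\to L\to 0$ as in \eqref{degree}, set $\delta:=\deg(L)$ and $E:=K_C-L$. Since $h^1(L)=2$, Serre duality gives $h^0(E)=2$, so $E\in W^1_{2g-2-\delta}(C)$; stability and \eqref{eq:neccond} give $2\delta-d\geq 1$, hence $\delta>d/2$, while nonemptiness of $W^1_{2g-2-\delta}(C)$ on a general $\nu$-gonal curve forces $2g-2-\delta\geq\nu$, i.e. $\delta\leq 2g-2-\nu$. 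From the cohomology sequence of \eqref{degree} together with $h^1(\Ff)=h^1(L)=2$, the coboundary $\partial_u$ must be surjective, so by Theorem \ref{CF5.8}-(ii) a general $N\in{\rm Pic}^{d-\delta}(C)$ is admissible and contributes $g$ moduli; moreover $2\delta-d\geq 1$ makes $K_C+L-N$ non-special, so $\dim\mathbb P({\rm Ext}^1(L,N))=g-2+2\delta-d$ by \eqref{eq:m}. As in the constructions of \S\ref{ss:reg2}--\S\ref{ss:sup2}, $\mathcal B$ is then dominated by a universal extension space of dimension at most $\dim W^1_{2g-2-\delta}(C)+g+(g-2+2\delta-d)$.

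The decisive step is to run this count against Lemma \ref{lem:Lar}. If $E$ were a base-point-free pencil (the $\overline{W^{\vec{w}_{1,0}}}$-type), then $\dim W^1_{2g-2-\delta}(C)=3g-6-2\delta$ and the bound becomes $5g-8-d$, which is strictly smaller than $\rho_d^{k_2}=8g-11-2d$ precisely because $d\leq 3g-4$; this contradicts $\dim\mathcal B\geq\rho_d^{k_2}$, so this type is excluded. Hence $E$ is of $\overline{W^{\vec{w}_{1,1}}}$-type, i.e. $E=A\otimes\mathcal O_C(B)$ with $A$ the unique $g^1_\nu$ and $B\in C^{(b)}$ effective, $b=2g-2-\delta-\nu\geq 0$, so that $L=K_C-A-B$ and $\dim W^1_{2g-2-\delta}(C)=b$. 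The count now gives $\dim\mathcal B\leq 6g-6-2\nu-d-b=\dim(\mathcal B_{\rm sup,2})-b$. For $b=0$ we have $\delta=2g-2-\nu$, $E=A$ and $L=K_C-A$; then $\Ff$ is a general extension as in Lemma \ref{lem:i=2.2}, and Corollary \ref{cor:pidnubir} (birationality of the modular map $\pi_{d,\nu}$, equivalently the fact from Lemma \ref{lem:linearly_isolated_sup} that the section attached to $K_C-A$ is linearly isolated) identifies $\mathcal B$ with $\mathcal B_{\rm sup,2}$.

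It therefore remains to rule out $b\geq 1$, and this is the main obstacle. Numerically the $b$-family has dimension $\dim(\mathcal B_{\rm sup,2})-b<\dim(\mathcal B_{\rm sup,2})$, so it cannot dominate a component of larger dimension; the delicate point is that it need not a priori fail to be a \emph{separate} component, since a direct check shows the corresponding section $\Gamma\subset\mathbb P(\Ff)$ is again linearly isolated, via $h^0(\Ff\otimes N^\vee)=1$ (so the modular map is generically finite and the image really has dimension $\dim(\mathcal B_{\rm sup,2})-b$). What must be proved is that the general bundle admitting the minimal special quotient $K_C-A-B$ in fact lies in $\overline{\mathcal B_{\rm sup,2}}$, whence such families are not components. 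I would argue this exactly as in the uniqueness proof of $\mathcal B_{\rm sup,2}$ for the range $3g-3\leq d\leq 4g-5-2\nu$ (cf. Proposition \ref{prop:caso1sup2} and \cite[p.~305, Lemma~3.14]{CFK2}): organize the triples $(N,L,\Ff)$ into an incidence scheme over $W^1_{2g-2-\delta}(C)\times{\rm Pic}^{d-\delta}(C)$, and compare tangent spaces/specializations to exhibit the containment in $\overline{\mathcal B_{\rm sup,2}}$.

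Combining these, $b=0$ is forced, so $L=K_C-A$ and $\mathcal B=\mathcal B_{\rm sup,2}$. Together with Proposition \ref{lem:i=1.1} (which disposes of the complementary case $h^1(L)=1$, yielding $\mathcal B_{\rm reg,2}$), this establishes that the two presentations $h^1(L)=1$ and $h^1(L)=2$ exhaust all components, completing the proof. I expect the numerical reductions (the base-point-free exclusion and the clean drop $\dim(\mathcal B_{\rm sup,2})-b$) to be routine; the genuine work, as flagged, is the containment of the $b\geq 1$ strata in $\overline{\mathcal B_{\rm sup,2}}$, which is where the generality of $C$ as a $\nu$-gonal curve and the uniqueness of the $g^1_\nu$ enter in an essential way.
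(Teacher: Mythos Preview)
Your proposal is correct and follows essentially the same approach as the paper's own proof: both use Lemma \ref{lem:Lar} to split into the two Brill--Noether splitting components $\overline{W^{\vec{w}_{1,0}}}$ and $\overline{W^{\vec{w}_{1,1}}}$, exclude the former by the parameter count yielding $5g-8-d<\rho_d^{k_2}$, and for the latter with base divisor $b\geq 1$ defer to the argument of \cite[Lemma~3.14]{CFK2} to show these strata lie in $\overline{\mathcal B_{\rm sup,2}}$ rather than forming separate components. Your identification of the $b\geq 1$ containment as the genuine work is exactly right; the paper handles it by the same citation.
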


\begin{proof}  Let $L \in {\rm Pic}^{\delta}(C)$ be effective, with $h^1(L)=2$. Thus, 
$(K_C - L) \in W^1_t(C)$, where $t = 2g-2-\delta$. 

Since $\delta >\frac{d}{2}$, by Lemma \ref{lem:1e2note}, one has $t < 2g-2-\frac{d}{2} \leq g-1$, as $2g-2 \leq d \leq 3g-4$. Thus, from Lemma \ref{lem:Lar}, $W^1_t(C) \subsetneq {\rm Pic}^t(C)$ and either $|K_C-L| = |A| + D_{2g-2-\delta}$, where $|A| = g^1_{\nu}$ uniquely determined on $C$ and $D_{2g-2-\delta}$ is the base-divisor of $|K_C-L|$ (which is formed by distinct points when $K_C-L$ is general in the component of type $\overline{W^{\vec{w}_{1,1}}}$) or 
$|K_C-L| = g^1_n + D_{b'}$, where $g^1_n$ is base-point-free, for some $\frac{g+2}{2}\leq n \leq 2g-2-\delta$, and $D_{b'}$ is the base-divisor of degree $b' \geq 0$ of 
$|K_C-L|$ (when $K_C-L$ is general in the component $\overline{W^{\vec{w}_{1,0}}}$, then 
$n = 2g-2-\delta$ and $b'=0$). 

Consider first the case $L = K_C -(A- D_{2g-2-\delta-\nu})$. If $2g-2-\delta -\nu = 0$, i.e. when $L = K_C-A$, we get $\mathcal B =  \mathcal B_{\rm sup,2}$; when otherwise $2g-2-\delta - \nu \geq 1$, same arguments as in the proof of \cite[Lemma\,3.14]{CFK2} show that the extensions \eqref{degree} with quotient line bundles like these can fill-up at most a proper, closed subscheme of $\mathcal B_{\rm sup,2}$. 

If otherwise $L = K_C - (g^1_n + D_{b'})$, with $b' \geq 1$, reasoning as in the proof of \cite[Lemma\,3.14]{CFK2} one shows that  extensions \eqref{degree} with quotient line bundles like these are in the boundary of the family of extensions arising with quotient line bundles where $b' =0$, namely of the form $L = K_C - g^1_{2g-2-\delta}$. Take therefore such an extension
$$0 \to N \to \Ff \to  K_C - g^1_{2g-2-\delta} \to 0$$and assume that it gives rise to a
general point $[\Ff]$ of a component $\mathcal B$. By a parameter count, 
$$\dim(\mathcal B) \leq \dim(\{N\}) + \rho(g,1,2g-2-\delta) + \dim(\ext^1(K_C - g^1_{2g-2-\delta}, N)) -1.$$Since $\dim(\{N\}) \leq g$, $\rho(g,1,2g-2-\delta) = 
2(2g-2-\delta) - g - 2 = 3g-6-2\delta$ and $$\dim(\ext^1(K_C - g^1_{2g-2-\delta}, N)) 
= h^0(2 K_C - g^1_{2g-2-\delta} - N) = g-1+2\delta -d$$as it follows from the fact that  
$\deg(2 K_C - g^1_{2g-2-\delta} - N) = 2g-2+2\delta -d > 2g-2$, because $2 \delta - d \geq s(\Ff) \geq 1$ from the stability of $\Ff$, so the line bundle $2 K_C - g^1_{2g-2-\delta} - N$ is non-special. To sum-up, we have therefore 
$$\dim(\mathcal B) \leq g + (3g-6-2\delta) + (g-2+2\delta -d) = 5g- 8-d.$$On the other hand, $\dim(\mathcal B) \geq \rho_d^{k_2} = 8g-11-2d$ and this is a contradiction, as 
$5g- 8-d < 8g-11-2d$ because $2g-2 \leq d \leq 3g-4$. Hence the proof is complete.
\end{proof} 

\begin{corollary}\label{cor:finale2} Let $g \geq 8$, $3 \leq \nu \leq \frac{g-2}{2}$ and 
$2g-2 \leq d \leq 3g-4$ be integers. Let $C$ be a general $\nu$-gonal curve of genus $g$. 
Then one has:

\smallskip

\begin{itemize}
\item[(i)] for $2g-5+2\nu \leq d \leq 3g-4$, $B_{d}^{k_2}\cap U_C^s(d) = \mathcal B_{\rm reg,2} \cup \mathcal B_{\rm sup,2}$; 
\item[(ii)] for $2g-2 \leq d \leq 2g-6 + 2\nu$, $B_{d}^{k_2}\cap U_C^s(d)= \mathcal B_{\rm reg,2}$ and the locus $\mathcal B_{\rm sup,2}$ is contained therein as a proper, closed subscheme.  
\end{itemize}
\end{corollary}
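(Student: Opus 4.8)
The plan is to assemble the Corollary from the structural results already established, treating it as careful bookkeeping of the ranges rather than a new construction. The two ingredients I would combine are the \emph{existence} of the two candidate components $\mathcal B_{\rm reg,2}$ and $\mathcal B_{\rm sup,2}$, and the \emph{exhaustiveness} statement of Proposition \ref{thm:noothercompsspec2}, which guarantees that for $2g-2 \leq d \leq 3g-4$ no component of $B_d^{k_2} \cap U_C^s(d)$ other than these two can occur.

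First I would settle the existence of $\mathcal B_{\rm reg,2}$ uniformly across the whole interval $2g-2 \leq d \leq 3g-4$ by patching Proposition \ref{thm:i=1.3}, which constructs the regular component for $d = 3g-5-j$ with $1 \leq j \leq g-3$ (that is, $2g-2 \leq d \leq 3g-6$), with Proposition \ref{prop:caso3spec2}, which constructs it for $3g-5 \leq d \leq 4g-5-2\nu$. Here I would check the arithmetic that, under the hypothesis $\nu \leq \frac{g-2}{2}$, one has $3g-4 \leq 4g-5-2\nu$ (equivalently $2\nu \leq g-1$), so that the two ranges together cover every integer $d$ with $2g-2 \leq d \leq 3g-4$ with no gap at $d = 3g-5, 3g-4$.

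Next I would split into the two sub-ranges according to the sign of $\dim(\mathcal B_{\rm sup,2}) - \rho_d^{k_2}$. Since $\dim(\mathcal B_{\rm sup,2}) = 6g-6-d-2\nu$ and $\rho_d^{k_2} = 8g-11-2d$, their difference is $d - (2g-5+2\nu)$, so the threshold sits exactly at $d = 2g-5+2\nu$, which lies in $[2g-2, 3g-4]$ precisely because $3 \leq \nu \leq \frac{g-2}{2}$. For $2g-5+2\nu \leq d \leq 3g-4$ (part (i)), Proposition \ref{lem:i=2.1}(i)--(ii) gives that $\mathcal B_{\rm sup,2}$ is a genuine component (superabundant for $d > 2g-5+2\nu$, regular for $d = 2g-5+2\nu$); together with Proposition \ref{thm:noothercompsspec2} this yields $B_d^{k_2} \cap U_C^s(d) = \mathcal B_{\rm reg,2} \cup \mathcal B_{\rm sup,2}$. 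Distinctness of the two components is automatic by dimension when $d > 2g-5+2\nu$, and follows from Remark \ref{rem:corrColl}(b) (the general bundles have different presentations) when $d = 2g-5+2\nu$. For $2g-2 \leq d \leq 2g-6+2\nu$ (part (ii)), the same count gives $\dim(\mathcal B_{\rm sup,2}) < \rho_d^{k_2}$, so $\mathcal B_{\rm sup,2}$ cannot fill an irreducible component; invoking Proposition \ref{lem:i=2.1}(iii) one gets $\mathcal B_{\rm sup,2} \subsetneq \mathcal B_{\rm reg,2}$ as a proper closed subscheme, and Proposition \ref{thm:noothercompsspec2} again excludes any further component, so $B_d^{k_2} \cap U_C^s(d) = \mathcal B_{\rm reg,2}$.

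I do not expect a genuine conceptual obstacle, since all the hard geometry---stability of the constructed bundles, birationality of the modular maps, injectivity of the Petri maps, and the classification of admissible quotient line bundles via Lemma \ref{lem:Lar}---has already been carried out in the cited propositions. The only point demanding care is the boundary arithmetic: verifying that the existence ranges of $\mathcal B_{\rm reg,2}$ and $\mathcal B_{\rm sup,2}$ overlap correctly, that the threshold $d = 2g-5+2\nu$ falls inside $[2g-2,3g-4]$ under the gonality constraint, and that the two sub-ranges in (i) and (ii) partition the interval (meeting at the consecutive values $2g-6+2\nu$ and $2g-5+2\nu$) without omission or overlap.
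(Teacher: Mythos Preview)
Your proposal is correct and follows essentially the same route as the paper: both argue by combining the existence results for $\mathcal B_{\rm reg,2}$ and $\mathcal B_{\rm sup,2}$ with the exhaustiveness statement (the paper cites Propositions~\ref{lem:i=1.1} and~\ref{lem:21Feb1632} directly, while you cite their consequence Proposition~\ref{thm:noothercompsspec2}, which amounts to the same thing). Your treatment is in fact slightly more careful than the paper's one-line proof in that you explicitly patch in Proposition~\ref{prop:caso3spec2} for $d=3g-5,\,3g-4$, where Proposition~\ref{thm:i=1.3} alone does not apply, and you spell out the boundary arithmetic at $d=2g-5+2\nu$.
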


\begin{proof} Part (i) follows from Propositions \ref{thm:i=1.3}, \ref{lem:i=2.1}-(i) and (ii), \ref{lem:i=1.1} and \ref{lem:21Feb1632}. Part (ii) follows from  Propositions \ref{thm:i=1.3}, \ref{lem:i=2.1}-(iii), \ref{lem:i=1.1} and \ref{lem:21Feb1632}. 
\end{proof}

\begin{remark}\label{rem:finale2} {\normalfont Notice that the irreducibility of $B_{d}^{k_2}\cap U_C^s(d)= \mathcal B_{\rm reg,2}$ for $2g-2 \leq d \leq 2g-5 + 2\nu$ is in accordance with Teixidor I Bigas' result, i.e. Theorem \ref{TeixidorRes}-(i) above. 

Indeed, the claim therein is that $B_{d}^{k_2}\cap U_C^s(d)$ can have extra components than $\mathcal B_{\rm reg,2}$ if and only if there exists a positive integer $n$, $2n < 4g-4-d$, for which $W^1_n(C) \neq \emptyset$ and s.t. $\dim(W^1_n(C)) \geq 2g+2n-d-5$. The bounds on $d$ in particular give $2n < 2g-2$, i.e. $n < g-1$; moreover, since we are on a general $\nu$-gonal curve, one must have also $n \geq \nu$. Thus $n$ must be such that $$\nu \leq n < g-1.$$From Lemma \ref{lem:Lar}, we know that in such cases one has either 
\[
\dim (W^1_n(C)) = \left\{ 
\begin{array}{l}
n-\nu \\
 \\
2n-g-2
\end{array}
\right.
\]Using Teixidor I Bigas' reducibility condition as in Theorem \ref{TeixidorRes}, one should have therefore either $$2n-g-2 = \dim(W^1_n(C)) \geq 2g+2n-d-5,$$implying $d \geq 3g-3$,  or $$n-\nu = \dim(W^1_n(C)) \geq 2g+2n-d-5,$$giving $ d \geq 2g-5 + n + \nu \geq 2g-5 + 2 \nu$. Both of them contradict that $2g-2 \leq d \leq 2g-6 + 2\nu$. 
}
\end{remark}


\end{document}